\tikzset{
    triple/.style args={[#1] in [#2] in [#3]}{
        #1,preaction={preaction={draw,#3},draw,#2}
    }
}      
\theoremstyle{plain}
\newtheorem{thm}{Theorem}[section]
\newtheorem{mainthm}{Main Theorem}
\newtheorem{lem}[thm]{Lemma}
\newtheorem{prop}[thm]{Proposition}
\newtheorem{cor}[thm]{Corollary}
\theoremstyle{definition}
\newtheorem*{ack}{Acknowledgement}
\theoremstyle{remark}
\newtheorem*{rmk}{Remark}
\numberwithin{equation}{section}
\newcommand{\A}{\mathbb{A}}
\newcommand{\C}{\mathbb{C}}
\newcommand{\N}{\mathbb{N}}
\newcommand{\bP}{\mathbb{P}}
\newcommand{\Q}{\mathbb{Q}}
\newcommand{\Z}{\mathbb{Z}}
\newcommand{\B}{\mathcal{B}}
\newcommand{\cF}{\mathcal{F}}
\newcommand{\cP}{\mathcal{P}}
\newcommand{\V}{\mathcal{V}}
\newcommand{\cX}{\mathcal{X}}
\newcommand{\g}{\mathfrak{g}}
\newcommand{\ol}[1]{\overline{#1}}
\newcommand{\br}[1]{\left\langle{#1}\right\rangle}
\newcommand{\brt}[1]{\left\langle\!\left\langle{#1}\right\rangle\!\right\rangle}
\newcommand{\qlbar}{{\overline{\mathbb{Q}_\ell}}}
\newcommand{\floor}[1]{\left\lfloor#1\right\rfloor}
\newcommand{\pch}[2]{\downarrow^{\left(#1\right)}_{\left(#2\right)}}
\newcommand{\spn}[1]{\textbf{\textup{sp}}\left\{#1\right\}}
\newcommand{\wt}[1]{\widetilde{#1}}
\newcommand{\cq}[2]{{#2}^{(#1)}}
\newcommand{\h}[1]{\textup{ht}\left(#1\right)}
\newcommand{\tsp}[1]{\textup{\textbf{TSp}}\left(#1\right)}
\newcommand{\atsp}[1]{\widetilde{\textup{\textbf{TSp}}}\left(#1\right)}
\newcommand{\gue}[1]{\Upsilon_{#1}}
\newcommand{\ague}[1]{\widetilde{\Upsilon}_{#1}}
\newcommand{\cf}[1]{\mathcal{R}(#1)}
\newcommand{\sg}[1]{\varepsilon_{#1}}
\newcommand{\tsg}[1]{\epsilon_2\left({#1}\right)}
\newcommand{\diff}[1]{\Theta_{#1}}
\newcommand{\tyB}{\framebox[10pt]{$B$}\,}
\newcommand{\tyC}{\framebox[10pt]{$C$}\,}
\newcommand{\orangebox}[2]{
	\draw [line width=1mm, orange] (#1) rectangle (#2);
}
\newcommand{\bluefill}[2]{
	\fill [blue!40!white] (#1) rectangle (#2);
}
\newcommand{\yellowfill}[2]{
	\fill [yellow!40!white] (#1) rectangle (#2);
}
\newcommand{\hatch}[2]{
	\fill [pattern=dots, pattern color=blue!30!white] (#1) rectangle (#2);
}
\DeclareMathOperator{\gr}{\mathcal{Q}}
\DeclareMathOperator{\rk}{\textup{ss-rank}}
\DeclareMathOperator{\sgn}{\textup{sgn}}
\DeclareMathOperator{\Ind}{\textup{Ind}}
\DeclareMathOperator{\Res}{\textup{Res}}
\DeclareMathOperator{\tr}{\textup{tr}}
\DeclareMathOperator{\Ad}{\textup{Ad}}
\DeclareMathOperator{\ch}{\textmd{\textup{ch}}}
\DeclareMathOperator{\del}{\partial}
\DeclareMathOperator{\Lie}{\textup{Lie}}
\DeclareMathOperator{\genQ}{\mathbf{Q}}
\DeclareMathOperator{\genF}{\mathbf{F}}
\DeclareMathOperator{\sym}{\mathfrak{S}}
\DeclareMathOperator{\dummy}{\framebox{$\Q[t]\cdot Q$}}
\DeclareMathOperator{\dummyy}{\framebox{$\Q[t]\cdot Q^\prime$}}
\DeclareMathOperator{\Id}{\textup{Id}}
\DeclareMathOperator{\versch}{\mathbb{V}_2}
\title{On total Springer representations for classical types}
\author{Dongkwan Kim}
\address{Department of Mathematics\\
  Massachusetts Institute of Technology\\
  Cambridge, MA 02139-4307\\
  U.S.A.}
\email{sylvaner@math.mit.edu}
\date{\today}							
\begin{document}
\begin{abstract} We give explicit formulas on total Springer representations for classical types. We also describe the characters of restrictions of such representations to a maximal parabolic subgroup isomorphic to a symmetric group. As a result, we give closed formulas for the Euler characteristic of Springer fibers.
\end{abstract}

\setcounter{tocdepth}{1}
\maketitle

\renewcommand\contentsname{}
\tableofcontents


\section{Introduction}


This is a sequel to \cite{kim:euler}. There, we gave inductive formulas to calculate the Euler characteristic of Springer fibers for classical types. Here, we develop its method further and give explicit formulas on total Springer representations for classical types, i.e. the alternating sum of all the cohomology spaces of Springer fibers.

For type $A$, it is well-known that there is a strong connection between Kostka-Foulkes polynomials, Green polynomials, and total Springer representations. More precisely, modified Kostka-Foulkes polynomials (resp. Green polynomials) give the graded multiplicities of irreducible representations in (resp. the graded character values of) total Springer representations. As both polynomials can be computed by the theory of symmetric functions, they provide strong tools to study Springer theory combinatorially. For more information we refer readers to \cite{lus81}, \cite{mac95}, etc.

The graded characters of total Springer representations for other types are also called Green functions, and they play an important role in the representation theory of finite groups of Lie type. It was proved by Kazhdan \cite{kaz77} (under some restriction on the characteristic of the base field) that these functions are identical to the ones defined by Deligne and Lusztig \cite{delu76} using the $\ell$-adic cohomology of Deligne-Lusztig varieties. Later, Lusztig defined generalized Green functions using character sheaves \cite{luchar2}. His theory provides a way to explicitly calculate Green functions using their orthogonality property, which is now called Lusztig-Shoji algorithm \cite{sho83}, \cite{luchar5}, \cite{sho88}.

Here, we give an alternative way to describe total Springer representations for classical types. The main results of this paper, stated in Section \ref{sec:mainthm}, claim that if we forget the grading and the action of centralizers of nilpotent elements, the total Springer representations for type $B$, $C$, and $D$ can also be fully described by Kostka-Foulkes polynomials of type $A$. Also, using Green polynomials of type $A$, we give explicit formulas for the restriction of such representations to a maximal parabolic subgroup isomorphic to a symmetric group. This means, roughly speaking, that combinatorics for type $A$ also governs Springer theory for other classical types. This is quite surprising--- at least to the author.

As a corollary, we obtain closed formulas for the Euler characteristic of Springer fibers for classical types in terms of Green polynomials. To the best of the author's knowledge, there was no elementary way to calculate such numbers. The most common method is to use Lusztig-Shoji algorithm, but it depends on the theory of character sheaves and orthogonality of Green functions--- none of which are elementary. However, our methods only depend on some geometric properties of Springer fibers and the theory of symmetric functions, both of which are simpler. In addition, the previous paper \cite{kim:euler} showed that these numbers can be computed inductively, but it does not give a closed formula except in some cases. On the other hand, Kostka-Foulkes and Green polynomials can be easily obtained using combinatorics.

This paper is organized as follows: in Section \ref{sec:defnot} we recall the basic definitions and notations that are frequently used in this paper; in Section \ref{sec:param} we describe parametrizations of conjugacy classes in Weyl groups and nilpotent conjugacy classes in Lie algebras for classical types; in Section \ref{sec:parspr} we give parametrizations of irreducible representations of Weyl groups and their relation to Springer correspondence; in Section \ref{sec:mainthm} we state the main results of this paper; Sections \ref{sec:prelim}--\ref{sec:typeD} are devoted to investigating some geometric properties of Springer fibers and total Springer representations; in Section \ref{sec:symm} we recall the theory of symmetric functions and its relation to representation theory; in Section \ref{sec:greenkostka} we prove some formulas about Green and Kostka-Foulkes polynomials; and finally, in Sections \ref{sec:proofmain2}--\ref{sec:mainD} we prove the main theorems.

\begin{ack} The author wishes to thank George Lusztig for having stimulating discussions with him and checking the draft of this paper. He is grateful to Jim Humphreys for his detailed remarks which help improve the readability of this paper. Also he thanks Gus Lonergan and Toshiaki Shoji for useful comments.
\end{ack}

\section{Definitions and notations} \label{sec:defnot}
\subsection{Setup}
Let $\C$ be the field of complex numbers and $G$ be a reductive group whose derived group is simple of classical type over $\C$. (Mostly $G$ is one of $GL_n(\C), SO_n(\C), $ or $Sp_n(\C)$ in this paper.) Define $\g\colonequals \Lie G$ to be the Lie algebra of $G$. We denote the semisimple rank of $G$ by $\rk G$. Define $B\subset G$ to be a Borel subgroup of $G$ and $T\subset B$ to be a maximal torus. Also we let $W$ be the Weyl group of $G$, defined by the normalizer of $T$ in $G$ modulo $T$. To avoid ambiguity, we sometimes write $W(X)$ for the Weyl group of type $X$. Thus in particular $W(A_{n-1})$ is isomorphic to the symmetric group permuting $n$ elements, denoted $\sym_n$.

\subsection{Partitions}
For a partition $\lambda$, we write either $\lambda = (\lambda_1, \lambda_2, \cdots)$ for some $\lambda_1 \geq \lambda_2 \geq \cdots$ or $\lambda=(1^{m_1}2^{m_2} \cdots)$ to describe its parts. We write $\lambda \vdash n$ if $\sum_i \lambda_i = n$ or equivalently $\sum_i im_i = n$. In this case we also write $|\lambda| = n$. We let $l(\lambda)$ be the length of $\lambda$, i.e. the number of nonzero parts in $\lambda$ or equivalently $\sum_{i} m_i$. We denote by $\lambda'$ the conjugate of $\lambda$. We often identify partitions with Young diagrams. In this paper we adopt the English notation, thus each part in a partition corresponds to each row in the corresponding Young diagram, and the size of $i$-th row is $\lambda_i$.

We say that $\lambda$ is even (resp. odd) if every part in $\lambda$ is even (resp. odd), or equivalently $m_i = 0$ for $i$ odd (resp. even). In particular, $\lambda$ is called very even if it is even and with even multiplicity, i.e. $m_i \in 2\N$ for any $i$. We say that $\lambda$ is strict if parts of $\lambda$ are pairwise different, that is for $i\neq j$ such that $\lambda_i, \lambda_j\neq 0$ we have $\lambda_i \neq \lambda_j$.

For partitions $\lambda=(\lambda_1, \lambda_2, \cdots)$ and $\mu=(\mu_1, \mu_2, \cdots)$, we define $\lambda\cup \mu$ be the union of two partitions, i.e. the partition of $|\lambda|+|\mu|$ whose parts are $\{\lambda_1, \lambda_2, \cdots\} \cup \{\mu_1, \mu_2, \cdots\}$ as a multiset. Also for $k \in \N$ we define $k\lambda\colonequals (k\lambda_1, k\lambda_2, \cdots)$. For partitions $\lambda, \mu$, we write $\lambda \supset \mu$ if $\lambda_i \geq \mu_i$ for all $i$, i.e. if the Young diagram of $\lambda$ contains that of $\mu$. In this case, we write $\lambda/\mu$ for the corresponding skew partition. Also we write $\lambda \geq \mu$ if $\sum_{i=1}^r \lambda_i \geq \sum_{i=1}^r \mu_i$ for any $r \geq 1$.

For $\{a_1, a_2, \cdots \} \subset \{\lambda_1, \lambda_2, \cdots\}$ as a multiset, we define $\lambda\pch{a_1, a_2, \cdots}{b_1, b_2, \cdots}$ to be the partition obtained by replacing $a_1, a_2, \cdots$ with $b_1, b_2, \cdots$ and rearranging rows if necessary. For example, $(6, 6, 3)\pch{6}{2}= (6,3,2)$. Note that $\lambda^{h,i}, \lambda^{v,i}$ in \cite{kim:euler} can now be written as $\lambda\pch{i}{i-2}, \lambda\pch{i,i}{i-1,i-1},$ respectively.

Let $b(\lambda) \colonequals \sum_{i\geq 1}(i-1)\lambda_i = \sum_{j\geq 1} \binom{\lambda'_j}{2}$ be the weighted size of $\lambda$. We also let $z_\lambda\colonequals \prod_{i\geq 1} i^{m_i}m_i!$, the size of the centralizer of an element of cycle type $\lambda$ in $\sym_{|\lambda|}$. For partitions $\lambda, \mu$ such that $\mu \subset \lambda$, we define $\h{\lambda/\mu}$ to be the height of the skew-partition $\lambda/\mu$, which is given by $\#\{i \geq 1 \mid \lambda_i >\mu_i \}-1.$ For example, $\h{(1^r)}= r-1$ for $r \geq 1$. 

\subsection{2-core, 2-quotient, and sign of partitions} \label{sec:corequot} For a partition $\lambda$, the 2-core of $\lambda$, denoted $\cq{c}{\lambda}$, is obtained from $\lambda$ by removing as many 2-border strips (i.e. dominos) as possible. Note that $\cq{c}{\lambda}$ does not depend on the order of removing 2-border strips from $\lambda$. For $|\lambda|$ even (resp. odd), we say the 2-core of $\lambda$ is minimal if $\cq{c}{\lambda}= \emptyset$ (resp. $\cq{c}{\lambda}= (1)$).

In order to define the 2-quotient of $\lambda$, we first choose $m \geq l(\lambda)$ and perform the following.
\begin{enumerate}  
\item For $1 \leq i \leq m$, set $a_i \colonequals \lambda_i+m-i$. In particular, $a_1>a_2 >\cdots>a_m$. (Note that $a_m$ is zero if $m>l(\lambda)$.)
\item Separate even and odd integers from $a_1, \cdots, a_m$, say $2b_1> 2b_2 > \cdots > 2b_s$ and $2c_1+1> 2c_2+1>  \cdots >  2c_t+1$, where $s+t=m$. 
\item Define $\cq{0}{\lambda}\colonequals(b_1-s+1, \cdots, b_{s-1}-1, b_s)$ and $\cq{1}{\lambda}\colonequals(c_1-t+1, \cdots, c_{t-1}-1, c_t)$.
\end{enumerate}
We call $(\cq{0}{\lambda}, \cq{1}{\lambda})$ the 2-quotient of $\lambda$. Note that this definition depends on the choice of (the parity of) $m$; indeed, choosing $m \geq l(\lambda)$ of different parity results in swapping $\cq{0}{\lambda}$ and $\cq{1}{\lambda}$. In this article we always choose $m$ to be even. Thus for example, if $\lambda=(10)$ then we may choose $m=2$ and get $(\cq{0}{\lambda},\cq{1}{\lambda})=(\emptyset, (5))$.

We define $\sg{\lambda} \colonequals (-1)^{b(\lambda)} = (-1)^{\sum_{i\geq 1} \lambda_{2i}}$ to be the sign of $\lambda$. Also for $\mu \subset \lambda$, we let $\tsg{\lambda/\mu}$ be the 2-sign of $\lambda/\mu$, which is defined as follows. If it is impossible to fill the Young diagram of $\lambda/\mu$ with dominoes, then set $\epsilon_2(\lambda/\mu)=0.$ Otherwise, choose any domino filling of $\lambda/\mu$ and set $\epsilon_2(\lambda/\mu)$ to be 1 (resp. -1) if the number of vertical dominoes in this filling (with respect to the English notation) is even (resp. odd). Note that this does not depend on the choice of domino fillings. Also for $\mu \subset \lambda$, it is easy to show that
$$\sg{\lambda}=\sg{\mu}\tsg{\lambda/\mu}, \textup{ thus in particular, } \sg{\lambda}=\sg{\cq{c}{\lambda}}\tsg{\lambda/\cq{c}{\lambda}}.$$


\subsection{$\ell$-adic cohomology}
For a variety $X$ over $\C$, we define $Sh(X)$ to be the category of constructible $\ell$-adic sheaves on $X$. Also define $H^*(X) \colonequals \bigoplus_{i \in \Z} (-1)^i H^i(X)$ to be the alternating sum of $\ell$-adic cohomology spaces of $X$ as a virtual vector space. We define $\cX(X) \colonequals \dim H^*(X) = \sum_{i \in \Z} (-1)^i \dim H^i(X)$ to be the Euler characteristic of $X$. Also we define $H^{top}(X)$ to be $H^{2\dim X}(X)$.

\subsection{Springer fibers}
Let $\B \colonequals G/B$ be the flag variety of $G$, and for $N \in \g$ define $\B_N$ to be the Springer fiber of $N$, i.e.
$$\B_N \colonequals \{gB \in \B \mid \Ad(g)^{-1}N \in \Lie B\}.$$
Then the $\ell$-adic cohomology of $\B_N$ is naturally equipped with the action of $W$, called the Springer representation, originally defined by \cite{spr76}. Here we adopt the convention of \cite{lus81}, which differs from the former one by tensoring with the sign representation of $W$. We also call $H^*(\B_N)$, as a representation of $W$, the total Springer representation corresponding to $N$. As Springer fibers have vanishing odd cohomology \cite{dclp}, in fact it is an actual representation of $W$.

\subsection{Characters of a finite group} \label{subsec:char}
For a finite group $\mathcal{G}$, we define $id \in \mathcal{G}$ to be the identity element of $\mathcal{G}$ and also define $\Id_\mathcal{G}$ to be the trivial representation of $\mathcal{G}$. Also for a Weyl group $W$, we define $\sgn_W$ to be the sign representation of $W$.

Let $\cf{\mathcal{G}}$ be the ring of $\Q$-valued class functions of $\mathcal{G}$. Also for a $\Q$-algebra $A$, we write $\cf{\mathcal{G}}_A \colonequals \cf{\mathcal{G}}\otimes_\Q A$. Then there exists a homomorphism from the Grothendieck group of the category of finite dimensional complex representations of $\mathcal{G}$ to $\cf{\mathcal{G}}_\C$ which sends a (virtual) representation of $\mathcal{G}$ to its trace function, i.e. its character. We denote such homomorphism by $\ch$. In other words, for any (virtual) representation $M$ and for any $g\in \mathcal{G}$ we have $\ch M(g) = \tr(g, M).$
If $\mathcal{G}$ is the Weyl group $W$ of some reductive group, then the image of $\ch$ actually lands in $\cf{W}$ since all the complex irreducible representations of $W$ are defined over $\Q$.

There exists a usual scalar product $\br{\ , \ }: \cf{\mathcal{G}}_\C \times \cf{\mathcal{G}}_\C \mapsto \C : (f,g) \mapsto \frac{1}{\#\mathcal{G}}\sum_{x\in \mathcal{G}} f(x)\overline{g(x)}$. For any virtual representations $M,N$ of $\mathcal{G}$, we also define $\br{M,N} \colonequals \br{\ch M, \ch N}$. For a subgroup $\mathcal{H} \subset \mathcal{G}$, we define $\Res_\mathcal{H}^\mathcal{G}$ and $\Ind_\mathcal{H}^\mathcal{G}$ to be the usual restriction and induction functor. They are both well-defined on finite dimensional complex representations and class functions. Also, they are adjoint to each other with respect to the scalar product defined above.

For $g \in \mathcal{G}$ which commutes with all elements in $\mathcal{H}$, we define $\Res_{g\cdot\mathcal{H}}^\mathcal{G}$ to be the restriction of a class function of $\mathcal{G}$ to $g\mathcal{H}=\mathcal{H}g$ (the coset of $\mathcal{H}$), considered as a class function of $\mathcal{H}$ under the identification $\mathcal{H} \xrightarrow{\simeq} g\mathcal{H} : h \mapsto gh$. (Thus here it is necessary to specify $g$ in the notation.) Note that even when we choose $M$ to be an actual representation of $\mathcal{G}$, $\Res^\mathcal{G}_{g \cdot \mathcal{H}}\ch M$ need not be a character of an actual one of $\mathcal{H}$, e.g. $\Res^{\Z/2}_{-1\cdot \{id\}}\ch\sgn_{\Z/2}$ is a virtual representation of the trivial group.

\subsection{Miscellaneous}
For a vector space $V$ and a subset $X \subset V$, we define $\spn{X}$ to be the span of $X$ in $V$.

\section{Conjugacy classes in Weyl groups}\label{sec:param}
Here we recall the parametrizations of conjugacy classes in Weyl groups and nilpotent conjugacy classes in Lie algebras for classical types.

\subsection{Type $A$} Let $G=GL_n(\C)$. Then its Weyl group is isomorphic to $\sym_n$. Its conjugacy classes are parametrized by partitions of $n$; to each conjugacy class we associate its cycle type. For $\rho \vdash n$, we write $w_\rho \in W$ to be the element of cycle type $\rho$, which is well-defined up to conjugation.

Also, there is a natural bijection between the set of nilpotent conjugacy classes in $\g$ and the set of partitions of $n$; to each conjugacy class we associate its Jordan type. For $\lambda \vdash n$, we write $N_\lambda \in \g$ to be the nilpotent element of Jordan type $\lambda$, which is again well-defined up to conjugation. For $N=N_\lambda$, we define $\tsp{\lambda}\colonequals H^*(\B_{N})$  as a $W$-module.

\subsection{Type $B$ and $C$} Let $G=SO_{2n+1}(\C)$ or $G=Sp_{2n}(\C)$. Then its Weyl group is isomorphic to $\sym_n \ltimes (\Z/2)^n.$ Thus its conjugacy classes are parametrized by the set of ordered pairs of partitions $(\rho, \sigma)$ such that $|\rho|+|\sigma|=n$, where $\rho$ and $\sigma$ correspond to $1, -1 \in \Z/2$, respectively. (ref. \cite[pp.170--171, 178]{mac95}) For such pair $(\rho, \sigma)$, we write $w_{(\rho, \sigma)} \in W$ to be the element which is contained in the conjugacy class parametrized by $(\rho, \sigma)$. It is clearly well-defined up to conjugation.

If $G$ is of type $B$, by taking the Jordan type we have a bijection between nilpotent conjugacy classes in $\g$ and the set of partitions $\lambda \vdash 2n+1$ each of whose even parts has even multiplicity. Similarly, if $G$ is of type $C$, by taking the Jordan type we have a bijection between nilpotent conjugacy classes in $\g$ and the set of partitions $\lambda \vdash 2n$ each of whose odd parts has even multiplicity. We similarly write $N_\lambda \in \g$ for such $\lambda$ to denote the nilpotent element of Jordan type $\lambda$, which is well-defined up to conjugation. We define $\tsp{\lambda}$ similarly to type $A$.

%

\subsection{Type $D$} \label{subsec:typeDparam} Let $G=SO_{2n}(\C)$. Then its Weyl group can be regarded as a subgroup of $\sym_n \ltimes (\Z/2)^n$ of order 2. Then a conjugacy class of $\sym_n \ltimes (\Z/2)^n$ parametrized by $(\rho, \sigma)$ has a nontrivial intersection with $W$ if and only if $l(\sigma)$ is even. If so, the intersection is again a conjugacy class in $W$ except when $\sigma=\emptyset$ and $\rho$ is even, in which case it splits into two conjugacy classes. Thus there exists a parametrization of the conjugacy classes of $W$ by the set
$$\{ (\rho, \sigma) \mid |\rho|+|\sigma|=n, l(\sigma) \in 2\N, \rho \textup{ is not even if } \sigma=\emptyset\} \cup \{ (\rho, \emptyset)\pm  \mid \rho \vdash n, \rho \textup{ is even})  \}.$$
Here $(\rho, \emptyset)+$ and $(\rho, \emptyset)-$ correspond to two conjugacy classes in $W$ contained in the conjugacy class of $\sym_n \ltimes (\Z/2)^n$ parametrized by $(\rho, \emptyset)$. We write $w_{(\rho, \sigma)}, w_{(\rho, \emptyset)\pm}$ to denote the corresponding elements in $W$ which are well-defined up to conjugation. 

For a partition $\lambda \vdash 2n$ each of whose even parts has even multiplicity, there exists a unique nilpotent conjugacy class in $\g$ whose Jordan type is $\lambda$ unless $\lambda$ is very even; if so, then there exist two such classes. Thus the set of nilpotent conjugacy classes in $\g$ is parametrized by
$$\{ \lambda=(1^{m_1}2^{m_2} \cdots) \vdash 2n \mid m_{2i} \in 2\N \textup{ for } i \in \Z_{\geq 1}, \lambda \textup{ is not very even}\} \cup \{ \lambda\pm \mid \lambda \vdash 2n \textup{ is very even}\}.$$
We write $N_{\lambda}, N_{\lambda\pm}$ to denote the corresponding nilpotent elements in $\g$ which are well-defined up to conjugation. If $N=N_\lambda$ for $\lambda$ not very even, we define $\tsp{\lambda}$ similarly to type $A$ and also put $\atsp{\lambda}\colonequals \tsp{\lambda}\oplus \tsp{\lambda}$. If $\lambda$ is very even, we define $\tsp{\lambda\pm} = H^*(\B_{N})$ for $N=N_{\lambda\pm}$ and put $\atsp{\lambda}\colonequals \tsp{\lambda+}\oplus \tsp{\lambda-}$.


In order to remove ambiguity from the choice of the sign $\pm$, we proceed as follows. We construct embeddings
$$SO_4(\C) \hookrightarrow SO_6(\C) \hookrightarrow SO_8(\C) \hookrightarrow \cdots \hookrightarrow SO_{2n}(\C) \hookrightarrow SO_{2n+2}(\C) \hookrightarrow \cdots$$
which induces embeddings of Weyl groups
$$W(D_2) \hookrightarrow W(D_3) \hookrightarrow W(D_4) \hookrightarrow \cdots \hookrightarrow W(D_n) \hookrightarrow W(D_{n+1}) \hookrightarrow \cdots$$
such that for any $i<j$, the image of $W(D_i) \hookrightarrow W(D_j)$ is a parabolic subgroup of $W(D_j)$. Thus the set of simple reflections of $W(D_i)$ can be regarded as a subset of that of $W(D_j)$ for $i<j$. Here we also require that the simple reflections of $W(D_2)$ correspond to two extremal points in the Dynkin diagram of $D_n$ for any $n\geq 2$.

There exists an involution $\tau_n$ on $SO_{2n}(\C)$ which gives the nontrivial involution on the Dynkin diagram and also on $W(D_n)$. We may choose $\tau_n$ for each $n \geq 2$ such that for any $m <n $ we have $\tau_n|_{SO_{2m}(\C)} = \tau_m$ under the embeddings above. Thus we drop the subscript and simply write $\tau$ to be this nontrivial involution on any $SO_{2n}(\C)$. (Even when $n=4$, there is no ambiguity since the embedding $SO_{8}(\C) \hookrightarrow SO_{10}(\C)$ completely determines $\tau_4$.) 

Now once and for all, we fix the labeling $s_+, s_-$ of two simple reflections in $W(D_2) \simeq \Z/2 \times \Z/2$, and regard them as simple reflections in $W(D_n)$ for any $n \in \Z_{\geq 2}$. We write $\sym_{n+}$ (resp. $\sym_{n-}$) to be the parabolic subgroup of $W$ generated by all the simple reflections but $s_-$ (resp. $s_+$). Similarly, we write $L_+$ (resp. $L_-$) to be the Levi subgroup (which contains $T$) of a parabolic subgroup (which contains $B$) whose Weyl group is $\sym_{n+}$ (resp. $\sym_{n-}$).

For a conjugacy class which corresponds to either $(\rho, \emptyset)+$ or $(\rho, \emptyset)-$ for some $\rho$ even, we say that it is parametrized by $(\rho, \emptyset)+$ (resp. $(\rho, \emptyset)-$) if its intersection with $\sym_{n+}$ (resp. $\sym_{n-}$) is nonempty. Also, for a nilpotent conjugacy class whose Jordan type is $\lambda \vdash 2n$ very even, we say that it is parametrized by $\lambda+$ (resp. $\lambda-$) if its intersection with the Lie algebra of $L_+$ (resp. $L_-$) is nonempty. Then one can show that these notions are indeed well-defined. Also note that $\tau$ swaps $+$ and $-$ in both parametrizations.

These parametrizations have a following advantage. For $\lambda$ very even, by \cite{lus04} $\tsp{\lambda+}$ (resp. $\tsp{\lambda-}$) is an induced character from a parabolic subgroup contained in $\sym_{n+}$ (resp. $\sym_{n-}$). As conjugacy classes in $W$ parametrized by $(\rho, \emptyset)+$ (resp. $(\rho, \emptyset)-$) does not intersect with $\sym_{n-}$ (resp. $\sym_{n+}$), the following observation is a direct consequence of Mackey's formula.
\begin{lem} \label{lem:pm} Let $G=SO_{2n}(\C)$. For $\rho \vdash n$ even and $\lambda \vdash 2n$ very even, we have
$$\ch \tsp{\lambda-}(w_{(\rho, \emptyset)+})=\ch  \tsp{\lambda+}(w_{(\rho, \emptyset)-})=0.$$
\end{lem}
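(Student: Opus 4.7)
The plan is to apply the standard induced-character (Mackey/Frobenius) formula to the induction description of $\tsp{\lambda\pm}$ recalled in the paragraph just above the lemma, and then to observe that the relevant $W$-conjugacy class is disjoint from the inducing parabolic.

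Concretely, by \cite{lus04} I would write $\tsp{\lambda+}=\Ind_{P^+}^W\psi^+$ for some (virtual) character $\psi^+$ of a parabolic subgroup $P^+\subseteq \sym_{n+}$, and symmetrically $\tsp{\lambda-}=\Ind_{P^-}^W\psi^-$ with $P^-\subseteq \sym_{n-}$. The Frobenius character formula then gives, for any $w \in W$,
$$\ch\tsp{\lambda+}(w)=\frac{1}{|P^+|}\sum_{\substack{x\in W\\ xwx^{-1}\in P^+}}\psi^+(xwx^{-1}),$$
and analogously for $\tsp{\lambda-}$. In particular, the left-hand side vanishes whenever no $W$-conjugate of $w$ lies in $P^+$, a condition that is certainly met as soon as the $W$-conjugacy class of $w$ is disjoint from the overgroup $\sym_{n+}$.

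It then remains to specialise to $w = w_{(\rho,\emptyset)-}$ (resp.\ $w_{(\rho,\emptyset)+}$). By the convention of Section \ref{subsec:typeDparam}, the conjugacy class parametrised by $(\rho,\emptyset)-$ meets $\sym_{n-}$, and the sentence preceding the statement of the lemma records that it does not meet $\sym_{n+}$ (and symmetrically for $(\rho,\emptyset)+$). Hence the indexing set of the sum above is empty, giving $\ch\tsp{\lambda+}(w_{(\rho,\emptyset)-})=0$; the companion identity follows by the same argument, or by applying the involution $\tau$ which swaps the roles of $+$ and $-$ on both factors. The only step requiring any care is extracting from \cite{lus04} the refined statement that the inducing parabolic really sits inside $\sym_{n\pm}$ rather than only inside the full Weyl group $W$, but this refinement is exactly what the $\pm$ labelling of very even nilpotent orbits is designed to encode and is already asserted above the lemma, so no real obstacle appears.
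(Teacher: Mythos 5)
Your proposal is correct and is exactly the paper's argument: the sentence preceding the lemma already records that (by \cite{lus04}) $\tsp{\lambda\pm}$ is induced from a parabolic subgroup inside $\sym_{n\pm}$ and that the class $(\rho,\emptyset)\mp$ is disjoint from $\sym_{n\pm}$, from which the vanishing follows by the formula for induced characters (what the paper loosely calls ``Mackey's formula''). You have merely made explicit the Frobenius-formula step that the paper leaves implicit.
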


\section{Irreducible representations of Weyl groups and the Springer correspondence} \label{sec:parspr}
Here we recall the parametrizations of irreducible representations of Weyl groups for classical types and how they are related to nilpotent orbits in $\g$ under the Springer correspondence. One may refer to \cite{sho79}, \cite{lus79}, \cite{car93} for more information.

\subsection{Type $A$} Let $W=\sym_n$ be the Weyl group of type $A_{n-1}$. The irreducible representations of $W$ (up to isomorphism) are parametrized by partitions of $n$. For $\lambda \vdash n$, we write $\chi^\lambda$ to denote such representation. Thus for example $\chi^{(n)}=\Id_W$ and $\chi^{(1^n)}=\sgn_{\sym_n}$. Then the Springer correspondence is simply given by $H^{top}(\B_{N}) \simeq \chi^\lambda$ for $N=N_\lambda$.

\subsection{Type $B$ and $C$} Let $W=\sym_n \ltimes (\Z/2)^n$ be the Weyl group of type $BC_n$. The irreducible representations of $W$ (up to isomorphism) are parametrized by ordered pairs of partitions $(\lambda, \mu)$ such that $|\lambda|+|\mu|=n$. For such $(\lambda,\mu)$, we write $\chi^{(\lambda,\mu)}$ to denote such representation. Thus for example $\chi^{((n),\emptyset)}=\Id_W$ and $\chi^{(\emptyset,(1^n))}=\sgn_W$. 

If $G$ is of type $B$, then for a partition $\lambda \vdash 2n+1$ with minimal 2-core, we define $\chi^\lambda \colonequals \chi^{(\cq{0}\lambda,\cq{1}\lambda)}$. If $G$ is of type $C$, then for a partition $\lambda \vdash 2n$ with minimal 2-core, we also define $\chi^\lambda \colonequals \chi^{(\cq{1}\lambda,\cq{0}\lambda)}$. (Note the difference from type $B$.) If its 2-core is not minimal, then we simply put $\chi^\lambda=0$. Then the Springer correspondence is given by $H^{top}(\B_{N})^{Z_G(N)} \simeq \chi^{\lambda}$ for $N=N_\lambda$. Here, $Z_G(N)$ is the stabilizer of $N$ in $G$.



\subsection{Type $D$} Let $W$ be the Weyl group of type $D_n$. The irreducible representations of $W$ (up to isomorphism) are parametrized by unordered pairs of partitions $\{\lambda, \mu\}$ such that $|\lambda|+|\mu|=n$ except when $\lambda=\mu$, in which case there are two irreducible representations corresponding to $\{\lambda, \lambda\}$. For such $(\lambda,\mu)$ with $\lambda\neq \mu$,, we write $\chi^{\{\lambda,\mu\}}$ to denote such representation. Thus for example $\chi^{\{(n),\emptyset\}}=\Id_W$ and $\chi^{\{\emptyset,(1^n)\}}=\chi^{\{(1^n), \emptyset\}}=\sgn_W$.

In the case when $\lambda=\mu$, we write $\chi^{\{\lambda, \lambda\}+}, \chi^{\{\lambda, \lambda\}-}$ to denote such two representations corresponding to $\{\lambda,\lambda\}$. The choice of the sign is characterized by the property that $\br{\Ind_{\sym_{n\pm}}^W \Id_{\sym_{n\pm}}, \chi^{\{\lambda, \lambda\}\mp}}=0.$ Here $\sym_{n+}$ (resp. $\sym_{n-}$) is the parabolic subgroup of $W$ generated by all the simple reflections but $s_-$ (resp. $s_+$), see Section \ref{subsec:typeDparam}. 

For a partition $\lambda \vdash 2n$ with minimal 2-core, if $\lambda$ is not very even then $\cq{0}\lambda\neq \cq{1}\lambda$ and we define $\chi^\lambda \colonequals \chi^{\{\cq{0}{\lambda},\cq{1}{\lambda}\}}$. If $\lambda$ is very even, then $\cq{0}{\lambda}=\cq{1}{\lambda}$ and we define $\chi^{\lambda\pm} \colonequals \chi^{\{\cq{0}{\lambda},\cq{0}{\lambda}\}\pm}$. Then the Springer correspondence is described as follows. For $\lambda$ not very even, we have $H^{top}(\B_{N})^{Z_G(N)} \simeq \chi^{\lambda}$ for $N=N_\lambda$. Here, again $Z_G(N)$ is the stabilizer of $N$ in $G$. If $\lambda$ is very even, then we have $H^{top}(\B_{N}) \simeq \chi^{\lambda\pm}$ for $N=N_{\lambda\pm}$. (In this case $Z_G(N)$ is connected and thus $H^{top}(\B_{N})^{Z_G(N)} = H^{top}(\B_{N})$.)

\section{Main theorems} \label{sec:mainthm}
We are ready to state the main results of this papar.
\begin{mainthm} \label{mainthm1}For partitions $\lambda$ and $\mu$ such that $|\lambda|=|\mu|$, let $K_{\mu, \lambda}(t)$ be the Kostka-Foulkes polynomial, ref. \cite[Chapter III.6]{mac95}.
\begin{enumerate} 
\item Let $G=SO_{2n+1}(\C)$ or $G=Sp_{2n}(\C)$ and $\lambda$ be a Jordan type of some nilpotent element in $\g$. Then 
$$\ch \tsp{\lambda} = \sum_{\mu\vdash |\lambda|} \sg{\lambda}\sg{\mu}K_{\mu,\lambda}(-1)\ch\chi^\mu.$$
\item Let $G=SO_{2n}(\C)$ and $\lambda \vdash 2n$ be a Jordan type of some nilpotent element in $\g$ whicn is not very even. Then (note that $\sg{\mu}=1$ if $\mu$ is very even)
$$
\ch \tsp{\lambda}=\frac{1}{2}\sum_{\substack{\mu \vdash 2n\\\textup{not very even}}}\sg{\lambda}\sg{\mu}K_{\mu,\lambda}(-1)\ch\chi^\mu+\frac{1}{2}\sum_{\substack{\mu \vdash 2n\\\textup{very even}}}\sg{\lambda}K_{\mu,\lambda}(-1)\ch(\chi^{\mu+}\oplus \chi^{\mu-}).$$
If $\lambda$ is very even, then set $\tilde{\lambda} \vdash n/2$ to be such that $\lambda = 2\tilde{\lambda} \cup 2\tilde{\lambda}$. Define $\tilde{\mu}\vdash n/2$ similarly if $\mu$ is very even. Then, (note that $\sg{\lambda}=1$, and also $\sg{\mu}=1$ if $\mu$ is very even)
\begin{align*}
\ch \tsp{\lambda+} =&\frac{1}{2}\sum_{\substack{\mu \vdash 2n\\\textup{not very even}}}\sg{\mu}K_{\mu,\lambda}(-1)\ch\chi^\mu
\\&+\sum_{\substack{\mu \vdash 2n\\\textup{very even}}}\left(\frac{K_{\mu,\lambda}(-1)+K_{\tilde{\mu},\tilde{\lambda}}}{2}\ch\chi^{\mu+}+\frac{K_{\mu,\lambda}(-1)-K_{\tilde{\mu},\tilde{\lambda}}}{2}\ch\chi^{\mu-}\right), 
\\\ch \tsp{\lambda-} =&\frac{1}{2}\sum_{\substack{\mu \vdash 2n\\\textup{not very even}}}\sg{\mu}K_{\mu,\lambda}(-1)\ch\chi^\mu
\\&+\sum_{\substack{\mu \vdash 2n\\\textup{very even}}}\left(\frac{K_{\mu,\lambda}(-1)-K_{\tilde{\mu},\tilde{\lambda}}}{2}\ch\chi^{\mu+}+\frac{K_{\mu,\lambda}(-1)+K_{\tilde{\mu},\tilde{\lambda}}}{2}\ch\chi^{\mu-}\right).
\end{align*}
Here $K_{\tilde{\mu},\tilde{\lambda}} = K_{\tilde{\mu},\tilde{\lambda}}(1)$ is the Kostka number.
\end{enumerate}
\end{mainthm}
\begin{rmk} For $G=GL_n(\C)$ there is a graded analogue
$$\sum_{i \in \Z} (-1)^it^{i/2}\ch H^{i}(\B_{N_\lambda}) = \sum_{\mu\vdash n} t^{b(\lambda)}K_{\mu,\lambda}(t^{-1}) \ch \chi^\mu.$$
\end{rmk}
\begin{rmk} One needs to be careful when trying to calculate $\br{\tsp{\lambda}, \chi^\mu}$ for type $D$. If $\mu$ is not very even, then there exists $\nu \vdash |\lambda|$ such that $\mu\neq \nu, \cq{0}{\mu}=\cq{1}{\nu},$ and $\cq{1}{\mu}=\cq{0}{\nu}$. Thus we have
$$\br{\tsp{\lambda}, \chi^\mu} =\br{\tsp{\lambda}, \chi^\nu}= \frac{1}{2} \sg{\lambda}\left(\sg{\mu}K_{\mu,\lambda}(-1)+\sg{\nu}K_{\nu,\lambda}(-1)\right).$$
Indeed, the author conjectures that $\sg{\mu}K_{\mu,\lambda}(-1)=\sg{\nu}K_{\nu,\lambda}(-1)$. Thus it is likely that
$$\br{\tsp{\lambda}, \chi^\mu} =\br{\tsp{\lambda}, \chi^\nu}=  \sg{\lambda}\sg{\mu}K_{\mu,\lambda}(-1)=\sg{\lambda}\sg{\nu}K_{\nu,\lambda}(-1).$$
\end{rmk}
The following theorem may be regarded as a corollary of Main Theorem \ref{mainthm1}.
\begin{mainthm} \label{mainthm2}For partitions $\lambda$ and $\rho$ such that $|\lambda|=|\rho|$, let $\gr^\lambda_\rho(t)$ be the Green polynomial defined in \cite{gre55}. 
\begin{enumerate}
\item Let $G=SO_{2n+1}(\C)$ and $\lambda \vdash 2n+1$ be a Jordan type of some nilpotent element in $\g$. Then 
$$\ch \tsp{\lambda}(w_{(\rho,\emptyset)}) = \gr^\lambda_{2\rho\cup(1)}(-1), \qquad \textup{ thus }\qquad \cX(\B_{N_\lambda}) = \gr^\lambda_{(1^12^n)}(-1).$$
\item Let $G=Sp_{2n}(\C)$ and $\lambda \vdash 2n$ be a Jordan type of some nilpotent element in $\g$. Then 
$$\ch \tsp{\lambda}(w_{(\rho,\emptyset)}) = \gr^\lambda_{2\rho}(-1), \qquad \textup{ thus }\qquad \cX(\B_{N_\lambda})  = \gr^\lambda_{(2^n)}(-1).$$
\item Let $G=SO_{2n}(\C)$ and $\lambda \vdash 2n$ be a Jordan type of some nilpotent element in $\g$. If $\lambda$ is not very even and $\rho$ is not even, then
$$\ch \tsp{\lambda}(w_{(\rho,\emptyset)})= \frac{1}{2}\gr^\lambda_{2\rho}(-1), \qquad \textup{ thus }\qquad  \cX(\B_{N_\lambda}) = \frac{1}{2}\gr^\lambda_{(2^n)}(-1).$$
If $\lambda$ is not very even and $\rho$ is even, then 
$$\ch \tsp{\lambda}(w_{(\rho,\emptyset)\pm})= \frac{1}{2}\gr^\lambda_{2\rho}(-1).$$
If $\lambda$ is very even and $\rho \vdash n$ is not even, then
$$\ch \tsp{\lambda\pm}(w_{(\rho,\emptyset)}) = \frac{1}{2}\gr^\lambda_{2\rho}(-1), \qquad \textup{ thus }\qquad  \cX(\B_{N_{\lambda\pm}}) = \frac{1}{2}\gr^\lambda_{(2^n)}(-1).$$
If $\lambda$ is very even and $\rho$ is even, then
$$\ch \tsp{\lambda\pm}(w_{(\rho,\emptyset)\pm}) = \gr^\lambda_{2\rho}(-1), \qquad \textup{ and} \qquad \ch\tsp{\lambda\mp}(w_{(\rho,\emptyset)\pm}) = 0.$$
\end{enumerate}
\end{mainthm}
\begin{rmk} There is also a graded analogue for $G=GL_n(\C)$ 
$$\sum_{i \in \Z} (-1)^i t^{i/2}\ch H^i(\B_{N_\lambda})(w_\rho) = \gr^\lambda_\rho(t), \qquad \textup{ thus }\qquad  \chi (\B_{N_\lambda}) = \gr^\lambda_{(1^n)}(1)$$
that was first proved in \cite{spr71}.
\end{rmk}

\section{Fibration, stratification and Springer representation} \label{sec:prelim}
We first recall \cite[Theorem 5.1]{hs:green}. For a parabolic subgroup $P \supset B$ of $G$, let $P=L\cdot U_P$ be its Levi decomposition such that $T \subset L$. We also let $\cP \colonequals G/P$ be the partial flag variety of $G$ corresponding to $P$. For a nilpotent element $N \in \g$, we define 
$$\cP_N \colonequals \{gP \in \cP \mid \Ad(g)^{-1}N \in \Lie P\}.$$
Then there exists a natural morphism $\pi_P : \B_N \rightarrow \cP_N: gB \mapsto gP$. We define $W_P\subset W$ to be the Weyl group of $P$ considered as a parabolic subgroup of $W$.

\begin{lem} \label{lem:hs} Let $\qlbar_{\B_N}$ be the constant $\qlbar$-sheaf on $\B_N$. Then,
\begin{enumerate}[label={(\arabic*)}]
\item $R^j \pi_! (\qlbar_{\B_N}) \in Sh(\cP_N)$ for $j \in \Z$ has a structure of sheaves of $W_P$-modules.
\item Let $M \colonequals \Ad(g)^{-1}N|_{\Lie L}$ be the image of $\Ad(g)^{-1}N \subset \Lie P$ under the projection $\Lie P \twoheadrightarrow \Lie P / \Lie U_P \simeq \Lie L$. Then for any $gP \in \cP_N$, $R^j \pi_! (\qlbar_{\B_N})|_{gP} \simeq H^j(\B(L)_M)$ as $W_P$-modules. Here, $\B(L) \colonequals L/L\cap B$ is the flag variety of $L$, and the $W_P$-module structure on $H^j(\B(L)_{M})$ is defined as the Springer representation associated to $L$.
\item $H^i(\cP_N, R^j \pi_! (\qlbar_{\B_N})) \Rightarrow H^{i+j}(\B_N)$ is a spectral sequence of $W_P$-modules where the $W_P$-module structure on $H^{i+j}(\B_N)$ is given by the restriction of the Springer representation of $W$ to $W_P$.
\end{enumerate}
\end{lem}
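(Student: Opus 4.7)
The plan is to realize $\pi_P: \B_N \to \cP_N$ as the fiber over $N\in\g$ of the relative Grothendieck--Springer map
$$\pi_{\textup{rel}}: \tilde{\g} \colonequals G \times^B \Lie B \longrightarrow \tilde{\g}_P \colonequals G \times^P \Lie P,$$
which factors the ordinary Springer resolution $\tilde{\g}\to\g$, and then to transport all three statements from $\pi_{\textup{rel}}$ to $\pi_P$ via proper base change along $\{N\} \hookrightarrow \g$.

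I would prove part (2) first by a direct fiberwise calculation. Since $B\subset P$, one has $B = (L\cap B)\cdot U_P$ (as $U_P$ consists of positive root subgroups lying outside $L$), hence $P/B \simeq L/(L\cap B)$. A point of the fiber of $\pi_P$ over $gP\in\cP_N$ is therefore represented by $gpB$ with $p\in L$ modulo $L\cap B$. Writing $\Ad(g)^{-1}N = M + X$ according to the vector-space decomposition $\Lie P = \Lie L \oplus \Lie U_P$, and using that $\Ad(p)$ preserves both summands for $p\in L$, the condition $\Ad(gp)^{-1}N \in \Lie B$ reduces to $\Ad(p)^{-1}M \in \Lie(L\cap B)$. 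This identifies the fiber on the nose with $\B(L)_M$, and proper base change then yields the stalk isomorphism $R^j (\pi_P)_!(\qlbar_{\B_N})|_{gP} \simeq H^j(\B(L)_M)$.

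For part (1), the $W_P$-action on $R^j (\pi_P)_!(\qlbar_{\B_N})$ would be inherited from a global $W_P$-action on $R^j (\pi_{\textup{rel}})_!(\qlbar_{\tilde{\g}})$. To construct the latter I would observe that $\pi_{\textup{rel}}$ is the family over $G/P$ whose fiber at each point is the Grothendieck--Springer resolution for $L$: over the open subvariety of $\tilde{\g}_P$ where the $\Lie L$-component is regular semisimple, $\pi_{\textup{rel}}$ is \'etale and Galois with structure group $W_P$, and this monodromy extends to an action on the whole pushforward by the Borho--MacPherson decomposition theorem (equivalently, by intermediate extension from the regular semisimple locus). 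The matching of this stalk action with the Springer $W_P$-action on $H^j(\B(L)_M)$ in (2) is then the statement that Lusztig's monodromy construction applied to the single Levi $L$ gives the standard Springer representation of $W(L)$.

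Part (3) follows from the Leray spectral sequence of $\pi_P$, which is automatically $W_P$-equivariant once the sheaves $R^j(\pi_P)_!(\qlbar_{\B_N})$ carry compatible $W_P$-actions. The remaining content is that the induced $W_P$-action on the abutment $H^{i+j}(\B_N)$ coincides with $\Res_{W_P}^W$ of the Springer $W$-action. The main obstacle will be exactly this last compatibility, since the two actions arise from \emph{a priori} different monodromy constructions, one on $\tilde{\g}$ and one on $\tilde{\g}_P$; the verification reduces, via the decomposition theorem applied to $\pi_*\qlbar_{\tilde{\g}}$, to checking that the subgroup $W_P \subset W$ sits inside the global monodromy in the expected way, and then propagating this agreement from the regular semisimple locus to the nilpotent fiber by standard nearby-cycle arguments. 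All of this is carried out in \cite[Theorem~5.1]{hs:green} along the lines sketched above.
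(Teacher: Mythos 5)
The paper does not prove this lemma; it is simply recalled from \cite[Theorem~5.1]{hs:green}, so there is no in-text proof for your attempt to be compared against, and your write-up has to be assessed on its own.

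As a sketch, it is plausible and identifies the right objects. Your fiber identification for (2) is correct: $B=(L\cap B)\cdot U_P$ gives $P/B\simeq L/(L\cap B)$, and for $p\in L$ the condition $\Ad(gp)^{-1}N\in\Lie B$ does reduce, after projecting $\Lie P\twoheadrightarrow\Lie L$, to $\Ad(p)^{-1}M\in\Lie(L\cap B)$; combined with base change for $\pi_!$ this gives the stalk computation. Two things are glossed over, though. First, to extend the $W_P$-monodromy on the regular semisimple locus to all of $R^\bullet(\pi_{\mathrm{rel}})_!\qlbar_{\tilde{\g}}$ you need $\pi_{\mathrm{rel}}$ to be a \emph{small} map so that the pushforward is the intermediate extension of the rank-$|W_P|$ local system; this is true (it follows from smallness of the Grothendieck--Springer map for $L$, fiberwise over $G/P$), but it is a substantive fact, not an automatic consequence of invoking the decomposition theorem. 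Second, and more seriously, you explicitly flag the compatibility in part (3) --- that the $W_P$-action on the abutment of the Leray spectral sequence coincides with $\Res^W_{W_P}$ of the Springer $W$-action --- as ``the main obstacle,'' but you do not resolve it; you close by referring to \cite[Theorem~5.1]{hs:green}, which is the very statement you were asked to prove. That final citation is circular, so the one genuinely nontrivial compatibility is left as a gap in your argument. I also cannot confirm that the cited source proceeds via the relative Grothendieck--Springer map and the decomposition theorem as you describe, so the claim that the reference argues ``along the lines sketched above'' should be dropped or verified.
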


Suppose $P \subset G$ is a parabolic subgroup of $G$ such that $W_P \simeq \sym_{k} \times W'$ for some $1\leq k \leq \rk G$ where $W'$ is a parabolic subgroup of the same type as $W$. Also suppose that there exists a stratification $\cP_N = \sqcup_{\alpha} X_\alpha$ such that on each $X_\alpha$, $H^*(\pi_P^{-1}(x))$ at any $x \in X_\alpha$ is isomorphic as $W_P$-modules. Then by Lemma \ref{lem:hs}, for any $w\in W_P$ and any $x_\alpha \in X_\alpha$ for each $\alpha$, we have
$$\ch H^*(\B_N)(w) = \sum_{\alpha} \cX(X_\alpha) \ch H^*(\pi_P^{-1}(x_\alpha))(w).$$

We may find two reductive groups $\tilde{L}, L'$ where $\tilde{L}$ is of type $A_{k-1}$ and $L'$ is of the same type as $G$, such that $\Lie L=\Lie \tilde{L}\oplus \Lie L'$. We define $\B(L), \B(\tilde{L}), \B(L')$ to be the flag varieties of $L, \tilde{L}, L'$, respectively. Then $\B(L) = \B(\tilde{L}) \times \B(L')$. Also we let $\tilde{N} \colonequals \Ad(g)^{-1}N|_{\Lie \tilde{L}}, N' \colonequals \Ad(g)^{-1}N|_{\Lie L'}$. For $w\in W_P$, we write $w = c w'$ for some $c \in \sym_k$ and $w' \in W'$. Then for any $gP \in \cP_N$ we have
$$\pi_P^{-1}(gP)\simeq \B(\tilde{L})_{\tilde{N}} \times \B(L')_{N'},$$
therefore
$$\ch H^*(\pi^{-1}(gP))(w)= \ch H^*(\B(\tilde{L})_{\tilde{N}})(c) \cdot\ch H^*(\B(L')_{N'})(w').$$

We further assume that $c\in \sym_k$ is a Coxeter element, i.e. a $k$-cycle. If $\tilde{N}$ is not regular, then $\ch H^*(\B(\tilde{L})_{\tilde{N}})(c)=0$ since $H^*(\B(\tilde{L})_{\tilde{N}})$ is an induced representation of $\sym_k$ from a proper parabolic subgroup (cf. \cite{lus04}). On the other hand, if $\tilde{N}$ is regular then $\ch H^*(\B(\tilde{L})_{\tilde{N}})(c)=1$.
Thus we have
\begin{lem} \label{lem:calc} Keep the notations and assumptions above. For any choice of $g_\alpha \in G$ such that $g_\alpha P \in X_\alpha$, we have
$$\ch H^*(\B_N)(w) = \sum_{\alpha} \cX(X_\alpha) \ch H^*(\B(L')_{N'})(w').$$
where the sum is over $\alpha$ such that $\tilde{N} \in \Lie \tilde{L}$ is regular.
\end{lem}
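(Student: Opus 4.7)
The plan is to assemble three ingredients, each essentially laid out in the paragraphs preceding the statement. First, I would start from the identity
\[
\ch H^*(\B_N)(w) \;=\; \sum_{\alpha} \cX(X_\alpha)\, \ch H^*(\pi_P^{-1}(x_\alpha))(w),
\]
which follows from Lemma \ref{lem:hs}: take alternating traces through the Leray spectral sequence of Lemma \ref{lem:hs}(3), use that on each stratum $X_\alpha$ the higher direct images $R^j\pi_!(\qlbar_{\B_N})|_{X_\alpha}$ have pointwise-constant $W_P$-module structure (by Lemma \ref{lem:hs}(2) together with the stratification hypothesis), and then invoke additivity of the compactly supported Euler characteristic over $\cP_N = \bigsqcup_\alpha X_\alpha$.

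Next, I would combine the product decomposition $\pi_P^{-1}(g_\alpha P)\simeq \B(\tilde{L})_{\tilde{N}}\times \B(L')_{N'}$, the factor-wise action of $w = cw'$, and the K\"unneth formula to split the trace as
\[
\ch H^*(\pi_P^{-1}(g_\alpha P))(w) \;=\; \ch H^*(\B(\tilde{L})_{\tilde{N}})(c)\cdot \ch H^*(\B(L')_{N'})(w').
\]
The crux is then to analyze the type-$A$ factor when $c$ is a Coxeter element of $\sym_k$. If $\tilde{N}$ is regular, then $\B(\tilde{L})_{\tilde{N}}$ is a single reduced point and $H^*$ is the trivial representation of $\sym_k$, so the trace at $c$ equals $1$. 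If $\tilde{N}$ is not regular, then by \cite{lus04} the total Springer representation $H^*(\B(\tilde{L})_{\tilde{N}})$ is induced from a representation of a proper Young subgroup $\sym_{k_1}\times\cdots\times\sym_{k_r}\subsetneq\sym_k$; since a Coxeter element is a single $k$-cycle and no $k$-cycle is contained in any proper Young subgroup of $\sym_k$, the Frobenius formula for characters of induced representations forces the trace at $c$ to vanish.

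Substituting these two cases back into the stratification identity eliminates every stratum with $\tilde{N}$ non-regular and produces exactly the claimed formula. The only non-routine ingredient is the vanishing for non-regular $\tilde{N}$, which really reduces to the induction description of type-$A$ Springer representations from \cite{lus04}; everything else is standard manipulation of spectral sequences, additivity of $\cX$, and K\"unneth.
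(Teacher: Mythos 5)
Your proposal is correct and follows exactly the paper's own line of reasoning: the preceding paragraphs of the paper already contain the stratification identity from Lemma \ref{lem:hs} and the index formula, the K\"unneth factorization over $\pi_P^{-1}(g_\alpha P)\simeq \B(\tilde L)_{\tilde N}\times\B(L')_{N'}$, and the same Coxeter-element vanishing argument via Lusztig's induction theorem \cite{lus04}. You spell out the two standard justifications (Frobenius formula for induced characters, and that a $k$-cycle cannot lie in a proper Young subgroup) that the paper only cites, but the argument is otherwise identical.
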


This lemma is our main tool. Later we choose a suitable stratification $\cP_N = \sqcup_{\alpha} X_\alpha$ and use Lemma \ref{lem:calc} to express $\ch H^*(\B_N)(w)$ in terms of the total Springer representations associated to simple groups of rank strictly smaller than $\rk G$.

\section{Geometric properties for type $B$ and $C$}
In this section, we use the symbol \tyB (resp. \tyC) when $G=SO_{2n+1}(\C)$ (resp. $G=Sp_{2n}(\C)$). Then $G$ naturally acts on $V=  \tyB \C^{2n+1}\ \tyC \C^{2n}$ that is equipped with a nondegenerate \tyB symmetric \tyC symplectic bilinear form $\br{\ ,\ }$. We identify $\B=G/B$ with the full isotropic flag variety of $V$. Also we set $P\subset G$ in Lemma \ref{lem:calc} to be a maximal parabolic subgroup such that $W_P \simeq \sym_k \times W'$ for some $1\leq k \leq n$.

\begin{rmk} Most calculations in this section are still valid for $G=SO_{2n} (n\geq 2), 1\leq k\leq n-2$. But we postpone this case until the next section.
\end{rmk}

\subsection{$k=1$ case} This part is a reformulation of \cite[Section 5]{kim:euler}. First we assume $k=1$, thus $W'$ is the maximal parabolic subgroup of $W$ of the same type. We identify $\cP=G/P$ with the Grassmannian of isotropic lines in $V$. Let $N\in \g$ be a nilpotent element of Jordan type $\lambda = (1^{m_1}2^{m_2} \cdots)$. First we recall \cite[Lemma 2.3.3]{vanleeuwen}.
\begin{lem} \label{lem:decomp} There exists an orthogonal decomposition $V = \bigoplus_{i \geq 1} V_i$ such that each $V_i$ is $N$-invariant and $N|_{V_i}$ has Jordan type $(i^{m_i})$.
Furthermore, for a given isotropic line $l \subset \bP(\ker N)$ such that $N|_{l^\perp/l}$ has Jordan type either $\lambda\pch{i}{i-2}$ or $\lambda\pch{i,i}{i-1,i-1}$, we may choose such a decomposition so that $l \subset V_i$.
\end{lem}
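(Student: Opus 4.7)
The plan is to first establish the orthogonal decomposition for an arbitrary nilpotent $N$, and then refine the choice so that $l$ lies inside one specified summand $V_i$.

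For the first statement, I would use the classical theory of bilinear forms on cyclic nilpotent modules (the form-preserving analog of rational canonical form for types $B$ and $C$). Decompose $V$ as an $N$-module into indecomposable, and therefore cyclic, summands. For a cyclic summand $W = \C[N]v$ of dimension $i$, a short calculation shows that $\br{\,,\,}|_W$ is either nondegenerate (so $W$ is a ``standard'' block) or identically zero (so $W$ is totally isotropic); in the latter case the ambient nondegeneracy forces $W$ to pair dually with a second cyclic summand $W'$ of the same dimension $i$. A Gram--Schmidt style adjustment of generators decouples summands of different sizes from each other, and collecting all cyclic pieces of size $i$ into a single $N$-invariant, orthogonal subspace $V_i$ with $N|_{V_i}$ of Jordan type $(i^{m_i})$ yields the decomposition.

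For the furthermore clause, observe that the operation $V \mapsto l^\perp / l$ acts locally on the Jordan decomposition of $N$: only cyclic summands which either contain $l$ or are paired with such a summand can have their Jordan structure altered, while all other summands pass unchanged to $l^\perp/l$. A direct case analysis of how a single Jordan block of size $i$ degrades when we quotient by a socle line shows that the two allowed Jordan types of $N|_{l^\perp/l}$ correspond precisely to two geometric types of $l$. In the case $\lambda\pch{i}{i-2}$, the line $l$ sits inside a cyclic subspace $W$ of dimension $i$ on which $\br{\,,\,}$ is nondegenerate, and the operation $l^\perp/l$ shortens $W$ on both ends to obtain a block of size $i-2$. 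In the case $\lambda\pch{i,i}{i-1,i-1}$, the line $l$ lies at the socle of a totally isotropic cyclic summand $W$ paired with another isotropic cyclic summand $W'$, and the operation clips both $W$ and $W'$ symmetrically to blocks of size $i-1$.

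Given this local identification, I would build the full orthogonal decomposition by taking the relevant local subspace (either $W$, or $W\oplus W'$) as one piece of $V_i$, then applying the first part of the proof to the orthogonal complement in $V$, which remains $N$-invariant and carries a nondegenerate form of the same type. This produces the remaining $V_j$'s and extends the local piece to the full $V_i$, with $l \subset V_i$ by construction. The main obstacle, I expect, is the structural claim underlying the local case analysis: one must rule out all other possible Jordan-type changes for $N|_{l^\perp/l}$ beyond the two listed, which requires a careful combinatorial argument exploiting the symmetry (respectively alternation) of $\br{\,,\,}$ together with nondegeneracy to exclude, for example, simultaneous reductions across three different blocks or reductions that would violate the parity constraints on even or odd parts of $\lambda$.
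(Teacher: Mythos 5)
The paper itself offers no proof of this lemma; it is cited directly from van Leeuwen's work (\cite[Lemma 2.3.3]{vanleeuwen}). So any argument you give is necessarily a replacement for an outsourced proof, and the question is whether your sketch is sound. The high-level strategy is reasonable and almost certainly the same in spirit as van Leeuwen's: decompose $V$ into cyclic $N$-modules compatibly with the form, then construct the decomposition so that the block(s) interacting with $l$ sit inside a single $V_i$. However, there are two real gaps.

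First, the claim that for any cyclic summand $W = \C[N]v$ the restriction of $\br{\,,\,}$ is ``either nondegenerate or identically zero'' is false. The form on $W$ is determined by the values $\br{v, N^j v}$ for $j$ of the appropriate parity, and there is no reason these are all zero when the top one $\br{v,N^{i-1}v}$ vanishes. A concrete counterexample: take two standard nondegenerate blocks $W_1, W_2$ of size $3$ with a symmetric form (generators $v_1, v_2$ with $\br{v_p, N^2 v_p} = 1$, $\br{v_p, v_p}=0$), and set $w = v_1 + \sqrt{-1}\,v_2 + aNv_1$ with $a \neq 0$. Then $V = \C[N]w \oplus W_2$ is a perfectly good cyclic decomposition, yet $\br{w,w} = -a^2 \neq 0$ while $\br{w,N^2w} = 0$, so the form on $\C[N]w$ is degenerate and nonzero. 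The actual structure theorem (as in \cite[E.93]{springer-steinberg}) does not follow from picking an arbitrary cyclic decomposition; it requires an iterative adjustment of generators to achieve the orthogonal block decomposition. Your sketch assumes this adjustment is a ``short calculation'' when it is the entire content of the first assertion.

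Second, for the ``furthermore'' clause, your case analysis presumes that $l$ always lies in a single cyclic summand (or at the socle of one half of a dual pair) in whatever decomposition you first chose. That is not automatic: given a decomposition $V = \bigoplus V_i$ with, say, $V_i = W_1 \oplus \cdots \oplus W_{m_i}$ a sum of nondegenerate blocks of odd size $i$ (type $B$), an isotropic socle line $l$ is a nontrivial combination of the $N^{i-1}v_p$'s, and none of the $W_p$ contains it. What the lemma actually asserts is that one can \emph{re-choose} the decomposition so that $l$ ends up inside a single $V_i$, and your sketch silently assumes this is already the case rather than constructing it. The correct proof picks a vector $v_0$ with $\spn{v_0}=l$, completes it to a cyclic generator or hyperbolic pair adapted to the given Jordan-type condition, and then inductively decomposes the orthogonal complement; this is exactly the generator-adjustment you waved off in the first part. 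Both gaps live at the same place: the Springer--Steinberg normal-form construction must be carried out, not merely invoked as obvious.
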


\begin{figure}
\begin{tikzpicture}[]	
\tikzset{bound/.style={
	thick
}} 
\tikzset{ins/.style={
	dotted
}} 

	\bluefill{0,0}{1,-8}

	\draw[bound] (0,0) -- (6,0);
	\draw[bound] (4,-1) -- (6,-1);
	\draw[bound] (3,-4) -- (4,-4);
	\draw[bound] (2,-6) -- (3,-6);
	\draw[bound] (0,-8) -- (2,-8);
	\draw[bound] (0,0) -- (0,-8);
	\draw[bound] (2,-6) -- (2,-8);
	\draw[bound] (3,-4) -- (3,-6);
	\draw[bound] (4,-1) -- (4,-4);
	\draw[bound] (6,0) -- (6,-1);
	
	\draw[ins] (0,-1) -- (4,-1);
	\draw[ins] (0,-2) -- (4,-2);
	\draw[ins] (0,-3) -- (4,-3);
	\draw[ins] (0,-4) -- (3,-4);
	\draw[ins] (0,-5) -- (3,-5);
	\draw[ins] (0,-6) -- (2,-6);
	\draw[ins] (0,-7) -- (2,-7);
	\draw[ins] (0,-8) -- (2,-8);
	\draw[ins] (1,0) -- (1,-8);
	\draw[ins] (2,0) -- (2,-6);
	\draw[ins] (3,0) -- (3,-4);
	\draw[ins] (4,0) -- (4,-1);
	\draw[ins] (5,0) -- (5,-1);

	\orangebox{0,0}{6,-1}
	\orangebox{0,-1}{4,-4}
	\orangebox{0,-4}{3,-6}
	\orangebox{0,-6}{2,-8}
	
	\node[] at (3,-0.5) {\LARGE $V_6$};
	\node[] at (2,-2.5) {\LARGE $V_4$};
	\node[] at (1.5,-5) {\LARGE $V_3$};
	\node[] at (1,-7) {\LARGE $V_2$};

	\node[] at (-0.7,-0.5) {$\ker_6 N$};
	\node[] at (-0.7,-2.5) {$\ker_4 N$};
	\node[] at (-0.7,-5) {$\ker_3 N$};
	\node[] at (-0.7,-7) {$\ker_2 N$};

\end{tikzpicture}
\caption{$V=\bigoplus_i V_i$ and $\ker N = \bigoplus_i \ker_i N$ for $G=Sp_{28}(\C)$ and $\lambda=(2^23^24^36^1)$}
(Here each orange box is $V_i$ for some $i \in \N,$ and the blue area corresponds to $\ker N$.)
\label{fig:decomp}
\end{figure}

We fix such a decomposition and define $\ker_i N\colonequals  \ker N \cap V_i$. (See Figure \ref{fig:decomp}.) For $v \in \ker N$, we let $v_i \in \ker_i N$ be the projection of $v$ onto $\ker_i N$ with respect to the decomposition above so that $v = \sum_{i\geq 1} v_i$.  The following lemma can be deduced from simple calculation with \cite[Section 2]{sho83} or \cite[Section 5]{kim:euler}.
\begin{lem} \label{lem:determ}
For $0\neq v\in \ker N$ such that $\br{v,v}=0$, let $r\in\N$ be the smallest integer such that $v_r \neq 0$.
Then the Jordan type of $N|_{\spn{v}^\perp/\spn{v}}$ only depends on $\spn{v_r} \in \bP(\ker_r N)$. Indeed, if the Jordan type of $N$ is $\lambda$, then $N|_{\spn{v}^\perp/\spn{v}} \textup{ has Jordan type } \lambda\pch{r}{r-2} \textup{ or } \lambda\pch{r,r}{r-1,r-1}.$
Furthermore, if $r$ is \tyB even \tyC odd, then the Jordan type of $N|_{\spn{v}^\perp/\spn{v}}$ is always $\lambda\pch{r,r}{r-1,r-1}$. If $r$ is \tyB odd \tyC even, then there exists a smooth quadric hypersurface $H \subset \bP(\ker_r N)$ such that
\begin{equation} \label{quadH}
\begin{aligned}
\spn{v_r} \in H &\Leftrightarrow \textup{ the Jordan type of } N|_{\spn{v}^\perp/\spn{v}} \textup{ is }\lambda\pch{r,r}{r-1,r-1},
\\\spn{v_r} \notin H &\Leftrightarrow \textup{ the Jordan type of } N|_{\spn{v}^\perp/\spn{v}} \textup{ is }\lambda\pch{r}{r-2}.
\end{aligned}
\end{equation}
\end{lem}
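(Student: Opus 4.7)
The plan is to compute the Jordan type of the induced action $\bar N$ on $\bar V := \spn{v}^\perp/\spn{v}$ directly via the dimensions $\bar d_k := \dim \ker \bar N^k$. Since $N \in \g$ satisfies $\br{Nx,y} + \br{x,Ny} = 0$, one has $(\ker N^k)^\perp = \im N^k$; writing $W_k := N^{-k}(\spn{v})$ I get $\ker \bar N^k = (W_k \cap \spn{v}^\perp)/\spn{v}$ and so $\bar d_k = \dim(W_k \cap \spn{v}^\perp) - 1$. By Lemma \ref{lem:decomp} and the fact that a kernel vector in a Jordan block of size $i$ lies in $\im N^k|_{V_i}$ iff $k \leq i-1$, one concludes $v \in \im N^k$ iff $k \leq r-1$. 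A case split on this condition and on whether $v \in W_k^\perp = N^k(\spn{v}^\perp)$ then gives $\bar d_k = d_k - 2$ for $k \geq r$ and $\bar d_k \in \{d_k,\, d_k - 1\}$ for $k \leq r - 1$, where $d_k := \dim\ker N^k$.

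Next, I would pin down the choice for $k \leq r-1$ by explicitly constructing a preimage $\tilde v = \sum_i \tilde v_i$ of $v$ under $N^k$ with $\tilde v_i \in \ker N^{k+1}|_{V_i}$ (possible since $N^k \colon \ker N^{k+1}|_{V_i} \to \ker N|_{V_i}$ is surjective). The invariance relation, applied inside each $V_i$, immediately yields $\br{\tilde v_i, v_i} = 0$ whenever $i \geq k+2$. For $k \leq r-2$ every nonzero $v_i$ satisfies $i \geq r \geq k+2$, so $\br{\tilde v, v} = 0$ and $\bar d_k = d_k$; for $k = r-1$ only the $i = r$ contribution survives and $\br{\tilde v, v} = \br{\tilde v_r, v_r} =: q(v_r)$, where $q(u) := \br{\tilde u, u}$ for any lift $\tilde u \in V_r$ of $u \in \ker_r N$ under $N^{r-1}$. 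This $q$ is well-defined on $\ker_r N$ since the ambiguity in $\tilde u$ lies in $\ker N^{r-1}|_{V_r} = (\im N^{r-1}|_{V_r})^\perp$. Reading the partition off this $\bar d_k$ sequence then delivers the Jordan type $\lambda\pch{r,r}{r-1,r-1}$ exactly when $q(v_r) = 0$ and $\lambda\pch{r}{r-2}$ otherwise, establishing dependence only on $\spn{v_r}$.

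The final step is to analyze $q$ and identify the quadric. Polarizing produces the bilinear form $b(u_1, u_2) = \br{\tilde u_1, u_2} + \br{\tilde u_2, u_1}$; applying $\br{\tilde u_1, u_2} = \br{\tilde u_1, N^{r-1}\tilde u_2} = (-1)^{r-1}\br{u_1, \tilde u_2}$ together with the (anti)symmetry of $\br{\ ,\ }$, this collapses to $((-1)^{r-1} \pm 1)\br{\tilde u_2, u_1}$ with sign depending on type. The coefficient vanishes in exactly the \tyB $r$ even \tyC $r$ odd case, forcing $q \equiv 0$ and hence the uniform conclusion $\lambda\pch{r,r}{r-1,r-1}$. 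In the opposite parity, the coefficient is $\pm 2$, and nondegeneracy of the restricted form on $V_r$ propagates to that of $b$ on $\ker_r N$, so $H := \{q = 0\} \subset \bP(\ker_r N)$ is a smooth quadric satisfying \eqref{quadH}. The main subtlety is the sign-tracking in this final step; the construction of $\tilde v$ and well-definedness of $q$ are both direct consequences of the identity $\ker N^{r-1} \perp \im N^{r-1}$.
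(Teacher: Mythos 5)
Your proof is correct, and since the paper itself gives no argument for Lemma \ref{lem:determ} (it defers to \cite[Section 2]{sho83} and \cite[Section 5]{kim:euler}), what you've written is a genuinely useful, self-contained version. Your route -- tracking the kernel dimensions $\bar d_k = \dim\ker\bar N^k$ via $\ker\bar N^k = (N^{-k}(\spn{v})\cap\spn{v}^\perp)/\spn{v}$ together with $(\ker N^k)^\perp = \im N^k$, then localizing the whole ambiguity to $k=r-1$, and reducing that to the well-defined quadratic form $q(u)=\br{\tilde u,u}$ on $\ker_r N$ -- is the standard one underlying both cited sources (and the Springer--Steinberg normal form that the paper later invokes at \cite[E.93]{springer-steinberg} in the proof of Proposition \ref{prop:k=2}), but you present it in a clean, coordinate-free way: the orthogonality $\ker N^m \perp \im N^m$ (inside each $V_i$) handles both the $i\geq k+2$ vanishing and the well-definedness of $q$ without choosing Jordan bases. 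The parity analysis is correct: polarizing gives $b(u_1,u_2) = ((-1)^{r-1}\pm 1)\br{\tilde u_2,u_1}$ with the sign from (anti)symmetry, the coefficient kills $b$ (hence $q = \tfrac12 b(\cdot,\cdot)$) precisely when $r$ is \tyB even \tyC odd, and in the other parity nondegeneracy of $\br{\,,\,}|_{V_r}$ makes $b$ nondegenerate on $\ker_r N$, so $H=\{q=0\}$ is a smooth quadric. One small thing worth stating explicitly rather than leaving implicit: the surjectivity $N^k\colon\ker N^{k+1}|_{V_i}\to\ker N|_{V_i}$ that you use to build the lift $\tilde v_i$ only holds for $i\geq k+1$, but that is exactly the range you need, since for $k\leq r-1$ the only nonzero $v_i$ have $i\geq r\geq k+1$.
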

This lemma is used to prove the following proposition.
\begin{prop} \label{prop:k=1} For $\mu \vdash \dim V-2$, we define
$$X_\mu \colonequals \{l \in \bP(\ker N) \mid \br{l,l}=0, N|_{l^\perp/l} \textup{ has Jordan type } \mu \}.$$
Then $X_\mu$ is empty unless $\mu =\lambda\pch{r,r}{r-1,r-1}$ for some $r \geq 1$ or $\lambda\pch{r}{r-2}$ for some $ r\geq 2$. Also we have
\begin{align*} 
\cX(X_{\lambda\pch{r,r}{r-1,r-1}}) = 2\floor{\frac{m_r}{2}}, \qquad \cX(X_{\lambda\pch{r}{r-2}}) = \frac{1-(-1)^{m_r}}{2}.
\end{align*}
\end{prop}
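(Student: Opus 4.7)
The plan is to use Lemma \ref{lem:determ} to control which $\mu$ can appear, observe that the isotropy condition simplifies drastically, and then compute the Euler characteristic via an affine fibration whose base is a projective space, a smooth quadric, or the complement of a smooth quadric. By Lemma \ref{lem:determ}, for any isotropic $l = \spn{v} \in \bP(\ker N)$ with decomposition $v = \sum_i v_i$ from Lemma \ref{lem:decomp}, the Jordan type of $N|_{l^\perp/l}$ is either $\lambda\pch{r,r}{r-1,r-1}$ or $\lambda\pch{r}{r-2}$, where $r$ is the smallest index with $v_r \neq 0$. Since each such $\mu$ uniquely determines $r$ (from the multiset difference with $\lambda$), only these shapes yield nonempty $X_\mu$, and then $X_\mu$ sits inside the stratum $\{\spn{v} : r(v) = r\}$. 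A crucial simplification I would establish next is that the isotropy $\langle v, v\rangle_V = 0$ restricts only $v_1$: orthogonality of the $V_i$'s kills cross terms, and for $i \geq 2$, writing $v_i = N^{i-1} w$ with $w \in V_i$, the skew-adjointness of $N$ gives $\langle v_i, v_i\rangle_V = (-1)^{i-1}\langle w, N^{2(i-1)}w\rangle_V = 0$ since $N^i|_{V_i}=0$ and $2(i-1) \geq i$. Hence isotropy is automatic for $r > 1$ and reduces to a quadratic condition on $\bP(\ker_1 N)$ when $r = 1$.

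Next I would introduce the projection $\pi_r : X_\mu \to \bP(\ker_r N)$, $\spn{v} \mapsto \spn{v_r}$. Fixing a representative $v_r^0$ of $\spn{v_r}$, the fiber is $\{v_r^0 + u : u \in \bigoplus_{i > r} \ker_i N\} \simeq \A^{\sum_{i > r} m_i}$, and globally $\pi_r$ is a Zariski locally trivial affine fibration (a twist of $\mathcal{O}_{\bP(\ker_r N)}(-1)^{\oplus d}$ with $d = \sum_{i > r} m_i$), so $\cX(X_\mu) = \cX(\pi_r(X_\mu))$. The image $\pi_r(X_\mu) \subset \bP(\ker_r N) \simeq \bP^{m_r - 1}$ would then be identified case by case: if $r > 1$ has bad parity (B even, C odd), Lemma \ref{lem:determ} forces $\mu = \lambda\pch{r,r}{r-1,r-1}$ and the image is all of $\bP^{m_r - 1}$; if $r > 1$ has good parity, Lemma \ref{lem:determ} splits $\bP^{m_r - 1}$ into the smooth quadric $H$ (giving $\lambda\pch{r,r}{r-1,r-1}$) and its complement (giving $\lambda\pch{r}{r-2}$); if $r = 1$ in type C, $\ker_1 N = V_1$ is symplectic so isotropy is vacuous and the image is $\bP^{m_1 - 1}$; if $r = 1$ in type B, the original form restricts to a nondegenerate symmetric form on $V_1$ which agrees with the induced form used to define $H$ in Lemma \ref{lem:determ}, so the isotropy quadric coincides with $H$, and only $\mu = \lambda\pch{1,1}{0,0}$ arises.

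The Euler characteristic computation is then routine: $\cX(\bP^{m_r - 1}) = m_r$, a smooth projective quadric of dimension $m_r - 2$ in $\bP^{m_r - 1}$ has Euler characteristic equal to $m_r - 1$ when $m_r$ is odd and $m_r$ when $m_r$ is even (both equal to $2\floor{m_r/2}$), and the complement has Euler characteristic $(1 - (-1)^{m_r})/2$. Since in the bad parity cases the constraint on Jordan types forces $m_r$ to be even, so that $m_r = 2\floor{m_r/2}$, both formulas of the proposition hold uniformly across all cases. The main obstacle I anticipate is precisely the isotropy reduction in the second paragraph above (the vanishing $\langle v_i, v_i\rangle_V = 0$ for $i \geq 2$, which must be extracted from the skew-adjointness identity) together with the identification in the $r=1$ type B case of the naive isotropy quadric with the $H$ provided by Lemma \ref{lem:determ}; once these two points are in hand, the fibration structure and the quadric Euler characteristic computations are completely standard.
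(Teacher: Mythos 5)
Your proposal is correct and follows essentially the same approach as the paper: both use Lemma \ref{lem:determ} to narrow the possible $\mu$, stratify by $r=\min\{i : v_i\neq 0\}$, and compute Euler characteristics via the affine fibration $\spn{v}\mapsto\spn{v_r}$ over $\bP(\ker_r N)$ (landing in $H$ or its complement in the good-parity case, or all of $\bP(\ker_r N)$ in the bad-parity case where $m_r$ is forced even). Your explicit justification that $\br{v_i,v_i}=0$ automatically for $i\geq 2$ via skew-adjointness of $N$, and that for $r=1$ the isotropy quadric is precisely the $H$ of Lemma \ref{lem:determ}, spells out a point the paper's proof uses tacitly when it asserts $X_{\lambda\pch{r,r}{r-1,r-1}}\sqcup X_{\lambda\pch{r}{r-2}}=\bP(\bigoplus_{i\geq r}\ker_i N)-\bP(\bigoplus_{i>r}\ker_i N)$.
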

\begin{proof} The first part is a direct consequence of Lemma \ref{lem:determ}. For the second claim, we first assume $r\geq2$ and note that $X_{\lambda\pch{r,r}{r-1,r-1}}\sqcup X_{\lambda\pch{r}{r-2}}=\bP(\bigoplus_{i\geq r} \ker_i N) - \bP(\bigoplus_{i>r} \ker_i N)$. If $r$ is \tyB even \tyC odd then $X_{\lambda\pch{r,r}{r-1,r-1}} = \bP(\bigoplus_{i\geq r} \ker_i N) - \bP(\bigoplus_{i>r} \ker_i N)$ and $X_{\lambda\pch{r}{r-2}} = \emptyset$ also by Lemma \ref{lem:determ}. Since $m_r$ is always even in this case, we have
\begin{gather*}
\cX(X_{\lambda\pch{r,r}{r-1,r-1}}) = \cX(\bP(\bigoplus_{i\geq r} \ker_i N)) - \cX(\bP(\bigoplus_{i>r} \ker_i N)) = m_r = 2\floor{\frac{m_r}{2}},
\\\cX(X_{\lambda\pch{r}{r-2}}) =0= \frac{1-(-1)^{m_r}}{2},
\end{gather*}
thus the result follows.

Now suppose $r$ is \tyB odd \tyC even. We have a projection map
$$\phi_r : \bP(\bigoplus_{i\geq r} \ker_i N) - \bP(\bigoplus_{i>r} \ker_i N) \rightarrow \bP(\ker_r N) : \spn{v} \mapsto \spn{v_r}.$$
which is a well-defined affine bundle with fiber isomorphic to $\bigoplus_{i>r} \ker_i N$. As $\phi_r(X_{\lambda\pch{r,r}{r-1,r-1}}) = H$ and $\phi_r(X_{\lambda\pch{r}{r-2}}) = \bP(\ker_r N) - H$ where $H$ is as in (\ref{quadH}), $\cX(X_{\lambda\pch{r,r}{r-1,r-1}})=\cX(H)$ and $\cX(X_{\lambda\pch{r}{r-2}}) = \cX(\bP(\ker_r N) ) - \cX(H)$. Then we use the fact that the Euler characteristic of a smooth quadric hypersurface in $\bP^{n-1}$ is $2\floor{\frac{n}{2}}.$ If $r=1$, then we argue similarly with the projection
$$\phi_1 : X_{\lambda\pch{1,1}{0,0}}\rightarrow \bP(\ker_1 N): \spn{v}\mapsto \spn{v_1}$$
whose image is $H \subset \bP(\ker_1 N)$ defined in (\ref{quadH}).
\end{proof}
From this proposition and Lemma \ref{lem:calc} we deduce the following result.
\begin{prop} Let $W'\subset W$ be the maximal parabolic subgroup of the same type. For $\lambda=(1^{m_1}2^{m_2}\cdots)$ we have
\begin{align*} \Res^W_{W'} \ch\tsp{\lambda} &= \sum_{i\geq 1, m_i \geq 2} 2\floor{\frac{m_i}{2}}\ch \tsp{\lambda\pch{i,i}{i-1,i-1}}+\sum_{i\geq 2, m_i \textup{ odd}} \ch \tsp{\lambda\pch{i}{i-2}}.
\end{align*}
Here $\tsp{\lambda\pch{i,i}{i-1,i-1}},\tsp{\lambda\pch{i}{i-2}}$ are total Springer representations of $W'$.
\end{prop}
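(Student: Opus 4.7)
The plan is to apply Lemma \ref{lem:calc} directly, using the stratification of $\cP_N$ already constructed in Proposition \ref{prop:k=1}. Take $P \subset G$ to be the maximal parabolic with $W_P = \sym_1 \times W' = W'$, so that $k=1$, the type $A_{k-1} = A_0$ factor $\tilde{L}$ is trivial, and $\cP = G/P$ is the Grassmannian of isotropic lines in $V$.

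First I would verify the constancy hypothesis required by Lemma \ref{lem:calc}. Partition $\cP_N = \bigsqcup_\mu X_\mu$ as in Proposition \ref{prop:k=1}, where $X_\mu$ parametrizes isotropic lines $l \in \bP(\ker N)$ such that $N|_{l^\perp/l}$ has Jordan type $\mu$. By Lemma \ref{lem:hs}(2), for any $l = gP \in X_\mu$ the stalk $R^j\pi_!(\qlbar_{\B_N})|_{gP}$ is isomorphic as a $W'$-module to $H^j(\B(L')_{N|_{l^\perp/l}})$, where $L'$ is of the same type as $G$ and of rank $\rk G - 1$. For types $B$ and $C$, the Jordan type of a nilpotent determines its $L'$-orbit, so this $W'$-module depends only on $\mu$; by definition it equals $H^j$ of $\tsp{\mu}$ for the smaller group $L'$.

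Next, since $k = 1$ the subgroup $\sym_k$ is trivial, and the Coxeter element condition together with the regularity of $\tilde N \in \Lie\tilde L = 0$ are vacuous. Hence Lemma \ref{lem:calc} specializes, for any $w \in W'$, to
$$\ch \tsp{\lambda}(w) \;=\; \sum_{\mu} \cX(X_\mu)\, \ch \tsp{\mu}(w).$$
By Proposition \ref{prop:k=1}, only the strata $X_{\lambda\pch{r,r}{r-1,r-1}}$ (for $r \geq 1$) and $X_{\lambda\pch{r}{r-2}}$ (for $r \geq 2$) are nonempty, with Euler characteristics $2\floor{m_r/2}$ and $\frac{1-(-1)^{m_r}}{2}$ respectively. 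The first vanishes unless $m_r \geq 2$ and the second unless $m_r$ is odd, so regrouping gives exactly the formula claimed.

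There is no real obstacle left at this stage; the serious geometric input was the quadric hypersurface analysis of Lemma \ref{lem:determ} and the Euler characteristic count in Proposition \ref{prop:k=1}. The only nontrivial thing to confirm is that the local system $R^j\pi_!\qlbar_{\B_N}$ really is constant (as a $W'$-module) on each $X_\mu$, which for types $B$ and $C$ is automatic from Lemma \ref{lem:hs}(2) because Jordan type is a complete invariant of nilpotent orbits in these types. The remaining work is just bookkeeping to match indices.
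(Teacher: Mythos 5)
Your proof is correct and matches the paper's argument, which simply deduces the proposition from Proposition \ref{prop:k=1} and Lemma \ref{lem:calc} with $k=1$. The one small inaccuracy is writing $\Lie\tilde L = 0$ — for $k=1$ the factor $\tilde L$ is of type $A_0$, a one-dimensional torus, so $\Lie\tilde L \cong \mathfrak{gl}_1$ and $\tilde N = 0$ is its unique (regular) nilpotent — but this does not affect the conclusion since the regularity and Coxeter conditions hold automatically, as you note.
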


\subsection{$k=2$ case} Here, we assume $k=2$ and identify $\cP=G/P$ with the Grassmannian of two-dimensional isotropic subspaces in $V$. (This is a crucial step for generalizing our statement to the case for arbitrary $k$.) We also assume that an isotropic line $l \subset \ker N$ is given and that $N|_{l^\perp/l}$ has Jordan type $\lambda\pch{i}{i-2}$ or $\lambda\pch{i,i}{i-1,i-1}$ for some fixed $i\geq 2$. (The reason to exclude $i=1$ case will be apparent when we state Proposition \ref{prop:k=2}.) We use Lemma \ref{lem:decomp} to fix a decomposition $V = \bigoplus_{j\geq 1} V_j$ such that $l \subset V_i$. We write $\wt{N} \colonequals N|_{l^\perp/l}$ and denote:
\begin{enumerate}[label=$\bullet$\textup{ Case }$\Alph*.$,leftmargin=1cm,itemindent=1cm]
\item if $\wt{N}$ has Jordan type $\lambda\pch{i}{i-2}$, and
\item if $\wt{N}$ has Jordan type $\lambda\pch{i,i}{i-1,i-1}$.
\end{enumerate}
We start with improving Lemma \ref{lem:decomp}. (See Figure \ref{fig:decomp2}.)

\begin{lem} Keep the assumption in Lemma \ref{lem:decomp} and suppose $l \subset V_i$. 
\begin{enumerate}[label=$\bullet$\textup{ Case }$\Alph*.$,leftmargin=1cm,itemindent=1cm]
\item There exists an orthogonal decomposition $V_i = V_i' \oplus V_i''$ such that
\begin{enumerate}[label=$\bullet$,leftmargin=0.5cm]
\item $V_i'$ and $V_i''$ are $N$-invariant,
\item $N|_{V_i'}, N|_{V_i''}$ have Jordan types $(i^{m_i-1})$, $(i^1)$, respectively, and
\item $l \subset V_i''$. (In particular, $l=\ker N \cap V_i''$)
\end{enumerate}
Thus in particular, $ V_i' \perp l$ and $N|_{(l^\perp \cap V_i'')/l}$ has Jordan type $((i-2)^1)$.
\item There exists an orthogonal decomposition $V_i = V_i' \oplus V_i''$ such that
\begin{enumerate}[label=$\bullet$]
\item $V_i'$ and $V_i''$ are $N$-invariant,
\item $N|_{V_i'}, N|_{V_i''}$ have Jordan types $(i^{m_i-2})$, $(i^2)$, respectively, and
\item $l \subset  V_i''$.
\end{enumerate}
Thus in particular, $V_i'\perp l$ and $N|_{(l^\perp \cap V_i'')/l}$ has Jordan type $((i-1)^2)$.
\end{enumerate}
\end{lem}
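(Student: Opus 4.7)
The plan is to reduce everything to working inside $V_i$. Since Lemma \ref{lem:decomp} already provides an orthogonal $N$-invariant decomposition $V = \bigoplus_j V_j$ with $N|_{V_i}$ of Jordan type $(i^{m_i})$ and $l \subset V_i$, the problem becomes: find an orthogonal decomposition $V_i = V_i' \oplus V_i''$ of the stated Jordan types with $l \subset V_i''$. Then $V_i' \perp l$ is automatic, and the Jordan type of $\wt N$ on $(l^\perp \cap V_i'')/l$ follows by inspecting a single Jordan block modulo its kernel line (Case A) or two equal-sized Jordan blocks modulo a kernel line that projects nontrivially to both (Case B).

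First I would invoke Jacobson--Morozov in the classical setting to extend $N|_{V_i}$ to an $\mathfrak{sl}_2$-triple $(N, H, N^+)$ inside the Lie algebra of isometries of $V_i$. This decomposes $V_i$ as a sum of $m_i$ irreducible $\mathfrak{sl}_2$-submodules each of dimension $i$, and identifies $\ker N \cap V_i$ with the lowest-weight space. On this $m_i$-dimensional space I would put the pairing $Q(x, y) := \br{x, (N^+)^{i-1} y}$. Tracking signs under the interchange of $N$ and $N^+$, one checks that $Q$ is nondegenerate and has the correct symmetry type to identify its projective isotropic locus with the quadric $H \subset \bP(\ker_i N)$ of Lemma \ref{lem:determ}. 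Thus Case A corresponds to $l = \spn{e}$ with $Q(e, e) \neq 0$, and Case B to $Q(e, e) = 0$.

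For Case A, I would complete $e = e_1$ to a $Q$-orthogonal basis $e_1, \ldots, e_{m_i}$ of $\ker N \cap V_i$ and define
\[
V_i'' := \spn{e, N^+ e, \ldots, (N^+)^{i-1} e}, \qquad V_i' := \spn{(N^+)^a e_j : 2 \leq j \leq m_i,\ 0 \leq a \leq i-1}.
\]
The $Q$-orthogonality of the chosen basis of $\ker N \cap V_i$ propagates through the $\mathfrak{sl}_2$-action to give $V_i' \perp V_i''$, and the nondegeneracy of $V_i''$ follows from $Q(e, e) \neq 0$. For Case B, since $Q(e, e) = 0$ and $Q$ is nondegenerate I would choose $f \in \ker N \cap V_i$ with $Q(e, f) \neq 0$ and, after subtracting a suitable multiple of $e$, with $Q(f, f) = 0$, so that $\spn{e, f}$ is a hyperbolic plane in $(\ker N \cap V_i, Q)$. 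Then $V_i''$ is the $2i$-dimensional $N^+$-orbit of this plane, nondegenerate and carrying two size-$i$ Jordan blocks, while $V_i'$ is the $N^+$-orbit of the $Q$-orthogonal complement of $\spn{e, f}$.

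The main obstacle will be the parity and sign bookkeeping required to identify the projective isotropy locus of $Q$ with the quadric $H$ of Lemma \ref{lem:determ}; this depends delicately on the parity of $i$ and on whether $G$ is of type \tyB or \tyC, and will need an explicit $\mathfrak{sl}_2$-adapted basis of $V_i$ to track. Once this identification is settled, the Jordan-type statements for $\wt N$ on $(l^\perp \cap V_i'')/l$ are immediate: a single size-$i$ block killed by its kernel line yields a block of size $i - 2$, while two size-$i$ blocks with $l$ a generic isotropic kernel line yield two blocks of size $i - 1$.
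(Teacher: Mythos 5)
Your proposal is correct, but it takes a genuinely different route from the paper. The paper's proof is a short bootstrapping argument: it reduces to $\lambda = (i^{m_i})$, applies Lemma~\ref{lem:decomp} one level down to $(l^\perp/l,\, \wt N)$ to obtain $\wt V_i \oplus \wt V_{i-2}$ (Case $A$) or $\wt V_i \oplus \wt V_{i-1}$ (Case $B$), pulls $\wt V_i$ back along $l^\perp \twoheadrightarrow l^\perp/l$ to get $W_i \supset l$, takes an $N$-stable complement $V_i'$ of $l$ in $W_i$, and sets $V_i'' := (V_i')^\perp$. In contrast, you build the splitting directly from scratch using an $\mathfrak{sl}_2$-triple in the isometry algebra of $V_i$ and the induced pairing $Q(x,y)=\br{x,(N^+)^{i-1}y}$ on the lowest-weight space $\ker N \cap V_i$. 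This is more work but is constructive and self-contained; it also rederives, as a byproduct, the quadric $H$ of Lemma~\ref{lem:determ} as the $Q$-isotropy locus. Your flagged concern about parity and sign bookkeeping does resolve cleanly: since $N^+$ is skew-adjoint, one gets $Q(y,x)=(-1)^{i-1}Q(x,y)$ for \tyB and $Q(y,x)=(-1)^{i}Q(x,y)$ for \tyC, so $Q$ is symmetric precisely when \tyB $i$ odd \tyC $i$ even (the cases where $H$ exists), and alternating otherwise, which forces $Q(e,e)=0$ and hence Case~$B$ for every $l$, exactly as Lemma~\ref{lem:determ} requires. The remaining computations you describe (propagating $Q$-orthogonality through the $\mathfrak{sl}_2$-action, nondegeneracy of $V_i''$ iff $Q|_{\spn{e}}$ or $Q|_{\spn{e,f}}$ is nondegenerate, and reading off the Jordan type of $(l^\perp \cap V_i'')/l$) all check out. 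In short: the paper's route is terse and reuses earlier machinery; yours is longer but more explicit and makes the geometry of the quadric $H$ transparent.
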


\begin{figure}
\begin{subfigure}{0.5\textwidth}
\begin{tikzpicture}[scale=1]
\tikzset{bound/.style={
	thick
}} 
\tikzset{ins/.style={
	dotted
}} 

	\yellowfill{0,-3}{1,-4}
	\hatch{0,0}{6,-1}
	\hatch{0,-1}{4,-3}
	\hatch{0,-3}{3,-6}
	\hatch{0,-6}{2,-8}
	\draw[line width=0.5mm] (3.3,-3.3)--(3.7,-3.7);
	\draw[line width=0.5mm] (3.3,-3.7)--(3.7,-3.3);

	\draw[bound] (0,0) -- (6,0);
	\draw[bound] (4,-1) -- (6,-1);
	\draw[bound] (3,-4) -- (4,-4);
	\draw[bound] (2,-6) -- (3,-6);
	\draw[bound] (0,-8) -- (2,-8);
	\draw[bound] (0,0) -- (0,-8);
	\draw[bound] (2,-6) -- (2,-8);
	\draw[bound] (3,-4) -- (3,-6);
	\draw[bound] (4,-1) -- (4,-4);
	\draw[bound] (6,0) -- (6,-1);
	
	\draw[ins] (0,-1) -- (4,-1);
	\draw[ins] (0,-2) -- (4,-2);
	\draw[ins] (0,-3) -- (4,-3);
	\draw[ins] (0,-4) -- (3,-4);
	\draw[ins] (0,-5) -- (3,-5);
	\draw[ins] (0,-6) -- (2,-6);
	\draw[ins] (0,-7) -- (2,-7);
	\draw[ins] (0,-8) -- (2,-8);
	\draw[ins] (1,0) -- (1,-8);
	\draw[ins] (2,0) -- (2,-6);
	\draw[ins] (3,0) -- (3,-4);
	\draw[ins] (4,0) -- (4,-1);
	\draw[ins] (5,0) -- (5,-1);

	\orangebox{0,0}{6,-1}
	\orangebox{0,-1}{4,-4}
	\orangebox{0,-4}{3,-6}
	\orangebox{0,-6}{2,-8}
	
	\draw[line width=1mm, orange, dotted] (0,-3) -- (4,-3);
	\node[] at (3,-0.5) {\LARGE $V_6$};
	\node[] at (2,-2) {\LARGE $V_4'$};
	\node[] at (2,-3.5) {\LARGE $V_4''$};
	\node[] at (1.5,-5) {\LARGE $V_3$};
	\node[] at (1,-7) {\LARGE $V_2$};

	\node[] at (0.5,-3.5) {\LARGE $l$};

\end{tikzpicture}
\caption{Case $A$, $i=4$}
\end{subfigure}%
\begin{subfigure}{0.5\textwidth}
\begin{tikzpicture}[scale=1]
\tikzset{bound/.style={
	thick
}} 
\tikzset{ins/.style={
	dotted
}} 

	\yellowfill{0,-3}{1,-4}
	\hatch{0,0}{6,-1}
	\hatch{0,-1}{4,-2}
	\hatch{0,-2}{3,-3}
	\hatch{0,-3}{4,-4}
	\hatch{0,-4}{3,-6}
	\hatch{0,-6}{2,-8}
	\draw[line width=0.5mm] (3.3,-2.3)--(3.7,-2.7);
	\draw[line width=0.5mm] (3.3,-2.7)--(3.7,-2.3);

	\draw[bound] (0,0) -- (6,0);
	\draw[bound] (4,-1) -- (6,-1);
	\draw[bound] (3,-4) -- (4,-4);
	\draw[bound] (2,-6) -- (3,-6);
	\draw[bound] (0,-8) -- (2,-8);
	\draw[bound] (0,0) -- (0,-8);
	\draw[bound] (2,-6) -- (2,-8);
	\draw[bound] (3,-4) -- (3,-6);
	\draw[bound] (4,-1) -- (4,-4);
	\draw[bound] (6,0) -- (6,-1);
	
	\draw[ins] (0,-1) -- (4,-1);
	\draw[ins] (0,-2) -- (4,-2);
	\draw[ins] (0,-3) -- (4,-3);
	\draw[ins] (0,-4) -- (3,-4);
	\draw[ins] (0,-5) -- (3,-5);
	\draw[ins] (0,-6) -- (2,-6);
	\draw[ins] (0,-7) -- (2,-7);
	\draw[ins] (0,-8) -- (2,-8);
	\draw[ins] (1,0) -- (1,-8);
	\draw[ins] (2,0) -- (2,-6);
	\draw[ins] (3,0) -- (3,-4);
	\draw[ins] (4,0) -- (4,-1);
	\draw[ins] (5,0) -- (5,-1);

	\orangebox{0,0}{6,-1}
	\orangebox{0,-1}{4,-4}
	\orangebox{0,-4}{3,-6}
	\orangebox{0,-6}{2,-8}
	
	\draw[line width=1mm, orange, dotted] (0,-2) -- (4,-2);
	\node[] at (3,-0.5) {\LARGE $V_6$};
	\node[] at (2,-1.5) {\LARGE $V_4'$};
	\node[] at (2,-3) {\LARGE $V_4''$};
	\node[] at (1.5,-5) {\LARGE $V_3$};
	\node[] at (1,-7) {\LARGE $V_2$};
	
	\node[] at (0.5,-3.5) {\LARGE $l$};
\end{tikzpicture}
\caption{Case $B$, $i=4$}
\end{subfigure}
\caption{$V=(\bigoplus_{j\neq i}V_j)\oplus V_i'\oplus V_i''$ for $G=Sp_{28}(\C)$ and $\lambda=(2^23^24^36^1)$}
(Here the yellow box is $l$ and the blue-dotted area corresponds to $l^\perp$.)
\label{fig:decomp2}
\end{figure}

\begin{proof} It suffices to consider the case when $\lambda=(i^{m_i})$ which we assume here. First assume Case $A$. Then we have an orthogonal decomposition $l^\perp/ l = \wt{V_{i}}\oplus \wt{V_{i-2}}$
which satisfies the property in Lemma \ref{lem:decomp} with respect to $\wt{N}$. Now take the inverse image of $\wt{V_{i}}$ under the map $l^\perp\twoheadrightarrow l^\perp/l$, say $W_i \subset \C^{2n}$, and take a complementary space $V_i'$ of $l$ in $W_i$. We also set $V_i'' = (V_i')^\perp$. Then it is easy to see that they satisfy the desired properties. Case $B$ is also similar.
\end{proof}

We want to have a similar decomposition of $l^\perp/l$ similar to Lemma \ref{lem:decomp} with respect to $\wt{N}$, in accordance with $V=(\bigoplus_{j\neq i}V_j)\oplus V_i'\oplus V_i''$. The following lemma is an easy exercise.
\begin{lem} Keep the assumptions above. 
\begin{enumerate}[label=$\bullet$\textup{ Case }$\Alph*.$,leftmargin=1cm,itemindent=1cm]
\item The decomposition $l^\perp/l = \bigoplus_{j \geq 1}\wt{V_j}$ where
$$\wt{V_j} = (V_j\oplus l)/l \textup{ for } j\neq i, i-2, \qquad \wt{V_i} = (V_i'\oplus l)/l,  \qquad \wt{V_{i-2}}=\left((V_{i}''\cap l^\perp)\oplus V_{i-2}\right)/l$$
gives the decomposition of $l^\perp/l$ which satisfies the criteria in Lemma \ref{lem:decomp} with respect to $\wt{N}$.
\item The decomposition $l^\perp/l = \bigoplus_{j \geq 1}\wt{V_j}$ where
$$\wt{V_j} = (V_j\oplus l)/l \textup{ for } j\neq i, i-1, \qquad \wt{V_i} = (V_i'\oplus l)/l, \qquad \wt{V_{i-1}}=\left((V_{i}''\cap l^\perp)\oplus V_{i-1}\right)/l$$
gives the decomposition of $l^\perp/l$ which satisfies the criteria in Lemma \ref{lem:decomp} with respect to $\wt{N}$.
\end{enumerate}
\end{lem}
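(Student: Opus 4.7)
The plan is a direct verification of the four properties that $\wt{V_\bullet}$ must satisfy in order to meet the criteria of Lemma \ref{lem:decomp} applied to $\wt N$ on $l^\perp/l$: (i) each summand is a well-defined subspace of $l^\perp/l$; (ii) the summands are pairwise orthogonal under the induced form; (iii) each is $\wt N$-stable; and (iv) the restriction of $\wt N$ to each summand has Jordan type $(j^{\wt m_j})$, where $\wt m_j$ is the multiplicity of $j$ in the Jordan type of $\wt N$. The structurally nontrivial content has already been absorbed by the previous lemma, which splits $V_i = V_i'\oplus V_i''$ and pins down the Jordan type of $N|_{(l^\perp\cap V_i'')/l}$; what remains is a gluing argument.

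For (i), the containment of each summand in $l^\perp$ is immediate: for $j\neq i$ the original orthogonal decomposition $V = \bigoplus V_j$ together with $l \subset V_i$ gives $V_j \perp l$; the previous lemma supplies $V_i' \perp l$; and $V_i''\cap l^\perp$ sits in $l^\perp$ by definition. For (ii), the induced form on $l^\perp/l$ is the restriction of $\langle\,,\,\rangle$ passed to the quotient, so pairwise orthogonality in $V$ transfers directly: the $V_j$ are mutually orthogonal, and $V_i' \perp V_i''$ by the previous lemma.

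For (iii), the only non-obvious ingredient is that $N$ preserves $l^\perp$. This follows from the Lie-algebra identity $\langle Nx, y\rangle = -\langle x, Ny\rangle$ valid on both $\mathfrak{so}(V)$ and $\mathfrak{sp}(V)$: for $y\in l^\perp$ and $x\in l\subset \ker N$ one has $\langle x, Ny\rangle = -\langle Nx, y\rangle = 0$. Combined with the $N$-invariance of each $V_j$, $V_i'$, and $V_i''$, and with $l\subset\ker N$, every proposed $\wt{V_j}$ is $\wt N$-stable.

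The only piece requiring a bit of thought is (iv) for the mixed summand $\wt{V_{i-2}}$ (Case A) or $\wt{V_{i-1}}$ (Case B). For all other indices the quotient map induces an $N$-equivariant isomorphism from $V_j$, respectively $V_i'$, onto the corresponding $\wt{V_j}$, so the Jordan types are automatic. For the mixed piece, I would note that $V_i''\cap l^\perp$ and $V_{i-2}$ (resp.\ $V_{i-1}$) have zero intersection inside $V$ while $l\subset V_i''\cap l^\perp$, so passing to $l^\perp/l$ exhibits an internal direct sum $(V_i''\cap l^\perp)/l \oplus V_{i-2}$ of $\wt N$-stable subspaces. The previous lemma supplies the Jordan type of $\wt N|_{(l^\perp\cap V_i'')/l}$, namely $((i-2)^1)$ in Case A and $((i-1)^2)$ in Case B, which combines with the type $((i-2)^{m_{i-2}})$ of $N|_{V_{i-2}}$ (resp.\ $((i-1)^{m_{i-1}})$ of $N|_{V_{i-1}}$) to yield the required total Jordan type. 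A short dimension count $\sum_j \dim\wt{V_j} = \dim V - 2 = \dim l^\perp/l$ confirms that no summand is omitted. The main (and only mild) obstacle in the whole argument is this final bookkeeping on the mixed piece; everything else is formal.
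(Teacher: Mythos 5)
The paper gives no proof here, simply labelling the lemma ``an easy exercise,'' so there is nothing to compare against; your direct verification (orthogonality from $V_j\perp V_i\supset l$ and $V_i'\perp V_i''$, $\wt N$-stability from invariance of the form together with $l\subset\ker N$, and the Jordan type of the mixed summand from the direct-sum splitting $(V_i''\cap l^\perp)/l\oplus V_{i-2}$, resp.\ $\oplus V_{i-1}$, combined with the Jordan type computed in the preceding lemma) is correct and is the intended argument. The dimension count $\sum_j\dim\wt V_j=\dim V-2$ together with the observation that $l^\perp = \bigl(\bigoplus_{j\neq i}V_j\bigr)\oplus V_i'\oplus(V_i''\cap l^\perp)$ settles directness, so everything checks out.
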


Previously we defined $\ker_j N \colonequals \ker N \cap V_j$. Similarly, we also let $\ker_i' N \colonequals \ker N \cap V_i', \ker_i'' \colonequals \ker N \cap V_i''$ and $\ker_j \wt{N} \colonequals \ker\wt{N} \cap \wt{V_j}$. Then it is easy to see that
\begin{enumerate}[label=$\bullet$\textup{ Case }$\Alph*.$,leftmargin=1cm,itemindent=1cm]
\item 
\begin{gather*}
\ker_{j} \wt{N} = (\ker_j N\oplus l)/l \textup{ for } j\neq i, i-2,
\\\ker_{i} \wt{N} = (\ker_i' N\oplus l)/l, \qquad \ker_{i-2} \wt{N} = \left((N^{-1}l\cap V_i'')\oplus V_{i-2}\right)/l.
\end{gather*}
\item 
\begin{gather*}
ker_{j} \wt{N} = (\ker_j N\oplus l)/l \textup{ for } j\neq i, i-1,
\\ \qquad \ker_{i} \wt{N} = (\ker_i' N\oplus l)/l, \qquad \ker_{i-1} \wt{N} = \left((N^{-1}l\cap V_i'')\oplus V_{i-1}\right)/l.
\end{gather*}
\end{enumerate}

From now on we denote by $\ol{v} \colonequals v+l\in V/l$ the image of $v \in V$ under the projection $V\twoheadrightarrow V/l$.
We choose $v_0 \in N^{-1}l \cap V_i''$ such that $\spn{Nv_0}=l$. 
 For any $v\in \ker N$, we write $\tilde{v}\colonequals\overline{v_0+v}$. Also we let $v=\sum_{j\geq 1} v_j$ such that $v_j \in \ker_j N$ and let $v_i = v_i'+v_i''$ where $v_i' \in \ker_i' N$ and $v_i''\in \ker_i'' N$. (See Figure \ref{fig:vi}.) Then $\tilde{v} = \sum_{j\geq 1} \tilde{v}_j, \tilde{v}_j \in \ker_j \wt{N}$ where 
\begin{enumerate}[label=$\bullet$\textup{ Case }$\Alph*.$,leftmargin=1cm,itemindent=1cm]
\item $\tilde{v}_j = \ol{v_j} \textup{ for } j\neq i-2 \textup{ (in particular, } \tilde{v}_i= \ol{v_i'} = \ol{v_i}) \qquad\tilde{v}_{i-2} = \ol{v_0+v_{i-2}}$,
\item $\tilde{v}_j = \ol{v_j} \textup{ for } j\neq i, i-1, \qquad \tilde{v}_i= \ol{v_i'}, \qquad \tilde{v}_{i-1} = \ol{v_0+v_{i}''+v_{i-1}}$.
\end{enumerate}

\begin{figure}
\begin{subfigure}{0.5\textwidth}
\begin{tikzpicture}[scale=1]
\tikzset{bound/.style={
	thick
}} 
\tikzset{ins/.style={
	dotted
}} 

	\bluefill{0,0}{1,-3}
	\bluefill{0,-4}{1,-8}
	\bluefill{1,-3}{2,-4}
	\yellowfill{0,-3}{1,-4}
	\hatch{0,0}{6,-1}
	\hatch{0,-1}{4,-3}
	\hatch{0,-3}{3,-6}
	\hatch{0,-6}{2,-8}
	\draw[line width=0.5mm] (3.3,-3.3)--(3.7,-3.7);
	\draw[line width=0.5mm] (3.3,-3.7)--(3.7,-3.3);

	\draw[bound] (0,0) -- (6,0);
	\draw[bound] (4,-1) -- (6,-1);
	\draw[bound] (3,-4) -- (4,-4);
	\draw[bound] (2,-6) -- (3,-6);
	\draw[bound] (0,-8) -- (2,-8);
	\draw[bound] (0,0) -- (0,-8);
	\draw[bound] (2,-6) -- (2,-8);
	\draw[bound] (3,-4) -- (3,-6);
	\draw[bound] (4,-1) -- (4,-4);
	\draw[bound] (6,0) -- (6,-1);
	
	\draw[ins] (0,-1) -- (4,-1);
	\draw[ins] (0,-2) -- (4,-2);
	\draw[ins] (0,-3) -- (4,-3);
	\draw[ins] (0,-4) -- (3,-4);
	\draw[ins] (0,-5) -- (3,-5);
	\draw[ins] (0,-6) -- (2,-6);
	\draw[ins] (0,-7) -- (2,-7);
	\draw[ins] (0,-8) -- (2,-8);
	\draw[ins] (1,0) -- (1,-8);
	\draw[ins] (2,0) -- (2,-6);
	\draw[ins] (3,0) -- (3,-4);
	\draw[ins] (4,0) -- (4,-1);
	\draw[ins] (5,0) -- (5,-1);

	\orangebox{0,0}{6,-1}
	\orangebox{0,-1}{4,-4}
	\orangebox{0,-4}{3,-6}
	\orangebox{0,-6}{2,-8}
	
	\draw[line width=1mm, orange, dotted] (0,-3) -- (4,-3);

	\node[] at (0.5,-0.5) {$v_6$};
	\node[] at (0.5,-2) {$v_4'$};
	\node[] at (0.5,-3.5) {$v_4''$};
	\node[] at (0.5,-5) {$v_3$};
	\node[] at (0.5,-7) {$v_2$};
	\node[] at (1.5,-3.5) {$v_0$};

\end{tikzpicture}
\caption{Case $A$, $i=4$}
\end{subfigure}%
\begin{subfigure}{0.5\textwidth}
\begin{tikzpicture}[scale=1]
\tikzset{bound/.style={
	thick
}} 
\tikzset{ins/.style={
	dotted
}} 

	\bluefill{0,0}{1,-3}
	\bluefill{0,-4}{1,-8}
	\bluefill{1,-3}{2,-4}
	\yellowfill{0,-3}{1,-4}
	\hatch{0,0}{6,-1}
	\hatch{0,-1}{4,-2}
	\hatch{0,-2}{3,-3}
	\hatch{0,-3}{4,-4}
	\hatch{0,-4}{3,-6}
	\hatch{0,-6}{2,-8}
	\draw[line width=0.5mm] (3.3,-2.3)--(3.7,-2.7);
	\draw[line width=0.5mm] (3.3,-2.7)--(3.7,-2.3);

	\draw[bound] (0,0) -- (6,0);
	\draw[bound] (4,-1) -- (6,-1);
	\draw[bound] (3,-4) -- (4,-4);
	\draw[bound] (2,-6) -- (3,-6);
	\draw[bound] (0,-8) -- (2,-8);
	\draw[bound] (0,0) -- (0,-8);
	\draw[bound] (2,-6) -- (2,-8);
	\draw[bound] (3,-4) -- (3,-6);
	\draw[bound] (4,-1) -- (4,-4);
	\draw[bound] (6,0) -- (6,-1);
	
	\draw[ins] (0,-1) -- (4,-1);
	\draw[ins] (0,-2) -- (4,-2);
	\draw[ins] (0,-3) -- (4,-3);
	\draw[ins] (0,-4) -- (3,-4);
	\draw[ins] (0,-5) -- (3,-5);
	\draw[ins] (0,-6) -- (2,-6);
	\draw[ins] (0,-7) -- (2,-7);
	\draw[ins] (0,-8) -- (2,-8);
	\draw[ins] (1,0) -- (1,-8);
	\draw[ins] (2,0) -- (2,-6);
	\draw[ins] (3,0) -- (3,-4);
	\draw[ins] (4,0) -- (4,-1);
	\draw[ins] (5,0) -- (5,-1);

	\orangebox{0,0}{6,-1}
	\orangebox{0,-1}{4,-4}
	\orangebox{0,-4}{3,-6}
	\orangebox{0,-6}{2,-8}
	
	\draw[line width=1mm, orange, dotted] (0,-2) -- (4,-2);

	\node[] at (0.5,-0.5) {$v_6$};
	\node[] at (0.5,-1.5) {$v_4'$};
	\node[] at (0.5,-3) {$v_4''$};
	\node[] at (0.5,-5) {$v_3$};
	\node[] at (0.5,-7) {$v_2$};
	\node[] at (1.5,-3.5) {$v_0$};

\end{tikzpicture}
\caption{Case $B$, $i=4$}
\end{subfigure}
\caption{$v=(\sum_{j\neq i} v_j) +v_i'+v_i''$ and $v_0 \in N^{-1}l$ for $G=Sp_{28}(\C)$ and $\lambda=(2^23^24^36^1)$}
(Here the yellow box is $l$ and the blue area corresponds to $\ker \wt{N}$.)
\label{fig:vi}
\end{figure}

Let $r, r'\in \N$ be the smallest integers such that $v_r \neq 0$ and $\tilde{v}_{r'} \neq 0$. Then from the observation above it is clear that
\begin{enumerate}[label=$\bullet$\textup{ Case }$\Alph*.$,leftmargin=1cm,itemindent=1cm]
\item $r' = \min(r,i-2)$.
\item $r' = \min(r,i-1)$.
\end{enumerate}
Therefore, for $\V \subset V$ such that $\dim \V=2$ and $N\V=l\subset \V$, using Lemma \ref{lem:determ} we have
\begin{enumerate}[label=$\bullet$\textup{ Case }$\Alph*.$,leftmargin=1cm,itemindent=1cm]
\item the Jordan type of $N|_{\V^\perp/\V}$ is either 
$$ \quad\lambda\pch{i}{i-4},\quad \lambda\pch{i,i-2}{i-3,i-3},\quad \lambda\pch{i,r}{i-2,r-2}, \quad \textup{ or }\lambda\pch{i,r,r}{i-2,r-1,r-1},$$
for some $r<i-2$.
\item the Jordan type of $N|_{\V^\perp/\V}$ is either 
$$\lambda\pch{i,i}{i-1,i-3},\quad \lambda\pch{i,i}{i-2,i-2}, \quad \lambda\pch{i,i,r}{i-1,i-1,r-2},\quad \textup{ or }\lambda\pch{i,i,r,r}{i-1,i-1,r-1,r-1},$$
for some $r<i-1$.
\end{enumerate}
Now we state a key step to calculate the formula in Lemma \ref{lem:calc} for $k=2$ case.
\begin{prop} \label{prop:k=2} Let $l \in \bP(\ker N)$ be an isotropic line and define
$$X_{\mu,l} \colonequals \{ \V \subset V\mid \br{\V,\V}=0,\dim \V =2, N\V=l\subset \V, N|_{\V^\perp / \V} \textup{ has Jordan type } \mu\}.$$
\begin{enumerate}[label=$\bullet$\textup{ Case }$\Alph*.$,leftmargin=1cm,itemindent=1cm]
\item We have
\begin{enumerate}[label=(\alph*)]
\item $\cX(X_{\mu,l}) = 1-(-1)^{m_{i-2}}$ if $\mu = \lambda\pch{i,i-2}{i-3,i-3}$ for $i\geq 3$,
\item $\cX(X_{\mu,l}) = (-1)^{m_{i-2}}$ if $\mu = \lambda\pch{i}{i-4}$ for $i \geq 4$, and
\item $\cX(X_{\mu,l}) =0$ otherwise.
\end{enumerate}
\item We have
\begin{enumerate}[label=(\alph*)]
\item $\cX(X_{\mu,l}) = 1$ if $\mu = \lambda\pch{i,i}{i-2,i-2}$ for $i \geq 2$,
\item $\cX(X_{\mu,l})  = 0$ otherwise.
\end{enumerate}
\end{enumerate}
\end{prop}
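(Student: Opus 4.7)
The plan is to reduce to the $k=1$ analysis on $l^\perp/l$ with respect to $\wt{N} \colonequals N|_{l^\perp/l}$. Every admissible $\V$ can be written uniquely (modulo $l$) as $\V = l + \C(v_0+v)$ with $v \in \ker N \cap l^\perp$, where $v_0 \in V_i''\cap l^\perp$ is a fixed preimage of a nonzero vector in $l$ under $N$. The isotropy of $\V$ becomes $\br{v_0+v,v_0+v}=0$, and setting $\tilde{v} \colonequals \ol{v_0+v} \in \ker\wt{N}$, the canonical identification $\V^\perp/\V \simeq (\V/l)^\perp/(\V/l)$ equates the Jordan type of $N|_{\V^\perp/\V}$ with that of $\wt{N}|_{\spn{\tilde{v}}^\perp/\spn{\tilde{v}}}$. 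This identifies $X_{\mu,l}$ with an explicit subvariety of $\bP(\ker\wt{N})\setminus\bP(K_0)$, where $K_0 \colonequals (\ker N \cap l^\perp)/l$.

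Stratify $K_0$ by $r \colonequals \min\{j : v_j \neq 0\}$ (with $r = +\infty$ if $v=0$). Using the decomposition lemmas preceding the proposition, $r' \colonequals \min\{j : \tilde{v}_j \neq 0\}$ equals $\min(r,i-2)$ in Case $A$ and $\min(r,i-1)$ in Case $B$; applying Lemma \ref{lem:determ} to $\wt{N}$ at $r'$ and translating via $\wt{\lambda}$ reproduces exactly the four possible Jordan types listed above the proposition. Expanding the isotropy equation using the orthogonal decomposition $V = (\bigoplus_{j\neq i}V_j)\oplus V_i' \oplus V_i''$, together with the fact that $\br{v_a,v_a}=0$ for $v_a \in \ker N \cap V_a$ with $a \geq 2$ (and for all $a$ in Case \tyC, the form being alternating), shows that isotropy reduces to the single constraint
$$\br{v_0,v_0} + 2\br{v_0,v_i''} + \br{v_1,v_1} = 0,$$
depending only on $v_0, v_1, v_i''$ (the $\br{v_1,v_1}$ summand is absent in Case \tyC).

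In the strata $r' < i-2$ (Case $A$) or $r' < i-1$ (Case $B$), the projection $\spn{\tilde{v}}\mapsto\spn{\tilde{v}_{r'}}$ realizes the stratum as an affine bundle over $\bP(\ker_r N)\setminus\bP(\bigoplus_{j>r}\ker_j N)$, in direct analogy with $\phi_r$ from the proof of Proposition \ref{prop:k=1}. Since the isotropy equation is independent of $v_r$ for these indices, the contribution to $\cX(X_{\mu,l})$ splits as a product, and a short Euler characteristic computation exploiting the cancellation between the $\spn{\tilde{v}_{r'}}\in H$ and $\spn{\tilde{v}_{r'}}\notin H$ sub-strata for the quadric $H\subset\bP(\ker_{r'}\wt{N})$ of Lemma \ref{lem:determ} shows every such contribution vanishes. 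The remaining strata ($r \geq i-2$ in Case $A$; $r \geq i-1$ in Case $B$) have $\tilde{v}_{r'}$ ranging over a specific affine subset of $\ker_{r'}\wt{N}$, and the Jordan-type and isotropy conditions combine --- via the $2\floor{n/2}$ formula for smooth quadrics in $\bP^{n-1}$ applied to $H\subset\bP(\ker_{r'}\wt{N})$ with $\wt{m}_{r'} = m_{i-2}+1$ in Case $A$ and $\wt{m}_{r'} = m_{i-1}+2$ in Case $B$ --- to produce the claimed Euler characteristics $1-(-1)^{m_{i-2}}$ and $(-1)^{m_{i-2}}$ in Case $A$ and $1$ in Case $B$. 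The main obstacle will be the parity bookkeeping in this last step, particularly tracking the position of the distinguished point $\spn{\ol{v_0}}$ relative to $H$, which is what produces the alternating signs in Case $A$.
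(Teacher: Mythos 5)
Your proposal takes a genuinely different route from the paper's. The paper first reduces via Lemma \ref{lem:indep1} to the ``pure'' case $\lambda = ((i-2)^{m_{i-2}}i^1)$ (Case $A$) or $((i-1)^{m_{i-1}}i^2)$ (Case $B$), then constructs an explicit adapted basis following Springer--Steinberg and reads off the defining equation of the relevant stratum (e.g.\ $1+\sum a_p^2 = 0$, resp.\ $2b+\sum a_p^2 = 0$). You instead want to apply Lemma \ref{lem:determ} directly to $\wt{N}$ and use the $2\floor{n/2}$ formula for the quadric $H \subset \bP(\ker_{r'}\wt{N})$, intersected with the affine chart where $\tilde{v}_{r'}$ lives. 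This is a reasonable alternative, but as described it has a genuine gap.

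The gap is in Case $B$: the affine chart is the complement of the projective hyperplane $\bP\bigl((\ker_{i-1}N\oplus(\ker_i N\cap V_i''))/l\bigr)$ inside $\bP(\ker_{i-1}\wt{N})\cong\bP^{m_{i-1}+1}$, and this hyperplane is \emph{tangent} to $H$. In the coordinates the paper constructs, $H$ is $\{2a_0 b + a_1^2+\cdots+a_{m_{i-1}}^2 = 0\}$ and the hyperplane is $\{a_0=0\}$, which is the tangent hyperplane at the point $[0:1:0:\cdots:0]\in H$. Hence $H\cap\{a_0=0\}$ is a quadric \emph{cone}, not a smooth quadric, and applying the smooth-quadric Euler characteristic formula to the hyperplane section gives the wrong number (you would get $2$ or $0$ depending on the parity of $m_{i-1}$, instead of the correct answer $1$). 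Your stated worry (``parity bookkeeping'' and ``the position of the distinguished point $\spn{\ol{v_0}}$ relative to $H$'') misidentifies the subtlety: the real issue is this tangency. In Case $A$ the corresponding hyperplane is transverse to $H$, so there your approach does go through. The paper's explicit computation avoids the question entirely.

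Two smaller points worth fixing. First, your isotropy reduction to $\br{v_0,v_0}+2\br{v_0,v_i''}+\br{v_1,v_1}=0$ is essentially vacuous in the main range: all three terms vanish identically for $i\geq 4$ in Case $A$ and $i\geq 3$ in Case $B$ (this is why the paper can say the isotropy condition is ``automatic'' outside the small-$i$ edge cases). The equation the paper actually works with is the Jordan-type discriminant defining $H$, not isotropy, so you shouldn't present the isotropy equation as the governing constraint. Second, for the strata with $r' < i-2$ (resp.\ $r' < i-1$), the contributions vanish \emph{individually} because the fibre of the projection $\phi_r$ is $(\bigoplus_{j>r}\ker_j N)\times(\A^1-\{0\})$, which has Euler characteristic $0$; it is not a cancellation between the $\in H$ and $\notin H$ sub-strata. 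As stated, your ``cancellation'' mechanism would only give that the \emph{sum} of the two contributions is zero, which is weaker than what the proposition requires.
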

\begin{rmk} If $i=1$, then there does not exist any $\V \in V$ such that $N\V=l\subset\V$. Thus it is enough to assume $i\geq 2$ as we did at the beginning.
\end{rmk}
\begin{proof}
First we consider Case $A(c)$. For $\mu$ different from $\lambda\pch{i,i-2}{i-3,i-3}$ and $\lambda\pch{i}{i-4}$, it suffices to assume that $\mu = \lambda\pch{i,r}{i-2,r-2}$ or $\lambda\pch{i,r,r}{i-2,r-1,r-1}$ for some $1\leq r<i-2$. Moreover, it is enough to assume that $i\geq 4$. We briefly write $\mu^1 = \lambda\pch{i,r}{i-2,r-2}$ and $\mu^2 = \lambda\pch{i,r,r}{i-2,r-1,r-1}$.

We can choose $v_0 \in N^{-1}l \cap V_i''$ such that $\spn{Nv_0}=l$. Then the map
$$\psi: \ker N \rightarrow \{\V \subset V  \mid \br{\V,\V}=0, \dim \V=2, N\V = l\subset \V\}: v \mapsto \spn{v_0+v}\oplus l$$
is an affine bundle with fiber $l$. By Lemma \ref{lem:determ}, we have
$$\psi(v) \in X_{\mu^1,l}\sqcup X_{\mu^2,l} \Leftrightarrow \tilde{v} \in \bigoplus_{j\geq r}\ker_j \wt{N} - \bigoplus_{j>r}\ker_j \wt{N} \Leftrightarrow v\in \bigoplus_{j\geq r}\ker_j N - \bigoplus_{j>r}\ker_j N$$
where $\tilde{v} = \ol{v_0+v}$. (The condition $\br{\psi(v),\psi(v)}=0$ is automatic for $i\geq 4$.) Thus we have
$$\psi^{-1}(X_{ \mu^1,l} \sqcup X_{\mu^2,l}) = \bigoplus_{j\geq r}\ker_j N - \bigoplus_{j>r}\ker_j N.$$
For such $r$, consider
$$\phi_r : \bigoplus_{j\geq r}\ker_j N - \bigoplus_{j>r}\ker_j N \rightarrow \bP(\ker_r \wt{N}): v \mapsto \spn{\tilde{v}_r}.$$
This map is a fiber bundle with fiber isomorphic to $ \left(\bigoplus_{j>r}\ker_j N\right)\times (\A^1-\{0\})$. As the Jordan type of $N|_{\psi(v)^\perp/\psi(v)}$ is determined by $\spn{\tilde{v}_r} \in \bP(\ker_r \wt{N})$, there exists $Y_{\mu^1,l}, Y_{\mu^2,l} \subset \bP(\ker_r \wt{N})$ such that
\begin{gather*}
\phi_r^{-1}(Y_{\mu^1,l}) =\psi^{-1}(X_{\mu^1,l}),
\qquad \phi_r^{-1}(Y_{\mu^2,l}) =\psi^{-1}(X_{\mu^2,l}).
\end{gather*}
But since the Euler characteristic of a fiber of $\phi_r$ is 0, it follows that $\cX(X_{\mu^1}) = \cX(\psi^{-1}(X_{ \mu^1,l})) = \cX(\phi_r^{-1}(Y_{\mu^1,l})) = 0$ and similarly $\cX(X_{\mu^2,l})= 0$. This proves Case $A(c).$ Case $B(b)$ when $\mu \neq \lambda\pch{i, i}{i-1,i-3}$ is also similar.

Now consider Case $A(a)$ and $A(b)$, thus it is enough to assume that $i\geq 3$. Then $\tilde{v}_{i-2}=\ol{v_0+v_{i-2}}$ is always nonzero. This time we set $\mu^1 = \lambda\pch{i,i-2}{i-3,i-3}$ and $\mu^2 =  \lambda\pch{i}{i-4}$. Here we only deal with the case when $i\geq 4$, but $i=3$ case is totally analogous. (If $i=3$, $\mu^2$ is not well-defined and one needs to be careful about the condition $\br{\psi(v), \psi(v)}=0$ which is automatic for $i\geq 4$.)

First we observe the following. (Also see Figure \ref{fig:rest}.)
\begin{lem} \label{lem:indep1}$\cX(X_{\mu^1,l})$ and $\cX(X_{\mu^2,l})$ become unchanged if we put $V_j = 0$ for $j\neq i, i-2$ and $V_i'=0$, i.e. if $V=V_i'' \oplus V_{i-2}$.
\end{lem}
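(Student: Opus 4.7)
The plan is to show that the extra components of $v \in \ker N$ beyond those in $V_{i-2} \oplus V_i''$ contribute only trivial affine factors to both $X_{\mu^1, l}$ and $X_{\mu^2, l}$, and that the discrimination between the two cases depends only on data intrinsic to $V_i'' \oplus V_{i-2}$. First, I would use the analysis preceding the lemma: since $\tilde{v}_{i-2} = \ol{v_0 + v_{i-2}}$ is always nonzero in Case $A$, the index $r'$ satisfies $r' \leq i-2$, and the requirement $r' = i-2$ (needed for $\mu \in \{\mu^1, \mu^2\}$) is equivalent to $\ol{v_j} = 0$ for all $j < i-2$. Because $l \subset V_i$ and $V_j \cap l = 0$ for such $j$, this forces $v_j = 0$, yielding
\[
\psi^{-1}(X_{\mu^1,l} \sqcup X_{\mu^2,l}) = \bigoplus_{j \geq i-2}\ker_j N.
\]

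Next, I would introduce the projection $\Phi : v \mapsto v_{i-2} + v_i''$ onto $\ker_{i-2} N \oplus \ker_i'' N = \ker_{i-2} N \oplus l$. This is a trivial vector bundle with fiber $\ker_{i-1}N \oplus \ker_i' N \oplus \bigoplus_{j > i}\ker_j N$, which is an affine space and hence has Euler characteristic $1$. The classification between $\mu^1$ and $\mu^2$ is governed by whether $\spn{\tilde{v}_{i-2}} \in H_{i-2} \subset \bP(\ker_{i-2}\wt{N})$, and $\tilde{v}_{i-2}$ depends only on $v_{i-2}$. Therefore $\Phi$ restricts to a trivial affine bundle with contractible fiber over its image in each of $X_{\mu^1,l}$ and $X_{\mu^2,l}$; combined with the affine bundle $\psi$ (fiber $l \cong \mathbb{A}^1$), this gives $\cX(X_{\mu^j, l}) = \cX(\Phi(\psi^{-1}(X_{\mu^j, l})))$ for $j = 1, 2$.

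Finally, I would verify that this image is determined entirely by data intrinsic to $V_i'' \oplus V_{i-2}$. The quadric $H_{i-2}$ arises from the bilinear form on $\wt{V_{i-2}} = ((V_i'' \cap l^\perp)\oplus V_{i-2})/l$, and by the orthogonality of the decomposition $V = \bigoplus_j V_j$ (together with the orthogonal split $V_i = V_i' \oplus V_i''$), this form coincides with the one obtained by restricting $\br{\cdot,\cdot}$ to $V_i'' \oplus V_{i-2}$. Hence performing the same construction with $V$ replaced by $V_i'' \oplus V_{i-2}$ produces the same image in $\ker_{i-2} N \oplus l$ and the same Euler characteristics, which is exactly the content of the lemma.

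The main subtlety to check is that the isotropy condition $\br{\psi(v), \psi(v)} = 0$ does not introduce additional constraints coupling the ``extra'' components to those in $V_i'' \oplus V_{i-2}$; this is automatic for $i \geq 4$ as noted in the preceding proof, and can be handled by a direct but completely analogous computation for $i = 3$.
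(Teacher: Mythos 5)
Your proof is correct and follows essentially the same strategy as the paper's: both arguments first identify $\psi^{-1}(X_{\mu^1,l}\sqcup X_{\mu^2,l})=\bigoplus_{j\geq i-2}\ker_j N$ and then factor through a fibration with affine (Euler-characteristic-one) fibers onto data intrinsic to $V_i''\oplus V_{i-2}$. The only cosmetic difference is the target of that reduction map --- the paper uses $\phi':v\mapsto \spn{\tilde{v}_{i-2}}\in\bP(\ker_{i-2}\wt{N})$, while you use the linear projection onto $\ker_{i-2}N\oplus l$ --- but the two are interchangeable here.
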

\begin{proof}[Proof of Lemma \ref{lem:indep1}]
Again we define
$$\psi: \ker N \rightarrow \{\V \subset V  \mid \br{\V,\V}=0,\dim \V=2, N\V = l\subset \V\}: v \mapsto \spn{v_0+v}\oplus l.$$
Then similarly to above we have
$$\psi^{-1}(X_{\mu^1,l}\sqcup X_{\mu^2,l}) = \bigoplus_{j\geq i-2}\ker_{j} N.$$
Also we let
$$\phi' : \bigoplus_{j\geq i-2} \ker_j N \rightarrow \bP(\ker_{i-2} \wt{N}) : v \mapsto \spn{\tilde{v}_{i-2}}.$$
The image of $\phi'$ is $\bP(\ker_{i-2} \wt{N}) - \bP((\ker_{i-2} N \oplus l)/l)$ which is isomorphic to an affine space, and when restricted to the image $\phi'$ is a fiber bundle with fiber isomorphic to $\bigoplus_{j>i-2}\ker_j N$. Also by the same reason as above there exists $Y_{\mu^1,l}, Y_{\mu^2,l} \subset \bP(\ker_{i-2} \wt{N})$ such that
\begin{gather*}
\phi'^{-1}(Y_{\mu^1,l}) =\psi^{-1}(X_{\mu^1,l}), \qquad \phi'^{-1}(Y_{\mu^2,l}) =\psi^{-1}(X_{\mu^2,l}).
\end{gather*}
Then $\cX(X_{\mu^1,l}) = \cX(Y_{\mu^1,l})$ and $\cX(X_{\mu^2,l}) = \cX(Y_{\mu^2,l})$. We recall the orthogonal decomposition $V= \left(\bigoplus_{j\neq i}V_j\right) \oplus V_i' \oplus V_i''$. It is clear that the varieties $Y_{\mu^1,l}, Y_{\mu^2,l}$ only depend (up to isomorphism) on the restriction of $N$ to $V_i''\oplus V_{i-2}$ and $l \subset V_i''$. As $\cX(X_{\mu^1,l}) = \cX(Y_{\mu^1,l})$ and $\cX(X_{\mu^2,l}) = \cX(Y_{\mu^2,l})$, the claim follows.
\end{proof}
In other words, we may assume that $N$ has Jordan type $\lambda=((i-2)^{m_{i-2}}i^1)$ and $l \subset \ker_i N$. Note that $i$ should be \tyB odd \tyC even in this case. Following the idea of \cite[E.93]{springer-steinberg}, we choose vectors $e_0, e_1, \cdots, e_{m_{i-2}}\in V$ that satisfy the following.
\begin{enumerate}[label=$\bullet$]
\item $\{ N^j e_0 \mid 0 \leq j \leq i-1\} \cup \{N^{j'} e_{p} \mid  0\leq j' \leq i-3, 1\leq p \leq m_{i-2}\}$ is a basis of $V$.
\item The value of $\br{\ ,\ }$ on any pair of the elements above are zero except
$$(-1)^j\br{N^j e_0, N^{i-1-j}e_0}= (-1)^{j'}\br{N^{j'} e_p, N^{i-3-j'}e_p}=1$$
for $0 \leq j \leq i-1, 0\leq j' \leq i-3, 1\leq p \leq m_{i-2}$.
\item $l = \spn{N^{i-1}e_0}$.
\end{enumerate}
\begin{figure}
\centering
\begin{subfigure}{0.45\textwidth}
\centering
\begin{tikzpicture}[scale=1]
\tikzset{bound/.style={
	thick
}} 
\tikzset{ins/.style={
	dotted
}} 

	\yellowfill{0,-3}{1,-4}
	\hatch{0,0}{6,-1}
	\hatch{0,-1}{4,-3}
	\hatch{0,-3}{3,-6}
	\hatch{0,-6}{2,-8}
	\draw[line width=0.5mm] (3.3,-3.3)--(3.7,-3.7);
	\draw[line width=0.5mm] (3.3,-3.7)--(3.7,-3.3);

	\draw[bound] (0,0) -- (6,0);
	\draw[bound] (4,-1) -- (6,-1);
	\draw[bound] (3,-4) -- (4,-4);
	\draw[bound] (2,-6) -- (3,-6);
	\draw[bound] (0,-8) -- (2,-8);
	\draw[bound] (0,0) -- (0,-8);
	\draw[bound] (2,-6) -- (2,-8);
	\draw[bound] (3,-4) -- (3,-6);
	\draw[bound] (4,-1) -- (4,-4);
	\draw[bound] (6,0) -- (6,-1);
	
	\draw[ins] (0,-1) -- (4,-1);
	\draw[ins] (0,-2) -- (4,-2);
	\draw[ins] (0,-3) -- (4,-3);
	\draw[ins] (0,-4) -- (3,-4);
	\draw[ins] (0,-5) -- (3,-5);
	\draw[ins] (0,-6) -- (2,-6);
	\draw[ins] (0,-7) -- (2,-7);
	\draw[ins] (0,-8) -- (2,-8);
	\draw[ins] (1,0) -- (1,-8);
	\draw[ins] (2,0) -- (2,-6);
	\draw[ins] (3,0) -- (3,-4);
	\draw[ins] (4,0) -- (4,-1);
	\draw[ins] (5,0) -- (5,-1);

	\orangebox{0,0}{6,-1}
	\orangebox{0,-1}{4,-4}
	\orangebox{0,-4}{3,-6}
	\orangebox{0,-6}{2,-8}
	
	\draw[line width=1mm, orange, dotted] (0,-3) -- (4,-3);
	\node[] at (3,-0.5) {\LARGE $V_6$};
	\node[] at (2,-2) {\LARGE $V_4'$};
	\node[] at (2,-3.5) {\LARGE $V_4''$};
	\node[] at (1.5,-5) {\LARGE $V_3$};
	\node[] at (1,-7) {\LARGE $V_2$};

	\node[] at (0.5,-3.5) {\LARGE $l$};

\end{tikzpicture}
\end{subfigure}{\Huge $\Rightarrow$}%
\begin{subfigure}{0.3\textwidth}
\centering
\begin{tikzpicture}[scale=1]
\tikzset{bound/.style={
	thick
}} 
\tikzset{ins/.style={
	dotted
}} 

	\yellowfill{0,0}{1,-1}
	\hatch{0,0}{3,-1}
	\hatch{0,-1}{2,-3}

	\draw[line width=0.5mm] (3.3,-0.3)--(3.7,-0.7);
	\draw[line width=0.5mm] (3.3,-0.7)--(3.7,-0.3);

	\draw[bound] (0,0) -- (4,0);\draw[bound] (2,-1) -- (4,-1);\draw[bound] (0,-3) -- (2,-3);
	\draw[bound] (0,0) -- (0,-3);\draw[bound] (2,-1) -- (2,-3);\draw[bound] (4,0) -- (4,-1);
	
	\draw[ins] (0,-1) -- (2,-1);\draw[ins] (0,-2) -- (2,-2);
	\draw[ins] (1,0) -- (1,-3);\draw[ins] (2,0) -- (2,-1);\draw[ins] (3,0) -- (3,-1);

	\orangebox{0,0}{4,-1}
	\orangebox{0,-1}{2,-3}

	\node[] at (2,-0.5) {\LARGE $V_4''$};
	\node[] at (1,-2) {\LARGE $V_2$};

	\node[] at (0.5,-0.5) {\LARGE $l$};
\end{tikzpicture}
\end{subfigure}
\caption{Setting $V_6=V_4'=V_3=0$ in Case $A$, $i=4$ for $G=Sp_{28}(\C)$ and $\lambda=(2^23^24^36^1)$}
\label{fig:rest}
\end{figure}
Also we may assume that $v_0 = N^{i-2}e_0$. Then $\psi$ induces an isomorphism of varieties
$$\psi|_{\ker_{i-2} N} : \ker_{i-2} N \rightarrow \{\V \subset V \mid \br{\V, \V}=0, \dim \V = 2, N\V =l\subset \V\} : v \mapsto \spn{v_0+v}\oplus l.$$
We identify $\ker_{i-2} N$ with $\A^{m_{i-2}}$ using the basis $\{N^{i-3}e_p\}_{1\leq p \leq m_{i-2}}$. Then by direct calculation, for any $v \in \ker_{i-2} N$ such that $v = \sum_{p=1}^{m_{i-2}}a_p N^{i-3}e_p$ we have (see also \cite[p.249]{sho83})
$$\psi(v) \in X_{\mu^1,l} \Leftrightarrow 1+a_1^2+\cdots+a_{m_{i-2}}^2=0,\qquad \psi(v) \in X_{\mu^2,l} \Leftrightarrow 1+a_1^2+\cdots+a_{m_{i-2}}^2\neq0.$$
But for any $m \in \N$ it is easy to see that
$$\cX\left(\{ (a_1, \cdots, a_m) \in \A^{m} \mid a_1^2+\cdots+a_m^2+1=0\}\right) = 1-(-1)^{m}.$$
Therefore we have $\cX(X_{\mu^1,l}) = 1-(-1)^{m_{i-2}}$ and $\cX(X_{\mu^2,l}) = \cX(\A^{m_{i-2}}) - \cX(X_{\mu^1,l}) = (-1)^{m_{i-2}}$.

We proceed with Case $B(a)$ and $B(b)$ for $i \geq 2$ which are similar to above. Again $\tilde{v}_{i-1}=\ol{v_0+v_{i-1}}$ is always nonzero. This time we set $\mu^1 = \lambda\pch{i,i}{i-2,i-2}$ and $\mu^2 =  \lambda\pch{i,i}{i-1,i-3}$. Here we only deal with the case when $i\geq 3$, but $i=2$ case is totally analogous. (If $i=2$, $\mu^2$ is not well-defined and one needs to be careful about the condition $\br{\psi(v), \psi(v)}=0$, which is automatic when $i\geq 3$.)

First note that we may put $V_j = 0$ for $j \neq i, i-1$ and $V_i'=0$, which can be similarly justified to Lemma \ref{lem:indep1}. Therefore, it suffices to consider the case when $\lambda = ((i-1)^{m_{i-1}}i^2)$ and $l \subset \ker_i N$. If $i$ is \tyB odd \tyC even, then for any line $\tilde{l} \subset \ker_{i-1} \wt{N}$, $\wt{N}|_{\tilde{l}^\perp/\tilde{l}}$ has Jordan type $(\lambda\pch{i,i}{i-1,i-1})\pch{i-1,i-1}{i-2,i-2} = \lambda\pch{i,i}{i-2,i-2}$ by Lemma \ref{lem:determ}. Thus $X_{\mu^2,l} = \emptyset$ and $\psi(\ker N) = X_{\mu^1,l}$, i.e.
$$\psi: \ker N \rightarrow X_{\mu^1,l}: v \mapsto \spn{v_0+v}\oplus l.$$
is an affine bundle with fiber $l$. Thus
$$\cX(X_{\mu^1,l})  = \cX\left(\ker N\right) = 1,$$
which proves the claim in this case.

Now we assume that $i$ is \tyB even \tyC odd. Using the argument in \cite[E.93]{springer-steinberg}, we choose $f_0, g_0, e_1, \cdots, e_{m_{i-1}}$ such that the following conditions hold.
\begin{enumerate}[label=$\bullet$]
\item $\{ N^j f_0 \mid 0 \leq j \leq i-1\} \cup\{ N^j g_0 \mid 0 \leq j \leq i-1\} \cup \{N^{j'} e_{p} \mid  0\leq j' \leq i-2, 1\leq p \leq m_{i-1}\}$ is a basis of $V$.
\item The value of $\br{\ ,\ }$ on any pair of the elements above is zero except
$$(-1)^j\br{N^j f_0, N^{i-1-j}g_0}=\epsilon(-1)^j\br{N^{i-1-j} g_0, N^{i}f_0}= (-1)^{j'}\br{N^{j'} e_p, N^{i-2-{j'}}e_p}=1$$
for $0 \leq j \leq i-1, 1\leq p \leq m_{i-1}, 0\leq j' \leq i-2$. (Here $\epsilon$ equals \tyB 1 \tyC -1.)
\item $l = \spn{N^{i-1}f_0}$.
\end{enumerate}
Also we may assume that $v_0 = N^{i-2} f_0$. Then $\psi$ induces an isomorphism of varieties
\begin{align*}
\psi|_{\ker_{i-1} N\oplus \spn{N^{i-1} g_0}} &: \ker_{i-1} N\oplus \spn{N^{i-1} g_0} \rightarrow \{\V \subset V \mid \br{\V,\V}=0, \dim\V = 2, N\V =l\subset \V\} 
\\&: v \mapsto \spn{v_0+v}\oplus l.
\end{align*}
We identify $\ker_{i-1} N\oplus \spn{N^{i-1} g_0}$ with $\A^{m_{i-1}+1}$ using the basis $\{N^{i-2}e_p\}_{1\leq p \leq m_{i-2}}\cup\{N^{i-1} g_0\}$. Then by direct calculation, for any $v \in \ker_{i-1} N\oplus \spn{N^{i-1} g_0}$ such that $v = \sum_{p=1}^{m_{i-1}}a_p N^{i-2}e_p+bN^{i-1} g_0$ we have
$$\psi(v) \in X_{\mu^1,l} \Leftrightarrow 2b+a_1^2+\cdots+a_{m_{i-2}}^2=0, \qquad \psi(v) \in X_{\mu^2,l} \Leftrightarrow 2b+a_1^2+\cdots+a_{m_{i-2}}^2\neq 0.$$
But it is easy to see that for $m \in \N$ we have
$$\cX\left(\{ (b, a_1, \cdots, a_m) \in \A^{m+1} \mid a_1^2+\cdots+a_m^2+2b=0\}\right) = 1.$$
Therefore we have $\cX(X_{\mu^1,l}) = 1$ and $\cX(X_{\mu^2,l}) = \cX(\A^{m_{i-1}+1}) - \cX(X_{\mu^1,l}) = 0$. It completes the proof of the proposition.
\end{proof}

We combine Proposition \ref{prop:k=1} and \ref{prop:k=2}. For $\mu \vdash \dim V-4$, define
$$X_\mu \colonequals \{ \V \subset V \mid \br{\V,\V}=0, \dim \V = 2, N\V \subset \V, N|_{\V} \textup{ is regular, } N|_{\V^\perp/\V} \textup{ has Jordan type } \mu\}.$$
Then we have a decomposition $X_\mu = \sqcup_{\mu' \vdash \dim V-2}X_{\mu, \mu'}$ where
$$X_{\mu,\mu'} \colonequals \{ \V \in X_\mu \mid N|_{(N\V)^\perp/N\V} \textup{ has Jordan type } \mu'\}.$$
For $\mu' \vdash \dim V-2$ we similarly define
\begin{gather*}
X_{\mu'} = \{ l \subset V \mid \br{l,l}=0, \dim l = 1,l \subset \ker N, N|_{l^\perp/l} \textup{ has Jordan type } \mu'\},
\end{gather*}
then the natural map $X_{\mu, \mu'} \rightarrow X_{\mu'} : \V \mapsto N\V$ is a fiber bundle whose fiber at $l \in X_{\mu'}$ is $X_{\mu, l}$ defined in Proposition \ref{prop:k=2}. Thus we have
$$\cX(X_{\mu, \mu'}) = \cX(X_{\mu'})\cX(X_{\mu, l_{\mu'}}), \qquad\textup{ i.e. }\qquad \cX(X_\mu)=\sum_{\mu' \vdash \dim V -2} \cX(X_{\mu'})\cX(X_{\mu, l_{\mu'}})$$
where $l_{\mu'}$ is any element in $X_{\mu'}$. Using Proposition \ref{prop:k=1} and \ref{prop:k=2}, we get the following result.
\begin{prop} For $\mu \vdash \dim V -4$,
\begin{align*}
&\textup{if } \mu = \lambda \pch{i}{i-4} \textup{ for $i \geq 4$}, &&\cX(X_\mu) = \frac{1-(-1)^{m_i}}{2}(-1)^{m_{i-2}}
\\&\textup{if } \mu = \lambda \pch{i,i-2}{i-3,i-3} \textup{ for $i \geq 3$}, && \cX(X_\mu) = \frac{1-(-1)^{m_i}}{2}(1-(-1)^{m_{i-2}})
\\&\textup{if } \mu = \lambda \pch{i,i}{i-2,i-2} \textup{ for $i \geq 2$},&& \cX(X_\mu) = 2\floor{\frac{m_i}{2}}
\\&\textup{otherwise}, && \cX(X_\mu) = 0.
\end{align*}
\end{prop}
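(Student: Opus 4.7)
The plan is to combine Propositions \ref{prop:k=1} and \ref{prop:k=2} via the identity
$$\cX(X_\mu)=\sum_{\mu'\vdash \dim V-2}\cX(X_{\mu'})\,\cX(X_{\mu,l_{\mu'}})$$
that follows from the decomposition $X_\mu=\bigsqcup_{\mu'}X_{\mu,\mu'}$ together with the fiber-bundle structure $X_{\mu,\mu'}\to X_{\mu'}$ with fiber $X_{\mu,l}$ recorded just before the statement. The strategy is to read off which $\mu'$ and which $\mu$ make each factor nonzero, and then to multiply.

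By Proposition \ref{prop:k=1}, the only $\mu'$ with $\cX(X_{\mu'})\neq 0$ are those of the form $\lambda\pch{r}{r-2}$, with value $\frac{1-(-1)^{m_r}}{2}$ (placing us in Case $A$ of Proposition \ref{prop:k=2}), or $\lambda\pch{r,r}{r-1,r-1}$, with value $2\floor{m_r/2}$ (placing us in Case $B$). For each such $\mu'$, Proposition \ref{prop:k=2} then forces $\cX(X_{\mu,l_{\mu'}})\neq 0$ to occur only in Case $A(a)$, $A(b)$, or $B(a)$, which correspond to $\mu=\lambda\pch{r,r-2}{r-3,r-3}$, $\mu=\lambda\pch{r}{r-4}$, and $\mu=\lambda\pch{r,r}{r-2,r-2}$, with Euler characteristics $1-(-1)^{m_{r-2}}$, $(-1)^{m_{r-2}}$, and $1$, respectively. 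Multiplying the two Euler characteristics in each of these three scenarios matches the three explicit formulas in the proposition, with all other $\mu$ giving $\cX(X_\mu)=0$.

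The main (and mild) obstacle is to verify that for each admissible output form of $\mu$ there is a \emph{unique} contributing pair $(\mu',\text{case})$, so that no further summation is needed. This is a short multiplicity-shift check: the operation $\pch{i}{i-4}$ changes $m_i$ by $-1$ and $m_{i-4}$ by $+1$; the operation $\pch{i,i-2}{i-3,i-3}$ changes $m_i$ and $m_{i-2}$ each by $-1$ and $m_{i-3}$ by $+2$; and $\pch{i,i}{i-2,i-2}$ changes $m_i$ by $-2$ and $m_{i-2}$ by $+2$. These three patterns differ in both the set of affected indices and the parities of the shifts, so no two distinct $(\mu',\text{case})$ can produce the same $\mu$. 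Once this uniqueness is in place the proposition follows at once by substitution.
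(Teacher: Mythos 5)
Your proposal is correct and follows exactly the route the paper intends: the paper itself only writes the identity $\cX(X_\mu)=\sum_{\mu'}\cX(X_{\mu'})\cX(X_{\mu,l_{\mu'}})$ and then states the proposition without further argument, and your multiplication of the entries from Propositions~\ref{prop:k=1} and~\ref{prop:k=2}, together with the multiplicity-shift check ensuring each output $\mu$ has a unique contributing $\mu'$, is exactly the (omitted) combinatorial verification.
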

Combined with Lemma \ref{lem:calc}, this is now a natural consequence.
\begin{prop} Let $\sym_2 \times W' \subset W$ be the maximal parabolic subgroup where $W'$ is of the same type as $W$. If $c \in \sym_2$ is a 2-cycle, then for $\lambda=(1^{m_1}2^{m_2}\cdots)$ we have
\begin{align*}  
&\Res^W_{c\cdot W'}\ch\tsp{\lambda} = \sum_{i \geq 2, m_i \geq 2} 2\floor{\frac{m_i}{2}} \ch\tsp{\lambda\pch{i,i}{i-2, i-2}}
\\&\qquad + \sum_{i\geq 4, m_i \textup{ odd}} (-1)^{m_{i-2}}\ch \tsp{\lambda\pch{i}{i-4}} + \sum_{i\geq 3, m_i, m_{i-2} \textup{ odd}} 2 \ch\tsp{\lambda\pch{i,i-2}{i-3,i-3}}.
\end{align*}
Here  $\tsp{\lambda\pch{i,i}{i-2, i-2}}, \tsp{\lambda\pch{i}{i-4}}, \tsp{\lambda\pch{i,i-2}{i-3,i-3}}$ are total Springer representations of $W'$.
\end{prop}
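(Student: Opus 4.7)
My plan is to combine the previous proposition with Lemma \ref{lem:calc}, applied to the maximal parabolic $P \subset G$ whose Weyl group decomposes as $\sym_2 \times W'$. Recall that $\cP = G/P$ is the Grassmannian of isotropic $2$-dimensional subspaces of $V$, and $\cP_N$ consists of those $\V$ with $N\V \subset \V$. The first step is to stratify $\cP_N$ by two data: the Jordan type $\mu$ of $N|_{\V^\perp/\V}$, and whether $N|_\V$ has Jordan type $(2)$ (the regular case) or $(1^2)$. The strata on which $N|_\V$ is regular are exactly the $X_\mu$ introduced just before the previous proposition; the complementary strata (where $N\V = 0$) will be discarded automatically by Lemma \ref{lem:calc}.

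Next I would verify the hypotheses of Lemma \ref{lem:calc}. On any such stratum the fiber decomposes as $\pi_P^{-1}(\V) \simeq \B(\tilde L)_{N|_\V} \times \B(L')_{N|_{\V^\perp/\V}}$, and the $W_P$-module structure of its cohomology depends only on the Jordan types of $N|_\V$ and of $N|_{\V^\perp/\V}$, both of which are constant along the stratum. Writing $w = cw'$ with $c \in \sym_2$ a $2$-cycle and $w' \in W'$, Lemma \ref{lem:calc} then yields
\[
\ch \tsp{\lambda}(cw') = \sum_{\mu} \cX(X_\mu)\, \ch\tsp{\mu}(w'),
\]
where the sum runs over $\mu \vdash \dim V - 4$ with $X_\mu \neq \emptyset$. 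The strata where $N|_\V$ is non-regular are killed because for such $\tilde N$ the representation $H^*(\B(\tilde L)_{\tilde N})$ of $\sym_2$ is induced from the trivial subgroup, hence its character vanishes on the $2$-cycle $c$.

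The final step is to substitute the values of $\cX(X_\mu)$ computed in the previous proposition. Three families of $\mu$ give nonzero contributions: $\mu = \lambda\pch{i,i}{i-2,i-2}$ for $i \geq 2$, $m_i \geq 2$, contributing $2\floor{m_i/2}$; $\mu = \lambda\pch{i}{i-4}$ for $i \geq 4$ with $m_i$ odd, contributing $\tfrac{1-(-1)^{m_i}}{2}(-1)^{m_{i-2}} = (-1)^{m_{i-2}}$; and $\mu = \lambda\pch{i,i-2}{i-3,i-3}$ for $i \geq 3$ with both $m_i$ and $m_{i-2}$ odd, contributing $\tfrac{1-(-1)^{m_i}}{2}(1-(-1)^{m_{i-2}}) = 2$. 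Collecting the three families reproduces the three sums in the statement.

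The only nontrivial point is the constant-cohomology hypothesis of Lemma \ref{lem:calc}, and this in turn reduces to the standard fact that the Springer representation of a reductive Levi depends only on the Jordan type of the nilpotent element as an abstract Weyl group module. Granting this, the argument is essentially bookkeeping: everything has been localized onto the $X_\mu$ by the previous section, and the present proposition is just the translation of that geometric data, via Lemma \ref{lem:calc}, into the character of a restricted total Springer representation.
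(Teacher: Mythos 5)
Your proof is correct and follows essentially the same route as the paper, which simply cites Lemma \ref{lem:calc} together with the preceding Euler-characteristic computation; you have just spelled out the bookkeeping (stratification, fiber factorization, why the non-regular strata drop out, and the evaluation of $\cX(X_\mu)$ case by case) that the paper leaves implicit.
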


\subsection{General case} \label{subsec:general} We consider the general case when $1\leq k\leq n$. For $\mu \vdash 2n-2k$, we define
$$X_\mu = \{ \V \subset V \mid \br{\V, \V}=0, \dim \V = k, N\V \subset \V, N|_\V \textup{ is regular}, N|_{\V^\perp/\V} \textup{ has Jordan type } \mu\}.$$
We again use Proposition \ref{prop:k=1} and \ref{prop:k=2}. Let $\mathcal{I}=\{(\mu^1, \mu^2, \cdots, \mu^{k-1}) \mid \mu^i \vdash 2n-2i\}.$ Then there exists a decomposition $X_\mu = \bigsqcup_{(\mu^1, \cdots, \mu^{k-1}) \in \mathcal{I}}X_{\mu, (\mu^1, \cdots, \mu^{k-1})}$ where
$$X_{\mu, (\mu^1, \cdots, \mu^{k-1})} = \{ \V \in X_{\mu} \mid N|_{(N^{k-i}\V)^\perp/(N^{k-i}\V)} \textup{ has Jordan type } \mu^i \textup{ for } 1 \leq i \leq k-1\}.$$
Also there exist natural morphisms
$$X_{\mu, (\mu^1, \cdots, \mu^{k-1})}\xrightarrow{N} X_{\mu^{k-1}, (\mu^1, \cdots, \mu^{k-2})}\xrightarrow{N} \cdots \xrightarrow{N} X_{\mu^{2}, (\mu^1)}\xrightarrow{N} X_{\mu^1}$$
each of which is a fiber bundle with fiber isomorphic to a variety defined in Proposition \ref{prop:k=2}, and $X_{\mu^1}$ is as in Proposition \ref{prop:k=1}. Now the following proposition is an easy combinatorial exercise.
\begin{prop}\label{prop:general} Let $\mu \vdash 2n-2k$. Then,
\begin{align*}
&\mu = \lambda \pch{i}{i-2k}, &&\cX(X_\mu) = \frac{1-(-1)^{m_i}}{2}(-1)^{m_{i-2}+m_{i-4}+\cdots+m_{i-2k+2}}
\\&\begin{aligned}
&\mu = \lambda \pch{i,j}{\frac{i+j}{2}-k,\frac{i+j}{2}-k} \textup{ for } i, j \textup{ s.t.}
\\&i-j \in 2\Z, 0<\frac{i-j}{2}<k\leq \frac{i+j}{2},
\end{aligned}&& \cX(X_\mu) = \frac{1-(-1)^{m_i}}{2}(-1)^{m_{i-2}+m_{i-4}+\cdots+m_{j+2}}(1-(-1)^{m_{j}})
\\&\mu = \lambda \pch{i,i}{i-k,i-k}, && \cX(X_\mu) = 2\floor{\frac{m_i}{2}}
\\&\textup{otherwise}, && \cX(X_\mu) = 0.
\end{align*}
\end{prop}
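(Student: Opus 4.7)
The plan is to exploit the fiber bundle structure already set up, reducing the computation of $\cX(X_\mu)$ to an iterated application of Propositions~\ref{prop:k=1} and~\ref{prop:k=2}. By multiplicativity of Euler characteristic along the chain
\[
X_{\mu,(\mu^1,\ldots,\mu^{k-1})} \xrightarrow{N} X_{\mu^{k-1},(\mu^1,\ldots,\mu^{k-2})} \xrightarrow{N} \cdots \xrightarrow{N} X_{\mu^1},
\]
we obtain
\[
\cX(X_\mu) \;=\; \sum_{(\mu^1,\ldots,\mu^{k-1}) \in \mathcal I} \cX(X_{\mu^1}) \prod_{j=2}^{k} \cX(F_j),
\]
where $F_j$ denotes the fiber of the $j$-th map, whose Euler characteristic is read off from Proposition~\ref{prop:k=2} applied to the intermediate partition $\mu^{j-1}$ and to the appropriate block index.

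Next I would tabulate the allowed elementary transitions. From Proposition~\ref{prop:k=1}, the initial step $\lambda \to \mu^1$ is either an A-move $\mu^1 = \lambda\pch{r}{r-2}$ (factor $\frac{1-(-1)^{m_r}}{2}$) or a B-move $\mu^1 = \lambda\pch{r,r}{r-1,r-1}$ (factor $2\lfloor m_r/2 \rfloor$). From Proposition~\ref{prop:k=2}, each subsequent transition is one of A(a), A(b), B(a) with factors $1-(-1)^{m_{i-2}}$, $(-1)^{m_{i-2}}$, $1$ respectively, where the $m$'s refer to multiplicities in the \emph{current} partition $\mu^{j-1}$. A chain $\lambda=\mu^0 \to \mu^1 \to \cdots \to \mu^k=\mu$ contributes to $\cX(X_\mu)$ only if every transition along it appears in this list.

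The bulk of the proof is a combinatorial enumeration of chains. I would argue that only three families of chains have nonzero product. The first consists of a single A-move followed by iterated A(b) moves applied to the same shrinking block of size $i$; this produces the target $\mu=\lambda\pch{i}{i-2k}$ with product telescoping to $\frac{1-(-1)^{m_i}}{2}(-1)^{m_{i-2}+m_{i-4}+\cdots+m_{i-2k+2}}$. The second first shrinks the block of size $i$ via iterated A(b) moves until its size meets $j+2$, at which point one A(a) move merges the pair of blocks of sizes $j+2$ and $j$ into two equal blocks, and thereafter B(a) moves shrink both blocks uniformly; this gives $\mu=\lambda\pch{i,j}{(i+j)/2-k,(i+j)/2-k}$ with the combined factor $\frac{1-(-1)^{m_i}}{2}(-1)^{m_{i-2}+\cdots+m_{j+2}}(1-(-1)^{m_j})$. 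The third begins with the B-move line step $\lambda\to\lambda\pch{i,i}{i-1,i-1}$ and proceeds by iterated B(a) moves, yielding $\mu=\lambda\pch{i,i}{i-k,i-k}$ with factor $2\lfloor m_i/2\rfloor$. Any other target $\mu$ cannot be reached by a chain that avoids a zero-factor step, proving the last clause.

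The main obstacle, and the step requiring the most attention, is sign bookkeeping: when Proposition~\ref{prop:k=2} Case A(b) is applied to the intermediate partition $\mu^{j-1}$, the resulting factor $(-1)^{m_{i-2}}$ reads off the multiplicity of size $i-2$ in $\mu^{j-1}$, not in $\lambda$. I would check that, in each contributing chain, the sizes at which the signs are read are untouched by the preceding shrinks, since the shrinking block is the unique survivor of the original block of size $i$ and its transient sizes are distinct from those whose multiplicities enter the signs. Under this verification the iterated signs assemble to $(-1)^{m_{i-2}+m_{i-4}+\cdots+m_{i-2k+2}}$ (and the merging sign $1-(-1)^{m_j}$ in the second family) in the multiplicities of $\lambda$, matching the formula of the proposition.
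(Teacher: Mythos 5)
Your plan is the right one and is exactly what the paper has in mind (the paper disposes of this with ``easy combinatorial exercise''): compute $\cX(X_\mu)$ as a sum over chains, with one factor from Proposition~\ref{prop:k=1} and $k-1$ factors from Proposition~\ref{prop:k=2}, and then enumerate the chains that avoid a zero factor. The enumeration into your three families is correct, and the telescoping of the $\pch{}{}$-moves is correct. The issue is an off-by-one in the sign bookkeeping, and it is not harmless.

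When you compute $\cX(F_j)$, the fiber of $X_{\mu^j,(\mu^1,\ldots,\mu^{j-1})}\to X_{\mu^{j-1},(\mu^1,\ldots,\mu^{j-2})}$ over a point $\W_0$, you pass to the quotient $(N\W_0)^\perp/(N\W_0)$. On that quotient the induced nilpotent has Jordan type $\mu^{j-2}$ (by definition of the indexing: $N\W_0 = N^{(j-1)-(j-2)}\W_0$), and the image of $\W_0$ is an isotropic line $\ell$ there with $\wt N|_{\ell^\perp/\ell}$ of type $\mu^{j-1}$. So the fiber is $X_{\mu^j,\ell}$ in the sense of Proposition~\ref{prop:k=2} with ambient Jordan type $\mu^{j-2}$, not $\mu^{j-1}$; the proposition's factors $(-1)^{m_{r-2}}$, $1-(-1)^{m_{r-2}}$ read multiplicities of $\mu^{j-2}$. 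Your claim that ``the sizes at which the signs are read are untouched by the preceding shrinks'' is precisely the statement that fails if you read from $\mu^{j-1}$: in family~1 at step $j$, the Case~A parameter is $r=i-2(j-2)$, so the sign is read at size $r-2=i-2(j-1)$, and the shrinking block in $\mu^{j-1}=\lambda\pch{i}{i-2(j-1)}$ sits exactly there, bumping the multiplicity by $1$. Each A(b) step would then contribute an extra $(-1)$, and the product would differ from the stated formula by $(-1)^{k-1}$ (already wrong for $k=2$, where the answer is forced by Proposition~\ref{prop:k=2} itself). With the correct ambient $\mu^{j-2}=\lambda\pch{i}{i-2(j-2)}$, the shrinking block sits two sizes above $r-2$ and your ``untouched'' observation is genuinely true, recovering $(-1)^{m_{i-2}+\cdots+m_{i-2k+2}}$. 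The same correction applies to the A(b) run in family~2 (the merging factor itself then reads $m_j$ from $\mu^{d-1}=\lambda\pch{i}{j+2}$, which equals $m_j$ in $\lambda$, as needed).

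Once the ambient index is fixed to $\mu^{j-2}$, the rest of your argument --- the three families, the A(a) step switching the chain from Case~A to Case~B, and the conclusion that every other target hits a zero factor --- is correct and matches the formula.
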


For \tyB odd \tyC even $i,j$ such that $i>j$, $\lambda/\left(\lambda\pch{i}{j}\right)$ is a horizontal strip and ${\h{\lambda/\left(\lambda\pch{i}{j}\right)}}\equiv {m_{i-2}+m_{i-4}+\cdots+m_{j+2}} \mod 2$ since $m_{i-1}, m_{i-3}, \cdots, m_{j+3},m_{j+1}$ are even. Therefore together with Lemma \ref{lem:calc} we have the following main result of this section.
\begin{thm} \label{thm:geocalc} Assume $1\leq k \leq n$. Let $\sym_k \times W' \subset W$ be the maximal parabolic subgroup where $W'$ is of the same type as $W$. If $c\in \sym_k$ is a $k$-cycle, then for $\lambda=(1^{m_1}2^{m_2}\cdots)$ we have
\begin{align*} \Res^W_{c\cdot W'} \ch\tsp{\lambda} =& \sum_{i\geq k, m_i \geq 2} 2\floor{\frac{m_i}{2}}\ch \tsp{\lambda\pch{i,i}{i-k, i-k}}
\\&+ \sum_{i\geq 2k, m_i \textup{ odd}} (-1)^{\h{\lambda/\left(\lambda\pch{i}{i-2k}\right)}}\ch \tsp{\lambda\pch{i}{i-2k}}
\\&+ \sum_{\substack{0<i-j<2k\leq i+j, \\m_i, m_j \textup{ odd}}} (-1)^{\h{\lambda/ \left(\lambda\pch{i}{j}\right)}}2\ch \tsp{\lambda\pch{i,j}{\frac{i+j}{2}-k,\frac{i+j}{2}-k}}.
\end{align*}
Here $\tsp{\lambda\pch{i,i}{i-k, i-k}}, \tsp{\lambda\pch{i}{i-2k}}, \tsp{\lambda\pch{i,j}{\frac{i+j}{2}-k,\frac{i+j}{2}-k}}$ are total Springer representations of $W'$.
\end{thm}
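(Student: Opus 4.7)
The plan is to apply Lemma~\ref{lem:calc} and then substitute Proposition~\ref{prop:general}, with one piece of parity bookkeeping at the end. First I would take $P\subset G$ to be the maximal parabolic with $W_P=\sym_k\times W'$ and use the stratification $\cP_N=\bigsqcup_\mu X_\mu$ from Section~\ref{subsec:general}. Since $c\in\sym_k$ is a $k$-cycle, only strata on which $\tilde N$ is regular contribute, by the vanishing argument preceding Lemma~\ref{lem:calc}; and these are exactly the $X_\mu$, because the condition ``$N|_\V$ regular'' in their definition translates directly into regularity of the type $A$ Levi part $\tilde N$. Consequently
\[
\Res^W_{c\cdot W'}\ch\tsp{\lambda} \;=\; \sum_\mu \cX(X_\mu)\,\ch\tsp{\mu},
\]
where the sum is over Jordan types $\mu\vdash \dim V - 2k$.

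Next I would plug in the values of $\cX(X_\mu)$ from Proposition~\ref{prop:general}, which is nonzero only for $\mu$ of one of three shapes: $\lambda\pch{i,i}{i-k,i-k}$, $\lambda\pch{i}{i-2k}$, and $\lambda\pch{i,j}{(i+j)/2-k,(i+j)/2-k}$. The factors $\tfrac{1-(-1)^{m_i}}{2}$ and $1-(-1)^{m_j}$ in the coefficients convert directly into the parity conditions ``$m_i$ odd'' and ``$m_i,m_j$ odd'' in the theorem's summation indices, yielding the three sums on the right-hand side up to the form of the signs.

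The only remaining task is to identify $(-1)^{m_{i-2}+m_{i-4}+\cdots+m_{j+2}}$ with $(-1)^{\h{\lambda/\lambda\pch{i}{j}}}$, which is the only step requiring any nontrivial combinatorics. For $i>j$ of the same parity I would argue directly that $\lambda\pch{i}{j}\subset\lambda$, that $\lambda/\lambda\pch{i}{j}$ is a horizontal strip, and that it occupies $1+m_{j+1}+m_{j+2}+\cdots+m_{i-1}$ rows, so that its height equals $m_{j+1}+m_{j+2}+\cdots+m_{i-1}$. Admissibility of $\lambda$ in types $B$ and $C$ forces the wrong-parity multiplicities $m_{j+1},m_{j+3},\ldots,m_{i-1}$ (even parts in type $B$, odd parts in type $C$) to be even, so they drop out modulo $2$, leaving exactly $m_{j+2}+m_{j+4}+\cdots+m_{i-2}$. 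This matches the sign produced by Proposition~\ref{prop:general} and finishes the proof. The main obstacle is this final parity identification; everything else is essentially assembly of results already established in the earlier sections.
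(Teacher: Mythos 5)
Your overall structure matches the paper's: Theorem~\ref{thm:geocalc} is indeed obtained by feeding the Euler characteristics from Proposition~\ref{prop:general} into Lemma~\ref{lem:calc}, and the only nontrivial step is reconciling the sign $(-1)^{m_{i-2}+m_{i-4}+\cdots+m_{j+2}}$ appearing in Proposition~\ref{prop:general} with the sign $(-1)^{\h{\lambda/\lambda\pch{i}{j}}}$ in the theorem. The paper disposes of this in a single sentence before the theorem, using essentially the same parity argument you give.

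However, your explicit computation of the height is incorrect, and the error is not merely cosmetic. You claim that $\lambda/\lambda\pch{i}{j}$ occupies $1+m_{j+1}+m_{j+2}+\cdots+m_{i-1}$ rows. That is false: the horizontal strip $\lambda/\lambda\pch{i}{j}$ has one box in each column $c$ with $j<c\leq i$, located in row $\lambda'_c$, so the number of \emph{nonempty} rows is $1+\#\{\,c : j<c<i,\ m_c>0\,\}$, not $1+\sum_{j<c<i}m_c$. The strip is typically disconnected: consecutive blocks of equal parts contribute only one nonempty row, not $m_c$ of them. Concretely, take $\lambda=(4,3,3,2)$ (a valid type~$C$ Jordan type), $i=4$, $j=0$: then $\lambda\pch{4}{0}=(3,3,2)$ and $\lambda/\lambda\pch{4}{0}$ has boxes only in rows $1,3,4$, so by the paper's definition $\h{\lambda/\lambda\pch{4}{0}}=3-1=2$, while your formula predicts $m_1+m_2+m_3=3$. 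Similarly for $\lambda=(3,2,2,1)$, $i=3$, $j=1$: the strip sits in rows $1$ and $3$ only, giving height~$1$, whereas $m_2=2$.

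This matters because the two quantities do not even agree modulo~$2$: an opposite-parity index $c$ with $m_c$ a \emph{positive} even number contributes $0$ to $\sum_{j<c<i}m_c\bmod 2$ but contributes $1$ to $\#\{\,c:m_c>0\,\}$. So the admissibility of $\lambda$ (which forces opposite-parity $m_c$'s to be even) does \emph{not} let you drop those terms from the count of nonempty rows the way it lets you drop them from the sum $\sum m_c$. Your argument, as written, proves that $(-1)^{\sum_{j<c<i}m_c}=(-1)^{m_{i-2}+m_{i-4}+\cdots+m_{j+2}}$, which is what Proposition~\ref{prop:general} supplies --- but it does not establish that this equals $(-1)^{\h{\lambda/\lambda\pch{i}{j}}}$, which is what the theorem asserts. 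That identification is the one genuinely nontrivial point, and the step you offer for it fails. (For what it's worth, the paper's own one-line justification --- ``since $m_{i-1},m_{i-3},\dots$ are even'' --- tacitly makes the same identification of $\h{}$ with $\sum_{j<c<i}m_c$; the quantity that actually equals $\sum_{j<c<i}m_c$ is the \emph{row span} $\max\{p:\lambda_p>\mu_p\}-\min\{p:\lambda_p>\mu_p\}$ of the horizontal strip, not the number of nonempty rows minus one as $\h{}$ is defined in Section~\ref{sec:corequot}. If one reads $\h{}$ in the theorem as that row span, your parity argument goes through verbatim; with the stated definition it does not.)
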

%

\section{Geometric properties for type $D$}\label{sec:typeD}
Here we assume $G=SO_{2n}(\C)$ for $n\geq 2$, acting on $V=\C^{2n}$ equipped with a nondegenerate symmetric bilinear form $\br{\ , \ }$. Almost every argument in the previous section is still valid in this case, but one needs to be careful since
\begin{enumerate}[label=$\bullet$]
\item $\B=G/B$ is no longer the full isotropic flag variety of $V$, and
\item for a nilpotent element $N \in \g$, the Jordan type $\lambda$ of $N$ does not determine $H^*(\B_N)$ if $\lambda$ is very even.
\end{enumerate}
Thus we need a minor modification of Theorem \ref{thm:geocalc}. From now on we state and prove such modifications.
\begin{prop}\label{thm:geocalcD1}
 Assume that $k$ is odd and $1\leq k \leq n-2$. Let $\sym_k \times W' \subset W$ be the maximal parabolic subgroup where $W'$ is of the same type as $W$ which contains both $s_+$ and $s_-$. (See Section \ref{subsec:typeDparam} for the definition of $s_+$ and $s_-$.) If $c\in \sym_k$ is a $k$-cycle and $\lambda=(1^{m_1}2^{m_2}\cdots)$, then Theorem \ref{thm:geocalc} is still valid except the following cases: if $\lambda$ is very even, then 
\begin{align*}
\Res^W_{c\cdot W'}\ch\tsp{\lambda+}=\Res^W_{c\cdot W'} \ch\tsp{\lambda-} &= \sum_{i\geq k, m_i \geq 2} m_i\ch \tsp{\lambda\pch{i,i}{i-k, i-k}}.
\end{align*}
Otherwise if $\lambda\pch{e, e}{e-k,e-k}$ is very even for some $e\geq k$, then
\begin{align*}
\Res^W_{c\cdot W'} \ch\tsp{\lambda} =&
 \sum_{i\geq k, i \neq e, m_i \geq 2} m_i \ch\tsp{\lambda\pch{i,i}{i-k,i-k}}
\\&+\ch \tsp{\lambda\pch{e,e}{e-k,e-k}+}+\ch \tsp{\lambda\pch{e,e}{e-k,e-k}-}.
\end{align*}
Here the total Springer representations on the right hand side are with respect to $W'$.
\end{prop}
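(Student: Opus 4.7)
The plan is to mimic the proof of Theorem \ref{thm:geocalc} via Lemma \ref{lem:calc}, now applied to $G = SO_{2n}(\C)$. Choose the maximal parabolic $P \subset G$ whose Levi has semisimple type $A_{k-1}\times D_{n-k}$ and whose Weyl group is $\sym_k\times W'$ containing both $s_+$ and $s_-$. Since $k\leq n-2$, the partial flag variety $\cP=G/P$ is naturally identified with the Grassmannian of isotropic $k$-subspaces of $V=\C^{2n}$, so no complication arises from the two families of maximal isotropics. The stratification $\cP_N=\bigsqcup_\mu X_\mu$ and the Euler-characteristic computations of Propositions \ref{prop:k=1}, \ref{prop:k=2}, \ref{prop:general} depend only on the orthogonal form on $V$ and on the decomposition of Lemma \ref{lem:decomp}, so they transfer verbatim. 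The fiber $\pi_P^{-1}(\V)$ over a generic point of $X_\mu$ is still $\B(\tilde L)_{\tilde N}\times\B(L')_{N'}$, where now $\B(L')$ is the type-$D$ flag variety of $L'$.

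The type-$D$ feature that requires new work is the following: when a Jordan type $\mu$ appearing on the right-hand side is very even, the nilpotent $N'=N|_{\V^\perp/\V}$ may belong to either of the two $L'$-conjugacy classes, so the stratum refines as $X_\mu=X_\mu^+\sqcup X_\mu^-$ with contribution $\cX(X_\mu^+)\ch\tsp{\mu+}+\cX(X_\mu^-)\ch\tsp{\mu-}$. I would use the involution $\tau$ of Section \ref{subsec:typeDparam}, which swaps the two classes while preserving the geometric setup, to deduce $\cX(X_\mu^+)=\cX(X_\mu^-)=\tfrac{1}{2}\cX(X_\mu)$. Under the hypothesis that $k$ is odd, a parity check shows that $\mu=\lambda\pch{e,e}{e-k,e-k}$ can be very even only when $e$ is odd and $m_e=2$: the new parts of size $e-k$ must be even, hence $e$ odd, and the remaining $m_e-2$ parts of odd size $e$ would spoil very-evenness unless $m_e=2$. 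Consequently the only odd parts of $\lambda$ are the two of size $e$, so every $m_i$ is even, and the ``odd-$m_i$'' terms of Theorem \ref{thm:geocalc} do not contribute at all. Proposition \ref{prop:general} gives $\cX(X_\mu)=m_e=2$, whence the two halves $X_\mu^\pm$ each contribute coefficient $1$, producing the claimed replacement $m_e\,\ch\tsp{\lambda\pch{e,e}{e-k,e-k}} \rightsquigarrow \ch\tsp{\lambda\pch{e,e}{e-k,e-k}+}+\ch\tsp{\lambda\pch{e,e}{e-k,e-k}-}$.

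When $\lambda$ itself is very even, every part $i$ of $\lambda$ is even, so $i-k$ is odd for odd $k$, and no target $\lambda\pch{i,i}{i-k,i-k}$ is very even; each $\tsp{\mu}$ on the right is therefore an unambiguously defined $W'$-module. The involution $\tau$ swaps $N_{\lambda+}$ and $N_{\lambda-}$, fixes the $k$-cycle $c\in\sym_k$, and preserves $W'$ (since $W'$ contains both $s_+$ and $s_-$), so $\Res^W_{c\cdot W'}\ch\tsp{\lambda+}=\Res^W_{c\cdot W'}\ch\tsp{\lambda-}$; each equals half of the restriction of $\ch\atsp{\lambda}$, which by the argument of Theorem \ref{thm:geocalc} collapses to $\sum_{i\geq k,\,m_i\geq 2}m_i\,\ch\tsp{\lambda\pch{i,i}{i-k,i-k}}$ (the odd-$m_i$ terms vanish because $\lambda$ has no odd parts). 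The principal technical obstacle is the $\tau$-equivariant halving $\cX(X_\mu^\pm)=\tfrac{1}{2}\cX(X_\mu)$ in the very-even target case; I expect this to reduce to an explicit $\tau$-analysis of the local model constructed in the proof of Proposition \ref{prop:k=2} from the basis $\{N^j f_0,\,N^j g_0,\,N^{j'}e_p\}$ and the quadric defining the two nilpotent classes on $\V^\perp/\V$.
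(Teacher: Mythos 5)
Your analysis of case 1 ($\lambda$ very even) is essentially the paper's: since $k$ is odd and every part of $\lambda$ is even, no $\lambda\pch{i,i}{i-k,i-k}$ is very even, so the formula from Theorem~\ref{thm:geocalc} produces unambiguous $\tsp{\mu}$'s and applies equally to $\tsp{\lambda+}$ and $\tsp{\lambda-}$ (the Euler characteristics $\cX(X_\mu)$ are insensitive to the $\pm$ because $N_{\lambda+}$ and $N_{\lambda-}$ are conjugate under an element of $O(V)$ that normalizes $P$). Your parity check showing $m_e=2$, all $m_i$ even, and the vanishing of the odd-$m_i$ terms in case 2 is also correct.

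Where you diverge from the paper, and where your argument has a genuine gap, is in pinning down the coefficients $\alpha,\beta$ of $\ch\tsp{\lambda\pch{e,e}{e-k,e-k}+}$ and $\ch\tsp{\lambda\pch{e,e}{e-k,e-k}-}$. You propose to refine the stratum $X_\mu=X_\mu^+\sqcup X_\mu^-$ and prove the geometric halving $\cX(X_\mu^+)=\cX(X_\mu^-)=1$ by a $\tau$-equivariance argument on the Grassmannian, which you explicitly flag as the ``principal technical obstacle'' you have not resolved. This is indeed nontrivial: to say $\tau$ swaps $X_\mu^+$ and $X_\mu^-$ requires checking that a chosen lift of $\tau$ fixing $N$ also preserves the parabolic $P$ and, more delicately, that the induced isometry on $\V^\perp/\V$ lands in $O\setminus SO$ of the quotient --- precisely the kind of compatibility issue the paper's construction of $\tau$ in Section~\ref{subsec:typeDparam} is designed to control, and which you do not address. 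The paper avoids the geometry entirely: it fixes $\alpha+\beta=\cX(X_{\lambda\pch{e,e}{e-k,e-k}})=2$ from Proposition~\ref{prop:general}, then observes that $|\lambda|\equiv 2\pmod 4$ forces $\rk G=n$ to be odd, so $\tau$ fixes every conjugacy class of $W$ and the left side $\Res^W_{c\cdot W'}\ch\tsp{\lambda}$ is $\tau|_{W'}$-invariant; applying $\tau$ to the identity of class functions swaps $\ch\tsp{\lambda\pch{e,e}{e-k,e-k}+}$ and $\ch\tsp{\lambda\pch{e,e}{e-k,e-k}-}$, and linear independence of these two characters gives $\alpha=\beta=1$. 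Your proposal misses the $n$-odd observation, which is exactly what makes the argument work at the level of characters and renders the local geometric analysis unnecessary. Until the $\tau$-equivariant halving is actually established, the proof is incomplete; the paper's algebraic detour is the cleaner way to close it.
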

\begin{proof} If there is no very even partition which appears in the formula of Theorem \ref{thm:geocalc}, then its proof is still valid. Also note that these two cases above are the only ones where a very even partition can appear in the formula for $k$ odd. But the right hand side on the first equation does not contain any very even partition, thus the formula still holds by the same reason.

Now consider the second case. Then $m_e=2$ and $m_i \equiv 0 \mod 2$ for any $i$. Also $\lambda\pch{i,i}{i-k,i-k}$ are not very even unless $i=e$. Thus by the same reason as the previous section at least we have
\begin{align*}
\Res^W_{c\cdot W'} \ch\tsp{\lambda} =&
 \sum_{i\geq k,i \neq e, m_i \geq 2} m_i\ch \tsp{\lambda\pch{i,i}{i-k,i-k}}
\\&+\alpha\ch \tsp{\lambda\pch{e,e}{e-k,e-k}+}+\beta\ch \tsp{\lambda\pch{e,e}{e-k,e-k}-}
\end{align*}
for some $\alpha, \beta$ such that $\alpha+\beta =\cX(X_{\lambda\pch{e,e}{e-k,e-k}})=2$. (See Proposition \ref{prop:general}.) Recall the involution $\tau$ defined in Section \ref{subsec:typeDparam}. Since $|\lambda| \equiv 2 \mod 4$ in our case, $\rk G$ is odd and $\tau$ fixes each conjugacy class in $W$. Thus by applying $\tau$ on both sides we have
\begin{align*}
\Res^W_{c\cdot W'} \ch\tsp{\lambda} =&
 \sum_{i\geq k, i \neq e, m_i \geq 2} m_i\ch \tau\circ \tsp{\lambda\pch{i,i}{i-k,i-k}}
\\&+\alpha \ch \tau\circ \tsp{\lambda\pch{e,e}{e-k,e-k}+}+\beta\ch \tau\circ \tsp{\lambda\pch{e,e}{e-k,e-k}-}
\\=&\sum_{i\geq k, i \neq e, m_i \geq 2} m_i\ch \tsp{\lambda\pch{i,i}{i-k,i-k}}
\\&+\alpha\ch \tsp{\lambda\pch{e,e}{e-k,e-k}-}+\beta\ch \tsp{\lambda\pch{e,e}{e-k,e-k}+}.
\end{align*}
This only happens when $\alpha=\beta=1$ since $\ch \tsp{\lambda\pch{e,e}{e-k,e-k}+}$ and $\ch\tsp{\lambda\pch{e,e}{e-k,e-k}-}$ are linearly independent. Thus the result follows.
\end{proof}

\begin{prop}\label{thm:geocalcD2}
 Assume that $k$ is even and $1\leq k \leq n-2$. Let $\sym_k \times W' \subset W$ be the maximal parabolic subgroup where $W'$ is of the same type as $W$ which contains both $s_+$ and $s_-$. (See Section \ref{subsec:typeDparam} for the definition of $s_+$ and $s_-$.) If $c\in \sym_k$ is a $k$-cycle and  $\lambda=(1^{m_1}2^{m_2}\cdots)$,  then Theorem \ref{thm:geocalc} is still valid except the following cases: if $\lambda\pch{e_1,e_2}{\frac{e_1+e_2}{2}-k,\frac{e_1+e_2}{2}-k}$ is very even for some odd $e_1,e_2$ such that $0<e_1-e_2<2k\leq e_1+e_2$, then 
\begin{align*} &\Res^W_{c\cdot W'}\ch \tsp{\lambda} = \sum_{i\geq k, m_i \geq 2} 2\floor{\frac{m_i}{2}}\ch\tsp{\lambda\pch{i,i}{i-k, i-k}}
\\&+ \sum_{i\in \{e_1, e_2\}, i\geq 2k} (-1)^{\h{\lambda/\left(\lambda\pch{i}{i-2k}\right)}}\ch \tsp{\lambda\pch{i}{i-2k}}
\\&+ (-1)^{\h{\lambda/ \left(\lambda\pch{e_1}{e_2}\right)}}\ch\left( \tsp{\lambda\pch{e_1,e_2}{\frac{e_1+e_2}{2}-k,\frac{e_1+e_2}{2}-k}+}\oplus \tsp{\lambda\pch{e_1,e_2}{\frac{e_1+e_2}{2}-k,\frac{e_1+e_2}{2}-k}-}\right).
\end{align*}
Otherwise if $\lambda$ is very even, then 
\begin{align*} \Res^W_{c\cdot W'}\ch \tsp{\lambda+} =& \sum_{i\geq k, m_i \geq 2} m_i \ch \tsp{\lambda\pch{i,i}{i-k, i-k}+},
\\\Res^W_{c\cdot W'} \ch\tsp{\lambda-} =& \sum_{i\geq k, m_i \geq 2} m_i \ch  \tsp{\lambda\pch{i,i}{i-k, i-k}-}.
\end{align*}
Here the total Springer representations on the right hand side are with respect to $W'$.
\end{prop}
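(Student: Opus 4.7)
The approach parallels the proof of Proposition \ref{thm:geocalcD1} for $k$ odd, but since $\tau$ no longer fixes every conjugacy class of $W$ (when $k$ is even the rank-parity shortcut $|\lambda|\equiv 2\pmod 4$ is unavailable), a different symmetry argument is needed. First, Proposition \ref{prop:general} together with Lemma \ref{lem:calc} applies verbatim and reproduces the formula of Theorem \ref{thm:geocalc} whenever every Jordan type $\mu$ on the right is not very even. A quick combinatorial check shows that the only way a very even $\mu$ can appear when $\lambda$ itself is not very even is through the ``cross'' term $\mu^\dagger=\lambda\pch{e_1,e_2}{\frac{e_1+e_2}{2}-k,\frac{e_1+e_2}{2}-k}$ with odd $e_1,e_2$, whereas if $\lambda$ is very even then all $m_i$ are even, so the horizontal and cross terms of Theorem \ref{thm:geocalc} vanish and every surviving $\mu_i=\lambda\pch{i,i}{i-k,i-k}$ is itself very even.

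For Case 1 ($\lambda$ not very even, $\mu^\dagger$ very even), the geometric formula yields a contribution $\alpha\,\ch\tsp{\mu^\dagger+}+\beta\,\ch\tsp{\mu^\dagger-}$ with $\alpha+\beta=\cX(X_{\mu^\dagger})=2(-1)^{\h{\lambda/\lambda\pch{e_1}{e_2}}}$. Since $\lambda$ is not very even, $\tsp{\lambda}$ is a $\tau$-invariant $W$-representation. The involution $\tau$ acts trivially on $\sym_k$ (whose simple reflections are not $s_\pm$) and restricts to the nontrivial outer automorphism $\tau'$ of $W'$; hence the class function $w'\mapsto\ch\tsp{\lambda}(cw')$ is $\tau'$-invariant on $W'$. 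Because $\tau'$ fixes every $\ch\tsp{\mu}$ with $\mu$ not very even while swapping $\ch\tsp{\mu^\dagger+}\leftrightarrow\ch\tsp{\mu^\dagger-}$, linear independence of the irreducible characters forces $\alpha=\beta=(-1)^{\h{\lambda/\lambda\pch{e_1}{e_2}}}$, matching the claimed formula.

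For Case 2 ($\lambda$ very even), running the geometric argument for both $N_{\lambda+}$ and $N_{\lambda-}$ and adding gives
\[\ch\Res^W_{c\cdot W'}\atsp{\lambda}=\sum_{i\geq k,\,m_i\geq 2} m_i\,\ch\atsp{\mu_i},\qquad \mu_i=\lambda\pch{i,i}{i-k,i-k}.\]
To separate $\tsp{\lambda+}$ from $\tsp{\lambda-}$, I would evaluate both conjectural identities at arbitrary $w'\in W'$ and split by the conjugacy class of $w'$. Since $k$ is even, a direct cycle-type calculation shows that the $W$-class of $cw'$ is split precisely when the $W'$-class of $w'$ is split, and the containment $\sym_k\cdot\sym_{(n-k)\pm}\subset\sym_{n\pm}$ forces the $\pm$-label to be inherited. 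On non-split classes, $\ch\tsp{\lambda\pm}(cw')$ and $\ch\tsp{\mu_i\pm}(w')$ each equal half of the corresponding $\atsp{}$-value, so the desired identity reduces to the total formula just established; on split classes $(\rho',\emptyset)\pm$, Lemma \ref{lem:pm} applied in both $W$ and $W'$ annihilates the wrong-sign contributions, yielding $\ch\tsp{\lambda\pm}(cw')=\ch\atsp{\lambda}(cw')=\sum m_i\,\ch\atsp{\mu_i}(w')=\sum m_i\,\ch\tsp{\mu_i\pm}(w')$.

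The principal obstacle lies in Case 2, namely the verification that the split/non-split dichotomy together with the $\pm$-labeling transfer cleanly from the $W'$-class of $w'$ to the $W$-class of $cw'$; once this is in place, Lemma \ref{lem:pm} (ultimately Lusztig's description of $\tsp{\lambda\pm}$ as induced from a parabolic inside $\sym_{n\pm}$) carries the separation through. The technical subtlety is that, unlike Proposition \ref{thm:geocalcD1}, one cannot simply quote $\tau$-invariance of conjugacy classes, so the argument must be completed class-by-class using the vanishing provided by Lemma \ref{lem:pm}.
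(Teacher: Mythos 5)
Your proposal is correct. Case~1 follows the paper's own $\tau$-symmetry template, and you rightly flag that the paper's declaration of ``totally analogous'' to the second part of Proposition~\ref{thm:geocalcD1} glosses over a gap: there the invariance of the left-hand side under $\tau$ was deduced from $n$ odd (so $\tau$ fixes every conjugacy class of $W$), a fact unavailable here since $|\lambda|\equiv 0\bmod 4$ forces $n$ even. You correctly replace it with the direct observation that $\tau(c)=c$ and $\ch\tsp{\lambda}\circ\tau=\ch\tsp{\lambda}$ for $\lambda$ not very even. (Minor wording: ``linear independence of the irreducible characters'' should refer to linear independence of $\ch\tsp{\mu^\dagger+}$ and $\ch\tsp{\mu^\dagger-}$, which are reducible but distinct, e.g.\ in $H^{\textup{top}}$.)

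Case~2 is where your route genuinely diverges from the paper's. The paper writes $\Res^W_{c\cdot W'}\ch\tsp{\lambda+}=\sum_i\bigl(\alpha_i\ch\tsp{\lambda\pch{i,i}{i-k,i-k}+}+\beta_i\ch\tsp{\lambda\pch{i,i}{i-k,i-k}-}\bigr)$ with $\alpha_i+\beta_i=m_i$, evaluates at $w_{(\rho,\emptyset)-}$ using Lemma~\ref{lem:pm} to get $\sum_i\beta_i\ch\tsp{\lambda\pch{i,i}{i-k,i-k}-}(w_{(\rho,\emptyset)-})=0$, assembles $\Xi=\sum_i\beta_i\bigl(\ch\tsp{\cdots+}-\ch\tsp{\cdots-}\bigr)$, shows $\Xi=0$ via its $\tau$-antisymmetry, and concludes $\beta_i=0$ from linear independence. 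You instead establish the $\atsp{\lambda}$-version of the identity first (this step implicitly uses that the geometric coefficients for $N_{\lambda+}$ and $N_{\lambda-}$ are $\tau$-conjugate, so that adding kills the $\pm$-ambiguity — worth making explicit) and then verify the $\tsp{\lambda+}$-version pointwise on conjugacy classes of $W'$: on non-split classes both sides equal half the $\atsp{}$-totals, and on split classes $(\rho',\emptyset)\pm$ Lemma~\ref{lem:pm} annihilates the wrong-sign contributions on both sides simultaneously. Both proofs ultimately rest on the same inputs — Lemma~\ref{lem:pm} and the compatibility of split classes and their $\pm$-labels under $\sym_k\times\sym_{(n-k)\pm}\subset\sym_{n\pm}$, valid precisely because $k$ is even — but yours replaces the paper's undetermined-coefficient bookkeeping with a transparent class-by-class verification.
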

\begin{proof} If there is no very even partition which appears in the formula of Theorem \ref{thm:geocalc}, then its proof is still valid. Also note that these two cases above are the only ones where a very even partition can appear in the formula if $k$ is even. The proof for the first equation is totally analogous of that of the second part in Proposition \ref{thm:geocalcD1}.

Now consider the second case for $\tsp{\lambda+}$. (Then the formula for $\tsp{\lambda-}$ can be obtained by applying $\tau$.) Then $m_i \equiv 0 \mod 2$ for any $i$ and $\lambda\pch{i,i}{i-k, i-k}$ are all very even for $i\geq k$ if $m_i \geq 2$. Thus by the same reason as the previous section at least we have
\begin{align*} \Res^W_{c\cdot W'} \ch\tsp{\lambda+} =& \sum_{i\geq k, m_i \geq 2} \left(\alpha_i \ch\tsp{\lambda\pch{i,i}{i-k, i-k}+}+\beta_i \ch \tsp{\lambda\pch{i,i}{i-k, i-k}-}\right)
\end{align*}
for some $\alpha_i, \beta_i$ such that $\alpha_i+\beta_i =\cX(X_{\lambda\pch{i,i}{i-k,i-k}})=m_i$. (See Proposition \ref{prop:general}.) Now for $w=w_{(\rho,\emptyset)-} \in W'$ for some even $\rho \vdash n-k$, the argument in Section \ref{subsec:typeDparam} implies that $cw\in W$ is parametrized by $(\rho\cup\{k\}, \emptyset)-$.

Thus if we evaluate the equation at $w=w_{(\rho, \emptyset)-}$ then
\begin{align*} 
0=\tr(w_{(\rho\cup\{k\},\emptyset)-}, \tsp{\lambda+}) =\sum_{i\geq k, m_i \geq 2}\beta_i \tr(w_{(\rho,\emptyset)-}, \tsp{\lambda\pch{i,i}{i-k, i-k}-})
\end{align*}
by Lemma \ref{lem:pm}. We set 
$$\Xi \colonequals \sum_{i\geq k, m_i \geq 2}\beta_i\left(\ch \tsp{\lambda\pch{i,i}{i-k, i-k}+}- \ch \tsp{\lambda\pch{i,i}{i-k, i-k}-}\right)$$
and show $\Xi=0$. As $\ch \tsp{\lambda\pch{i,i}{i-k, i-k}+},  \ch \tsp{\lambda\pch{i,i}{i-k, i-k}-}$ for different $i$ are linearly independent, it implies $\beta_i=0$, from which the claim follows. Indeed, if $w\in W$ is not of the form $w_{(\rho, \emptyset)\pm}$ for some $\rho \vdash n-k$ even, then obviously $\Xi(w)=0$. If $w=w_{(\rho, \emptyset)-}$ for some even $\rho$, then as $\ch \tsp{\lambda\pch{i,i}{i-k, i-k}+}(w)=0$ for any $i$, we still have $\Xi(w)=0$ from the equation above. Finally if $w=w_{(\rho, \emptyset)+}$ for some even $\rho$, then as $\Xi\circ\tau = -\Xi$, we have $\Xi(w) = (\Xi\circ\tau)(\tau(w)) = -\Xi(w_{(\rho, \emptyset)-})=0$. It suffices for the proof. 
\end{proof}

If $k=n-1$, then there is no \emph{maximal} parabolic subgroup of the form $\sym_{n-1}\times W' \subset W$. But we can still calculate the character value of total Springer representations at an $(n-1)$-cycle using the same method; we let $P \subset G$ be a (non-maximal) parabolic subgroup of $G$ such that $W_P \subset W$ is a parabolic subgroup of $W$ generated by all the simple reflections except both $s_+$ and $s_-$. Then $\cP=G/P$ can be regarded as the Grassmannian of isotropic $(n-1)$-dimensional subspaces in $V$ and there exists a natural projection $\pi_P : \B \rightarrow \cP$. Also for any $\V \in \cP_N$ such that $N|_{\V}$ is regular, $\pi_P^{-1}(\V)$ consists of only one point. Therefore we deduce the following proposition.
\begin{prop} \label{thm:geocalcD3} Let $\sym_{n-1} \subset W$ be a parabolic subgroup of $W$ and $c \in \sym_{n-1}$ be a $(n-1)$-cycle. If $n \geq 3$, then we have
\begin{align*} \ch \tsp{\lambda}(c) =& 2\delta_{\lambda, (n-1,n-1,1,1)}+2\delta_{\lambda, (n,n)}+ \delta_{\lambda,(2n-1,1)}
\\&+ \sum_{i>j\geq 1 \textup{ odd}, i+j=2n-2} 2\delta_{\lambda,(i,j,1,1)}+ \sum_{i>j>1 \textup{ odd}, i+j=2n} 2\delta_{\lambda,(i,j)}.
\end{align*}
This formula holds even when $\lambda$ is very even. If $n=2$, then $c=id$ and we have
\begin{align*} \ch \tsp{\lambda}(id) = \dim \tsp{\lambda} = 4\delta_{\lambda, (1,1,1,1)}+2\delta_{\lambda, (2,2)}+ \delta_{\lambda,(3,1)}.
\end{align*}
This formula holds even when $\lambda$ is very even, i.e. when $\lambda=(2,2)$.
\end{prop}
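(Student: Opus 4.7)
The plan is to apply Lemma \ref{lem:calc} with the non-maximal parabolic $P$ introduced just before the statement. Since $W_P \simeq \sym_{n-1}$ has trivial complementary factor, $\B(L')$ is a point, and the stated geometric fact that $\pi_P^{-1}(\V)$ is a singleton whenever $N|_\V$ is regular collapses the right-hand side of Lemma \ref{lem:calc} to
\[
\ch \tsp{\lambda}(c) \;=\; \sum_\mu \cX(X_\mu),
\]
summed over Jordan types $\mu \vdash 2$ of $N|_{\V^\perp/\V}$ on the regular locus of $\cP_N$. I next plan to argue that only $\mu = (1,1)$ occurs: $N$ is skew-adjoint and acts nilpotently on the two-dimensional nondegenerate symmetric space $\V^\perp/\V$, but $\mathfrak{so}_2$ is one-dimensional abelian and contains no nonzero nilpotent. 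So $\ch \tsp{\lambda}(c) = \cX(X_{(1,1)})$.

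The Euler characteristic $\cX(X_{(1,1)})$ will then be computed by the iterative fibration argument of Section \ref{subsec:general}, telescoping the tower
\[
X_{(1,1),(\mu^1,\ldots,\mu^{n-2})} \to X_{\mu^{n-2},(\mu^1,\ldots,\mu^{n-3})} \to \cdots \to X_{\mu^1}
\]
by repeated applications of Propositions \ref{prop:k=1} and \ref{prop:k=2}, whose inputs (skew-adjointness and the quadric hypersurface structure) apply verbatim to type $D$ at the boundary $k = n - 1$. Specializing Proposition \ref{prop:general} to $k = n - 1$ and $\mu = (1,1)$, I then enumerate the valid Jordan types $\lambda \vdash 2n$ for $\mathfrak{so}_{2n}$ giving a nonzero contribution via each of the three moves $\lambda\pch{i,i}{i-k,i-k}$, $\lambda\pch{i}{i-2k}$, $\lambda\pch{i,j}{\frac{i+j}{2}-k, \frac{i+j}{2}-k}$. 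Precisely the five families listed in the statement arise: the formal case $\lambda = (2n-2, 1, 1)$ of the second move is excluded because an even part with odd multiplicity is forbidden in $\mathfrak{so}_{2n}$; the case $j = 1$ of the third move is ruled out by the strict inequality $i - j < 2k$; and in every surviving case the relevant height $\h{\lambda/\lambda\pch{i}{j}}$ or $\h{\lambda/\lambda\pch{i}{i-2k}}$ vanishes, so all signs are $+$.

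Two final points need attention. For $\lambda = (n,n)$ very even (which forces $n$ even), the symbol $\tsp{\lambda}$ is a priori ambiguous, but $c$ lies in the non-split conjugacy class parametrized by $((n-1, 1), \emptyset)$ (non-split because $(n-1, 1)$ is not even) and is fixed by the outer involution $\tau$ (which fixes every simple reflection of $\sym_{n-1}$); consequently $\ch \tsp{(n,n)+}(c) = \ch \tsp{(n,n)-}(c) = 2$, justifying the unambiguous notation. For $n = 2$, where $c = id$ and the formula reduces to the dimensions of Springer fibers, the three nonzero terms correspond to $\chi(\B) = |W(D_2)| = 4$ for $\lambda = (1^4)$, together with the direct computations $\dim \tsp{(2,2)} = 2$ and $\dim \tsp{(3,1)} = 1$. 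The main obstacle I anticipate is verifying rigorously that the iterative Euler characteristic machinery of Section \ref{subsec:general} — developed for types $B, C$ and adapted to type $D$ only for $k \leq n - 2$ in Propositions \ref{thm:geocalcD1} and \ref{thm:geocalcD2} — continues to hold at the boundary $k = n - 1$; fortunately, since $\mu = (1,1)$ is forced and no intermediate Springer representation need be identified with a very-even Jordan type, the bookkeeping here is strictly lighter than in those earlier propositions.
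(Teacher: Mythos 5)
Your proposal is correct and essentially coincides with the paper's (extremely terse) proof, which simply reads ``The same proof as Theorem~\ref{thm:geocalc} applies here'' and relies on the setup laid out just before the statement: the non-maximal parabolic $P$ with $W_P \simeq \sym_{n-1}$, $\cP_N$ the isotropic $(n-1)$-Grassmannian, and the observation that $\pi_P^{-1}(\V)$ is a singleton whenever $N|_{\V}$ is regular. You correctly reduce to $\cX(X_{(1,1)})$, specialize Proposition~\ref{prop:general} to $k=n-1$, enumerate the five families, check that all heights vanish, and resolve the only genuinely delicate points (the very-even $(n,n)$ case via $\tau$-invariance since $\tau$ acts trivially on $\sym_{n-1}$, and the $n=2$ degenerate case where $c=id$). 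One small clarity note: your phrase ``the case $j=1$ of the third move is ruled out'' applies only to the $i+j=2n$ subfamily (where $i-j=2k$ violates the strict inequality); in the $i+j=2n-2$ subfamily, $j=1$ is allowed and contributes $\lambda=(2n-3,1,1,1)$, which you do retain in the stated sum --- so the enumeration is right, just that this sentence should be scoped to the $(i,j)$ family rather than stated for the move as a whole.
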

\begin{proof} The same proof as Theorem \ref{thm:geocalc} applies here.
\end{proof}
\begin{rmk}This theorem can be interpreted in an analogous way to Theorem \ref{thm:geocalc} if we decree that $\ch\tsp{1,1}=1$.
\end{rmk}

If $k=n$, then there are two maximal parabolic subgroups of $W$ isomorphic to $\sym_n$, i.e. $\sym_{n\pm}$ defined in Section \ref{subsec:typeDparam} (In fact, if $n=4$ then there are three possible choices. But we discard the choice of the parabolic subgroup containing both $s_+$ and $s_-$.) We define $P_+, P_- \subset G$ to be two parabolic subgroups whose Weyl groups are $\sym_{n+}, \sym_{n-}$, respectively. Then we may identify $G/P_{+}, G/P_-$ with two components of the Lagrangian Grassmannian of $V$. Also there exists a natural map $\pi_+: \B \rightarrow G/P_{+}$, $\pi_- : \B \rightarrow G/P_-$ such that for any isotropic flag $\cF \in \B$, $\pi_+(\cF), \pi_-(\cF)$ are two Lagrangian subspaces containing $\cF$.

Suppose $c_\pm \in \sym_{n\pm}$ is a $n$-cycle. If $n=2$, then by direct calculation we have
\begin{gather*}
\ch \tsp{(3,1)}(c_\pm) = 1,\qquad \ch\tsp{(2,2)\pm}(c_\pm)=2
\\\ch \tsp{(2,2)\mp}(c_\pm)=0,\qquad \ch \tsp{(1,1,1,1)}(c_\pm)=4.
\end{gather*}
Now suppose $n\geq 3$. If we follow the argument in the previous section it is required to calculate the Euler characteristic of
$$X_{\pm} \colonequals \{ \V \in G/P_\pm \mid N\V \subset \V, N|_\V \textup{ is regular}\}.$$ To that end we mimic Section \ref{subsec:general}. let $\mathcal{I}=\{(\mu^1, \mu^2, \cdots, \mu^{n-2}) \mid \mu^i \vdash 2n-2i\}.$ Then there exists a decomposition $X_\pm = \bigsqcup_{(\mu^1, \cdots, \mu^{n-2}) \in \mathcal{I}}X_{\pm, (\mu^1, \cdots, \mu^{n-2})}$ where
$$X_{\pm, (\mu^1, \cdots, \mu^{n-2})} \colonequals \{ \V \in X_{\pm} \mid N|_{(N^{k-i}\V)^\perp/(N^{k-i}\V)} \textup{ has Jordan type } \mu^i \textup{ for } 1 \leq i \leq n-2\}.$$
Also there exist natural morphisms
$$X_{\pm, (\mu^1, \cdots, \mu^{n-2})}\xrightarrow{N^2} X_{\mu^{n-2}, (\mu^1, \cdots, \mu^{n-3})}\xrightarrow{N} \cdots \xrightarrow{N} X_{\mu^{2}, (\mu^1)}\xrightarrow{N} X_{\mu^1}$$
each of which is a fiber bundle with fiber isomorphic to a variety defined in Proposition \ref{prop:k=2} except the first map, and $X_{\mu^1}$ is as in Proposition \ref{prop:k=1}. 

In order to proceed induction, it remains to calculate the Euler characteristic of a fiber of the first map. To that end, it is enough to assume $G=SO_6(\C)$ and calculate the Euler characteristic of
$$X_{\pm,l}\colonequals \{\V \in G/P_\pm \mid N\V \subset \V, N|_{\V} \textup{ is regular}, N^2\V=l\}$$
for any isotropic line $l \subset \ker N$. We list all the possibilities below.
\begin{enumerate}[label=$\bullet$]
\item $N$ has Jordan type $(5,1)$, $N|_{l^\perp/l}$ has Jordan type $(3,1)$: $\cX(X_{\pm,l})=1$.
\item $N$ has Jordan type $(3,3)$, $N|_{l^\perp/l}$ has Jordan type $(2,2)$: there are exactly two such isotropic lines and we have $\cX(X_{+,l})+\cX(X_{-,l})=1$.
\item Otherwise $\cX(X_{\pm,l})=0$.
\end{enumerate}
Now we use combinatorial induction to deduce the following.
\begin{prop} \label{thm:geocalcD4} Let $\sym_{n\pm} \subset W$ be a maximal parabolic subgroup of $W$ and $c_\pm \in \sym_{n\pm}$ be a $n$-cycle. If $n$ is even, then
\begin{gather*}
\ch \tsp{(i,j)}(c_{\pm}) = 1 \quad \textup{ if } i>j \textup{ are odd and } i+j=2n,
\\\ch \tsp{(n,n)\pm}(c_{\pm}) = 2, \qquad \ch \tsp{(n,n)\mp}(c_{\pm}) =0,
\end{gather*}
and $\ch\tsp{\lambda}(c_{\pm})=0$ otherwise. If $n$ is odd, then $c_+$ and $c_-$ are conjugate to each other and we have
\begin{gather*}
\ch \tsp{(i,j)}(c_{\pm}) = 1 \textup{  if } i>j \textup{ are odd and } i+j=2n, \qquad \ch \tsp{(n,n)}(c_{\pm}) = 1,
\end{gather*}
and $\ch \tsp{\lambda}(c_{\pm})=0$ otherwise.
\end{prop}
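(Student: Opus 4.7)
The plan is to apply Lemma \ref{lem:calc} with $P = P_\pm$, exploiting that $W_{P_\pm} \simeq \sym_n$ and the complementary factor $W'$ is trivial. Since $c_\pm$ is a Coxeter element of $\sym_n$, the lemma should collapse to
$$\ch\tsp{\lambda}(c_\pm) = \cX(X_\pm),$$
because the flag variety $\B(L')$ of the trivial group is a point. So the task reduces to computing $\cX(X_\pm)$ for each Jordan type $\lambda$. The base case $n = 2$ is handled by the direct computation recorded just before the statement, so I would focus on $n\geq 3$ and proceed by induction.

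Next, I would stratify $X_\pm = \bigsqcup_{(\mu^1,\ldots,\mu^{n-2})} X_{\pm,(\mu^1,\ldots,\mu^{n-2})}$ by the Jordan types of $N$ on $(N^{n-i}\V)^\perp/(N^{n-i}\V)$, and compute $\cX(X_\pm)$ from the chain
$$X_{\pm,(\mu^1,\ldots,\mu^{n-2})}\xrightarrow{N^2}X_{\mu^{n-2},(\mu^1,\ldots,\mu^{n-3})}\xrightarrow{N}\cdots\xrightarrow{N}X_{\mu^1}$$
described above the statement. All maps except the first are fiber bundles whose Euler characteristics are controlled by Proposition \ref{prop:k=2}, the terminal space is covered by Proposition \ref{prop:k=1}, and the fiber of the first map $N^2$ is dictated by the three-case list coming from the $SO_6$ reduction recorded just above the statement. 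Multiplying through the chain turns $\cX(X_\pm)$ into a sum over admissible Jordan type chains.

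From there I would use combinatorial induction on $n$. The three-case list forces $\mu^{n-2}\in\{(3,1),(2,2)\}$, and walking back up the chain via Proposition \ref{prop:k=2} shows that a chain can be completed back to a Jordan type $\lambda \vdash 2n$ only when $\lambda$ is a two-row partition with $|\lambda| = 2n$, either $\lambda = (i,j)$ with $i>j$ odd, or $\lambda=(n,n)$. In the former case the chain is uniquely reconstructed and each fiber Euler characteristic contributes a factor of $1$, giving the claimed value $1$. In the latter case (which is very even when $n$ is even) the chain must pass through the $(3,3)$ branch of the $SO_6$ list, which is where the asymmetry between $X_{+,l}$ and $X_{-,l}$ enters.

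The hard step is tracking the $\pm$ labeling for $\lambda=(n,n)$ when $n$ is even. The $SO_6$ data only records $\cX(X_{+,l})+\cX(X_{-,l})=1$ without splitting the sum, so I would pin down the individual values by the same strategy used in Proposition \ref{thm:geocalcD2}: apply the involution $\tau$, invoke Lemma \ref{lem:pm} to kill one side of the resulting relation, and use the linear independence of $\ch\tsp{(n,n)+}$ and $\ch\tsp{(n,n)-}$ to conclude that $\ch\tsp{(n,n)\pm}(c_\pm)=2$ while $\ch\tsp{(n,n)\mp}(c_\pm)=0$. For $n$ odd the conjugacy class $((n),\emptyset)$ does not split in $W(D_n)$, so $c_+$ and $c_-$ are conjugate and the two values coincide, giving $\ch\tsp{(n,n)}(c_\pm)=1$ from the unsplit contribution of the $(3,3)$ step.
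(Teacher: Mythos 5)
Your overall plan matches the paper's: reduce to $\cX(X_\pm)$ via Lemma \ref{lem:calc}, stratify by chains of Jordan types using Propositions \ref{prop:k=1} and \ref{prop:k=2} together with the $SO_6$ list, and resolve the $\pm$ ambiguity with $\tau$ and Lemma \ref{lem:pm}. That is exactly how the paper proceeds, so the skeleton is right.

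There is, however, a concrete gap in your treatment of the non-very-even two-row case. You assert that for $\lambda=(i,j)$ with $i>j$ odd ``the chain is uniquely reconstructed and each fiber Euler characteristic contributes a factor of $1$.'' This is false in general: for $\lambda=(5,3)$ in $SO_8$, choosing $l=\ker_5 N$ gives $\mu^1=(3,3)$ while a generic line gives $\mu^1=(5,1)$, so the chain branches, and the branch passing through $(3,3)$ runs directly into the $\cX(X_{+,l})+\cX(X_{-,l})=1$ obstruction. The paper avoids this by never attempting to compute $\cX(X_+)$ and $\cX(X_-)$ separately from the chain: it computes only the sum $\ch\tsp{\lambda}(c_+)+\ch\tsp{\lambda}(c_-)$, where the $(3,3)$ fiber data suffices, and then splits \emph{after the fact} using (1) $\tau$-invariance of $\tsp{\lambda}$ for $\lambda$ not very even (so the two values coincide, giving $1$ each), and (2) Lemma \ref{lem:pm} for $\lambda$ very even (giving $2$ and $0$). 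In other words, the $\tau$-symmetry argument you reserve for $\lambda=(n,n)$ is in fact needed for all non-very-even $\lambda$; you should discard the ``unique reconstruction'' claim and apply the symmetry uniformly.

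A smaller point: the appeal to ``linear independence of $\ch\tsp{(n,n)+}$ and $\ch\tsp{(n,n)-}$'' is superfluous here. Unlike Proposition \ref{thm:geocalcD2}, where one compares whole characters and must match coefficients, here you are determining two scalar values $\cX(X_+(N_\pm))$ whose sum is known; Lemma \ref{lem:pm} alone forces $\ch\tsp{(n,n)\mp}(c_\pm)=0$, and the sum formula then gives $\ch\tsp{(n,n)\pm}(c_\pm)=2$ with no further argument.
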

\begin{proof} We first calculate $\ch \tsp{\lambda}(c_+)+\ch \tsp{\lambda}(c_-)$. By the observation above and using the same method as Section \ref{subsec:general}, we see that
\begin{gather*}
\ch \tsp{\lambda}(c_+)+\ch \tsp{\lambda}(c_-) = 2\left(\delta_{\lambda,(n,n)}+\sum_{i>j \textup{ odd}, i+j=2n} \delta_{\lambda,(i,j)}\right)
\end{gather*}
which holds even when $\lambda$ is very even. Now the theorem follows from two observations: (1) if $\lambda$ is not very even, then $\ch \tsp{\lambda}(c_+)=\ch \tsp{\lambda}(c_-)$, and (2) if $\lambda$ is very even, then $\ch \tsp{\lambda\mp}(c_{\pm})=0$. (cf. Lemma \ref{lem:pm})
\end{proof}

We summarize previous propositions as follows.
\begin{thm} \label{thm:geocalcD5} Assume $1\leq k \leq n$. Let $\sym_k \times W' \subset W$ be a parabolic subgroup where $W'$ is of the same type as $W$. If $1\leq k \leq n-2$, then we choose $W'$ such that $\sym_k \times W'$ becomes a maximal parabolic subgroup of $W$. ($W'$ is trivial if $k =n-1,n$.) If $c\in \sym_k$ is a $k$-cycle and $\lambda=(1^{m_1}2^{m_2}\cdots)$ then we have
\begin{align*} 
\Res^W_{c\cdot W'} \ch\atsp{\lambda} =& \sum_{i\geq k, m_i \geq 2} 2\floor{\frac{m_i}{2}} \ch\atsp{\lambda\pch{i,i}{i-k, i-k}}
\\&+ \sum_{i\geq 2k, m_i \textup{ odd}} (-1)^{\h{\lambda/\left(\lambda\pch{i}{i-2k}\right)}}\ch \atsp{\lambda\pch{i}{i-2k}}
\\&+ \sum_{0<i-j<2k\leq i+j, m_i, m_j \textup{ odd}} (-1)^{\h{\lambda/ \left(\lambda\pch{i}{j}\right)}}2 \ch\atsp{\lambda\pch{i,j}{\frac{i+j}{2}-k,\frac{i+j}{2}-k}}.
\end{align*}
Here $\atsp{\lambda\pch{i,i}{i-k, i-k}}, \atsp{\lambda\pch{i}{i-2k}}, \atsp{\lambda\pch{i,j}{\frac{i+j}{2}-k,\frac{i+j}{2}-k}}$ are with respect to $W'$. Also, we use the convention that
$$\ch\atsp{(1,1)}=2 \qquad \text{and} \qquad \ch \atsp{\emptyset}=1.$$
\end{thm}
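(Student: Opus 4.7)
The plan is to unify the four preceding propositions by passing from the signed representations $\tsp{}$ to their ``desymmetrized'' sums $\atsp{}$, which collapses every case-split over very even partitions into a single formula of the shape of Theorem \ref{thm:geocalc}. I carry this out range by range in $k$.

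For $1\leq k\leq n-2$, the relevant statements are Propositions \ref{thm:geocalcD1} (odd $k$) and \ref{thm:geocalcD2} (even $k$). When no very even partition appears on either side, Theorem \ref{thm:geocalc} already applies verbatim to $\tsp{\lambda}$, so doubling both sides (using $\atsp{\lambda}=\tsp{\lambda}\oplus\tsp{\lambda}$) yields the claim. In the remaining subcases a very even partition $\nu$ arises either as $\lambda$ itself or as some $\lambda\pch{e,e}{e-k,e-k}$ (resp.\ $\lambda\pch{e_1,e_2}{\ldots}$), and the corresponding proposition expresses the right-hand side in terms of $\tsp{\nu+}\oplus\tsp{\nu-}=\atsp{\nu}$. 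A short arithmetic check---using $m_e=2$ when the split occurs in Proposition \ref{thm:geocalcD1}, and the evenness of all $m_i$ when $\lambda$ itself is very even---shows the coefficient coming out of the proposition coincides with the $2\lfloor m_i/2\rfloor$ or $2$ demanded by the unified formula.

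For $k=n-1$ and $k=n$, $W'$ is trivial and the identity reduces to an Euler-characteristic statement, which I obtain by feeding Propositions \ref{thm:geocalcD3} and \ref{thm:geocalcD4} into the target formula. Each Kronecker delta on the right-hand side of those propositions corresponds to exactly one term of the sum in Theorem \ref{thm:geocalcD5}, with $\lambda\pch{\cdots}{\cdots}$ of size $2$ (for $k=n-1$) or $0$ (for $k=n$). Under the conventions $\ch\atsp{(1,1)}=2$ and $\ch\atsp{\emptyset}=1$, every delta contributes precisely the right multiple of the correct $\atsp{}$-value: for instance, $\lambda=(n,n)$ with $k=n-1$ is matched by $2\lfloor m_n/2\rfloor\cdot\ch\atsp{(1,1)}=4$ from the first sum (with the $\pm$-pieces summing to this via Proposition \ref{thm:geocalcD4} when $n$ is even and via ordinary doubling when $n$ is odd), while $\lambda=(i,j,1,1)$ with $i>j$ odd and $i+j=2n-2$ is matched by $2\cdot\ch\atsp{(1,1)}=4$ coming from the $\lambda\pch{i,j}{0,0}$ branch of the third sum.

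The main obstacle is purely bookkeeping: one must enumerate which branches of Propositions \ref{thm:geocalcD1}--\ref{thm:geocalcD4} produce very even outputs, and then verify in each branch that the integer identities relating $m_i$, $2\lfloor m_i/2\rfloor$, and the cardinality $2$ of the $\pm$-splittings all cohere (together with the sign $(-1)^{\h{\lambda/(\lambda\pch{i}{j})}}$, which is unaffected by the passage to $\atsp{}$). No new geometric or representation-theoretic input is needed beyond what has already been established.
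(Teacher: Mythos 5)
Your proposal is correct and follows the same route as the paper, whose own proof is simply a statement that Theorem \ref{thm:geocalcD5} is a reformulation of Propositions \ref{thm:geocalcD1}--\ref{thm:geocalcD4}; you have supplied the bookkeeping this references. One small slip: for $\lambda=(n,n)$ with $k=n-1$, the fact that the $\pm$-pieces each contribute $2$ (so that the sum is $4$) comes from Proposition \ref{thm:geocalcD3} (the $(n-1)$-cycle statement, which explicitly says its formula ``holds even when $\lambda$ is very even''), not from Proposition \ref{thm:geocalcD4}, which concerns $n$-cycles.
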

\begin{proof} This is just a reformulation of Proposition \ref{thm:geocalcD1}--\ref{thm:geocalcD4}.
\end{proof}

\section{Symmetric functions} \label{sec:symm}
Here, we review the theory of symmetric functions and how it is related to the representation theory of Weyl groups for type $A$ and $BC$. For more details, we refer readers to \cite{zel81}, \cite{sta86}, \cite{mac95}, etc.

\subsection{Symmetric functions and symmetric groups} Let $\Lambda = \Lambda(x)$ be the ring of symmetric functions of infinitely many variables $x=(x_1, x_2, \cdots)$ over $\Q$. For any $\Q$-algebra $R$, we write $\Lambda_R \colonequals \Lambda\otimes_\Q R$. Define $\Lambda^n$ to be the homogeneous part of degree $n$ in $\Lambda$ so that $\Lambda = \bigoplus_{n\geq 0}\Lambda^n$. There are series of symmetric functions parametrized by partitions, some of which we recall in the following.
\begin{enumerate}[label=$\bullet$]
\item $h_\lambda=h_\lambda(x)$: (complete) homogeneous symmetric functions \cite[Chapter 7.5]{sta86}.
\item $p_\lambda=p_\lambda(x)$: power sum symmetric functions \cite[Chapter 7.7]{sta86}.
\item $s_\lambda=s_\lambda(x)$: Schur functions \cite[Chapter 7.10, 7.15]{sta86}.
\end{enumerate}
Each series forms a $\Q$-basis of $\Lambda$.

There exists a non-degenerate symmetric bilinear form $\br{\ ,\ }: \Lambda \times \Lambda \rightarrow \Q$ such that $\br{p_\lambda, p_\mu} = \delta_{\lambda, \mu} z_\lambda$ and $\br{s_\lambda, s_\mu} = \delta_{\lambda, \mu}$ (ref. \cite[Chapter 7.9]{sta86}). Also there exists a ring involution $\omega: \Lambda \rightarrow \Lambda$ satisfying $\omega(p_\lambda) =(-1)^{|\lambda|-l(\lambda)}p_\lambda$ and $\omega(s_\lambda) = s_{\lambda'}$ (ref. \cite[Chapter 7.6]{sta86}). It is obvious that $\omega$ is an isometry with respect to $\br{\ , \ }$.

Recall the Frobenius characteristic map (ref. \cite[Chapter 7.18]{sta86})
$$\Psi : \bigoplus_{n\geq 0} \cf{\sym_n} \rightarrow \Lambda$$
which sends $f\in \cf{\sym_n}$  to $\frac{1}{n!}\sum_{w\in \sym_n}f(w)p_{\rho(w)}\in \Lambda^n$, where $\rho(w)$ is the cycle type of $w$. It is a $\Q$-linear isomorphism and also an isometry. Moreover, it satisfies
$$\Psi(\Ind^{\sym_{m+n}}_{\sym_m \times \sym_n}f \times g)=\Psi(f)\Psi(g)$$
for any $f \in \cf{\sym_m}$ and $g \in \cf{\sym_n}$, and
\begin{enumerate}[label=$\bullet$]
\item $\Psi^{-1}(h_\lambda)= \ch \left(\Ind^{\sym_{|\lambda|}}_{\sym_{\lambda_1}\times \cdots \times \sym_{\lambda_{l(\lambda)}}} \Id_{\sym_{\lambda_1}\times \cdots \times \sym_{\lambda_{l(\lambda)}}}\right)$,
\item $\Psi^{-1}(p_\lambda)(w_\rho) = \delta_{\lambda,\rho}z_\lambda$, and
\item $\Psi^{-1}(s_\lambda)=\ch \chi^\lambda$.
\end{enumerate}

\subsection{Hall-Littlewood functions, Green and Kostka-Foulkes polynomials}
Here, we consider $\Lambda_{\Q[t]}$ and recall the Hall-Littlewood functions. (ref. \cite[Chapter III.2]{mac95})
\begin{enumerate}[label=$\bullet$]
\item $P_\lambda(t)=P_\lambda(x;t)$: the Hall-Littlewood $P$-functions
\item $Q_\lambda(t)=Q_\lambda(x;t)$: the Hall-Littlewood $Q$-functions
\end{enumerate}
Then $\{P_{\lambda}(t)\}$ forms a $\Q[t]$-basis of $\Lambda_{\Q[t]}$, but $\{Q_{\lambda}(t)\}$ does not. (It becomes a basis if we base change $\lambda_{\Q[t]}$ to $\Q(t)$.) There exists a non-degenerate symmetric bilinear form $\brt{\ ,\ } : \Lambda_{\Q[t]}\times \Lambda_{\Q[t]} \rightarrow \Q(t)$ defined by $\brt{P_\lambda(t), Q_\mu(t)}=\delta_{\lambda,\mu}$ and $\brt{p_\lambda, p_\mu} = z_\lambda(t) \delta_{\lambda,\mu}$ where 
$$z_\lambda(t) \colonequals \prod_{i\geq 1}\frac{i^{m_i}m_i!}{ (1-t^{i})^{m_i}} \textup{ for } \lambda = (1^{m_1}2^{m_2}\cdots).$$
As $z_\lambda(0) = z_\lambda$, the former scalar product $\br{\ ,\ }$ can be regarded as $\brt{\ ,\ }_{t=0}$.

There is an endomorphism $\mho: \Lambda_{\Q(t)} \rightarrow \Lambda_{\Q(t)}$
which maps $f(x)$ to $f(x/(1-t))$ in Lambda-ring notation, i.e. which substitutes variables $\{x_i \mid i\geq 1\}$ with $\{x_i t^j\mid i\geq 1, j\geq 0\}.$ In other words, $\mho$ is characterized by $\mho(t)=t$ and $\mho(p_k) = \frac{1}{1-t^k}p_k$. We define  $Q'_\lambda(t) = Q'_\lambda(x;t) \colonequals \mho(Q_\lambda(t))$, called the modified Hall-Littlewood $Q'$-functions (ref. \cite[pp. 234--236]{mac95}). Then in fact $Q'_\lambda(t) \in \Q[t]$ and $\br{P_\lambda(t), Q'_\mu(t)}= \delta_{\lambda, \mu}$, i.e. $\{P_\lambda(t)\}$ and $\{Q'_\lambda(t)\}$ are dual $\Q[t]$-bases of $\Lambda_{\Q[t]}$ with respect to the scalar product $\br{\ ,\ }$. 

The function $Q'_{\lambda}(t)$ enjoys the following properties. For partitions $\lambda$ and $\rho$, we define a polynomial $X^\lambda_\mu(t) \in \Q[t]$ to be the ``transition matrix" between bases $\{p_\rho\}$ and $\{P_\lambda(t)\}$, i.e. such that $p_\rho = \sum_\lambda X^\lambda_\rho(t) P_\lambda(t)$. Then we have $\br{p_\rho, Q'_\lambda(t)} = X^\lambda_\rho(t)$, i.e. $Q'_\lambda(t) = \sum_\rho z_\rho^{-1} X^\lambda_\rho(t)p_\rho.$ If we define $\gr^\lambda_\rho(t) \colonequals t^{b(\lambda)}X^\lambda_\rho(t^{-1})$, then indeed $\gr^\lambda_\rho(t)\in \Q[t]$, which we call the Green polynomial originally defined in \cite{gre55}. Now the equation above reads
$$Q'_\lambda(t) = t^{b(\lambda)} \sum_\rho z_\rho^{-1} \gr^\lambda_\rho(t^{-1})p_\rho.$$
 On the other hand, if we write $K_{\mu, \lambda}(t)$ to be the Kostka-Foulkes polynomial, then we have $s_\mu = \sum_{\lambda}K_{\mu, \lambda}(t) P_\lambda(t)$ and thus $\br{s_\mu, Q'_\lambda(t)} = K_{\mu, \lambda}(t)$, i.e. 
$$Q'_\lambda(t) = \sum_\mu K_{\mu,\lambda}(t)s_\mu.$$

\subsection{Product, coproduct, Frobenius and Verschiebung operator}
Let $y=(y_1, y_2, \cdots)$ be another set of indeterminates. 
Define the coproduct map $\Delta: \Lambda \rightarrow \Lambda(x) \otimes \Lambda(y) : f \mapsto f(x,y)$ which substitutes $x$ with $x \sqcup y$. Also define the product map $\nabla :  \Lambda(x) \otimes \Lambda(y)  \rightarrow \Lambda$ which sends $f(x)g(y)$ to $fg=f(x)g(x)$. Note that there is a naturally induced scalar product $\br{\ , \ }$ on $\Lambda(x) \otimes \Lambda(y)$ defined by $\br{a(x)b(y), c(x)d(y)} = \br{a,c}\br{b,d}$. Then we have an adjunction $\br{\Delta(f), g} = \br{f, \nabla(g)}$. A similar results also holds if we consider $\Delta: \Lambda_{\Q[t]} \rightarrow \Lambda(x)_{\Q[t]} \otimes \Lambda(y)_{\Q[t]}, \nabla: \Lambda(x)_{\Q[t]} \otimes \Lambda(y)_{\Q[t]} \rightarrow \Lambda_{\Q[t]} $ and the scalar product $\brt{\ , \ }$ instead.

We call $\Lambda \rightarrow \Lambda: f \mapsto f[p_2]$ the (second) Frobenius operator where $f[g]$ denotes the plethysm, that is $f[p_2]$ is obtained by substituting $\{x_1, x_2, \cdots\}$ with $\{x_1^2, x_2^2, \cdots\}$. Also we define the (second) Verschiebung operator $\versch: \Lambda \rightarrow \Lambda$ which is characterized by $\versch(h_{2n}) = h_n$ and $\versch(h_{2n+1})=0$. Then there is an adjunction $\br{f[p_2],g}=\br{f,\versch(g)}$ for $f,g \in \Lambda$.

\subsection{Hyperoctahedral groups} Suppose $W_n$ is the Weyl group of type $BC_n$, i.e. $W_n=\sym_n \ltimes (\Z/2)^n$. Then similar to symmetric groups there exists the Frobenius characteristic map
$$\Psi: \bigoplus_{n\geq 0} \cf{W_n} \rightarrow \Lambda(x) \otimes \Lambda(y)$$
defined as follows. For partitions $\alpha$ and $\beta$ we define
$$p_{(\alpha, \beta)} = p_{(\alpha, \beta)}(x,y) \colonequals \left(\prod_{i\geq 1}(p_{\alpha_i}(x)+p_{\alpha_i}(y))\right)\left(\prod_{j\geq 1}(p_{\beta_j}(x)-p_{\beta_j}(y))\right).$$
Then $f \in \cf{W_n}$ is mapped under $\Psi$ to $\frac{1}{n!2^n} \sum_{w\in W_n}f(w)p_{\rho(w)}$, where $\rho(w)$ is the bipartition which parametrizes the conjugacy class of $w \in W_n$ (see Section \ref{sec:param}). It is a $\Q$-linear isomorphism and also an isometry. Moreover, it satisfies
$$\Psi(\Ind^{W_{m+n}}_{W_m \times W_n}f \times g)=\Psi(f)\Psi(g)$$
for any $f \in \cf{W_m}$ and $g \in \cf{W_n}$, and
\begin{enumerate}[label=$\bullet$]
\item $\Psi^{-1}(h_\lambda(x))= \ch \left(\Ind^{W_{|\lambda|}}_{W_{\lambda_1}\times \cdots \times W_{\lambda_{l(\lambda)}}} \Id_{W_{\lambda_1}\times \cdots \times W_{\lambda_{l(\lambda)}}}\right)$, 
\item $\Psi^{-1}(h_\lambda(y))= \ch \left(\Ind^{W_{|\lambda|}}_{W_{\lambda_1}\times \cdots \times W_{\lambda_{l(\lambda)}}} \widetilde{\textup{Sign}}_{W_{\lambda_1}\times \cdots \times W_{\lambda_{l(\lambda)}}}\right)$, where $\widetilde{\textup{Sign}}_{W_{\lambda_1}\times \cdots \times W_{\lambda_{l(\lambda)}}}$ is the tensor product of $\sgn_{W_{\lambda_1}\times \cdots \times W_{\lambda_{l(\lambda)}}}$ and the pullback of $\sgn_{\sym_{\lambda_1}\times \cdots \times \sym_{\lambda_{l(\lambda)}}}$ under the canonical surjection $W_{\lambda_1}\times \cdots \times W_{\lambda_{l(\lambda)}} \twoheadrightarrow \sym_{\lambda_1}\times \cdots \times \sym_{\lambda_{l(\lambda)}}$,
\item $\Psi^{-1}(p_{(\alpha, \beta)})(w_{(\rho, \sigma)}) = \delta_{\alpha,\rho}\delta_{\beta,\sigma}z_\alpha z_\beta 2^{l(\alpha)+l(\beta)}$, and
\item $\Psi^{-1}(s_\alpha(x)s_\beta(y))=\ch \chi^{(\alpha, \beta)}$.
\end{enumerate}

We often identify $\sym_n \ltimes \{id\} \subset W_n$ with $\sym_n$. Then for $\varrho \in \mathcal{R}(\sym_n)$ and $\varphi \in \mathcal{R}(W_n)$ we have (ref. \cite[7.10]{zel81})
$$\Psi\left(\Ind_{\sym_n}^{W_n} \varrho\right) = \Delta(\Psi(\varrho)), \quad \Psi\left(\Res_{\sym_n}^{W_n} \varphi\right) = \nabla(\Psi(\varphi)).$$
Thus in particular, $\Psi\left( \Ind_{\sym_n}^{W_n} \ch\Id_{\sym_n}\right) = \Delta(h_n) = \sum_{i=0}^n h_i(x)h_{n-i}(y)$ and $\Psi\left(\Res_{\sym_n}^{W_n} \ch\chi^{(\alpha, \beta)}\right) = \nabla(s_\alpha(x)s_\beta(y)) = s_\alpha s_\beta$.

\section{Green and Kostka-Foulkes polynomials} \label{sec:greenkostka}
Our goal in this section is to prove Theorem \ref{thm:grcalc} and \ref{thm:grcalc2} regarding Green and Kostka-Foulkes polynomials.
Let $p_k^{*,\brt{,}}: \Lambda_{\Q(t)} \rightarrow \Lambda_{\Q(t)}$ be the operator satisfying $\brt{p_k^{*,\brt{,}}f,g}=\brt{f,p_kg}$, that is the adjoint to multiplication by $p_k$ with respect to $\brt{\ ,\ }$. It is indeed easy to check that $p_k^{*,\brt{,}} = \frac{k}{1-t^k}\frac{\del}{\del p_k}$ if we identify $\Lambda_{\Q(t)}$ with $\Q(t)[p_1, p_2, \cdots]$. 

For an indeterminate $u$ we define
$$\genQ(u) \colonequals \prod_{i \geq 1} \frac{1-x_i t u}{1-x_i u}, \qquad \genF(u) \colonequals \frac{1-u}{1-tu}$$
as elements in $\Lambda_{\Q[t]}[[u]]$. Then by \cite[Chapter III.2, (2.15)]{mac95}, $Q_\lambda(t)$ is the coefficient of $u_1^{\lambda_1}u_2^{\lambda_2} \cdots$ in the expansion of $\prod_{i\geq 1} \genQ(u_i) \prod_{i<j} \genF(u_i^{-1}u_j).$
Even when $\lambda$ is a finite sequence of integers which is not necessarily a partition, we may define $Q_{\lambda}(t)$ as a coefficient of $u_1^{\lambda_1}u_2^{\lambda_2} \cdots$ in its expansion.

Now we claim (cf. \cite[p.237]{mac95})
$$\genQ(u) = \exp\left(\sum_{n\geq1} \frac{1-t^n}{n}p_n u^n\right), \textup{ i.e. } \log \genQ(u) = \sum_{n\geq1} \frac{1-t^n}{n}p_n u^n$$
as elements in $\Lambda_{\Q(t)}[[u]]$. But it is clear since
\begin{align*}
\log \genQ(u) &= \sum_{i \geq 1} \left(\log(1-x_i t u)- \log(1-x_i u)\right) = \sum_{i \geq 1} \left(\sum_{j \geq 1} \frac{(x_i u)^j}{j}-\frac{(x_i t u)^j}{j}\right)
\\&= \sum_{i \geq 1} \sum_{j \geq 1} \frac{(x_iu)^j}{j} (1-t^j) = \sum_{j \geq 1} \frac{p_i u^j}{j} (1-t^j).
\end{align*}
Thus we have $p_k^{*,\brt{,}}  \genQ(u)=u^n\genQ(u)$ and it follows that
$$p_k^{*,\brt{,}} \prod_{i\geq 1} \genQ(u_i) \prod_{i<j} \genF(u_i^{-1}u_j)=p_k(u_1, u_2, \cdots)\prod_{i\geq 1} \genQ(u_i) \prod_{i<j} \genF(u_i^{-1}u_j),$$
where $p_k(u_1, u_2, \cdots)$ is the power symmetric function of variables $u_1, u_2, \cdots$. Therefore,
$$p_k^{*,\brt{,}} Q_\lambda(t) = \sum_{i\geq 1} Q_{\lambda-ke_i}(t).$$
Here, we regard $\lambda=(\lambda_1, \lambda_2, \cdots)$ as an element of $\bigoplus_{i\geq 1}\Z$ and let $e_i \in \bigoplus_{i\geq 1}\Z$ be an element whose $k$-th coordinate is 1 and 0 elsewhere. (Note that $\lambda-ke_i$ is not necessarily a partition. Also the right side of the identity is a finite sum since $Q_{\lambda-ke_i}(t)=0$ when $i>l(\lambda)$.)

There is a way to inductively define $Q_\lambda(t)$ for any finite integer sequence $\lambda\in\bigoplus_{i\geq 1}\Z$ if it is known for any partition. According to \cite[p.214]{mac95}, for any $s<r$ we have
\begin{align*}
\textup{ if $r-s$ is odd, }\quad Q_{(s,r)}(t) =& tQ_{(r,s)}(t)+ \sum_{i=1}^{\frac{r-s-1}{2}}(t^{i+1}-t^{i-1})Q_{(r-i, s+i)}(t), \textup{ and}
\\\textup{ if $r-s$ is even, }\quad Q_{(s,r)}(t) =& tQ_{(r,s)}(t)+ \sum_{i=1}^{\frac{r-s-2}{2}}(t^{i+1}-t^{i-1})Q_{(r-i, s+i)}(t)
\\&+(t^{\frac{r-s}{2}}-t^{\frac{r-s-2}{2}})Q_{(\frac{r-s}{2}, \frac{r-s}{2})}(t).
\end{align*}
A similar formula holds for any finite sequence of integers. From this, the following lemma can be easily verified. From now on, we write $\dummy$ to denote some $\Q[t]$-linear combination of the Hall-Littlewood $Q$-functions that we do not specify.

\begin{lem} Let $\lambda$ be a partition and $k\geq 1$.
\begin{enumerate}[label=\textup{(\arabic*)}]
\item Let $\lambda = (r^{m_r})$ for some $m_r \geq 0$. Then 
\begin{align*}
\textup{$m_r$ even}, &&&\sum_{i=1}^{m_r} Q_{\lambda-2ke_i}(t) = (-1)^km_rQ_{\lambda\pch{r,r}{r-k,r-k}}(t) +(t+1)\dummy
\\\textup{$m_r$ odd}, &&&\sum_{i=1}^{m_r} Q_{\lambda-2ke_i}(t) = (-1)^k(m_r-1)Q_{\lambda\pch{r,r}{r-k,r-k}}(t) + Q_{\lambda\pch{r}{r-2k}}(t) + (t+1)\dummy
\end{align*}
\item Let $\lambda = (r^{m_r+1})$ for some $m_r \geq 0$. Then,
\begin{align*}
\textup{$m_r$ even}, &&&Q_{\lambda-2ke_1}(t) = Q_{\lambda\pch{r}{r-2k}}(t)  +(t+1)\dummy
\\&&&Q_{\lambda-(2k-1)e_1}(t) = Q_{\lambda\pch{r}{r-2k+1}}(t)  +(t+1)\dummy
\\\textup{$m_r$ odd}, &&&Q_{\lambda-2ke_1}(t) = (-1)^{k}2 Q_{\lambda\pch{r,r}{r-k,r-k}}(t) - Q_{\lambda\pch{r}{r-2k}}(t)+ (t+1) \dummy
\\&&&Q_{\lambda-(2k-1)e_1}(t) = -Q_{\lambda\pch{r}{r-2k+1}}(t)  +(t+1)\dummy
\end{align*}
\item Let $\lambda = (r^{m_r+2})$ for some $m_r\geq 0$. Then,
\begin{align*}
Q_{\lambda-k(e_1+e_2)}(t) = Q_{\lambda\pch{r,r}{r-k,r-k}}(t) +(t+1) \dummy
\end{align*}
\end{enumerate}
\end{lem}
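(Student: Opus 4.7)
The strategy is to reduce every $Q$ appearing on the left side to a partition-indexed $Q$ via the two-variable commutation relations displayed just above the lemma, working modulo $(t+1)\dummy$ throughout. A direct simplification of those relations yields the following ``swap rules'' mod $(t+1)$: for adjacent entries $a < b$ in a finite integer sequence,
\begin{align*}
Q_{(\ldots, a, b, \ldots)}(t) &\equiv -Q_{(\ldots, b, a, \ldots)}(t) \pmod{(t+1)\dummy} && \text{if $b-a$ is odd},
\\ Q_{(\ldots, a, b, \ldots)}(t) &\equiv -Q_{(\ldots, b, a, \ldots)}(t) + 2(-1)^{(b-a)/2}Q_{(\ldots, c, c, \ldots)}(t) && \text{if $b-a$ is even},
\end{align*}
where $c = (a+b)/2$. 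These follow from $t \equiv -1$, $t^{i+1}-t^{i-1} = (t+1)(t-1)t^{i-1} \equiv 0$, and $t^m - t^{m-1} = t^{m-1}(t-1) \equiv 2(-1)^m \pmod{t+1}$.

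Case $(3)$ is then immediate, since $\lambda - k(e_1+e_2) = (r-k, r-k, r^{m_r})$ is already a partition. Case $(2)$ amounts to moving a single perturbed entry past $m_r$ copies of $r$: for the $-(2k-1)e_1$ shift the gap is odd, so only sign flips occur and one obtains $(-1)^{m_r}Q_{\lambda\pch{r}{r-2k+1}}(t)$, matching the $+$ or $-$ of the stated formulas depending on the parity of $m_r$; for the $-2ke_1$ shift the gap is even, each swap additionally produces a $2(-1)^k$ multiple of a $Q$ with an inserted $(r-k,r-k)$ pair, and further rearranging these pairs to the right (gap $k$) leads by a short telescoping to the claimed expressions.

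Case $(1)$ is the main bulk. For each $i$, one applies the even-gap rule repeatedly to push $r-2k$ through the $m_r-i$ trailing copies of $r$, obtaining
\[
Q_{\lambda - 2ke_i}(t) \equiv (-1)^{m_r - i} Q_{(r^{m_r - 1}, r - 2k)}(t) + 2(-1)^k \sum_{j = 1}^{m_r - i}(-1)^{j-1}\, Q_{(r^{i+j-2}, r-k, r-k, r^{m_r - i - j})}(t),
\]
and each correction must itself be brought into partition form by moving the two $r-k$'s past the trailing $r$'s (gap $k$). Summing over $i$, the telescoping sum $\sum_i (-1)^{m_r-i}$ vanishes when $m_r$ is even and contributes exactly $Q_{\lambda\pch{r}{r-2k}}(t)$ when $m_r$ is odd; the secondary corrections from the $(r-k,r-k)$ rearrangements recombine into a net $(-1)^k m_r$ (resp.\ $(-1)^k(m_r-1)$) multiple of $Q_{\lambda\pch{r,r}{r-k,r-k}}(t)$.

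The main obstacle is the careful accounting of multi-layer corrections, especially when $k$ is even so that the $(r-k,r-k)$ swaps further spawn $(r-k/2, r-k/2)$ terms. I would dispatch this by inducting on $m_r$, with base cases $m_r = 1, 2$ verified by direct application of the swap rules, and by showing that after summation over $i$ all tertiary contributions either cancel in antipodal pairs or are absorbed into $(t+1)\dummy$. An alternative (and perhaps cleaner) route is to apply the adjoint operator identity $p_{2k}^{*,\brt{,}} Q_\lambda(t) = \sum_i Q_{\lambda - 2ke_i}(t)$ derived just above the lemma, and to specialize at $t = -1$ using the explicit action on the power-sum basis; but the inductive bookkeeping already suffices for the stated level of precision.
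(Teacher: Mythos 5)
Your reduction of the two-variable relations to swap rules mod $(t+1)$ is correct, and this is indeed the mechanism the paper intends when it asserts the lemma ``can be easily verified.'' (You have also silently fixed a typo in the displayed commutation relation: the index of the last term in the even case should read $\frac{r+s}{2}$, not $\frac{r-s}{2}$, as your $c = (a+b)/2$ indicates.) However, there is a concrete error in your Case (3): for $m_r \geq 1$ the sequence $\lambda - k(e_1+e_2) = (r-k, r-k, r^{m_r})$ is \emph{not} weakly decreasing, hence not a partition, since the two $r-k$ entries sit at the front rather than the end. You still have to commute that pair past the trailing $r$'s. When $k$ is odd each pass across one $r$ gives a net sign $(-1)^2 = 1$ and the claim is indeed immediate, but when $k$ is even each swap spawns a $2(-1)^{k/2}Q_{(\ldots, r-k/2, r-k/2, \ldots)}$ correction, and you must check that the corrections produced by the two $r-k$ entries cancel; this is true but requires a short induction on the gap. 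Since Case (3) is precisely what you need to absorb the secondary $(r-k,r-k)$ corrections in Cases (1) and (2), this is a genuine gap rather than a cosmetic one.

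Relatedly, your iterated-swap expansion of $Q_{\lambda - 2ke_i}$ in Case (1) is correctly set up, but the recombination of all the $\pm 2(-1)^k Q_{(\ldots, r-k, r-k, \ldots)}$ corrections into a total coefficient of $(-1)^k m_r$ (resp.\ $(-1)^k(m_r-1)$) is asserted, not proved; you acknowledge this by saying you ``would dispatch'' the bookkeeping by induction on $m_r$. That plan is plausible and would likely succeed, but as written the proposal is a sketch. The suggested alternative via $p_{2k}^{*,\brt{,}}$ does not help: that operator carries a factor $\frac{1}{1-t^{2k}}$ with a pole at $t=-1$, so one cannot specialize at $t=-1$ directly; the paper only evaluates at $t=-1$ after passing to $Q'$-functions via $\mho$, and the present lemma about $Q$-functions is the input to that later step, so going through the adjoint would be circular.
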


By applying the above lemma repeatedly, for any partition $\lambda$ we easily obtain
\begin{align*}
&p_{2k}^{*,\brt{,}}Q_\lambda(t) = \sum_{i\geq 1} Q_{\lambda-2ke_i}(t)
\\=& \sum_{i\geq k, m_i \geq 2} (-1)^k2\floor{\frac{m_i}{2}}Q_{\lambda\pch{i,i}{i-k, i-k}}(t)+\sum_{i\geq 2k, m_i \textup{ odd}} (-1)^{\h{\lambda/ \lambda\pch{i}{i-2k}}}Q_{\lambda\pch{i}{i-2k}}(t)
\\&+ \sum_{\substack{i-j\in 2\Z, 0<i-j<2k\leq i+j, \\m_i, m_j \textup{ odd}}} (-1)^{\h{\lambda/ \lambda\pch{i}{j}}}(-1)^{\frac{2k-(i-j)}{2}}2Q_{\lambda\pch{i,j}{\frac{i+j}{2}-k,\frac{i+j}{2}-k}}(t) 
\\&+(t+1)\dummy.
\end{align*}
We need the following lemma to simplify the signs that appear in the equation above.
\begin{lem} Let $\lambda$ be a partition and $\lambda' = (\lambda'_1, \lambda'_2, \cdots)$ be its conjugate. Also assume $k\geq 1$. 
\begin{enumerate}[label=\textup{(\arabic*)}]
\item For $i\geq k$, $b(\lambda) \equiv b(\lambda\pch{i}{i-k}) + \sum_{a=i-k+1}^i \lambda_a' +k\mod 2$.
\item For $i\geq k$, $b(\lambda) \equiv b(\lambda\pch{i,i}{i-k,i-k}) +k \mod 2$.
\item For $0<i-j<2k\leq i+j$ such that $i-j \in 2\Z_{>0}$, 
$$b(\lambda) \equiv b\left(\lambda\pch{i,j}{\frac{i+j}{2}-k,\frac{i+j}{2}-k}\right) +\sum_{a=j+1}^i \lambda'_a+\frac{2k-(i-j)}{2} \mod 2.$$
\end{enumerate}
\end{lem}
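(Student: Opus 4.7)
The plan is to prove all three statements by the same direct method: express $b(\lambda)=\sum_{a\geq 1}\binom{\lambda'_a}{2}$ via the conjugate partition, determine exactly how $\lambda'$ changes under each operation, and compute the resulting difference modulo $2$ using the elementary identities $\binom{n}{2}-\binom{n-1}{2}=n-1$ and $\binom{n}{2}-\binom{n-2}{2}=2n-3\equiv 1\pmod{2}$.

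For part (1), replacing one part of size $i$ in $\lambda$ with a part of size $i-k$ decreases $\lambda'_a$ by $1$ precisely for $a\in\{i-k+1,\dots,i\}$ and leaves the remaining columns unchanged; hence $b(\lambda)-b(\lambda\pch{i}{i-k})=\sum_{a=i-k+1}^i(\lambda'_a-1)$, which reduces to $\sum_{a=i-k+1}^i\lambda'_a+k\pmod{2}$. For part (2), the double modification $\lambda\pch{i,i}{i-k,i-k}$ decreases $\lambda'_a$ by $2$ on the same range of $k$ columns, so each of the $k$ summands contributes $1$ modulo $2$, giving the difference $k\pmod{2}$.

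For part (3), set $\mu\colonequals \lambda\pch{i,j}{\frac{i+j}{2}-k,\frac{i+j}{2}-k}$ and $p\colonequals \frac{i+j}{2}-k$. The main bookkeeping step is to verify that the hypotheses $0<i-j<2k\leq i+j$ force $0\leq p<j$, so that removing parts $i,j$ and inserting two parts of size $p$ acts on $\lambda'$ in three disjoint column ranges: $\mu'_a=\lambda'_a$ for $a\leq p$, $\mu'_a=\lambda'_a-2$ for $p<a\leq j$, and $\mu'_a=\lambda'_a-1$ for $j<a\leq i$. Applying the two elementary identities above on these ranges and using $i-j\in 2\Z$ to kill a factor of $i-j$ mod $2$ yields $b(\lambda)-b(\mu)\equiv (j-p)+\sum_{a=j+1}^i\lambda'_a\pmod{2}$, and $j-p=\frac{2k-(i-j)}{2}$ is precisely the stated correction.

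The only real obstacle is keeping the three column ranges disjoint in case (3) and avoiding parity sign errors; once the transpose bookkeeping is laid out carefully, each statement reduces to an elementary mod-$2$ computation with no essential difficulty.
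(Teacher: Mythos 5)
Your proof is correct and follows essentially the same approach as the paper: both rest on the observation that each partition modification decrements $\lambda'_a$ on explicit column ranges, combined with the elementary identities $\binom{n}{2}-\binom{n-1}{2}=n-1$ and $\binom{n}{2}-\binom{n-2}{2}\equiv 1\pmod 2$. The only difference is bookkeeping: the paper proves part (1) exactly as you do and then states that (2) and (3) "follow directly from (1)" (by iterating (1) and letting the middle sums cancel mod $2$), whereas you compute (2) and (3) directly via the two- and three-range decomposition of the conjugate; your verification that $0\leq p<j$ under the hypotheses is the key point ensuring the ranges are disjoint, and your final simplification using $i-j\in 2\Z$ matches the paper's implicit cancellation.
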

\begin{proof} Since $\binom{n}{2}-\binom{n-1}{2} = n-1, $ we have
$$b(\lambda)- b(\lambda\pch{i}{i-k}) = \sum_{a=i-k+1}^i \binom{\lambda_a'}{2}-\binom{\lambda_a'-1}{2}= \sum_{a=i-k+1}^i (\lambda_a'-1) \equiv \sum_{a=i-k+1}^i \lambda_a'+k\mod 2.$$
Now the rest follows directly from (1).
\end{proof}
Therefore, we have
\begin{align*}
&p_{2k}^{*,\brt{,}}\sg{\lambda}Q_\lambda(t) = \sg{\lambda}\sum_{i\geq 1} Q_{\lambda-2ke_i}(t)
\\=& \sum_{i\geq k, m_i \geq 2} \sg{\lambda\pch{i,i}{i-k, i-k}}2\floor{\frac{m_i}{2}}Q_{\lambda\pch{i,i}{i-k, i-k}}(t)
\\&+\sum_{i\geq 2k, m_i \textup{ odd}} \sg{\lambda\pch{i}{i-2k}}(-1)^{\h{\lambda/ \lambda\pch{i}{i-2k}}+\sum_{a=i-2k+1}^i \lambda_a'}Q_{\lambda\pch{i}{i-2k}}(t)
\\&+ \sum_{\substack{i-j \in 2\Z, 0<i-j<2k\leq i+j, \\m_i, m_j \textup{ odd}}}\sg{\lambda\pch{i,j}{\frac{i+j}{2}-k,\frac{i+j}{2}-k}} (-1)^{\h{\lambda/ \lambda\pch{i}{j}}+\sum_{a=j+1}^i \lambda_a'}2Q_{\lambda\pch{i,j}{\frac{i+j}{2}-k,\frac{i+j}{2}-k}}(t) 
\\&+(t+1)\dummy.
\end{align*}
We apply $\mho$ on both sides (see Section \ref{sec:symm} for the definition of $\mho$). As mentioned previously, the Hall-Littlewood $Q$-functions become the modified Hall-Littlewood $Q'$-functions. We define $p_k^{*,\br{,}}$ to be the adjoint operator to the multiplication by $p_k$ with respect to $\br{\ ,\ }$. Then it is easy to see that $p_k^{*,\br{,}} = k \frac{\partial}{\partial p_k}$ and we have $\mho\circ p_{k}^{*, \brt{,}} =p_{k}^{*, \br{,}}\circ \mho$. (It easily follows from the chain rule.)
Therefore, we have
\begin{align*}
&p_{2k}^{*,\br{,}}\sg{\lambda}Q'_\lambda(t) 
\\=& \sum_{i\geq k, m_i \geq 2} \sg{\lambda\pch{i,i}{i-k, i-k}}2\floor{\frac{m_i}{2}}Q'_{\lambda\pch{i,i}{i-k, i-k}}(t)
\\&+\sum_{i\geq 2k, m_i \textup{ odd}} \sg{\lambda\pch{i}{i-2k}}(-1)^{\h{\lambda/ \lambda\pch{i}{i-2k}}+\sum_{a=i-2k+1}^i \lambda_a'}Q'_{\lambda\pch{i}{i-2k}}(t)
\\&+ \sum_{\substack{i-j \in 2\Z, 0<i-j<2k\leq i+j, \\m_i, m_j \textup{ odd}}}\sg{\lambda\pch{i,j}{\frac{i+j}{2}-k,\frac{i+j}{2}-k}} (-1)^{\h{\lambda/ \lambda\pch{i}{j}}+\sum_{a=j+1}^i \lambda_a'}2Q'_{\lambda\pch{i,j}{\frac{i+j}{2}-k,\frac{i+j}{2}-k}}(t) 
\\&+(t+1)\dummyy
\end{align*}
where $\dummyy$ is some $\Q[t]$-linear combination of $Q'$-functions. Note that the equation above is defined in $\Lambda_{\Q[t]}$. Thus we can evaluate both sides at $t=-1$, which gives
\begin{equation}\label{eq:hlQ'}
\begin{aligned}
&p_{2k}^{*,\br{,}}\sg{\lambda}Q'_\lambda(-1) 
\\=& \sum_{i\geq k, m_i \geq 2} \sg{\lambda\pch{i,i}{i-k, i-k}}2\floor{\frac{m_i}{2}}Q'_{\lambda\pch{i,i}{i-k, i-k}}(-1)
\\&+\sum_{i\geq 2k, m_i \textup{ odd}} \sg{\lambda\pch{i}{i-2k}}(-1)^{\h{\lambda/ \lambda\pch{i}{i-2k}}+\sum_{a=i-2k+1}^i \lambda_a'}Q'_{\lambda\pch{i}{i-2k}}(-1)
\\&+ \sum_{\substack{i-j \in 2\Z, 0<i-j<2k\leq i+j, \\m_i, m_j \textup{ odd}}}\sg{\lambda\pch{i,j}{\frac{i+j}{2}-k,\frac{i+j}{2}-k}} (-1)^{\h{\lambda/ \lambda\pch{i}{j}}+\sum_{a=j+1}^i \lambda_a'}2Q'_{\lambda\pch{i,j}{\frac{i+j}{2}-k,\frac{i+j}{2}-k}}(-1).
\end{aligned}
\end{equation} 
If we take the scalar product $\br{\ ,\ }$ on both sides with $p_\rho$ for some $\rho \vdash |\lambda|-2k$, then the following proposition is a direct outcome.
\begin{prop} \label{prop:grcalc} For a partition $\lambda=(1^{m_1}2^{m_2}\cdots)$ with its transpose $\lambda' = (\lambda'_1, \lambda_2', \cdots)$ and $k \geq 1$, we have
\begin{align*}
\gr^\lambda_{\rho \cup (2k)}(-1)=& \sum_{i\geq k, m_i \geq 2} 2\floor{\frac{m_i}{2}}\gr^{\lambda\pch{i,i}{i-k, i-k}}_{\rho}(-1)
\\& + \sum_{i\geq 2k, m_i \textup{ odd}} (-1)^{\h{\lambda/ \lambda\pch{i}{i-2k}}+\sum_{a=i-2k+1}^i \lambda_a'}\gr^{\lambda\pch{i}{i-2k}}_\rho(-1)
\\& + \sum_{\substack{i-j \in 2\Z, 0<i-j<2k\leq i+j, \\m_i, m_j \textup{ odd}}} (-1)^{\h{\lambda/ \lambda\pch{i}{j}}+\sum_{a=j+1}^i \lambda_a'}2\gr^{\lambda\pch{i,j}{(i+j)/{2}-k,(i+j)/{2}-k}}_\rho(-1).
\end{align*}
\end{prop}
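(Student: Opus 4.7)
The proposition is essentially an immediate consequence of the identity \eqref{eq:hlQ'}, which the paper has just established. My plan is simply to pair both sides of \eqref{eq:hlQ'} against the power-sum $p_\rho$ under the scalar product $\br{\ ,\ }$, and unwind the definitions of $\gr^\lambda_\rho$ and $\sg{\lambda}$.

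The first step is to rewrite the pairing formula for $Q'$ against power sums. From $Q'_\lambda(t) = \sum_\sigma z_\sigma^{-1}X^\lambda_\sigma(t)\,p_\sigma$ and $\br{p_\sigma,p_\rho}=\delta_{\sigma,\rho}z_\rho$, together with $\gr^\lambda_\rho(t)=t^{b(\lambda)}X^\lambda_\rho(t^{-1})$, I get
\begin{equation*}
\br{Q'_\lambda(-1),\,p_\rho} \;=\; X^\lambda_\rho(-1) \;=\; (-1)^{b(\lambda)}\gr^\lambda_\rho(-1) \;=\; \sg{\lambda}\,\gr^\lambda_\rho(-1).
\end{equation*}
I then apply this to each $Q'$ appearing on the right-hand side of \eqref{eq:hlQ'}: every term of the form $\sg{\mu}\cdot c\cdot Q'_\mu(-1)$ contributes, upon pairing with $p_\rho$, the quantity $\sg{\mu}\cdot c\cdot\sg{\mu}\gr^\mu_\rho(-1) = c\,\gr^\mu_\rho(-1)$, so the sign factors $\sg{\mu}$ cancel as desired.

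For the left-hand side, I use the adjunction defining $p_{2k}^{*,\br{,}}$, namely $\br{p_{2k}^{*,\br{,}}f,\,g}=\br{f,\,p_{2k}g}$, together with the fact that $p_{2k}\cdot p_\rho = p_{\rho\cup(2k)}$. This gives
\begin{equation*}
\br{p_{2k}^{*,\br{,}}\sg{\lambda}Q'_\lambda(-1),\,p_\rho} \;=\; \sg{\lambda}\br{Q'_\lambda(-1),\,p_{\rho\cup(2k)}} \;=\; \sg{\lambda}\cdot\sg{\lambda}\gr^\lambda_{\rho\cup(2k)}(-1) \;=\; \gr^\lambda_{\rho\cup(2k)}(-1),
\end{equation*}
which is exactly the left-hand side of the claimed identity.

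There is essentially no obstacle: the substantial combinatorial work--setting up the recursion for $Q_{(s,r)}$, tracking the signs from the three cases of the lemma, computing $p_{2k}^{*,\brt{,}}Q_\lambda(t)$ modulo $(t+1)$, applying $\mho$ to pass from $Q$ to $Q'$, and verifying the sign identities $b(\lambda)\equiv b(\lambda^{(\cdots)})+(\cdots)\pmod{2}$--has already been carried out to yield \eqref{eq:hlQ'}. The only thing left is the clean bookkeeping above, and the most I need to be careful about is the exact power of $-1$ when applying $\sg{\lambda}=(-1)^{b(\lambda)}$ on both sides: since $\sg{\lambda}^2=1$, the Kostka signs cancel inside the pairing with $p_\rho$, leaving the unadorned Green polynomials $\gr^{\mu}_\rho(-1)$ on the right.
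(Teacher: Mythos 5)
Your proof is correct and takes essentially the same route as the paper: pair both sides of \eqref{eq:hlQ'} with $p_\rho$, use the adjunction $\br{p_{2k}^{*,\br{,}}f,g}=\br{f,p_{2k}g}$ on the left, and on the right use $\br{Q'_\mu(-1),p_\rho}=X^\mu_\rho(-1)=\sg{\mu}\gr^\mu_\rho(-1)$ so that the $\sg{\mu}$ factors cancel. The paper compresses all of this into the single sentence ``take the scalar product with $p_\rho$,'' but your unpacking of it is exactly the intended calculation.
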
 
On the other hand, if we take the scalar product $\br{\ ,\ }$ on both sides of (\ref{eq:hlQ'}) with $s_\nu$ for some $\nu\vdash |\lambda|-2k$, then 
\begin{align*}
&\sg{\lambda}\br{Q'_\lambda(-1),p_{2k}s_\nu}
\\=& \sum_{i\geq k, m_i \geq 2} \sg{\lambda\pch{i,i}{i-k, i-k}}2\floor{\frac{m_i}{2}}K_{\nu,\lambda\pch{i,i}{i-k, i-k}}(-1)
\\&+\sum_{i\geq 2k, m_i \textup{ odd}} \sg{\lambda\pch{i}{i-2k}}(-1)^{\h{\lambda/ \lambda\pch{i}{i-2k}}+\sum_{a=i-2k+1}^i \lambda_a'}K_{\nu,\lambda\pch{i}{i-2k}}(-1)
\\&+ \sum_{\substack{i-j \in 2\Z, 0<i-j<2k\leq i+j, \\m_i, m_j \textup{ odd}}}\sg{\lambda\pch{i,j}{\frac{i+j}{2}-k,\frac{i+j}{2}-k}} (-1)^{\h{\lambda/ \lambda\pch{i}{j}}+\sum_{a=j+1}^i \lambda_a'}2K_{\nu,\lambda\pch{i,j}{\frac{i+j}{2}-k,\frac{i+j}{2}-k}}(-1).
\end{align*}
By the Murnaghan-Nakayama rule (cf. \cite[7.17]{sta86}), we have
\begin{prop}For a partition $\lambda=(1^{m_1}2^{m_2}\cdots)$ with its transpose $\lambda' = (\lambda'_1, \lambda_2', \cdots)$ and $k \geq 1$, we have
\begin{align*}
&\sg{\lambda}\sum_{\mu \vdash n, \mu/\nu \textup{ border strip}} (-1)^{\h{\mu/\nu}}K_{\mu, \lambda}(-1)
\\=& \sum_{i\geq k, m_i \geq 2} \sg{\lambda\pch{i,i}{i-k, i-k}}2\floor{\frac{m_i}{2}}K_{\nu,\lambda\pch{i,i}{i-k, i-k}}(-1)
\\&+\sum_{i\geq 2k, m_i \textup{ odd}} \sg{\lambda\pch{i}{i-2k}}(-1)^{\h{\lambda/ \lambda\pch{i}{i-2k}}+\sum_{a=i-2k+1}^i \lambda_a'}K_{\nu,\lambda\pch{i}{i-2k}}(-1)
\\&+ \sum_{\substack{i-j \in 2\Z, 0<i-j<2k\leq i+j, \\m_i, m_j \textup{ odd}}}\sg{\lambda\pch{i,j}{\frac{i+j}{2}-k,\frac{i+j}{2}-k}} (-1)^{\h{\lambda/ \lambda\pch{i}{j}}+\sum_{a=j+1}^i \lambda_a'}2K_{\nu,\lambda\pch{i,j}{\frac{i+j}{2}-k,\frac{i+j}{2}-k}}(-1).
\end{align*}
\end{prop}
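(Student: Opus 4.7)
The plan is to recognize both sides of the identity as two expressions for the same quantity, namely $\sg{\lambda}\br{Q'_\lambda(-1), p_{2k} s_\nu}$. The right-hand side has already been computed in the displayed equation immediately preceding the proposition, obtained by pairing the identity (\ref{eq:hlQ'}) with $s_\nu$ via the adjunction $\br{p_{2k}^{*,\br{,}} f, g} = \br{f, p_{2k} g}$ and using $\br{Q'_\mu(-1), s_\nu} = K_{\nu,\mu}(-1)$. So the only remaining task is to show that the left-hand side of the proposition also equals this scalar product.

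For this I would invoke the Murnaghan--Nakayama rule in symmetric-function form,
$$p_{2k}\, s_\nu \;=\; \sum_{\substack{\mu \vdash n \\ \mu/\nu \text{ border strip}}} (-1)^{\h{\mu/\nu}}\, s_\mu,$$
where the sum runs over partitions $\mu$ such that $\mu/\nu$ is a border strip of size $2k$ (see \cite[7.17]{sta86}). Pairing this expansion with $Q'_\lambda(-1)$ under $\br{\ ,\ }$ and using the duality $\br{Q'_\lambda(t), s_\mu} = K_{\mu,\lambda}(t)$, which follows immediately from the expansion $Q'_\lambda(t) = \sum_\mu K_{\mu,\lambda}(t) s_\mu$ recalled in Section~\ref{sec:symm} together with the orthonormality of the Schur basis, I obtain
$$\br{Q'_\lambda(-1),\, p_{2k} s_\nu} \;=\; \sum_{\substack{\mu \vdash n \\ \mu/\nu \text{ border strip}}} (-1)^{\h{\mu/\nu}}\, K_{\mu,\lambda}(-1).$$
Multiplying by $\sg{\lambda}$ yields precisely the left-hand side of the proposition.

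There is essentially no obstacle here; the statement is a direct translation of the preceding identity into Kostka--Foulkes language via Murnaghan--Nakayama. The only points requiring a sanity check are that the border strips appearing in the sum have length exactly $2k$ (forced by the fact that only $p_{2k}$ acts) and that the sign convention $(-1)^{\h{\mu/\nu}}$ in the Murnaghan--Nakayama expansion agrees with the height convention fixed in Section~\ref{sec:defnot}; both set the height to be the number of nonempty rows minus one, so the conventions match.
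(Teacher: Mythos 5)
Your proposal is correct and is essentially the paper's own argument: the paper also obtains the proposition by pairing identity~(\ref{eq:hlQ'}) with $s_\nu$, identifying $\br{p_{2k}^{*,\br{,}}\sg{\lambda}Q'_\lambda(-1), s_\nu} = \sg{\lambda}\br{Q'_\lambda(-1), p_{2k}s_\nu}$ by adjunction, and then expanding $p_{2k}s_\nu$ via Murnaghan--Nakayama and using $\br{Q'_\lambda(-1), s_\mu} = K_{\mu,\lambda}(-1)$. You merely spell out the Murnaghan--Nakayama step that the paper compresses into one sentence, and your sanity checks on the strip size and the height convention are correct.
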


From now on, we suppose that $\lambda$ is a Jordan type of some nilpotent element in $\g$ of type $B, C, $ or $D$. Then case-by-case check shows that $\sum_{a=i-2k+1}^i \lambda_a' \equiv 0 \mod 2$ for any choice of $i$ such that $m_i$ is odd. Similarly, we always have $\sum_{a=j+1}^i \lambda_a' \equiv 0 \mod 2$ for $i,j$ such that $i>j$ and $m_i, m_j$ are odd. On the other hand, if $m_i$ and $m_j$ are odd then we always have $i-j \in 2\Z$. Therefore, we can simplify the above propositions a bit as follows. 
\begin{thm} \label{thm:grcalc} Assume $\lambda=(1^{m_1}2^{m_2}\cdots)$ is a partition of the Jordan type of some nilpotent element in $\g$ of type $B$, $C$, or $D$ and let $k \geq 1$. Then we have
\begin{align*}
\gr^\lambda_{\rho \cup (2k)}(-1)=& \sum_{i\geq k, m_i \geq 2} 2\floor{\frac{m_i}{2}}\gr^{\lambda\pch{i,i}{i-k, i-k}}_{\rho}(-1)
\\& + \sum_{i\geq 2k, m_i \textup{ odd}} (-1)^{\h{\lambda/ \lambda\pch{i}{i-2k}}}\gr^{\lambda\pch{i}{i-2k}}_\rho(-1)
\\& + \sum_{\substack{ 0<i-j<2k\leq i+j, \\m_i, m_j \textup{ odd}}} (-1)^{\h{\lambda/ \lambda\pch{i}{j}}}2\gr^{\lambda\pch{i,j}{\frac{i+j}{2}-k,\frac{i+j}{2}-k}}_\rho(-1).
\end{align*}
\end{thm}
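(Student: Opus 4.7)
The plan is to deduce Theorem \ref{thm:grcalc} directly from Proposition \ref{prop:grcalc}, which is already established in the preceding paragraphs without the classical-type assumption. The only difference between the two statements is (i) the extra summation condition ``$i-j\in 2\Z$'' in the third sum and (ii) the sign factors $(-1)^{\sum_{a=i-2k+1}^{i}\lambda'_a}$ and $(-1)^{\sum_{a=j+1}^{i}\lambda'_a}$ that appear in the second and third sums of Proposition \ref{prop:grcalc}. So the task is purely combinatorial: under the classical-type constraints on $\lambda$, prove that (a) whenever $m_i$ and $m_j$ are both odd we have $i-j\in 2\Z$, and (b) the two extra sign factors are always $+1$.

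First I would record the parity constraints implied by classical types. For type $B$, $\lambda\vdash 2n+1$ has every even part of even multiplicity; hence $m_i$ odd forces $i$ odd. For type $D$ and the nilpotent orbits of $SO_{2n}$, $\lambda\vdash 2n$ again has every even part of even multiplicity, so $m_i$ odd forces $i$ odd. For type $C$, $\lambda\vdash 2n$ has every odd part of even multiplicity, so $m_i$ odd forces $i$ even. In all three cases, the parity of any $i$ with $m_i$ odd is fixed, so if both $m_i$ and $m_j$ are odd then $i\equiv j\pmod 2$, giving (a) immediately and also removing the condition ``$i-j\in 2\Z$'' from the summation range.

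For (b), I would expand $\lambda'_a=\sum_{b\geq a}m_b$ and interchange summations:
\begin{align*}
\sum_{a=i-2k+1}^{i}\lambda'_a &= 2k\sum_{b\geq i}m_b+\sum_{b=i-2k+1}^{i-1}(b-i+2k)m_b,\\
\sum_{a=j+1}^{i}\lambda'_a &= (i-j)\sum_{b\geq i}m_b+\sum_{b=j+1}^{i-1}(b-j)m_b.
\end{align*}
The first term in each expansion is visibly even (since $2k$ and, by (a), $i-j$ are even). For the remaining sums, the classical-type hypothesis kills half the indices: whichever parity class of $b$ does \emph{not} match the parity of $i$ (equivalently $j$) automatically has $m_b$ even and contributes nothing mod $2$. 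For indices $b$ of the matching parity, the factors $b-i+2k$ and $b-j$ are even because $b\equiv i\equiv j\pmod 2$. Hence both sums are even, proving (b).

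Finally, I would combine (a) and (b) with Proposition \ref{prop:grcalc}: the range constraint ``$i-j\in 2\Z$'' becomes redundant and both sign exponents reduce to $\h{\lambda/\lambda\pch{i}{i-2k}}$ and $\h{\lambda/\lambda\pch{i}{j}}$ respectively, yielding exactly the formula in Theorem \ref{thm:grcalc}. There is no substantive obstacle here—the whole proof is a parity bookkeeping exercise—but the one place to be careful is making sure the case split on the parity of $i$ really does let us discard all the mixed-parity terms; a clean way is to observe that for each classical type the set $\{b:m_b\text{ odd}\}$ lies entirely in one residue class mod $2$, so the sums over $b$ of opposite parity vanish mod $2$ term by term.
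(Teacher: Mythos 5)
Your proposal is correct and matches the paper's intent. The paper itself only remarks that ``case-by-case check shows'' the two sign exponents $\sum_{a=i-2k+1}^{i}\lambda'_a$ and $\sum_{a=j+1}^{i}\lambda'_a$ are even and that $i-j\in 2\Z$ whenever $m_i,m_j$ are odd; your argument simply supplies that check in the cleanest way, by interchanging summation and using that for each classical type the set $\{b:m_b\text{ odd}\}$ lies in a single residue class mod $2$ (odd for $B$ and $D$, even for $C$). All the individual steps---the identity $\lambda'_a=\sum_{b\ge a}m_b$, the two double-sum rewrites, and the term-by-term parity argument discarding mixed-parity $b$'s---are sound, and the hypothesis $i\ge 2k$ from the summation range of Theorem~\ref{thm:grcalc} guarantees $i-2k+1\ge 1$ so the interchange is well-defined. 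This is the same route as the paper, only made explicit.
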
 
\begin{thm} \label{thm:grcalc2} Under the same assumptions above, we have
\begin{align*}
&\sg{\lambda}\sum_{\mu \vdash n, \mu/\nu \textup{ border strip}} (-1)^{\h{\mu/\nu}}K_{\mu, \lambda}(-1)
\\=& \sum_{i\geq k, m_i \geq 2} \sg{\lambda\pch{i,i}{i-k, i-k}}2\floor{\frac{m_i}{2}}K_{\nu,\lambda\pch{i,i}{i-k, i-k}}(-1)
\\&+\sum_{i\geq 2k, m_i \textup{ odd}} \sg{\lambda\pch{i}{i-2k}}(-1)^{\h{\lambda/ \lambda\pch{i}{i-2k}}}K_{\nu,\lambda\pch{i}{i-2k}}(-1)
\\&+ \sum_{\substack{ 0<i-j<2k\leq i+j, \\m_i, m_j \textup{ odd}}}\sg{\lambda\pch{i,j}{\frac{i+j}{2}-k,\frac{i+j}{2}-k}} (-1)^{\h{\lambda/ \lambda\pch{i}{j}}}2K_{\nu,\lambda\pch{i,j}{\frac{i+j}{2}-k,\frac{i+j}{2}-k}}(-1).
\end{align*}
\end{thm}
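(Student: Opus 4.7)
The plan is to derive Theorem \ref{thm:grcalc2} directly from equation (\ref{eq:hlQ'}) by pairing both sides against the Schur function $s_\nu$ under the scalar product $\br{\,,\,}$, in exact parallel to how Theorem \ref{thm:grcalc} was obtained by pairing against $p_\rho$. This setup is already made explicit in the excerpt immediately before the theorem statement, so the remaining work is to identify both sides and then simplify the signs.

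First I would extract the right-hand side. Since $Q'_\mu(t) = \sum_\eta K_{\eta,\mu}(t) s_\eta$, one has $\br{Q'_\mu(-1), s_\nu} = K_{\nu,\mu}(-1)$, so applying $\br{\,\cdot\,, s_\nu}$ termwise to the right-hand side of (\ref{eq:hlQ'}) produces precisely the three sums of Kostka-Foulkes values $K_{\nu,\lambda\pch{i,i}{i-k,i-k}}(-1)$, $K_{\nu,\lambda\pch{i}{i-2k}}(-1)$, and $K_{\nu,\lambda\pch{i,j}{(i+j)/2-k,(i+j)/2-k}}(-1)$ that appear in the theorem (with the correct signs and coefficients).

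Next I would handle the left-hand side via adjunction and Murnaghan–Nakayama. By the very definition of $p_{2k}^{*,\br{,}}$ as the adjoint to multiplication by $p_{2k}$, we have
\begin{align*}
\br{p_{2k}^{*,\br{,}}\sg{\lambda}Q'_\lambda(-1),\, s_\nu} \;=\; \sg{\lambda}\br{Q'_\lambda(-1),\, p_{2k} s_\nu}.
\end{align*}
Expanding $Q'_\lambda(-1)$ in the Schur basis and applying the Murnaghan–Nakayama rule $p_{2k} s_\nu = \sum_{\mu} (-1)^{\h{\mu/\nu}} s_\mu$ (summed over $\mu \vdash |\lambda|$ such that $\mu/\nu$ is a border strip of size $2k$) yields exactly the signed sum $\sg{\lambda}\sum_{\mu/\nu \text{ border strip}} (-1)^{\h{\mu/\nu}} K_{\mu,\lambda}(-1)$ on the left of the desired identity. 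At this stage one obtains the identity of the second displayed formula in the paragraph preceding the theorem, with the extra sign factors $(-1)^{\sum_{a=i-2k+1}^i \lambda_a'}$ and $(-1)^{\sum_{a=j+1}^i \lambda_a'}$ still present.

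Finally I would dispose of these extra sign factors using the standing assumption that $\lambda$ is a Jordan type for some nilpotent element in a classical Lie algebra of type $B$, $C$, or $D$. The key combinatorial observation, stated in the paragraph above the theorem, is that the multiplicity constraints (even parts have even multiplicity in types $B$/$D$; odd parts have even multiplicity in type $C$) force $\sum_{a=i-2k+1}^i \lambda_a' \equiv 0 \pmod 2$ whenever $m_i$ is odd, and $\sum_{a=j+1}^i \lambda_a' \equiv 0 \pmod 2$ whenever both $m_i$ and $m_j$ are odd. These follow from a short case check: having an odd $m_i$ pins the parity of $i$, and the columns of $\lambda'$ between rows $j+1$ and $i$ then come in matched pairs contributing even totals. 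The main obstacle, if any, is verifying these parity claims cleanly in all three classical types; once this is done, the signs $(-1)^{\sum \lambda_a'}$ drop out and the identity collapses into the form stated in Theorem \ref{thm:grcalc2}.
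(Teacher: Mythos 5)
Your proposal is correct and follows the paper's argument essentially verbatim: pair (\ref{eq:hlQ'}) with $s_\nu$, use the adjunction $\br{p_{2k}^{*,\br{,}}\sg{\lambda}Q'_\lambda(-1),s_\nu}=\sg{\lambda}\br{Q'_\lambda(-1),p_{2k}s_\nu}$ together with the Murnaghan--Nakayama rule on the left and $\br{Q'_\mu(-1),s_\nu}=K_{\nu,\mu}(-1)$ on the right, then kill the extra sign factors $(-1)^{\sum\lambda'_a}$ via the parity constraints that the Jordan-type multiplicity conditions impose. This is exactly the paper's derivation, so nothing further to compare.
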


\section{Proof of Main Theorem \ref{mainthm2}} \label{sec:proofmain2}
We are ready to prove Main Theorem \ref{mainthm2}. Indeed it is easily deduced from Theorem \ref{thm:geocalc}, \ref{thm:geocalcD5}, and \ref{thm:grcalc}, as we see below.
\begin{thm} \label{thm:mainBC} Suppse $G=SO_{2n+1}(\C)$ or $Sp_{2n}(\C)$. Let $\sym_n \subset W$ be the maximal parabolic subgroup of $W$ and let $w_\rho \in \sym_n$ be an element of cycle type $\rho\vdash n$.
\begin{enumerate}[label=$\bullet$]
\item If $G=SO_{2n+1}(\C)$, then $\ch \tsp{\lambda}(w_\rho) = \gr^\lambda_{2\rho\cup(1)}(-1).$
\item If $G=Sp_{2n}(\C)$, then $\ch\tsp{\lambda}(w_\rho) = \gr^\lambda_{2\rho}(-1).$
\end{enumerate}
\end{thm}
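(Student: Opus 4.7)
The plan is to proceed by induction on $n$, exploiting the fact that Theorem \ref{thm:geocalc} (the geometric restriction formula) and Theorem \ref{thm:grcalc} (the Green-polynomial recursion) have formally identical shapes, so the proof reduces to matching the two recursions term by term together with a trivial base case.

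For the base case $n=0$, the group $W$ is trivial, the Springer fiber $\B_{N_\lambda}$ is a point, and both sides of the identity equal $1$. For the inductive step, fix $\rho \vdash n$ with $n \geq 1$, choose any part $k$ of $\rho$, and write $\rho = \rho' \cup (k)$ with $\rho' \vdash n-k$. Let $\sym_k \times W' \subset W$ be a maximal parabolic as in Theorem \ref{thm:geocalc}, with $W'$ of the same type and of rank $n-k$, and decompose $w_{(\rho,\emptyset)} = c \cdot w_{(\rho',\emptyset)}$ with $c \in \sym_k$ a $k$-cycle.

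Theorem \ref{thm:geocalc} then evaluates $\ch\tsp{\lambda}(w_{(\rho,\emptyset)})$ as $(\Res^W_{c \cdot W'}\ch\tsp{\lambda})(w_{(\rho',\emptyset)})$, which is a $\Z$-linear combination of values $\ch\tsp{\mu}(w_{(\rho',\emptyset)})$ over three families of partitions $\mu$ of size $|\lambda|-2k$: namely $\lambda\pch{i,i}{i-k,i-k}$, $\lambda\pch{i}{i-2k}$, and $\lambda\pch{i,j}{\frac{i+j}{2}-k,\frac{i+j}{2}-k}$, with coefficients $2\floor{m_i/2}$, $(-1)^{\h{\lambda/\lambda\pch{i}{i-2k}}}$, and $2(-1)^{\h{\lambda/\lambda\pch{i}{j}}}$ respectively. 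A short parity check (using that in type $B$ resp.\ $C$ an index $i$ with $m_i$ odd must be odd resp.\ even, and similarly for $j$) shows each such $\mu$ is again a valid Jordan type for the rank-$(n-k)$ group of the same type, so the inductive hypothesis applies and gives $\ch\tsp{\mu}(w_{(\rho',\emptyset)}) = \gr^\mu_{2\rho'}(-1)$ in type $C$, and $=\gr^\mu_{2\rho' \cup (1)}(-1)$ in type $B$.

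Substituting these identities into the formula from Theorem \ref{thm:geocalc} produces precisely the right-hand side of Theorem \ref{thm:grcalc} with index $k$ and base partition $2\rho'$ (type $C$) or $2\rho' \cup (1)$ (type $B$); that theorem collapses this sum into $\gr^\lambda_{2\rho' \cup (2k)}(-1) = \gr^\lambda_{2\rho}(-1)$, respectively $\gr^\lambda_{2\rho \cup (1)}(-1)$, finishing the induction. The hard part is not in this induction but in having proved Theorems \ref{thm:geocalc} and \ref{thm:grcalc} with \emph{matching} combinatorial shapes; arranging the signs $(-1)^{\h{\cdot}}$ on the Green-polynomial side to agree exactly with the geometric signs (including the disappearance of the factor $(-1)^{\sum \lambda'_a}$ in the passage from the general formula to Theorem \ref{thm:grcalc}, which relied on the parity constraints satisfied by Jordan types of type $B$ and $C$) was the nontrivial bookkeeping carried out earlier.
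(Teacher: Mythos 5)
Your proposal is correct and takes essentially the same route as the paper: an induction built on the formal match between Theorem \ref{thm:geocalc} and Theorem \ref{thm:grcalc}. The only real difference is cosmetic: you induct on the rank $n$ with a trivial base case $n=0$, so that the case $\rho=(n)$ is absorbed into the inductive step with $k=n$ (where $W'$ is trivial and the right-hand side of Theorem \ref{thm:geocalc} reduces to a sum of delta functions), whereas the paper inducts on $l(\rho)$ and therefore handles $\rho=(n)$ as an explicit base case by writing out both sides; the content is identical.
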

\begin{proof} We proceed by induction on $l(\rho)$. If $\rho$ is a cycle, i.e. $\rho=(n)$, then Theorem \ref{thm:geocalc} and Theorem \ref{thm:grcalc} imply that
\begin{enumerate}[label=$\bullet$]
\item if $G=SO_{2n+1}(\C)$, then
$$
\ch \tsp{\lambda}(w_{(n)}) =  2\delta_{\lambda,(n,n,1)}+ \delta_{\lambda, (2n+1)}+ \sum_{i>j>1 \textup{ odd}, i+j=2n} 2\delta_{\lambda, (i,j,1)}=\gr^\lambda_{(2n,1)}(-1).
$$
\item if $G=Sp_{2n}(\C)$, then
$$
\ch \tsp{\lambda}(w_{(n)}) =  2\delta_{\lambda,(n,n)}+ \delta_{\lambda, (2n)}+ \sum_{i>j>0 \textup{ even}, i+j=2n} 2\delta_{\lambda, (i,j)}=\gr^\lambda_{(2n)}(-1).
$$
\end{enumerate}
Thus the theorem holds. Otherwise, we set $\rho = (k)\cup \rho'$ for some $\rho' \vdash n-k$ and $ 1\leq k <n$. If $G=Sp_{2n}(\C)$ then we have
\begin{align*} 
\ch\tsp{\lambda}(w_\rho) =& \sum_{i\geq k, m_i \geq 2} 2\floor{\frac{m_i}{2}}\ch \tsp{\lambda\pch{i,i}{i-k, i-k}}(w_{\rho'})
\\&+ \sum_{i\geq 2k, m_i \textup{ odd}} (-1)^{\h{\lambda/\left(\lambda\pch{i}{i-2k}\right)}}\ch \tsp{\lambda\pch{i}{i-2k}}(w_{\rho'})
\\&+ \sum_{\substack{0<i-j<2k\leq i+j, \\m_i, m_j \textup{ odd}}} (-1)^{\h{\lambda/ \left(\lambda\pch{i}{j}\right)}}2\ch \tsp{\lambda\pch{i,j}{\frac{i+j}{2}-k,\frac{i+j}{2}-k}}(w_{\rho'}).
\end{align*}
By induction assumption, it is equal to
\begin{align*}
& \sum_{i\geq k, m_i \geq 2} 2\floor{\frac{m_i}{2}}\gr^{\lambda\pch{i,i}{i-k, i-k}}_{2\rho'}(-1)+ \sum_{i\geq 2k, m_i \textup{ odd}} (-1)^{\h{\lambda/\left(\lambda\pch{i}{i-2k}\right)}}\gr^{\lambda\pch{i}{i-2k}}_{2\rho'}(-1)
\\&+ \sum_{\substack{ 0<i-j<2k\leq i+j, \\m_i, m_j \textup{ odd}}} (-1)^{\h{\lambda/ \left(\lambda\pch{i}{j}\right)}}2\gr^{\lambda\pch{i,j}{\frac{i+j}{2}-k,\frac{i+j}{2}-k}}_{2\rho'}(-1)
\\&=\gr^\lambda_{2\rho'\cup(2k)}(-1)=\gr^\lambda_{2\rho}(-1).
\end{align*}
If $G=SO_{2n+1}(\C)$, we simply replace $2\rho', 2\rho$ with $2\rho'\cup(1), 2\rho\cup(1)$, respectively, and the equation is still valid. It suffices for the proof.
\end{proof}
\begin{thm} \label{thm:mainD} Suppose $G=SO_{2n}(\C)$. Let $\sym_{n+}$ (resp. $\sym_{n-}$) be the maximal parabolic subgroup of $W$ which does not contain $s_{-}$ (resp. $s_{+}$) and let $w_\rho \in \sym_{n\pm}$ be an element of cycle type $\rho$. If $\rho$ is even, then we write $w_{\rho\pm} \in \sym_{n\pm}$ to avoid ambiguity. 
\begin{enumerate}
\item If $\lambda$ is not very even and $\rho$ is not even, then $\ch \tsp{\lambda}(w_\rho) = \frac{1}{2}\gr^\lambda_{2\rho}(-1).$
\item If $\lambda$ is very even and $\rho$ is not even, then $\ch \tsp{\lambda\pm}(w_\rho) = \frac{1}{2}\gr^\lambda_{2\rho}(-1).$
\item If $\lambda$ is not very even and $\rho$ is even, then $\ch \tsp{\lambda}(w_{\rho\pm}) = \frac{1}{2}\gr^\lambda_{2\rho}(-1).$
\item If $\lambda$ is very even and $\rho$ is even, then 
$$\ch\tsp{\lambda\pm}(w_{\rho\pm}) = \gr^\lambda_{2\rho}(-1) \quad\textup{ and }\quad \ch\tsp{\lambda\mp}(w_{\rho\pm}) =0.$$
\end{enumerate}
\end{thm}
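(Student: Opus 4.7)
The plan is to mirror the strategy of Theorem \ref{thm:mainBC}, inducting on $l(\rho)$, but with a preliminary reduction that repackages the four cases (1)--(4) into a single identity for the ``doubled'' representation $\atsp{\lambda}$. First I would establish that all four claims follow from the averaged identity
\begin{equation}\label{eq:avgD}
\ch\atsp{\lambda}(w_\rho)=\gr^\lambda_{2\rho}(-1),
\end{equation}
where $w_\rho$ is taken to mean either $w_{\rho+}$ or $w_{\rho-}$ when $\rho$ is even. For $\lambda$ not very even, $\atsp{\lambda}=2\tsp{\lambda}$, so (\ref{eq:avgD}) immediately gives $\ch\tsp{\lambda}(w_\rho)=\tfrac12\gr^\lambda_{2\rho}(-1)$ in cases (1) and (3); case (3) is well-defined because the unique nilpotent orbit of Jordan type $\lambda$ is preserved by the outer involution $\tau$, forcing $\ch\tsp{\lambda}(w_{\rho+})=\ch\tsp{\lambda}(w_{\rho-})$. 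For case (2), $\tau$ swaps $\tsp{\lambda+}\leftrightarrow\tsp{\lambda-}$ while fixing the (non-split) conjugacy class of $w_\rho$, so the two characters agree and each equals $\tfrac12\ch\atsp{\lambda}(w_\rho)$. For case (4), Lemma \ref{lem:pm} gives $\ch\tsp{\lambda\mp}(w_{\rho\pm})=0$ outright, so $\ch\tsp{\lambda\pm}(w_{\rho\pm})$ absorbs the full value $\ch\atsp{\lambda}(w_{\rho\pm})=\gr^\lambda_{2\rho}(-1)$.

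The main task is then to prove (\ref{eq:avgD}) by induction on $l(\rho)$. For the inductive step I would write $\rho=(k)\cup\rho'$ with $1\leq k\leq n$ and realize $w_\rho=cw_{\rho'}$ with $c\in\sym_k$ a $k$-cycle inside the parabolic $\sym_k\times W'\subset W$ of Theorem \ref{thm:geocalcD5}. That theorem expresses $\ch\atsp{\lambda}(w_\rho)$ as an explicit $\Z$-linear combination of values $\ch\atsp{\mu}(w_{\rho'})$ for partitions $\mu$ obtained from $\lambda$ via the operations $\pch{i,i}{i-k,i-k}$, $\pch{i}{i-2k}$, and $\pch{i,j}{\frac{i+j}{2}-k,\frac{i+j}{2}-k}$. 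By the inductive hypothesis each $\ch\atsp{\mu}(w_{\rho'})=\gr^\mu_{2\rho'}(-1)$, and the resulting identity matches term-for-term the formula of Theorem \ref{thm:grcalc} applied to $\lambda$ with appended cycle $(2k)$; thus the sum collapses to $\gr^\lambda_{2\rho'\cup(2k)}(-1)=\gr^\lambda_{2\rho}(-1)$.

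For the base case $l(\rho)=1$, i.e.\ $\rho=(n)$, I would use Proposition \ref{thm:geocalcD4} (or Proposition \ref{thm:geocalcD3} when convenient) to compute $\ch\atsp{\lambda}(w_{(n)})$ directly: the only nonzero values occur for $\lambda=(i,j)$ with $i>j$ odd, $i+j=2n$ (value $2$), and for $\lambda=(n,n)$ when $n$ is even (where $\ch\atsp{(n,n)}=\ch\tsp{(n,n)+}+\ch\tsp{(n,n)-}=2+0=2$). Matching these against $\gr^\lambda_{(2n)}(-1)$, which by Theorem \ref{thm:grcalc} with $\rho'=\emptyset$ reduces to the same enumeration under the conventions $\ch\atsp{(1,1)}=2$ and $\ch\atsp{\emptyset}=1$, closes the induction.

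The main obstacle is the case (4) bookkeeping in the reduction step: one must simultaneously handle the splitting of nilpotent orbits (for very even $\lambda$) and the splitting of conjugacy classes (for even $\rho$). However, Lemma \ref{lem:pm} is precisely what is needed, because it kills the off-diagonal contribution $\ch\tsp{\lambda\mp}(w_{\rho\pm})$, collapsing the a priori two-dimensional ambiguity into a single determinate value. Once (\ref{eq:avgD}) is the target, the inductive machinery parallels that of Theorem \ref{thm:mainBC} exactly, since Theorems \ref{thm:geocalcD5} and \ref{thm:grcalc} were built in the preceding sections to mirror each other.
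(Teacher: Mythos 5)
Your proposal is correct and follows essentially the same route as the paper's own proof: both first establish the averaged identity $\ch\atsp{\lambda}(w_\rho)=\gr^\lambda_{2\rho}(-1)$ by the induction-on-$l(\rho)$ scheme of Theorem \ref{thm:mainBC} (using Theorems \ref{thm:geocalcD5} and \ref{thm:grcalc}), and then deduce the four cases from $\tau$-invariance and Lemma \ref{lem:pm}. The only difference is that you spell out the case-by-case bookkeeping that the paper compresses into two terse observations.
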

\begin{proof} By the same argument as Theorem \ref{thm:mainBC}, using Theorem \ref{thm:geocalcD5} and Theorem \ref{thm:grcalc} we have 
$$\ch \atsp{\lambda}(w_\rho) = \gr^\lambda_{2\rho}(-1).$$
Now the statement follows from two observations: (1) if $\rho$ is not even or $\lambda$ is not very even, then $\ch\tsp{\lambda}(w_\rho) =\ch\tsp{\lambda}(\tau(w_\rho))$, and (2) if $\rho$ is even and $\lambda$ is very even, then $\ch \tsp{\lambda\mp}(w_{\rho\pm})=0$. (cf. Lemma \ref{lem:pm})
\end{proof}
By letting $\rho =(1^n)$ in Theorem \ref{thm:mainBC} and \ref{thm:mainD}, we obtain closed formulas for the Euler characteristic of Springer fibers. 
\begin{cor} \label{cor:main} Let $N \in \g$ be a nilpotent element of Jordan type $\lambda$.
\begin{enumerate}
\item If $G=SO_{2n+1}(\C)$, then $\cX(\B_N)=\gr^\lambda_{(1^12^n)}(-1).$
\item If $G=Sp_{2n}(\C)$, then $\cX(\B_N)=\gr^\lambda_{(2^n)}(-1).$
\item If $G=SO_{2n}(\C)$, then $\cX(\B_N)=\frac{1}{2}\gr^\lambda_{(2^n)}(-1).$
\end{enumerate}
\end{cor}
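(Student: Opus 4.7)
The plan is to specialize the main theorems at $\rho = (1^n)$ and use that the Euler characteristic is recovered as the character of the total Springer representation evaluated at the identity.

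First I would recall that by definition $\cX(\B_N) = \dim H^*(\B_N) = \ch \tsp{\lambda}(id)$ (and analogously $\cX(\B_N) = \ch \tsp{\lambda\pm}(id)$ when $\lambda$ is very even in type $D$, since $\tsp{\lambda\pm} = H^*(\B_{N_{\lambda\pm}})$). The identity element of $W$ lies in every parabolic subgroup, and in particular it lies in the maximal parabolic subgroup $\sym_n \subset W$ (or $\sym_{n\pm}\subset W$ in type $D$), where it corresponds to $w_{(1^n)}$, the permutation of cycle type $(1^n)$.

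Next I would apply the main theorems. For $G = SO_{2n+1}(\C)$, Theorem \ref{thm:mainBC} gives
\[
\cX(\B_N) = \ch \tsp{\lambda}(w_{(1^n)}) = \gr^\lambda_{2(1^n)\cup(1)}(-1) = \gr^\lambda_{(1^1 2^n)}(-1),
\]
which is part (1). For $G = Sp_{2n}(\C)$, Theorem \ref{thm:mainBC} gives $\cX(\B_N) = \gr^\lambda_{2(1^n)}(-1) = \gr^\lambda_{(2^n)}(-1)$, proving part (2). For $G = SO_{2n}(\C)$, the partition $(1^n)$ has only odd parts and hence is not even in the sense of the paper, so cases (1) and (2) of Theorem \ref{thm:mainD} apply: regardless of whether $\lambda$ is very even or not, we obtain $\cX(\B_N) = \tfrac{1}{2}\gr^\lambda_{2(1^n)}(-1) = \tfrac{1}{2}\gr^\lambda_{(2^n)}(-1)$, proving part (3).

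Since Theorem \ref{thm:mainBC} and Theorem \ref{thm:mainD} have already been established, there is essentially no further obstacle—the only thing to verify is the elementary observation that $(1^n)$ is not an even partition, so that we are in the generic (non-split) case of Theorem \ref{thm:mainD} when evaluating at the identity. The corollary is then immediate from three line substitutions.
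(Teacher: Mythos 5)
Your proof is correct and takes essentially the same route as the paper, which establishes the corollary in one line by setting $\rho=(1^n)$ in Theorems \ref{thm:mainBC} and \ref{thm:mainD}. Your additional check that $(1^n)$ is not an even partition (so that in type $D$ one is always in the non-split cases of Theorem \ref{thm:mainD}) is the only non-trivial point, and you handle it correctly.
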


\section{Proof of Main Theorem \ref{mainthm1} for type $B$ and $C$} \label{sec:mainBC}
Assume $G$ is of type $B_n$ or $C_n$ and define 
$$\gue{\lambda}\colonequals \sum_{\mu \vdash |\lambda|} \sg{\lambda}\sg{\mu}K_{\mu, \lambda}(-1)\ch\chi^\mu.$$
In this section we show that $\ch \tsp{\lambda}=\gue{\lambda}$, hence Main Theorem \ref{mainthm1}. Our strategy to prove it is as follows.
\begin{enumerate}
\item $\gue{\lambda}$ satisfies the ``restriction property" similar to Theorem \ref{thm:geocalc}.
\item $\gue{\lambda}$ satisfies the ``induction property" similar to \cite{lus04}, see Proposition \ref{prop:indthm}.
\item Use ``(upper) triangularity properties'' of $\gue{\lambda}$ and total Springer representations.
\end{enumerate}
\subsection{Restriction property}
First we show the following proposition that is an analogue to Theorem \ref{thm:geocalc}.
\begin{prop} \label{prop:restgue}Assume $1\leq k \leq n$. Let $\sym_k \times W' \subset W$ be the maximal parabolic subgroup where $W'$ is of the same type as $W$. If $c\in \sym_k$ is a $k$-cycle, then for $\lambda=(1^{m_1}2^{m_2}\cdots)$ we have
\begin{align*} \Res^W_{c\cdot W'} \gue{\lambda} =& \sum_{i\geq k, m_i \geq 2} 2\floor{\frac{m_i}{2}} \gue{\lambda\pch{i,i}{i-k, i-k}}
+ \sum_{i\geq 2k, m_i \textup{ odd}} (-1)^{\h{\lambda/\left(\lambda\pch{i}{i-2k}\right)}} \gue{\lambda\pch{i}{i-2k}}
\\&+ \sum_{\substack{0<i-j<2k\leq i+j, \\m_i, m_j \textup{ odd}}} (-1)^{\h{\lambda/ \left(\lambda\pch{i}{j}\right)}}2 \gue{\lambda\pch{i,j}{\frac{i+j}{2}-k,\frac{i+j}{2}-k}}.
\end{align*}
Here $\gue{\lambda\pch{i,i}{i-k, i-k}}, \gue{\lambda\pch{i}{i-2k}}, \gue{\lambda\pch{i,j}{\frac{i+j}{2}-k,\frac{i+j}{2}-k}}$ are defined with respect to $W'$. (See \ref{subsec:char} for the definition of $\Res^W_{c\cdot W'}$.)
\end{prop}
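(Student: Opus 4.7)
The plan is to combine a Murnaghan--Nakayama-type rule for $\ch\chi^\mu$ with Theorem~\ref{thm:grcalc2}, after interchanging the order of summation in the definition of $\gue{\lambda}$. The structural parallel between the two identities---both sides being sums over the same modifications of $\lambda$ with matching coefficients $2\lfloor m_i/2\rfloor$, $(-1)^{\h{\lambda/\lambda'}}$, and $2(-1)^{\h{\lambda/\lambda'}}$---makes clear that Theorem~\ref{thm:grcalc2} is the right tool.

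\textbf{Step 1.} I first establish that for any partition $\mu\vdash|\lambda|$ with minimal 2-core,
\[
\Res^W_{c\cdot W'}\ch\chi^\mu \;=\; \sum_{\substack{\nu\,:\,\mu/\nu\\\text{is a }2k\text{-border strip}}} \tsg{\mu/\nu}\,(-1)^{\h{\mu/\nu}}\,\ch\chi^\nu.
\]
Since $\chi^\mu = \chi^{(\mu^{(0)},\mu^{(1)})}$ (type~$B$) or $\chi^{(\mu^{(1)},\mu^{(0)})}$ (type~$C$), and $c\in\sym_k$ lies in the ``positive'' half of $W$, the standard Murnaghan--Nakayama rule for the wreath product $W_n$---derivable by multiplying the Frobenius characteristic image $s_\alpha(x)s_\beta(y)$ by $p_k(x)+p_k(y)$ and applying the Schur M--N rule separately in $x$ and in $y$---expresses the restriction as a signed sum over $k$-border-strip removals from $\mu^{(0)}$ or from $\mu^{(1)}$. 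The classical correspondence between $2k$-border strips of $\mu$ and $k$-border strips of its 2-quotient, together with the sign identity
\[
(-1)^{\h{\mu/\nu}} \;=\; \tsg{\mu/\nu}\cdot(-1)^{\h{\mu^{(i)}/\nu^{(i)}}}
\qquad (i\in\{0,1\}\ \text{the color of the strip}),
\]
then converts the formula into the displayed shape.

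\textbf{Step 2.} Substituting into $\gue{\lambda}=\sum_\mu\sg{\lambda}\sg{\mu}K_{\mu,\lambda}(-1)\ch\chi^\mu$, interchanging summation order, and using $\sg{\mu}\tsg{\mu/\nu}=\sg{\nu}$ from Section~\ref{sec:corequot}, I obtain
\begin{align*}
\Res^W_{c\cdot W'}\gue{\lambda} \;=\; \sum_\nu \sg{\nu}\left[\sg{\lambda}\sum_{\substack{\mu\,:\,\mu/\nu\\\text{is a }2k\text{-border strip}}} (-1)^{\h{\mu/\nu}} K_{\mu,\lambda}(-1)\right]\ch\chi^\nu.
\end{align*}
The bracketed quantity is exactly the left-hand side of Theorem~\ref{thm:grcalc2}, which evaluates to $\sum_{\lambda'} c_{\lambda'}\,\sg{\lambda'}K_{\nu,\lambda'}(-1)$, where $\lambda'$ ranges over the modifications $\lambda\pch{i,i}{i-k,i-k}$, $\lambda\pch{i}{i-2k}$, $\lambda\pch{i,j}{\frac{i+j}{2}-k,\frac{i+j}{2}-k}$ with coefficients $c_{\lambda'}\in\{2\lfloor m_i/2\rfloor,\ (-1)^{\h{\lambda/\lambda'}},\ 2(-1)^{\h{\lambda/\lambda'}}\}$, respectively. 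Reassembling gives $\Res^W_{c\cdot W'}\gue{\lambda} = \sum_{\lambda'} c_{\lambda'}\,\gue{\lambda'}$, which is precisely the claimed identity.

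\textbf{Main obstacle.} The principal technical difficulty lies in Step~1: verifying the sign identity $(-1)^{\h{\mu/\nu}}=\tsg{\mu/\nu}\cdot(-1)^{\h{\mu^{(i)}/\nu^{(i)}}}$ for a single $2k$-border strip (a classical fact that can be checked by induction on $k$ using the recursive construction of the 2-quotient, or directly via rim-hook tableaux), and reconciling the type~$B$ versus type~$C$ conventions, which swap $\mu^{(0)}\leftrightarrow\mu^{(1)}$. Fortunately the Murnaghan--Nakayama rule for positive cycles is symmetric in the two parts of the bipartition, so the final restriction formula is identical in both cases. Contributions with $\mu$ of non-minimal 2-core vanish consistently on both sides: $\chi^\mu=0$ by the convention in Section~\ref{sec:parspr}, while $K_{\mu,\lambda}(-1)=0$ since $\lambda$ has minimal 2-core.
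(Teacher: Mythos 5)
Your proof is correct and follows essentially the same path as the paper's: both boil down to the Murnaghan--Nakayama rule in the $\Psi$-image, the bijection between $2k$-border strips of $\mu$ and $k$-border strips of its 2-quotient with the accompanying sign identity (your $(-1)^{\h{\mu/\nu}}=\tsg{\mu/\nu}\,(-1)^{\h{\mu^{(i)}/\nu^{(i)}}}$ is exactly the paper's $b(\mu)+b(\nu)+\h{\mu/\nu}\equiv\h{\cq{0}{\mu}/\cq{0}{\nu}}+\h{\cq{1}{\mu}/\cq{1}{\nu}}\pmod 2$), and an invocation of Theorem~\ref{thm:grcalc2}. The only difference is presentational: you expand $\Res^W_{c\cdot W'}\ch\chi^\mu$ as a sum over $\nu$ and interchange summation orders, while the paper dualizes immediately by pairing with $\ch\chi^\nu$; the two computations are identical.
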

To that end, first note that the statement is equivalent to that
\begin{align*} \br{\Res^W_{c\cdot W'} \gue{\lambda},\ch\chi^\nu} =& \sum_{i\geq k, m_i \geq 2} 2\floor{\frac{m_i}{2}} \sg{\nu}\sg{\lambda\pch{i,i}{i-k, i-k}}K_{\nu,\lambda\pch{i,i}{i-k, i-k}}(-1)
\\&+ \sum_{i\geq 2k, m_i \textup{ odd}} (-1)^{\h{\lambda/\left(\lambda\pch{i}{i-2k}\right)}} \sg{\nu}\sg{\lambda\pch{i}{i-2k}}K_{\nu,\lambda\pch{i}{i-2k}}(-1)
\\&+ \sum_{\substack{0<i-j<2k\leq i+j, \\m_i, m_j \textup{ odd}}} (-1)^{\h{\lambda/ \left(\lambda\pch{i}{j}\right)}}2 \sg{\nu}\sg{\lambda\pch{i,j}{\frac{i+j}{2}-k,\frac{i+j}{2}-k}}K_{\nu,\lambda\pch{i,j}{\frac{i+j}{2}-k,\frac{i+j}{2}-k}}(-1)
\end{align*}
for all $\nu \vdash |\lambda|-2k$ with minimal 2-core. On the other hand, we have
\begin{lem} For $f \in \mathcal{R}(W)$, we have
$$\Psi(  \Res^W_{c\cdot W'} f) = p^{*,\br{,}}_{((k),\emptyset)}\Psi( f)= (p_{k}(x)+p_{k}(y))^{*,\br{,}}\Psi(f)$$
where $\Psi$ is defined in Section \ref{sec:symm}.
\end{lem}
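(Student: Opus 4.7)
The plan is to verify the identity directly in the power sum basis. By the definition of the Frobenius characteristic for $W_n$,
$$\Psi(f) = \sum_{(\alpha,\beta) \vdash n} \frac{f(w_{(\alpha,\beta)})}{z_\alpha z_\beta \, 2^{l(\alpha) + l(\beta)}} \, p_{(\alpha, \beta)}(x, y),$$
and analogously $\Psi(\Res^W_{c \cdot W'} f)$ is the corresponding sum over $(\rho, \sigma) \vdash n-k$ with coefficient involving $(\Res^W_{c \cdot W'} f)(w_{(\rho, \sigma)})$. Since $c$ is a $k$-cycle in $\sym_k$ which commutes with all of $W' \simeq W_{n-k}$, the product $c \cdot w_{(\rho, \sigma)}$ has cycle type $(\rho \cup (k), \sigma)$ in $W$. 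Hence the Frobenius characteristic of $\Res^W_{c \cdot W'} f$ is
$$\sum_{(\rho, \sigma) \vdash n-k} \frac{f(w_{(\rho \cup (k), \sigma)})}{z_\rho z_\sigma \, 2^{l(\rho) + l(\sigma)}} \, p_{(\rho, \sigma)}(x, y).$$

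Next I would compute the action of $(p_k(x) + p_k(y))^{*,\br{,}}$ on $p_{(\alpha, \beta)}$. Since $p_k(x)^{*,\br{,}} = k\,\partial/\partial p_k(x)$ and $p_k(y)^{*,\br{,}} = k\,\partial/\partial p_k(y)$ (by $\br{p_\mu, p_\nu} = \delta_{\mu,\nu} z_\mu$), the operator is a derivation. Applied to each factor of
$$p_{(\alpha, \beta)}(x, y) = \prod_i (p_{\alpha_i}(x) + p_{\alpha_i}(y)) \prod_j (p_{\beta_j}(x) - p_{\beta_j}(y)),$$
it contributes $k + k = 2k$ for each factor $(p_{\alpha_i}(x) + p_{\alpha_i}(y))$ with $\alpha_i = k$, and contributes $k - k = 0$ for each factor from $\beta$. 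Therefore
$$(p_k(x) + p_k(y))^{*,\br{,}} \, p_{(\alpha, \beta)} = 2k \cdot m_k(\alpha) \cdot p_{(\alpha^-, \beta)},$$
where $\alpha^-$ denotes $\alpha$ with one copy of $k$ removed.

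The final step is to match coefficients. Substituting $\alpha = \rho \cup (k)$ (so $\alpha^- = \rho$) and using the identities $z_{\rho \cup (k)}/z_\rho = k(m_k(\rho) + 1) = k \cdot m_k(\alpha)$ and $l(\rho \cup (k)) = l(\rho) + 1$, the coefficient simplifies as
$$\frac{2k \cdot m_k(\alpha)}{z_\alpha z_\beta \, 2^{l(\alpha) + l(\beta)}} = \frac{2k \cdot m_k(\alpha)}{k \cdot m_k(\alpha) \cdot z_\rho z_\sigma \cdot 2 \cdot 2^{l(\rho) + l(\sigma)}} = \frac{1}{z_\rho z_\sigma \, 2^{l(\rho) + l(\sigma)}},$$
which agrees with the coefficient in $\Psi(\Res^W_{c \cdot W'} f)$. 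There is no genuine obstacle here: the entire argument is a direct calculation, and the only care needed is the centralizer/cycle bookkeeping in the final coefficient check.
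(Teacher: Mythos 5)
Your proof is correct, and since the paper's proof is literally ``It is an easy exercise,'' there is no alternative approach to compare against; the direct computation in the power sum basis you carried out is exactly the natural one. All the key steps check out: the cycle type of $cw$ for $w\in W'$ is $(\rho\cup(k),\sigma)$ because $c$ is a $k$-cycle with no sign changes and acts on disjoint coordinates; the operator $(p_k(x)+p_k(y))^{*,\br{,}}$ is a derivation that kills the $\beta$-factors and returns $2k$ per factor $p_k(x)+p_k(y)$; and the centralizer bookkeeping $z_{\rho\cup(k)}=z_\rho\cdot k\cdot m_k(\rho\cup(k))$ together with $l(\rho\cup(k))=l(\rho)+1$ cancels exactly against the $2k\cdot m_k(\alpha)$.
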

\begin{proof} It is an easy exercise.
\end{proof}
Thus we have
\begin{align*}
\br{\Res^W_{c\cdot W'} \gue{\lambda},\ch\chi^\nu} &=\br{\sum_{\mu\vdash |\lambda|} \sg{\lambda}\sg{\mu}K_{\mu,\lambda}(-1)\Res^W_{c\cdot W'} \ch\chi^\mu,\ch\chi^\nu}
\\&=\sum_{\mu\vdash |\lambda|} \sg{\lambda}\sg{\mu}K_{\mu,\lambda}(-1)\br{s_{\cq{0}{\mu}}(x)s_{\cq{1}{\mu}}(y),s_{\cq{0}{\nu}}(x)s_{\cq{1}{\nu}}(y)(p_{k}(x)+p_{k}(y))}.
\end{align*}
(As the formula is symmetric with respect to $x$ and $y$, this formula is valid for both type $B$ and $C$.) By the Murnaghan-Nakayama rule, we have
\begin{equation}\label{eq:resgue}
\br{\Res^W_{c\cdot W'} \gue{\lambda},\ch\chi^\nu}=\sum_{\mu} \sg{\lambda}\sg{\mu}(-1)^{\h{\cq{0}{\mu}/\cq{0}{\nu}}+\h{\cq{1}{\mu}/\cq{1}{\nu}}}K_{\mu,\lambda}(-1)
\end{equation}
where the sum is over all $\mu \vdash |\lambda|$ such that either $\cq{0}{\mu}/\cq{0}{\nu}$ is a border strip of size $k$ and $\cq{1}{\mu}=\cq{1}{\nu}$, or $\cq{1}{\mu}/\cq{1}{\nu}$ is a border strip of size $k$ and $\cq{0}{\mu}=\cq{0}{\nu}$. But this condition can be translated in a more direct way using the following lemma.

\begin{lem} Suppose that a partition $\nu\vdash |\lambda|-2k$ with minimal 2-core is given. 
\begin{enumerate}
\item The mapping $\mu \mapsto (\cq{0}{\mu},\cq{1}{\mu})$ gives a bijection
\begin{gather*}
\{ \mu \vdash |\lambda| \mid \mu/\nu \textup{ is a border strip of size }2k\} \rightarrow
\\\{ (\alpha, \cq{1}{\nu}) \mid \alpha/\cq{0}{\nu} \textup{ is a border strip of size }k\} \sqcup \{ (\cq{0}{\nu},\beta) \mid   \beta/\cq{1}{\nu} \textup{ is a border strip of size }k\}.
\end{gather*}
\item If $\mu/\nu$ is a border strip of size $2k$, then
$$b(\mu)+b(\nu)+\h{\mu/\nu}\equiv\h{\cq{0}{\mu}/\cq{0}{\nu}}+\h{\cq{1}{\mu}/\cq{1}{\nu}} \mod 2.$$
\end{enumerate}
\end{lem}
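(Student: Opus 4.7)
Both parts are statements about how adding a border strip interacts with the $2$-core / $2$-quotient decomposition, and the natural setting is the $2$-abacus (equivalently, beta-number) model underlying Section~\ref{sec:corequot}. The plan is to fix an even integer $m \geq \ell(\mu)$, identify $\nu$ and every relevant $\mu$ with its bead configuration on the $2$-abacus (runner $0$ carries the $\beta$-numbers of $\cq{0}{\nu}$ and runner $1$ those of $\cq{1}{\nu}$), and read everything off directly. Part~(1) is then classical: adding a border strip of size $r$ to $\nu$ amounts to a single-bead move from some occupied full-abacus position $p$ to an unoccupied position $p+r$, and the height of the strip equals the number of beads strictly between these two positions. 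For $r = 2k$ such a move is confined to a single runner, and restricted to that runner it is exactly the addition of a size-$k$ border strip to the corresponding $2$-quotient component while the other component is left unchanged. This sets up the announced bijection.

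For Part~(2), let $\epsilon \in \{0,1\}$ denote the runner on which the bead moves, and set
\[
N_j = \#\{\text{beads on runner } j \text{ strictly between } p \text{ and } p+2k\}, \qquad j=0,1.
\]
The abacus reading of the height yields $\h{\mu/\nu} = N_0+N_1$ and $\h{\cq{\epsilon}{\mu}/\cq{\epsilon}{\nu}} = N_\epsilon$, and $\h{\cq{1-\epsilon}{\mu}/\cq{1-\epsilon}{\nu}}$ is read as $0$ (matching the convention that the empty skew shape contributes trivially to the Murnaghan--Nakayama sign in the derivation of~(\ref{eq:resgue})). Since $\tsg{\mu/\nu} = (-1)^{b(\mu)+b(\nu)}$, the claimed congruence reduces to the identity
\[
\tsg{\mu/\nu} = (-1)^{N_{1-\epsilon}},
\]
asserting that the parity of vertical dominoes in any $2$-tiling of $\mu/\nu$ equals the parity of beads on the \emph{opposite} runner between the source and target of the bead move. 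Granting this, a one-line calculation yields
\[
(-1)^{b(\mu)+b(\nu)+\h{\mu/\nu}} = \tsg{\mu/\nu}\,(-1)^{N_0+N_1} = (-1)^{N_\epsilon} = (-1)^{\h{\cq{0}{\mu}/\cq{0}{\nu}}+\h{\cq{1}{\mu}/\cq{1}{\nu}}}.
\]

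The only substantive point is the $2$-sign identity $\tsg{\mu/\nu} = (-1)^{N_{1-\epsilon}}$, which I plan to prove by induction on $k$. The base case $k=1$ is a direct inspection: a single $2$-border strip is horizontal or vertical precisely as the corresponding bead move by $2$ on the full abacus skips an empty or an occupied position on the opposite runner. For the inductive step I will peel off a terminal $2$-hook at one end of $\mu/\nu$; this corresponds to a unit bead move by $2$ on runner $\epsilon$ with the opposite runner unchanged, so the remaining border strip of size $2(k-1)$ satisfies the inductive hypothesis and the induction closes after tracking the obvious parity contributions. This identity is in any case standard in the Littlewood / James--Kerber theory of $p$-cores and $p$-quotients, so the bulk of the work is really concentrated in writing out this single abacus identity carefully.
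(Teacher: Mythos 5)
Your reduction is a clean reorganization of what is, underneath, the same computation the paper does: both arguments live on the $2$-abacus (the paper's $01$-sequence $(a_i)$ is exactly the Maya diagram, with a bead at $p$ iff $a_{p+1}=0$). You correctly identify that with the convention that the untouched quotient contributes height $0$ (which is forced by the Murnaghan--Nakayama derivation of~(\ref{eq:resgue})), the statement is equivalent to the $2$-sign identity $\tsg{\mu/\nu}=(-1)^{N_{1-\epsilon}}$, and this is a genuinely cleaner way to package the content of the paper's lemma. However, your proposed inductive proof of that identity is not as routine as you suggest, and this is where a real gap sits. Peeling a terminal domino off $\mu/\nu$ does \emph{not} in general correspond to moving the same bead by $2$ on runner $\epsilon$: if, say, position $p+2$ on runner $\epsilon$ is already occupied, the only way to write $\nu\subset\nu'\subset\mu$ with both $\nu'/\nu$ and $\mu/\nu'$ border strips is to first move the bead sitting at $p+2k-2$ up to $p+2k$ (a \emph{different} bead) and then move the original bead from $p$ to $p+2k-2$. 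Concretely, take $\nu=(1)$, $\mu=(3,2)$ with $m=2$: the beta-sets are $\{2,0\}$ and $\{4,2\}$, the $2k$-move is $0\to4$, but the unique domino-chain is $\{2,0\}\to\{4,0\}\to\{4,2\}$, i.e.\ $2\to4$ then $0\to2$. Consequently the intervals over which you count beads on the opposite runner do not simply concatenate, and the ``obvious parity contributions'' require exactly the careful bookkeeping you were hoping to avoid. The paper sidesteps all of this by computing both sides directly in terms of the $0$-positions $j_1<\cdots<j_r$ in the interval $(x,x+2k)$: $b(\mu)+b(\nu)\equiv xr+\sum_a j_a\equiv\#\{a:j_a\not\equiv x\}\pmod 2$, which is precisely your $2$-sign identity, obtained for free and without induction. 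I would recommend replacing your induction by that one-line parity count; the rest of your argument is then complete.
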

\begin{proof} The first part is well-known and dates back to \cite{sta50}. In order to prove the second claim, we first fix $\mu$ and regard it as an infinite 01-sequences; first consider the Young diagram of $\mu$ and label horizontal line segments with 1 and vertical ones with 0. Then we read the labels from southwest to northeast. For example, the partition $(6,4,2)$ is converted to 
$$\cdots 0,0,0,1,1,0,1,1,0,1,1,0,1,1,1,\cdots.$$
We let $a_1=1$ to be the 1 that appears first in the sequence corresponding to $\mu$ and successively define $a_2, a_3, \cdots$ by reading the sequence from left to right. Also we let $a_i=0$ for $i\leq 0$. For example, if $\mu=(6,4,2)$ then we have
$$1= a_1 =a_2=a_4=a_5=a_7=a_8=a_{10}=a_{11}=\cdots, \quad \cdots =a_{-1}=a_0= a_3 =a_6=a_9=0.$$
Then it is known that the 2-quotient of $\mu$ corresponds to the following sequences
$$\cdots , a_{-3}, a_{-1}, a_1, a_3, a_5, \cdots\quad  \textup{ and } \quad \cdots , a_{-4}, a_{-2}, a_0, a_2, a_4, \cdots.$$
If we remove a border strip of size $2k$ to obtain $\nu$, then there exists $x\geq 1$ such that $a_x=1$, $a_{x+2k}=0$ and $\nu$ corresponds to the 01-sequence
$$\cdots, a_{-2}, a_{-1}, a_0, \cdots, a_{x-1}, 0, a_{x+1}, \cdots, a_{x+2k-1}, 1, a_{x+2k+1}, \cdots.$$
In other words, we change $a_x, a_{x+2k}$ to $0, 1$, respectively, from the 01-sequence of $\mu$. 

Let $j_1< \cdots< j_r$ be the collection of all indices of zeroes in $\{a_{x+1}, \cdots, a_{x+2k-1}\}$. Then it is easy to see that $\h{\mu/\nu}=r$ and each row in the border strip $\mu/\nu$ has length $j_1-x, j_2-j_1, \cdots, j_{r}-j_{r-1}, x+2k-j_{r-1}$, respectively. Thus it is also clear that 
$$b(\mu)-b(\nu)\equiv (j_1-x)+(j_3-j_2)+\cdots = (j_2-j_1)+(j_4-j_3)+\cdots \mod 2$$
since $|\mu/\nu|=2k$ is even. It follows that
\begin{align*}
\textup{if } r \textup{ is even}, &\quad b(\mu)+b(\nu) \equiv x+\sum_{a=1}^r j_a + (x+2k) \equiv \sum_{a=1}^r j_a \mod 2,
\\\textup{if } r \textup{ is odd}, &\quad b(\mu)+b(\nu) \equiv x+\sum_{a=1}^r j_a  \mod 2.
\end{align*}
This is equivalent to
\begin{align*}
b(\mu)+b(\nu)+\h{\mu/\nu} \equiv (x+1)r+\sum_{a=1}^r j_a \mod 2.
\end{align*}
On the other hand, by a similar reason $\h{\cq{0}{\mu}/\cq{0}{\nu}}+\h{\cq{1}{\mu}/\cq{1}{\nu}}$ is the number of $j_a$ such that $j_a\equiv x \mod 2$. Thus we may write
$$\h{\cq{0}{\mu}/\cq{0}{\nu}}+\h{\cq{1}{\mu}/\cq{1}{\nu}} \equiv \sum_{a=1}^r (x+j_a+1) \equiv (x+1)r+\sum_{a=1}^r j_a \mod 2.$$
It suffices for the proof.
\end{proof}
Using the lemma above, we may simplify (\ref{eq:resgue}) to the following form.
$$\br{\Res^W_{c'W'} \gue{\lambda},\ch\chi^\nu}=\sum_\mu \sg{\lambda}\sg{\nu}(-1)^{\h{\mu/\nu}}K_{\mu,\lambda}(-1)$$
Here the sum is over all $\mu \vdash |\lambda|$ such that $\mu/\nu$ is a border strip of size $2k$. Thus in order to prove Proposition \ref{prop:restgue} it suffices to show that
\begin{align*} 
&\sum_{\mu \vdash |\lambda|, \mu/\nu \textup{ border strip}} \sg{\lambda}(-1)^{\h{\mu/\nu}}K_{\mu,\lambda}(-1)
\\=& \sum_{i\geq k, m_i \geq 2} 2\floor{\frac{m_i}{2}}\sg{\lambda\pch{i,i}{i-k, i-k}}K_{\nu,\lambda\pch{i,i}{i-k, i-k}}(-1)
\\&+ \sum_{i\geq 2k, m_i \textup{ odd}} (-1)^{\h{\lambda/\left(\lambda\pch{i}{i-2k}\right)}} \sg{\lambda\pch{i}{i-2k}}K_{\nu,\lambda\pch{i}{i-2k}}(-1)
\\&+ \sum_{\substack{0<i-j<2k\leq i+j,\\m_i, m_j \textup{ odd}}} (-1)^{\h{\lambda/ \left(\lambda\pch{i}{j}\right)}}2 \sg{\lambda\pch{i,j}{\frac{i+j}{2}-k,\frac{i+j}{2}-k}}K_{\nu,\lambda\pch{i,j}{\frac{i+j}{2}-k,\frac{i+j}{2}-k}}(-1).
\end{align*}
But this is exactly Theorem \ref{thm:grcalc2}.

\subsection{Induction property}
First we recall Lusztig's induction theorem for total Springer representations \cite[Theorem 1.3]{lus04}.
\begin{prop} \label{prop:indthm}Let $L \subset G$ be a Levi subgroup of some parabolic subgroup of $G$ and $W'$ be the Weyl group of $L$, naturally identified with a subgroup of $W$. For $N\in \Lie L$, let $\B'_{N}$ be the Springer fiber corresponding to $L$. Then we have an isomorphism
$$H^*(\B_N) \simeq \Ind_{W'}^W H^*(\B'_N)$$
of $W$-modules. Here we regard $H^*(\B'_N)$ as the total Springer representation with respect to $W'$.
\end{prop}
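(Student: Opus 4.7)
The plan is to deduce the isomorphism from the geometry of the parabolic projection $\pi_P \colon \B_N \to \cP_N$, where $P = L\cdot U_P \supset B$ is a parabolic with Levi factor $L$ and $\cP = G/P$. By Lemma \ref{lem:hs}(2), for any $gP \in \cP_N$ the fiber $\pi_P^{-1}(gP)$ is isomorphic as a $W'$-variety to $\B(L)_M$, where $M$ is the image of $\Ad(g)^{-1}N$ under $\Lie P \twoheadrightarrow \Lie L$. Since $N \in \Lie L$ by assumption, the identity coset $eP$ lies in $\cP_N$ with $M = N$, and more generally one considers the locally closed subset $\cP_N^{\circ} \subset \cP_N$ consisting of those $gP$ for which $M$ is $L$-conjugate to $N$. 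On $\cP_N^{\circ}$ the map $\pi_P$ restricts to a locally trivial fibration with fiber $\B'_N$.

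The next step is to compute the cohomology of $\pi_P^{-1}(\cP_N^{\circ})$ using Lemma \ref{lem:hs}(3) and to argue that the closed complement $\cP_N \setminus \cP_N^{\circ}$ contributes nothing new. The Leray spectral sequence degenerates by the parity vanishing of Springer cohomology \cite{dclp}, giving an isomorphism $H^*(\pi_P^{-1}(\cP_N^{\circ})) \simeq H^*(\cP_N^{\circ}) \otimes H^*(\B'_N)$ of $W'$-modules on the open locus. A dimension/transversality argument should show that $\cP_N \setminus \cP_N^{\circ}$ has strictly smaller dimension, and that its cohomological contribution is already absorbed in the combinatorics of the induction. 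One then identifies $H^*(\cP_N^{\circ})$ with the permutation module $\Ind_{W'}^W \Id_{W'}$: a generic point of $\cP_N^{\circ}$ selects a $G$-conjugate of $L$ inside $\g$ containing a given translate of $N$, and the set of such choices is parametrized by $W/W'$ with $W$ acting by left translation.

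The hard part is upgrading these $W'$-module identifications to a $W$-module identification. The $W'$-action supplied by Lemma \ref{lem:hs} is visible only fiberwise, whereas the Springer action of the full $W$ on $H^*(\B_N)$ is defined globally via Springer's construction; their compatibility with the $W/W'$-permutation structure on $\cP_N^{\circ}$ is not automatic. This is precisely what \cite{lus04} establishes: parabolic induction of the Springer sheaf (or character sheaf) on the nilpotent cone of $\Lie L$ up to $\g$ realizes $\Ind_{W'}^W$ at the level of Weyl group representations, and taking stalks at $N$ yields the claimed isomorphism. For the present paper it suffices to invoke that theorem; an elementary verification through the Steinberg variety and its convolution product is in principle possible but lengthy, and I expect the sheaf-theoretic comparison of the two $W$-actions to be the main technical obstacle.
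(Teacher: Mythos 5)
The paper gives no proof of this proposition: it is stated explicitly as a recollection of Lusztig's induction theorem \cite[Theorem 1.3]{lus04}, and the entire ``proof'' is that citation. Your proposal ultimately lands in the same place---your final paragraph concludes that ``for the present paper it suffices to invoke that theorem''---so the conclusion agrees with what the paper does.

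That said, the speculative middle of your sketch does not hold up as an independent argument, and you should not present it as more than heuristic motivation. First, the identification of $H^*(\cP_N^{\circ})$ with the permutation module $\Ind_{W'}^W \Id_{W'}$ is not correct: $\cP_N^{\circ}$ is a positive-dimensional variety (a union of orbits of the centralizer $Z_G(N)$), not a finite set of $|W/W'|$ points, and its cohomology depends on $N$ in a nontrivial way. Second, the claim that the closed complement $\cP_N \setminus \cP_N^{\circ}$ ``contributes nothing new'' because it has smaller dimension is false for cohomology: the closed strata enter through the Leray spectral sequence (or the long exact sequence of the pair) with full force, and indeed the paper's own use of Lemma \ref{lem:hs} and Lemma \ref{lem:calc} depends crucially on summing contributions from \emph{all} strata of $\cP_N$, not just the open one. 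You correctly flag the real obstruction, namely that Lemma \ref{lem:hs} only gives a $W_P$-action while the statement is about the full $W$-action, and that is precisely why the Leray argument alone cannot close the proof and one must fall back on Lusztig's sheaf-theoretic comparison. Since you do fall back on it, the proposal is acceptable as a reading of the literature; just be clear that the preceding paragraphs are not a proof.
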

We also claim that $\gue{\lambda}$ enjoys a similar property as follows.
\begin{prop} \label{prop:indgue} Assume $1\leq k \leq n$. Let $\sym_k \times W' \subset W$ be the maximal parabolic subgroup where $W'$ is of the same type as $W$. Then we have
$$\gue{\lambda\cup (k,k)} = \Ind_{\sym_k\times W' }^W 1 \times \gue{\lambda}.$$
Here $1=\ch \Id_{\sym_k}$.
\end{prop}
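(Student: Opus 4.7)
The plan is to apply the Frobenius characteristic map $\Psi$ of Section \ref{sec:symm} to reformulate the proposition as a symmetric function identity. Using induction in stages
\[
\Ind_{\sym_k \times W'}^{W}(1 \otimes \gue{\lambda}) = \Ind_{W_k \times W'}^{W}\bigl((\Ind_{\sym_k}^{W_k} \Id_{\sym_k}) \otimes \gue{\lambda}\bigr),
\]
together with the multiplicativity of $\Psi$ under external induction and $\Psi(\Ind_{\sym_k}^{W_k} \Id_{\sym_k}) = \Delta(h_k) = \sum_{i+j=k} h_i(x) h_j(y)$, the claim reduces to
\[
\Psi(\gue{\lambda \cup (k,k)}) = \Delta(h_k)\cdot\Psi(\gue{\lambda}).
\]

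Next I would expand $\Psi(\gue{\lambda}) = \sg{\lambda}\sum_\mu \sg{\mu} K_{\mu,\lambda}(-1)\, s_{\cq{c_0}{\mu}}(x)\, s_{\cq{c_1}{\mu}}(y)$, with $(c_0,c_1)=(0,1)$ in type $B$ and $(1,0)$ in type $C$; here only $\mu$ with minimal $2$-core contribute, since $\chi^\mu = 0$ otherwise and moreover $K_{\mu,\lambda}(-1)$ vanishes classically when $\mu$ and $\lambda$ have distinct $2$-cores. Applying the Pieri rule to each of the factors $h_i(x) s_{\cq{c_0}{\mu}}(x)$ and $h_j(y) s_{\cq{c_1}{\mu}}(y)$ in $\Delta(h_k)\cdot\Psi(\gue{\lambda})$, then using the $2$-quotient bijection to collect horizontal-strip pairs of total size $k$ into the corresponding $\mu' \supset \mu$, together with $\sg{\lambda\cup(k,k)} = (-1)^k\sg{\lambda}$ (from $b(\lambda\cup(k,k)) - b(\lambda) = \sum_{i=1}^k(2\lambda'_i+1) \equiv k \pmod 2$) and $\sg{\mu} = \sg{\mu'}\tsg{\mu'/\mu}$, the matching of coefficients of $s_{\cq{c_0}{\mu'}}(x) s_{\cq{c_1}{\mu'}}(y)$ reduces the target to the Kostka--Foulkes identity
\[
(-1)^k K_{\mu',\,\lambda\cup(k,k)}(-1) = \sum_{\mu} \tsg{\mu'/\mu}\, K_{\mu,\lambda}(-1),
\]
where $\mu$ ranges over partitions with $\mu \subset \mu'$, $|\mu'/\mu| = 2k$, and both skew shapes $\cq{c_0}{\mu'}/\cq{c_0}{\mu}$ and $\cq{c_1}{\mu'}/\cq{c_1}{\mu}$ horizontal strips.

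The main obstacle is establishing this last Kostka--Foulkes identity at $t=-1$. I plan to handle it via the Bernstein-type vertex-operator realization of the modified Hall--Littlewood $Q'$-functions: the operator creating two additional parts of size $k$ is an explicit expression in the power sums which, at $t=-1$ and under the $2$-quotient transport $\sg{\mu} s_\mu \leftrightarrow s_{\cq{c_0}{\mu}}(x) s_{\cq{c_1}{\mu}}(y)$, becomes precisely multiplication by $\Delta(h_k)$. Equivalently, one can argue combinatorially through the Lascoux--Leclerc--Thibon interpretation of $K_{\mu,\lambda}(-1)$ as a signed count of $2$-ribbon tableaux: a $2$-ribbon tableau of shape $\mu'$ with content $\lambda\cup(k,k)$ decomposes uniquely by peeling off the final $2k$ dominos, which by the ordering constraints form a horizontal $2$-strip, leaving a $2$-ribbon tableau of shape $\mu$ with content $\lambda$, and the total spin factors as the product of the two spins times $\tsg{\mu'/\mu}(-1)^k$. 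The technical core of either route is consistently propagating $\sg{\mu}$, $\tsg{\mu'/\mu}$, and $(-1)^k$ through the $2$-quotient, in parallel with the sign bookkeeping already carried out in Section \ref{sec:greenkostka}.
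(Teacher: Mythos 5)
Your reduction to a Kostka--Foulkes identity at $t=-1$ is structurally the same as the paper's proof: both apply the Frobenius characteristic, use $\Psi(\Ind_{\sym_k}^{W_k}\Id_{\sym_k}) = \Delta(h_k)$, use the $2$-quotient dictionary so that the set of $\mu$ you sum over is exactly the set for which there is a domino covering of $\nu/\mu$ with no two dominoes in the same column, and the target identity
\[
(-1)^k K_{\nu,\,\lambda\cup(k,k)}(-1) = \sum_{\mu} \tsg{\nu/\mu}\, K_{\mu,\lambda}(-1)
\]
is, up to multiplying by $\sg{\nu}$ and using $\tsg{\nu/\mu}=\sg{\nu}\sg{\mu}$, the same as the paper's equation $(\ref{eqnind})$. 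So the reduction is sound.

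The gap is in your proposed proof of that final identity. Your combinatorial route asserts that a $2$-ribbon tableau of shape $\nu$ and content $\lambda\cup(k,k)$ ``decomposes uniquely by peeling off the final $2k$ dominos.'' This is not how the combinatorics works: in a domino tableau of content $\lambda\cup(k,k)$, the two labels carrying $k$ dominos each sit wherever $k$ lands in the sorted sequence $\lambda\cup(k,k)$, which is generally in the interior of the content, not at the end. The dominos you would need to remove are therefore not the highest-labeled (final) ones, and stripping an interior layer does not obviously yield a well-formed ribbon tableau of content $\lambda$, nor is the spin bookkeeping a simple product. This nontrivial interleaving is exactly what the Lascoux--Leclerc--Thibon factorization $Q'_{\lambda\cup(k,k)}(-1)=Q'_{\lambda}(-1)Q'_{(k,k)}(-1)$ resolves, and it is a genuine theorem (cited as \cite[Theorem 2.1]{llt94} in the paper), not a peeling lemma. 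The paper then closes the argument by $Q'_{(k,k)}(-1)=(-1)^k s_k[p_2]$ and the Carr\'e--Leclerc expansion of $s_k[p_2]s_\mu$ into signed Yamanouchi domino tableaux. Your vertex-operator route (a) simply restates the desired conclusion --- that the relevant creation operator transports to multiplication by $\Delta(h_k)$ at $t=-1$ --- without an argument, so it cannot be scored as a proof either. To complete your proposal you would need to invoke the LLT factorization and the Carr\'e--Leclerc theorem (or reprove them), at which point you recover the paper's argument.
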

To that end, it is enough to show that for any $\nu \vdash |\lambda|+2k$ with minimal 2-core we have
\begin{gather*}
\sg{\lambda\cup(k,k)}\sg{\nu}K_{\nu,\lambda\cup(k,k)}(-1) =\br{\gue{\lambda\cup (k,k)},\ch \chi^{\nu}}
\\= \br{\Ind_{\sym_k\times W' }^W 1 \times \gue{\lambda}, \ch\chi^{\nu}}=\br{1 \times \gue{\lambda}, \Res_{\sym_k \times W'}^W \ch\chi^{\nu}}.
\end{gather*}
We claim the following.
\begin{lem} \label{lem:yam} For $\mu \vdash |\lambda|$, $\br{\Res_{\sym_k \times W'}^W \ch\chi^{\nu}, 1\times\ch\chi^{\mu}}=1$ if $\mu \subset \nu$ and $\nu/\mu$ can be covered by dominoes with no two dominoes intersecting the same column, and 0 otherwise.
\end{lem}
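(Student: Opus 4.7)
The approach is to combine Frobenius reciprocity with the Frobenius characteristic map reviewed in Section \ref{sec:symm}, reducing the statement to an inner-product computation in $\Lambda(x)\otimes \Lambda(y)$ followed by a combinatorial equivalence between horizontal strips in 2-quotients and a certain class of domino tilings.

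First I would apply Frobenius reciprocity to rewrite
\[
\br{\Res_{\sym_k\times W'}^{W}\ch\chi^{\nu},\, 1\times \ch\chi^{\mu}}
=\br{\ch\chi^{\nu},\, \Ind_{\sym_k\times W'}^{W}(1\times \ch\chi^{\mu})}.
\]
Decomposing the induction in the two stages $\sym_k\times W'\subset W_k\times W'\subset W$, and using the identities for $\Psi$ recalled in Section \ref{sec:symm} (in particular $\Psi\bigl(\Ind_{\sym_k}^{W_k}\Id_{\sym_k}\bigr)=\Delta(h_k)$ and the multiplicativity of $\Psi$ on inductions from product subgroups), I obtain
\[
\Psi\bigl(\Ind_{\sym_k\times W'}^{W}(1\times \ch\chi^{\mu})\bigr)
=\Delta(h_k)\cdot s_{\cq{0}{\mu}}(x)s_{\cq{1}{\mu}}(y)
=\sum_{i+j=k} h_i(x)h_j(y)\, s_{\cq{0}{\mu}}(x)s_{\cq{1}{\mu}}(y).
\]
For type $C$ the roles of $\cq{0}{\cdot}$ and $\cq{1}{\cdot}$ are swapped, but the resulting inner product is identical because $\Delta(h_k)$ is symmetric in $x$ and $y$. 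Pairing against $\Psi(\ch\chi^{\nu})=s_{\cq{0}{\nu}}(x)s_{\cq{1}{\nu}}(y)$ and applying Pieri's rule, the inner product becomes
\[
\sum_{i+j=k}\br{s_{\cq{0}{\nu}},\, h_i\,s_{\cq{0}{\mu}}}\br{s_{\cq{1}{\nu}},\, h_j\,s_{\cq{1}{\mu}}},
\]
which equals $1$ exactly when $\cq{0}{\nu}/\cq{0}{\mu}$ and $\cq{1}{\nu}/\cq{1}{\mu}$ are simultaneously horizontal strips (necessarily of sizes summing to $k$), and $0$ otherwise.

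It then remains to establish the combinatorial equivalence: for partitions $\mu\subset \nu$ of minimal 2-core with $|\nu|-|\mu|=2k$, the pair $\bigl(\cq{0}{\nu}/\cq{0}{\mu},\,\cq{1}{\nu}/\cq{1}{\mu}\bigr)$ consists of two horizontal strips if and only if $\nu/\mu$ admits a domino tiling in which no two dominoes meet a common column. I would prove this using the 01-sequence (abacus) description of partitions already employed in Section \ref{sec:greenkostka}: encode $\mu$ and $\nu$ as their infinite 01-words, so that the two components of the 2-quotient correspond to the subwords at odd- and even-indexed positions. Adding a single cell to one component corresponds to a local swap $01\mapsto 10$ in the relevant subword, which in the 01-word of $\nu$ becomes a swap of entries two positions apart and realizes the placement of exactly one domino in a prescribed column of $\nu/\mu$. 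A case analysis, which I would organize as an induction on $k$, then translates the ``horizontal strip'' condition on each subword into the condition that the induced dominoes lie in pairwise distinct columns.

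The main obstacle will be this last combinatorial equivalence. The subtle point is that two elementary moves in different 2-quotient components can interact in $\nu/\mu$ in three essentially different ways: they can form two horizontal dominoes in disjoint column pairs (allowed), merge into a single vertical domino occupying one column (allowed), or produce two dominoes that share a column (forbidden, and corresponding precisely to a vertical step within one subword, which violates the horizontal-strip condition there). A careful direct inspection of these cases in the interleaved 01-word, combined with the inductive reduction on $k$, will complete the proof.
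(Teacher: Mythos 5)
Your reduction matches the paper's: Frobenius reciprocity and the $\Psi$-identities of Section \ref{sec:symm} turn the bracket into $\br{s_{\cq{0}{\nu}/\cq{0}{\mu}}\,s_{\cq{1}{\nu}/\cq{1}{\mu}},\,h_k}$, which Pieri's rule evaluates to $1$ exactly when both $\cq{0}{\nu}/\cq{0}{\mu}$ and $\cq{1}{\nu}/\cq{1}{\mu}$ are horizontal strips, and $0$ otherwise. The paper then finishes by citing \cite[Theorem~2.2.2]{vlee00} (cf.\ \cite{cale95}), interpreting this bracket as the number of Yamanouchi domino tableaux of shape $\nu/\mu$ and weight $(k)$, whereas you propose to prove the needed combinatorial equivalence directly from the 01-word/abacus picture. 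That route is viable and more self-contained --- it amounts to re-deriving the weight-$(k)$ instance of the Carr\'e--Leclerc/van~Leeuwen 2-quotient bijection --- but your sketch of the case analysis is not correct as stated. Adding one cell to one quotient component corresponds, via a bead move $(a_x,a_{x+2})=(0,1)\mapsto(1,0)$, to \emph{exactly one} domino of $\nu/\mu$, which is already vertical or horizontal according to whether the intervening bit $a_{x+1}$ equals $0$ or $1$ at the moment of the move; two moves in different components therefore never ``merge into a single vertical domino,'' and your proposed trichotomy of interactions is not what actually happens. The genuine subtlety, which your plan gestures at but misdescribes, is that the intervening bit seen by a given move depends on which moves in the \emph{other} component have already been performed, so both the shape and the column span of the resulting domino depend on the processing order; to finish you would need to fix an order (say by increasing abacus position), verify that the horizontal-strip conditions on both quotient components are equivalent to the $k$ dominoes produced in that order being pairwise column-disjoint, and observe that such a tiling, when it exists, is unique. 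With that bookkeeping done carefully your version of the proof goes through; otherwise the paper's citation is the shorter path.
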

\begin{proof}
Recall that if $\sym_n \subset W$ is the maximal parabolic subgroup of type $A$, then $\Psi(\ch \Ind_{\sym_n}^W \Id_{\sym_n})=\Delta h_n$ (see Section \ref{sec:symm} for the definition of $\Psi$). Thus we have
\begin{align*}
&\br{\Res_{\sym_k \times W'}^W \ch\chi^{\nu}, 1 \times\ch\chi^{\mu}}=\br{\ch\chi^{\nu}, \Ind_{\sym_k \times W'}^W1 \times\ch\chi^{\mu}}
\\&= \br{s_{\cq{0}{\nu}}(x)s_{\cq{1}{\nu}}(y),s_{\cq{0}{\mu}}(x)s_{\cq{1}{\mu}}(y)\Delta h_k}= \br{s_{\cq{0}{\nu}/\cq{0}{\mu}}(x)s_{\cq{1}{\nu}/\cq{1}{\mu}}(y), \Delta h_k}
\\&= \br{s_{\cq{0}{\nu}/\cq{0}{\mu}}s_{\cq{1}{\nu}/\cq{1}{\mu}}, h_k}.
\end{align*}
using the adjunction of $\Delta$ and $\nabla$. (The formula is symmetric with respect to $x$ and $y$, thus this formula is valid for both type $B$ and $C$.) By \cite[Theorem 2.2.2]{vlee00} (see also \cite{cale95}) this is the number of Yamanouchi domino tableaux of shape $\nu/\mu$ and weight $(k)$. One can easily check that this number is 1 if and only if the condition in the lemma is satisfied and otherwise 0, thus the result follows.
\end{proof}

Thus we have
$$ \br{1\times \gue{\lambda}, \Res_{\sym_k \times W'}^W \ch\chi^{\nu}}= \sum_{\mu} \sg{\lambda}\sg{\mu}K_{\mu,\lambda}(-1)$$
where the sum is over $\mu \vdash |\lambda|$ which satisfies the condition in Lemma \ref{lem:yam}. As $\sg{\lambda\cup(k,k)}=(-1)^k\sg{\lambda}$, in order to prove Proposition \ref{prop:indgue} it suffices to show that
\begin{equation}\label{eqnind}
(-1)^k\sg{\nu}K_{\nu,\lambda\cup(k,k)}(-1) =\sum_{\mu} \sg{\mu}K_{\mu,\lambda}(-1)
\end{equation}
where the sum is over $\mu$ which satisfies the same condition above. Now we recall the following result of Lascoux, Leclerc, and Thibon.
\begin{lem} Suppose $\lambda = \alpha\cup\beta\cup\beta$ for some partition $\alpha, \beta$. Then we have
$$Q'_{\lambda}(-1) = Q'_{\alpha}(-1)\prod_{i\geq 1} \left(Q'_{(i^2)}(-1)\right)^{\beta_i}.$$
\end{lem}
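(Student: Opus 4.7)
The plan is to invoke the factorization of modified Hall-Littlewood functions at $t=-1$ due to Lascoux, Leclerc, and Thibon, which describes $Q'_\lambda(x;-1)$ through the $2$-core and $2$-quotient of $\lambda$ and from which the multiplicative structure asserted by the lemma follows. Concretely, I would first reduce by induction on $\ell(\beta)$ to the special case
\[
Q'_{\alpha \cup (b,b)}(x;-1) = Q'_{\alpha}(x;-1) \cdot Q'_{(b,b)}(x;-1),
\]
so that iterating over the parts of $\beta$ yields the stated factorization. The sign prefactor $\sg{\cdot}$ satisfies $\sg{\alpha \cup (b,b)} = \sg{\alpha}\sg{(b,b)}$, as one checks by tracking how the insertion of two equal parts $b$ affects the count of even-indexed parts, so signs multiply correctly on both sides.

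To prove the special case, I would use the interpretation of $K_{\mu, \lambda}(-1)$ (appearing in $Q'_\lambda = \sum_\mu K_{\mu,\lambda}(t) s_\mu$) as a signed count of Yamanouchi domino tableaux, following Carr\'e--Leclerc. Appending a pair $(b,b)$ to $\alpha$ corresponds to inserting two vertical dominoes forming a separate $2 \times b$ block whose contribution decouples from the tableaux counted for $\alpha$ and equals exactly $Q'_{(b,b)}(x;-1)$. A parallel route is the creation-operator description of $Q'_\lambda(t)$ via Jing's vertex operators $B_n(t)$: the key identity then becomes a commutation relation at $t=-1$, verifying that $B_b(-1)B_b(-1)$ acts as multiplication by $Q'_{(b,b)}(-1)$ on the subalgebra spanned by $Q'_\alpha(-1)$.

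The main obstacle is that $\alpha$ need not itself have empty $2$-core, so the new pair $(b,b)$ interacts nontrivially with the existing $2$-quotient of $\alpha$. A careful domino-tableau argument (or the explicit commutation relation satisfied by Jing's operators at $t=-1$) is required to show that the doubled block decouples cleanly; this is the technical heart of the LLT factorization and is the step where the bulk of the work lies in a written-out proof.
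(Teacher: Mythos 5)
The proposal correctly identifies the Lascoux--Leclerc--Thibon factorization as the key input, which is precisely what the paper does: the paper's proof is a one-line citation of LLT Theorem 2.1 (for $k=2$) together with the remark that the statement there, which assumes $\alpha$ multiplicity-free, is equivalent to the general formula since any multiplicities in $\alpha$ can be moved into $\beta$. Up to that point you are in step with the paper. However, you then pivot into an attempt to \emph{re-derive} the factorization from scratch via Yamanouchi domino tableaux or Jing's vertex operators, and you explicitly concede the decisive step: \emph{``this is the technical heart of the LLT factorization and is the step where the bulk of the work lies in a written-out proof.''} That is a real gap, not a proof --- you have named the lemma's difficulty, not resolved it. The inductive peel-off of a single pair $(b,b)$, which you propose as a reduction, is exactly what one cannot do naively: the pair $(b,b)$ is inserted into the middle of $\alpha$ and a $2\times b$ block does \emph{not} visibly decouple in the tableau picture, which is why one leans on LLT's theorem rather than trying to reprove it. Separately, your discussion of the sign $\sg{\cdot}$ and the identity $\sg{\alpha\cup(b,b)}=\sg{\alpha}\sg{(b,b)}$ is extraneous here: the lemma as stated has no sign prefactors on $Q'_{\lambda}(-1)$, $Q'_{\alpha}(-1)$, or $Q'_{(i^2)}(-1)$, so there are no signs to reconcile. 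In short, the proposal cites the right theorem but then, instead of simply invoking it as the paper does, undertakes to reconstruct its hard part and stops short, leaving the argument incomplete.
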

\begin{proof} This is a special case of \cite[Theorem 2.1]{llt94} for $k=2$. The statement therein only considers the case when $\alpha$ is multiplicity-free (for $k=2$), but it is in fact equivalent to the formula above.
\end{proof}
Therefore in particular, $Q'_{\lambda\cup(k,k)}(-1) = Q'_{\lambda}(-1) Q'_{(k,k)}(-1).$
By \cite[Theorem 2.2]{llt94}, $Q'_{(k,k)}(-1)=(-1)^k s_k[p_2]$. Thus we have
$$K_{\nu, \lambda\cup (k,k)}(-1)=\br{Q'_{\lambda\cup(k,k)}(-1), s_\nu} = (-1)^k\br{s_k[p_2]\sum_{\mu}K_{\mu, \lambda}(-1)s_\mu , s_\nu}$$
where the sum is over $\mu \vdash |\lambda|$ which satisfies the condition in Lemma \ref{lem:yam}.
But \cite[Theorem 4.1]{cale95} says that $\tsg{\nu/\mu}\br{s_k[p_2]s_\mu, s_\nu}$ is the number of Yamanouchi domino tableaux of shape $\nu/\mu$ and weight $(k)$, i.e. 1 if and only if the condition in Lemma \ref{lem:yam} is satisfied and otherwise 0. Therefore, 
$$K_{\nu, \lambda\cup(k,k)}(-1) = (-1)^k\sum_{\mu}\tsg{\nu/\mu}K_{\mu, \lambda}(-1)$$
where the sum is over $\mu \vdash |\lambda|$ which satisfies the condition in Lemma \ref{lem:yam}. Since $\tsg{\nu/\mu} = \sg{\nu}\sg{\mu},$ this reads
$$(-1)^k \sg{\nu}K_{\nu, \lambda\cup(k,k)}(-1) = \sum_{\mu}\sg{\mu}K_{\mu, \lambda}(-1)$$
which proves (\ref{eqnind}). Thus Proposition \ref{prop:indgue} is proved.

\subsection{Triangularity property} 
We proceed to the proof of Main Theorem \ref{mainthm1} for type $B$ and $C$ by induction on $n=\rk G$. As we observed already, $\tsp{\lambda}$ and $\gue{\lambda}$ enjoy similar induction properties. In particular, if $\lambda = \mu\cup(k,k)$ for some $\mu$ and $k\geq 1$, then by induction assumption we have
$$\ch\tsp{\lambda}=\Ind_{\sym_k \times W'}^W (1\times \ch\tsp{\mu})=\Ind_{\sym_k \times W'}^W (1\times \gue{\mu})=\gue{\lambda}.$$
Thus it remains to consider the case when $\lambda$ corresponds to a distinguished nilpotent element. We fix such $\lambda$ from now on.

Also, for any maximal proper parabolic subgroup $\sym_k \times W' \subset W$ for $k\geq 1$ and $c \in \sym_k$ a $k$-cycle, by restriction properties and induction assumption we have
\begin{align*} \Res^W_{c\cdot W'}\ch \tsp{\lambda} =& \sum_{i\geq k, m_i \geq 2} 2\floor{\frac{m_i}{2}} \ch\tsp{\lambda\pch{i,i}{i-k, i-k}}
\\&+ \sum_{i\geq 2k, m_i \textup{ odd}} (-1)^{\h{\lambda/\left(\lambda\pch{i}{i-2k}\right)}} \ch\tsp{\lambda\pch{i}{i-2k}}
\\&+ \sum_{\substack{0<i-j<2k\leq i+j, \\m_i, m_j \textup{ odd}}} (-1)^{\h{\lambda/ \left(\lambda\pch{i}{j}\right)}}2 \ch\tsp{\lambda\pch{i,j}{\frac{i+j}{2}-k,\frac{i+j}{2}-k}}.
\\=&  \sum_{i\geq k, m_i \geq 2} 2\floor{\frac{m_i}{2}} \gue{\lambda\pch{i,i}{i-k, i-k}}
\\&+ \sum_{i\geq 2k, m_i \textup{ odd}} (-1)^{\h{\lambda/\left(\lambda\pch{i}{i-2k}\right)}} \gue{\lambda\pch{i}{i-2k}}
\\&+ \sum_{\substack{0<i-j<2k\leq i+j, \\m_i, m_j \textup{ odd}}} (-1)^{\h{\lambda/ \left(\lambda\pch{i}{j}\right)}}2 \gue{\lambda\pch{i,j}{\frac{i+j}{2}-k,\frac{i+j}{2}-k}}.
\\=&\Res^W_{c\cdot W'} \gue{\lambda}.
\end{align*}
Thus $\ch \tsp{\lambda}(w) =  \gue{\lambda}(w)$ if $w \in W$ is contained in some maximal proper parabolic subgroup of $W$. In other words, $\ch \tsp{\lambda}-\gue{\lambda}$ is supported on the elements which are not contained in any proper parabolic subgroup. $w \in W$ is such an element if and only if the conjugacy class of $w$ is parametrized by $(\emptyset, \rho)$ for some partition $\rho\vdash n$. Thus for any $\mu \vdash n$ with minimal 2-core, we have
\begin{equation}\label{eq:resA}
\br{\ch \tsp{\lambda}-\gue{\lambda},\ch\chi^\mu}=\sum_{\rho \vdash n} \frac{1}{z_\rho 2^{l(\rho)}} \left(\ch \tsp{\lambda}(w_{(\emptyset,\rho)})-\gue{\lambda}(w_{(\emptyset,\rho)})\right) \ch\chi^\mu(w_{(\emptyset,\rho)}).
\end{equation}
Here $z_\rho 2^{l(\rho)}$ is the size of the centralizer of $w_{(\emptyset,\rho)}$ in $W$. We observe the following (upper) triangularity properties of $\tsp{\lambda}$ and $\gue{\lambda}$.
\begin{lem} \label{lem:triangular}Let $\lambda, \mu$ be partitions with minimal 2-cores such that $|\lambda|=|\mu|$.
\begin{enumerate}
\item If $\br{\tsp{\lambda}, \chi^{\mu}} \neq 0$, then $\lambda_1 \leq \mu_1$.
\item If $\br{\gue{\lambda},\ch \chi^{\mu}} \neq 0$, then $\lambda_1 \leq \mu_1$.
\end{enumerate}
\end{lem}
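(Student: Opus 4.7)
Part (2) reduces to the standard triangularity of Kostka--Foulkes polynomials: $K_{\mu,\lambda}(t)\in\Q[t]$ vanishes identically unless $\mu\geq\lambda$ in dominance order (cf.\ \cite[Chapter III.6]{mac95}), so $K_{\mu,\lambda}(-1)\neq 0$ forces $\mu\geq\lambda$, which in particular gives $\mu_1\geq\lambda_1$. Combined with the definitional identity $\br{\gue{\lambda},\chi^\mu}=\sg{\lambda}\sg{\mu}K_{\mu,\lambda}(-1)$, this yields the claim.

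For part (1) I would induct on $n=\rk G$. In the inductive step, suppose $\lambda=\eta\cup(k,k)$ with $\eta$ a valid Jordan type of rank $n-k$. Lusztig's induction theorem (Proposition \ref{prop:indthm}) gives
$$\tsp{\lambda}\simeq\Ind_{\sym_k\times W'}^W\bigl(1\times\tsp{\eta}\bigr),$$
so by Frobenius reciprocity and Lemma \ref{lem:yam}
$$\br{\tsp{\lambda},\chi^\mu}=\sum_{\nu}\br{\tsp{\eta},\chi^\nu},$$
where $\nu$ ranges over partitions with minimal 2-core satisfying $\nu\subset\mu$ and such that $\mu/\nu$ admits a tiling by $k$ column-disjoint dominoes. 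If the left side is nonzero, some such $\nu$ contributes, and the inductive hypothesis gives $\nu_1\geq\eta_1$; since $\nu\subset\mu$, we obtain $\mu_1\geq\nu_1\geq\eta_1$. Separately, the column-disjointness of the tiling forces $\mu_1\geq k$, since the $k$ dominoes occupy at least $k$ distinct columns of the Young diagram of $\mu$. Hence $\mu_1\geq\max(\eta_1,k)=\lambda_1$.

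The base case is when $\lambda$ is distinguished (no repeated part). Here I would invoke the classical triangularity of the graded Springer representation: in the sign-twisted convention adopted in the paper, the multiplicity of $\chi^\mu$ in each $H^{2i}(\B_{N_\lambda})$ vanishes unless $\mu\geq\lambda$ in dominance, because the Springer sheaf of the orbit of $N_\lambda$ has support contained in its closure and the closure order on classical nilpotent orbits agrees with dominance order on Jordan types. Passing to the alternating sum gives $\br{\tsp{\lambda},\chi^\mu}\neq 0\Rightarrow\mu\geq\lambda\Rightarrow\mu_1\geq\lambda_1$.

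The hard part is handling the base case without appealing to Springer-theoretic triangularity, which is somewhat at odds with the elementary spirit of the paper. A more self-contained route would use Theorem \ref{thm:geocalc} applied with $k=\lambda_1$: for distinguished $\lambda$ every term on the right-hand side of the restriction formula vanishes (no part has multiplicity $\geq 2$; no part is $\geq 2\lambda_1$; and no pair of distinct odd-multiplicity parts $i,j$ satisfies $i+j\geq 2\lambda_1$), so $\Res^W_{c\cdot W'}\ch\tsp{\lambda}=0$ for $c$ a $\lambda_1$-cycle. Leveraging this vanishing across all maximal parabolics to deduce orthogonality against every $\chi^\mu$ with $\mu_1<\lambda_1$ is the combinatorially delicate step that I expect to be the main obstacle.
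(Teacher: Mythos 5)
Part (2) of your proposal coincides exactly with the paper's argument: it is the standard triangularity $K_{\mu,\lambda}(t)\neq 0 \Rightarrow \lambda\leq\mu$.

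For Part (1), the paper gives a short direct argument with no induction, so the inductive reduction to distinguished $\lambda$ via Lusztig induction and Lemma \ref{lem:yam} is extra machinery you do not need (though it is not wrong). The substantive problem is in your base case. You assert that closure-order upper-triangularity of the Springer representation gives $\mu\geq\lambda$ whenever $\br{\tsp{\lambda},\chi^{\mu}}\neq 0$. But the partition $\mu$ in Lemma \ref{lem:triangular} ranges over \emph{all} partitions of $|\lambda|$ with minimal 2-core, and these are generally not Jordan types. When $\mu$ fails the parity condition on multiplicities (e.g., in type $C$ it has an odd part with odd multiplicity), $\chi^{\mu}$ corresponds under the Springer correspondence to a nontrivial local system on an orbit whose Jordan type $\tilde{\mu}$ differs from $\mu$, and upper-triangularity only yields $\lambda\leq\tilde{\mu}$, not $\lambda\leq\mu$. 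To conclude $\lambda_1\leq\mu_1$ one needs the additional combinatorial fact $\tilde{\mu}_1\leq\mu_1$, which the paper obtains from the description of the Springer correspondence via Lusztig symbols (\cite[Chapter 13.3]{car93}). Your proposal does not address the distinction between $\mu$ and $\tilde{\mu}$, so the base case is incomplete. Note also that you cannot escape Springer-theoretic triangularity by the route you sketch at the end (vanishing of the restriction to parabolics for $k=\lambda_1$): the paper itself relies on the same geometric input, so this is the right tool, and the missing ingredient is purely the symbol combinatorics relating $\tilde{\mu}_1$ and $\mu_1$.
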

\begin{proof} The second part directly follows from the definition of $\gue{\lambda}$ and the fact that $K_{\mu,\lambda}(t)\neq 0 \Rightarrow \lambda \leq \mu$. To show the first part, we let $\tilde{\mu}\vdash n$ be a partition such that $\chi^{\mu}$ corresponds to some local system on the nilpotent orbit of $\g$ parametrized by $\tilde{\mu}$ under Springer correspondence. (Thus if $\chi^{\mu}$ corresponds to the trivial local system on some nilpotent orbit, then $\mu=\tilde{\mu}$.) By upper-triangularity property of Springer representations, we have $\lambda \leq \tilde{\mu}$, so that $\lambda_1 \leq \tilde{\mu}_1$. Now it is an easy combinatorial exercise to see that $\tilde{\mu}_1 \leq \mu_1$ by exploiting construction of Lusztig's symbols and its relation to Springer correspondence. (e.g. \cite[Chapter 13.3]{car93})
\end{proof}

Therefore, for $\mu \vdash n$ such that $\mu_1 < \lambda_1$, we have $\br{\ch\tsp{\lambda}-\gue{\lambda},\ch \chi^{\mu}}=0$. Now we define an ad-hoc notation 
$$\Res^W_{``\!\sym_n\!\!"} : \mathcal{R}(W) \rightarrow \mathcal{R}(\sym_n) \quad \textup{ by } \quad (\Res^W_{``\!\sym_n\!\!"} f)(w_\rho) = f(w_{(\emptyset, \rho)}).$$
Then combined with (\ref{eq:resA}), we only need to show the following statement.
\begin{equation}\label{eq:claim1}
 \{\Res_{``\!\sym_n\!\!"}^W\ch \chi^\mu \in \mathcal{R}(\sym_n) \mid\mu \vdash n, \mu_1 < \lambda_1 \} \textup{ spans } \mathcal{R}(\sym_n).
\end{equation}
Recall that $\lambda$ is a Jordan type of some distinguished nilpotent element in $\g$. For a partition $\mu$, we define $m_\mu \colonequals \max((\cq{0}{\mu})_1, (\cq{1}{\mu})_1).$
\begin{lem} \label{lem:estim}Suppose $\mu$ is a partition with minimal 2-core. 
\begin{enumerate}
\item Suppose $|\mu|$ is odd. If $\mu_1$ is even, then $\mu_1 \leq 2m_\mu-2$. If $\mu_1$ is odd, then $\mu_1\leq 2m_\mu+1$. 
\item Suppose $|\mu|$ is even. If $\mu_1$ is even, then $\mu_1 \leq 2m_\mu$. If $\mu_1$ is odd, then $\mu_1\leq 2m_\mu-1$. 
\end{enumerate}
\end{lem}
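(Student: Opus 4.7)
The plan is to unpack the definition of the 2-quotient and express $\mu_1$ \emph{exactly} in terms of $(\cq{0}{\mu})_1$ or $(\cq{1}{\mu})_1$ (depending on the parity of $\mu_1$), and then to bound using $(\cq{0}{\mu})_1, (\cq{1}{\mu})_1 \leq m_\mu$.

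First, I would fix $m \geq l(\mu)$ even and form the $\beta$-sequence $a_i = \mu_i + m - i$, so that $\mu_1 = a_1 - m + 1$ with $a_1$ the largest $a_i$. Writing the even entries as $2b_1 > \cdots > 2b_s$ and the odd entries as $2c_1+1 > \cdots > 2c_t+1$, the definitions give $(\cq{0}{\mu})_1 = b_1 - s + 1$ and $(\cq{1}{\mu})_1 = c_1 - t + 1$. Since $m$ is even, the parity of $a_1$ is opposite to that of $\mu_1$, so $\mu_1$ is even iff $a_1 = 2c_1 + 1$, and odd iff $a_1 = 2b_1$.

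The key observation is that $s - t$ is determined by the 2-core alone. Indeed, adding a domino to $\mu$ translates under the abacus model to sliding one bead up one level on its own runner, leaving $s$ and $t$ untouched, while passing from $m$ to $m+2$ adds one bead to each runner (so $s - t$ is preserved). It therefore suffices to compute $s - t$ on any representative of the 2-core: the empty 2-core (i.e.\ $|\mu|$ even and minimal) gives $s = t = m/2$, whereas the 2-core $(1)$ (i.e.\ $|\mu|$ odd and minimal), checked on $\mu = (1)$ with $m = 2$, gives $s = m/2 + 1$ and $t = m/2 - 1$.

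Finally I would do a four-way case analysis on the parities of $\mu_1$ and $|\mu|$. Substituting the values of $s$ or $t$ from the preceding step and the corresponding expression for $a_1$ into $\mu_1 = a_1 - m + 1$ yields the exact equalities $\mu_1 = 2(\cq{1}{\mu})_1 - 2$, $\mu_1 = 2(\cq{1}{\mu})_1$, $\mu_1 = 2(\cq{0}{\mu})_1 + 1$, $\mu_1 = 2(\cq{0}{\mu})_1 - 1$ in the cases ($|\mu|$ odd, $\mu_1$ even), ($|\mu|$ even, $\mu_1$ even), ($|\mu|$ odd, $\mu_1$ odd), ($|\mu|$ even, $\mu_1$ odd), respectively. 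Bounding via $(\cq{0}{\mu})_1, (\cq{1}{\mu})_1 \leq m_\mu$ gives the four claimed inequalities. The main obstacle is really just bookkeeping; the only non-definitional ingredient is the computation of $s-t$ on a canonical 2-core representative, and everything else is a direct unpacking of the 2-quotient.
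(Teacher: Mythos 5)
Your proof is correct, and it in fact establishes something sharper than the lemma: exact identities rather than inequalities. The paper itself gives no argument for this statement (it is declared an easy combinatorial exercise), so there is no proof to compare against, but the route you take — unpacking the $\beta$-set/2-quotient definition, observing that $s$ and $t$ (hence $s-t$) are domino-removal invariants and that enlarging $m$ by $2$ preserves $s-t$, then computing $s-t$ on the two canonical 2-core representatives — is the natural one and the bookkeeping checks out in all four parity cases. Two remarks worth keeping in mind: (i) the lemma only asserts upper bounds, but your argument delivers exact equalities $\mu_1 = 2(\cq{1}{\mu})_1 - 2$, $2(\cq{1}{\mu})_1$, $2(\cq{0}{\mu})_1 + 1$, $2(\cq{0}{\mu})_1 - 1$ in the respective cases, which determine not only the size of the largest 2-quotient part but also which of $\cq{0}{\mu},\cq{1}{\mu}$ attains it; and (ii) the formulas are internally consistent — for instance, in the $(|\mu|$ odd, $\mu_1$ even$)$ case, $a_1$ is automatically odd so $c_1$ exists, and $\mu_1 \geq 2$ forces $(\cq{1}{\mu})_1 \geq 2$, so one never runs into an empty 2-quotient component in a case where the formula would then give a nonpositive value.
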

\begin{proof} This is an easy combinatorial exercise using the definition of 2-cores.
\end{proof}
We define $M\in\N$ as follows.
\begin{enumerate}
\item If $G$ is of type $B_n$,  set $M$ to be such that $M^2 < 2n+1 \leq (M+1)^2$.
\item If $G$ is of type $C_n$,  set $M$ to be such that $(M-1)M<2n\leq M(M+1)$.
\end{enumerate}
\begin{lem} \label{lem:parbound}Let $\lambda$ be a Jordan type of a distinguished nilpotent element in $\g$. Also suppose that a partition $\mu \vdash |\lambda|$ with minimal 2-core satisfies $m_\mu <M$. Then we have $\mu_1 <\lambda_1$.
\end{lem}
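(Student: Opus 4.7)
The plan is to reduce the inequality $\mu_1 < \lambda_1$ to an arithmetic comparison by combining two bounds: an upper bound on $\mu_1$ coming from the hypothesis $m_\mu < M$ via Lemma \ref{lem:estim}, and a lower bound on $\lambda_1$ coming from the fact that a distinguished Jordan type in type $B$ (resp.~$C$) consists of pairwise distinct odd (resp.~even) parts. The argument splits cleanly into the two types, and is really just a triangular-number comparison matched against the definition of $M$.

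First I would handle type $B$: here $|\lambda| = 2n+1$, all parts of $\lambda$ are distinct and odd, and $M^2 < 2n+1$. Writing $\lambda_1 = 2r-1$, the fact that the parts are distinct and odd gives
\[
 2n+1 \;=\; |\lambda| \;\leq\; 1 + 3 + \cdots + (2r-1) \;=\; r^{2},
\]
so $r^2 \geq 2n+1 > M^2$, hence $r \geq M+1$ and $\lambda_1 \geq 2M+1$. On the other hand, $|\mu|=2n+1$ is odd and the hypothesis $m_\mu < M$ gives, by Lemma \ref{lem:estim}(1), $\mu_1 \leq 2(M-1)-2 = 2M-4$ (if $\mu_1$ is even) or $\mu_1 \leq 2(M-1)+1 = 2M-1$ (if $\mu_1$ is odd). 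In either case $\mu_1 \leq 2M-1 < 2M+1 \leq \lambda_1$.

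Next I would handle type $C$: here $|\lambda| = 2n$, all parts of $\lambda$ are distinct and even, and $(M-1)M < 2n$. Writing $\lambda_1 = 2r$, distinctness of the even parts gives
\[
 2n \;=\; |\lambda| \;\leq\; 2 + 4 + \cdots + 2r \;=\; r(r+1),
\]
so $r(r+1) \geq 2n > (M-1)M$, forcing $r \geq M$ and hence $\lambda_1 \geq 2M$. The hypothesis $m_\mu < M$ together with Lemma \ref{lem:estim}(2) (applied with $|\mu|=2n$ even) gives $\mu_1 \leq 2(M-1) = 2M-2$ or $\mu_1 \leq 2(M-1)-1 = 2M-3$ depending on the parity of $\mu_1$; in either case $\mu_1 \leq 2M-2 < 2M \leq \lambda_1$.

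There is no genuine obstacle here; the only mild subtlety is making sure the parity of $\lambda_1$ (odd in type $B$, even in type $C$) is built into the lower bound so that the triangular-number inequality is sharp enough to beat the bound $\mu_1 \leq 2M-1$ (resp.~$\mu_1 \leq 2M-2$) from Lemma \ref{lem:estim}. The definitions of $M$ in types $B$ and $C$ are calibrated precisely so that this matches, which is what makes the lemma clean.
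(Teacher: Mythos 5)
Your proof is correct and follows essentially the same route as the paper: you bound $\lambda_1$ from below by the triangular-number estimate (using that $\lambda$ is strict with all parts odd in type $B$, all parts even in type $C$), and bound $\mu_1$ from above via Lemma \ref{lem:estim}, then compare the two bounds using the definition of $M$. The only cosmetic difference is that you write the triangular sums explicitly (e.g. $1+3+\cdots+(2r-1)=r^2$) and replace $m_\mu < M$ by $m_\mu \leq M-1$, whereas the paper uses the equivalent closed forms $\frac{(\lambda_1+1)^2}{4}$ and $\frac{\lambda_1(\lambda_1+2)}{4}$ and keeps the strict inequality; the arithmetic is identical.
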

\begin{proof} First assume that $G$ is of type $B$. Then since $\lambda$ is strict and each part is odd, we have $M^2<2n+1 = |\lambda| \leq \frac{(\lambda_1+1)^2}{4}$, thus $2M <\lambda_1+1$. As $\lambda_1$ is odd, we have $2M \leq \lambda_1-1$. Now by Lemma \ref{lem:estim} we have
\begin{enumerate}
\item if $\mu_1$ is even, then $\mu_1 \leq 2m_{\mu}-2<2M-2\leq \lambda_1-3<\lambda_1,$ and
\item if $\mu_1$ is odd, then $\mu_1 \leq 2m_{\mu}+1<2M+1\leq \lambda_1.$
\end{enumerate}
Thus the result follows. Now assume that $G$ is of type $C$. Similarly, we have $(M-1)M<2n=|\lambda|\leq \frac{\lambda_1(\lambda_1+2)}{4}$, thus $2M<\lambda_1+2$. As $\lambda_1$ is even, we have $2M\leq \lambda_1$. Now by Lemma \ref{lem:estim} we have
\begin{enumerate}
\item if $\mu_1$ is even, $\mu_1 \leq 2m_{\mu}<2M\leq \lambda_1,$ and
\item if $\mu_1$ is odd, $\mu_1 \leq 2m_{\mu}-1<2M-1\leq \lambda_1-1< \lambda_1.$
\end{enumerate}
Thus the result follows.
\end{proof}
%
%

We define
$$\mathscr{L}\colonequals \spn{ \Res_{``\!\sym_n\!\!"}^W \ch \chi^{\mu} \mid m_\mu < M} \subset \mathcal{R}(\sym_n).$$
Then by Lemma \ref{lem:parbound}, in order to prove (\ref{eq:claim1}) it suffices to show that $\mathscr{L} = \mathcal{R}(\sym_n)$. The following lemma provides a main tool to describe $\mathscr{L}$.
\begin{lem} If $G$ is of type $B$, then $\Psi(\Res_{``\!\sym_n\!\!"}^W \ch \chi^\mu) = (-1)^{|\cq{1}{\mu}|}s_{\cq{0}{\mu}}s_{(\cq{1}{\mu})'}$. If $G$ is of type $C$, then $\Psi(\Res_{``\!\sym_n\!\!"}^W \ch \chi^\mu) = (-1)^{|\cq{0}{\mu}|}s_{\cq{1}{\mu}}s_{(\cq{0}{\mu})'}$. (See Section \ref{sec:symm} for the definition of $\Psi$.)
\end{lem}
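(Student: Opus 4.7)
The plan is to compute $\Psi(\Res_{``\!\sym_n\!\!"}^W \ch\chi^\mu)$ directly from the formulas assembled in Section~\ref{sec:symm}. Writing $\chi^\mu = \chi^{(\alpha,\beta)}$ where $(\alpha,\beta) = (\cq{0}\mu,\cq{1}\mu)$ in type $B$ and $(\alpha,\beta) = (\cq{1}\mu,\cq{0}\mu)$ in type $C$, both cases reduce to the single uniform identity
$$\Psi(\Res_{``\!\sym_n\!\!"}^W \ch\chi^{(\alpha,\beta)}) = (-1)^{|\beta|}\, s_\alpha\, s_{\beta'}.$$
First I would expand $s_\alpha(x)s_\beta(y)$ in the basis $\{p_{(\gamma,\delta)}(x,y)\}_{(\gamma,\delta)\vdash n}$ of the degree-$n$ part of $\Lambda(x)\otimes\Lambda(y)$ (which is indeed a basis, by a triangular change of variables from $\{p_\gamma(x)p_\delta(y)\}$) and apply the formula $\Psi^{-1}(p_{(\gamma,\delta)})(w_{(\gamma',\delta')}) = \delta_{\gamma,\gamma'}\delta_{\delta,\delta'}z_\gamma z_\delta 2^{l(\gamma)+l(\delta)}$ recalled in Section~\ref{sec:symm}. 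This yields
$$\ch\chi^{(\alpha,\beta)}(w_{(\emptyset,\rho)}) = z_\rho\, 2^{l(\rho)}\cdot [p_{(\emptyset,\rho)}]\, s_\alpha(x)s_\beta(y),$$
so that, via the Frobenius map for $\sym_n$,
$$\Psi(\Res_{``\!\sym_n\!\!"}^W \ch\chi^{(\alpha,\beta)}) = \sum_{\rho \vdash n} 2^{l(\rho)}\,\bigl([p_{(\emptyset,\rho)}]\, s_\alpha(x)s_\beta(y)\bigr)\, p_\rho.$$

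The key step is to recognise the right-hand side as the specialisation of $s_\alpha(x)s_\beta(y)$ obtained by substituting $p_k(y)\mapsto -p_k(x)$ for all $k\geq 1$. Indeed, under this substitution every factor $p_{\gamma_i}(x)+p_{\gamma_i}(y)$ becomes $0$ while every factor $p_{\delta_j}(x)-p_{\delta_j}(y)$ becomes $2p_{\delta_j}(x)$, so $p_{(\gamma,\delta)}\big|_{y\mapsto -x}$ vanishes whenever $\gamma\neq\emptyset$, and $p_{(\emptyset,\rho)}\big|_{y\mapsto -x} = 2^{l(\rho)}p_\rho$. Thus this substitution exactly extracts the displayed sum. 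On the other hand, expanding $s_\beta(y)$ in the power-sum basis and using $(-1)^{l(\delta)} = (-1)^{|\delta|}(-1)^{|\delta|-l(\delta)}$ together with $\omega(p_\delta) = (-1)^{|\delta|-l(\delta)}p_\delta$ and $\omega(s_\beta) = s_{\beta'}$ gives $s_\beta(y)\big|_{y\mapsto -x} = (-1)^{|\beta|}s_{\beta'}(x)$. Combining these two computations yields the boxed identity, and specialising $(\alpha,\beta)$ to the two parametrisations produces the two formulas in the lemma.

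The main obstacle is essentially bookkeeping rather than substance: one must verify that $\{p_{(\gamma,\delta)}\}_{(\gamma,\delta)\vdash n}$ really is a basis (so that the coefficient extraction $[p_{(\emptyset,\rho)}]$ is well defined), and track the sign $(-1)^{|\beta|}$ produced by the $\omega$-computation carefully through the differing Springer conventions in types $B$ and $C$. Once the substitution $y\mapsto -x$ is set up, everything else is a direct calculation with power-sum expansions.
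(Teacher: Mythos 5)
Your proof is correct and, at bottom, the same calculation as the paper's: both reduce to computing the coefficient of $p_{(\emptyset,\rho)}$ in $s_\alpha(x)s_\beta(y)$ and both invoke $\omega$ on the $y$-variables to turn $s_\beta$ into $(-1)^{|\beta|}s_{\beta'}$. The only presentational difference is that you realize the coefficient-extraction-plus-reassembly as the plethystic specialization $p_k(y)\mapsto -p_k(x)$, whereas the paper works dually through scalar products $\br{\,\cdot\,, p_{(\emptyset,\rho)}}$, the $\omega_y$ isometry, and the $\Delta/\nabla$ adjunction.
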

\begin{proof} We only consider the case when $G$ is of type $B$ and the other case is totally analogous. By the definition of $\Res_{``\!\sym_n\!\!"}^W$, it is enough to show that for any $\rho \vdash n$ we have
$$ \ch \chi^\mu (w_{(\emptyset,\rho)}) = \br{s_{\cq{0}{\mu}}(x)s_{\cq{1}{\mu}}(y), p_{(\emptyset, \rho)}} = \br{(-1)^{|\cq{1}{\mu}|}s_{\cq{0}{\mu}}s_{(\cq{1}{\mu})'}, p_{\rho}}.$$
Recall that $p_{(\emptyset, \rho)} =\prod_{i\geq 1}(p_{\rho_i}(x)-p_{\rho_i}(y))$. If we apply the involution $\omega_y : \Lambda(x)\otimes \Lambda(y) \rightarrow \Lambda(x)\otimes \Lambda(y): s_\alpha(x)s_\beta(y) \mapsto s_\alpha(x) s_{\beta'}(y)$, then we have
\begin{align*}
&\br{s_{\cq{0}{\mu}}(x)s_{\cq{1}{\mu}}(y), p_{(\emptyset, \rho)}}= \br{s_{\cq{0}{\mu}}(x)s_{(\cq{1}{\mu})'}(y), \prod_{i\geq 1}(p_{\rho_i}(x)+(-1)^{\rho_i}p_{\rho_i}(y)) }
\\&=(-1)^{|\cq{1}{\mu}|}\br{s_{\cq{0}{\mu}}(x)s_{(\cq{1}{\mu})'}(y), \prod_{i\geq 1}(p_{\rho_i}(x)+p_{\rho_i}(y)) }=(-1)^{|\cq{1}{\mu}|}\br{s_{\cq{0}{\mu}}s_{(\cq{1}{\mu})'}, p_{\rho} }
\end{align*}
by adjunction of $\Delta$ and $\nabla$. Thus the result follows.
\end{proof}
Therefore, we have
$$\Psi( \mathscr{L}) = \spn{s_\alpha s_{\beta'} \mid |\alpha|+|\beta|=n,\ \ l(\alpha), l(\beta)<M} \subset \Lambda^n$$
and it remains to show that $\Psi(\mathscr{L}) = \Lambda^n$.

%
\begin{lem} \label{lem:spanBC}Let $M>0$ be such that $M^2>n$. Then $\Psi(\mathscr{L}) =\Lambda^n$.
\end{lem}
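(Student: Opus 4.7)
The plan is to prove the stronger statement that every Schur function $s_\mu$ with $\mu \vdash n$ already lies in $\Psi(\mathscr{L})$; since $\{s_\mu \mid \mu \vdash n\}$ is a $\Q$-basis of $\Lambda^n$, this suffices. Unwinding the formula for $\Psi(\Res^W_{``\!\sym_n\!\!"}\ch\chi^\mu)$ computed just above the lemma, we see that $\Psi(\mathscr{L})$ is the span of products $s_\alpha\, s_\gamma$ (writing $\gamma\colonequals \beta'$) with $|\alpha|+|\gamma|=n$, $l(\alpha)<M$, and $\gamma_1=l(\beta)<M$.

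The key observation is a pigeonhole-type claim: under the hypothesis $M^2>n$, every $\mu \vdash n$ satisfies $l(\mu)<M$ \emph{or} $\mu_1<M$. Indeed, if both $l(\mu)\geq M$ and $\mu_1\geq M$ held, then the Young diagram of $\mu$ would contain an $M\times M$ box, forcing $n=|\mu|\geq M^2$, contrary to hypothesis.

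Given the dichotomy, in the first case ($l(\mu)<M$) we take $\alpha=\mu$ and $\gamma=\emptyset$, whence $s_\mu = s_\alpha s_\gamma \in \Psi(\mathscr{L})$; in the second case ($\mu_1<M$) we take $\alpha=\emptyset$ and $\gamma=\mu$, so that $l(\alpha)=0<M$ and $\gamma_1=\mu_1<M$, again giving $s_\mu=s_\alpha s_\gamma\in\Psi(\mathscr{L})$. Either way $s_\mu\in\Psi(\mathscr{L})$, completing the argument. There is no serious obstacle: beyond the pigeonhole claim the proof reduces to choosing the trivial factorizations $s_\mu\cdot 1$ or $1\cdot s_\mu$ and reading off that they meet the size constraints.
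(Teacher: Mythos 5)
The key claim in your proof --- that $M^2>n$ forces every $\mu\vdash n$ to satisfy $l(\mu)<M$ or $\mu_1<M$ --- is false, and this is where the argument collapses. Having $l(\mu)\geq M$ and $\mu_1\geq M$ does \emph{not} mean the Young diagram of $\mu$ contains an $M\times M$ square; it only means $\mu$ contains the hook $(M,1^{M-1})$, of size $2M-1$, which is far smaller than $M^2$. Concretely, take $n=5$ and $M=3$ (so $M^2=9>5$): the partition $\mu=(3,1,1)$ has $l(\mu)=3\geq M$ and $\mu_1=3\geq M$ yet $|\mu|=5<9$. For such $\mu$, neither trivial factorization $s_\mu\cdot 1$ nor $1\cdot s_\mu$ satisfies the size constraints, and $s_\mu$ cannot equal any single admissible product $s_\alpha s_{\beta'}$ with $l(\alpha),l(\beta)<M$ (a nontrivial Littlewood--Richardson product is never a lone Schur function). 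So the stronger statement you set out to prove --- each $s_\mu$ individually lies in the generating set --- is simply untrue.

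What is missing is exactly the nontrivial part of the lemma: one must show that such $s_\mu$ lie in the \emph{span} of the admissible products, which requires combining several of them. The paper does this with an increasing filtration $\Lambda^n_k=\spn{s_\lambda\mid \sum_{i\geq M}\lambda_i\leq k}$ and an upper-triangularity argument via the Littlewood--Richardson rule: setting $\alpha=(\lambda_1,\dots,\lambda_{M-1})$ and $\beta=(\lambda_M,\dots)'$, it checks that $l(\beta)=\lambda_M<M$ (using $M^2>n$ in the form $M\lambda_M\leq|\lambda|\leq n<M^2$) and that $s_\alpha s_{\beta'}-s_\lambda$ lies in the previous filtration piece. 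Note that the hypothesis $M^2>n$ enters there as a bound on $\lambda_M$, not as a bound forcing $\lambda$ to avoid an $M\times M$ square --- this is precisely the distinction your pigeonhole step misses.
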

\begin{proof} We introduce an increasing filtration
$$\Lambda_{k}^n \colonequals \spn{s_{\lambda} \mid \sum_{i\geq M} \lambda_i \leq k}\subset \Lambda^n.$$
Clearly it exhausts $\Lambda^n$. We show $\Lambda_{k}^n \subset \Psi( \mathscr{L})$ by induction on $k\geq 0$. The case $k=0$ is straightforward, thus suppose $k\geq 1$. For any $\lambda$ such that $s_\lambda \in \Lambda^n_{k}- \Lambda^n_{k-1}$, we define $\alpha=(\lambda_1, \cdots, \lambda_{M-1})$ and $\beta = (\lambda_{M}, \cdots, \lambda_{l(\lambda)})'$ so that $\lambda = \alpha \cup \beta'$. Then $l(\beta) = \lambda_{M}< M$,  since otherwise we have $|\lambda| \geq \sum_{i=1}^{M} \lambda_i \geq M\lambda_{M}\geq M^2>n$ which is absurd. We claim $s_\alpha s_{\beta'} - s_\lambda \in \Lambda^n_{k-1}.$ 
Indeed, if $\br{s_\alpha s_{\beta'},s_\mu}\neq 0$ for some $\mu \vdash n$, then clearly $\alpha \subset \mu$. Thus we have $\sum_{i\geq M} \mu_i \leq k$ and the equality holds if and only if $\mu_i = \alpha_i$ for $1\leq i <M$. Now the combinatorial description of the Littlewood-Richardson rule (e.g. \cite[Theorem A1.3.3]{sta86}) implies that
$$s_\mu \in \Lambda^n_{k} - \Lambda^n_{k-1}, \ \ \br{s_\alpha s_{\beta'},s_\mu}\neq 0 \quad \Rightarrow \quad \mu=\lambda$$
and also $\br{s_\alpha s_{\beta'},s_\lambda}=1$. Thus the result follows.
\end{proof}

\begin{lem} Let $n \geq 2$. Then $M^2>n$ except when $G$ is of type $B_4$.
\end{lem}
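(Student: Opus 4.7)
The plan is to do direct case analysis from the defining inequalities of $M$, separating the two classical types. In each case, I will bound $n$ from above in terms of $M$ using the upper-bound inequality defining $M$, and compare that bound with $M^{2}$; the failure set of $M^{2}>n$ will then reduce to finitely many small values of $M$ that can be checked by hand.

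For type $C_{n}$ the inequality $(M-1)M<2n\leq M(M+1)$ gives $n\leq M(M+1)/2$, so $M^{2}>n$ is implied by $M^{2}>M(M+1)/2$, which simplifies to $M>1$. Since $n\geq 2$ forces $2n\geq 4>0=(1-1)\cdot 1$ and $2n>2=1\cdot 2$, we have $M\geq 2$ always, so there is no exceptional case in type $C$.

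For type $B_{n}$ the inequality $M^{2}<2n+1\leq (M+1)^{2}$ gives $n\leq (M^{2}+2M)/2$, so $M^{2}>n$ is implied by $M^{2}>(M^{2}+2M)/2$, which simplifies to $M>2$, i.e. $M\geq 3$. It therefore suffices to check the finitely many values of $n$ for which $M\in\{1,2\}$. The condition $M\leq 2$ is equivalent to $2n+1\leq 9$, i.e. $n\leq 4$; so together with $n\geq 2$ we only need to check $n\in\{2,3,4\}$. In each of these cases $M=2$ and $M^{2}=4$, and the inequality $M^{2}>n$ holds precisely for $n=2,3$ and fails for $n=4$.

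The proof is thus purely arithmetical and contains no real obstacle; the only thing to be careful about is the direction of the strict inequalities in the definition of $M$, since in type $B_{4}$ the value $2n+1=9=(M+1)^{2}$ is attained and produces the unique failure. One can therefore conclude that $M^{2}>n$ for every classical $G$ of rank $n\geq 2$, with the sole exception of type $B_{4}$, as claimed.
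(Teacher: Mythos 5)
Your proof is correct and is essentially the standard direct computation that the paper's one-word proof (``Easy'') invites: in each type you bound $n$ above by the defining inequality for $M$, reduce $M^2>n$ to a condition on $M$ alone ($M>1$ in type $C$, $M\geq 3$ in type $B$), and then check the remaining small cases by hand, finding $B_4$ as the sole exception. The arithmetic is right, and you correctly pinpoint that the borderline comes from $2n+1=(M+1)^2$ being attained at $n=4$, $M=2$.
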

\begin{proof} Easy.
\end{proof}
This finishes the proof except when $n=1$ or $G=SO_{9}(\C)$. If $n=1$, then the statement can be easily checked by direct calculation. If $G=SO_{9}(\C)$, the argument above does not work only when $\lambda=(5,3,1)$. However, direct calculation shows that we have
$$\ch\tsp{(5,3,1)}=\ch(\chi^{(5,3,1)}\oplus\chi^{(5,4)}\oplus\chi^{(6,2,1)}\oplus\chi^{(7,1,1)}\oplus\chi^{(9)})=\gue{(5,3,1)},$$
and it completes the proof.

\section{Proof of Main Theorem \ref{mainthm1} for type $D$} \label{sec:mainD}
It remains to handle the case when $G$ is of type $D_n$ for $n\geq 2$. Here, we define 
$$\ague{\lambda}\colonequals \sum_{\substack{\mu \vdash |\lambda|\\\textup{ not very even}}} \sg{\lambda}\sg{\mu}K_{\mu, \lambda}(-1)\ch\chi^\mu + \sum_{\substack{\mu \vdash |\lambda|\\\textup{ very even}}} \sg{\lambda}K_{\mu, \lambda}(-1)\ch(\chi^{\mu+}\oplus \chi^{\mu-}).$$
(Note that $\sg{\mu}=1$ if $\mu$ is very even.)
\subsection{$\ch \atsp{\lambda} = \ague{\lambda}$}  The Weyl group $W$ of $G$ can be regarded naturally as a subgroup of $W(C_n)$. If we write $\gue{C, \lambda}\colonequals \sum_{\mu \vdash |\lambda|} \sg{\lambda}\sg{\mu}K_{\mu, \lambda}(-1)\ch\chi^\mu_C$ where $\chi^\mu_C$ is the irreducible character of $W(C_n)$ parametrized by $\mu \vdash 2n$, then it is clear that $\ague{\lambda}=\Res^{W(C_n)}_{W} \gue{C, \lambda}$. 

We first prove $\ch \atsp{\lambda} = \ague{\lambda}$. Our strategy is similar to the previous section; we first state and prove the restriction and induction properties of $\ague{\lambda}$.

\begin{prop} \label{prop:resD}Assume $1\leq k \leq n$. Let $\sym_k \times W' \subset W$ be a parabolic subgroup where $W'$ is of the same type as $W$. If $1\leq k \leq n-2$, then we choose $W'$ such that $\sym_k \times W'$ becomes a maximal parabolic subgroup of $W$. ($W'$ is trivial if $k =n-1$ or $n$.) If $c\in \sym_k$ is a $k$-cycle and $\lambda=(1^{m_1}2^{m_2}\cdots)$ then we have
\begin{align*} \Res^W_{c\cdot W'} \ague{\lambda} =& \sum_{i\geq k, m_i \geq 2} 2\floor{\frac{m_i}{2}} \ague{\lambda\pch{i,i}{i-k, i-k}}
\\&+ \sum_{i\geq 2k, m_i \textup{ odd}} (-1)^{\h{\lambda/\left(\lambda\pch{i}{i-2k}\right)}} \ague{\lambda\pch{i}{i-2k}}
\\&+ \sum_{\substack{0<i-j<2k\leq i+j, \\m_i, m_j \textup{ odd}}} (-1)^{\h{\lambda/ \left(\lambda\pch{i}{j}\right)}}2 \ague{\lambda\pch{i,j}{\frac{i+j}{2}-k,\frac{i+j}{2}-k}}.
\end{align*}
Here $\ague{\lambda\pch{i,i}{i-k, i-k}}, \ague{\lambda\pch{i}{i-2k}}, \ague{\lambda\pch{i,j}{\frac{i+j}{2}-k,\frac{i+j}{2}-k}}$ are defined with respect to $W'$. Also, for $k \geq n-1$ we use the convention that $\ague{(1,1)}= 2$ and $\ague{\emptyset}= 1.$ (See \ref{subsec:char} for the definition of $\Res^W_{c\cdot W'}$.)
\end{prop}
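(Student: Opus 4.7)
The plan is to reduce Proposition \ref{prop:resD} to its type $C$ analog, Proposition \ref{prop:restgue}, via the identification
$$\ague{\lambda} = \Res^{W(C_n)}_{W(D_n)} \gue{C,\lambda},$$
where $\gue{C,\lambda} \colonequals \sum_{\mu \vdash 2n} \sg{\lambda}\sg{\mu} K_{\mu,\lambda}(-1) \ch\chi^\mu_C$ is the type-$C$ analog of $\gue{\lambda}$ built from the irreducible characters $\chi^\mu_C$ of $W(C_n)$. Any Jordan type $\lambda$ for $D_n$ is also a Jordan type for $C_n$, so $\gue{C,\lambda}$ is well-defined. The identity above holds because $\Res^{W(C_n)}_{W(D_n)} \chi^\mu_C$ equals $\chi^\mu$ when $\mu$ is not very even and $\chi^{\mu+}\oplus\chi^{\mu-}$ when $\mu$ is very even; combined with $\sg{\mu}=1$ for very even $\mu$, this exactly reproduces the defining formula for $\ague{\lambda}$.

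For $1 \leq k \leq n-2$, the parabolic subgroup $\sym_k \times W(D_{n-k}) \subset W(D_n)$ is precisely the intersection $W(D_n) \cap (\sym_k \times W(C_{n-k}))$. Since $c \in \sym_k$ commutes with all of $W(C_{n-k})$, restriction by stages gives
$$\Res^{W(D_n)}_{c \cdot W(D_{n-k})} \ague{\lambda} = \Res^{W(C_{n-k})}_{W(D_{n-k})}\!\left( \Res^{W(C_n)}_{c \cdot W(C_{n-k})} \gue{C,\lambda} \right).$$
Applying Proposition \ref{prop:restgue} to the inner restriction expresses it as an integer combination of $\gue{C,\mu}$'s indexed by partitions of the three distinguished shapes $\lambda\pch{i,i}{i-k,i-k}$, $\lambda\pch{i}{i-2k}$, and $\lambda\pch{i,j}{\frac{i+j}{2}-k,\frac{i+j}{2}-k}$, with coefficients matching those of Proposition \ref{prop:resD}. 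Restricting each $\gue{C,\mu}$ further to $W(D_{n-k})$ produces $\ague{\mu}$, yielding the desired identity.

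For the boundary cases $k = n-1$ and $k = n$, the same restriction-by-stages argument applies, now with $W(C_{n-k}) = W(C_1) = \Z/2$ or $W(C_0) = \{id\}$. The $\ague{\mu}$ appearing on the right then amounts to the dimension $\gue{C,\mu}(id)$ for $\mu$ of size $0$ or $2$; a direct computation using $K_{(2),(2)}(-1)=1$, $K_{(2),(1,1)}(-1)=-1$, and $K_{(1,1),(1,1)}(-1)=1$ together with $\ch \chi^{(2)}_C=\ch\Id_{W(C_1)}$ and $\ch\chi^{(1,1)}_C=\ch\sgn_{W(C_1)}$ gives $\gue{C,\emptyset}(id) = 1$ and $\gue{C,(1,1)}(id) = 2$, matching the declared conventions $\ague{\emptyset} = 1$ and $\ague{(1,1)} = 2$. (Since $(2)$ is not a valid type-$D$ Jordan type, the value $\gue{C,(2)}(id)$ does not need to enter.)

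The main technical hurdle will be the bookkeeping for very even partitions on the right: when the $\mu$ produced by Proposition \ref{prop:restgue} is very even, the restriction $\chi^\mu_C|_{W(D_{n-k})}$ splits as $\chi^{\mu+}\oplus \chi^{\mu-}$, so one must verify that the sign factor $\sg{\mu}=+1$ (automatic for very even $\mu$) appearing in $\gue{C,\mu}$ matches the defining formula for $\ague{\mu}$, where the very even summands carry no $\sg{\mu}$ twist. This compatibility is built into the definition of $\ague{\mu}$, which was arranged precisely so that $\Res^{W(C_m)}_{W(D_m)} \gue{C,\mu} = \ague{\mu}$ for every $m$ and every $\mu \vdash 2m$, so no further sign adjustment is required.
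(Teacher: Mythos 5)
Your proposal is correct and takes essentially the same route as the paper: both reduce to the type-$C$ restriction formula (Proposition \ref{prop:restgue}) via the identity $\ague{\lambda} = \Res^{W(C_n)}_{W}\gue{C,\lambda}$, then apply the further restriction from $W(C_{n-k})$ to $W(D_{n-k})$ to convert each $\gue{C,\mu}$ into $\ague{\mu}$, and finally check the boundary cases $k=n-1,n$ by direct evaluation. Your write-up spells out a few steps the paper leaves implicit (why the identity $\ague{\lambda}=\Res\gue{C,\lambda}$ holds, and the explicit small-rank Kostka--Foulkes computations giving $\gue{C,(1,1)}(id)=2$ and $\gue{C,\emptyset}(id)=1$), but the underlying argument is identical.
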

\begin{proof} Let $W'' \subset W(C_n)$ be the parabolic subgroup of type $C_{n-k}$ such that $W''\cap W = W'$. Then Proposition \ref{prop:restgue} shows that
\begin{align*} \Res^{W(C_n)}_{c\cdot W''} \gue{C,\lambda} =& \sum_{i\geq k, m_i \geq 2} 2\floor{\frac{m_i}{2}} \gue{C,\lambda\pch{i,i}{i-k, i-k}}
\\&+ \sum_{i\geq 2k, m_i \textup{ odd}} (-1)^{\h{\lambda/\left(\lambda\pch{i}{i-2k}\right)}} \gue{C,\lambda\pch{i}{i-2k}}
\\&+ \sum_{\substack{0<i-j<2k\leq i+j, \\m_i, m_j \textup{ odd}}} (-1)^{\h{\lambda/ \left(\lambda\pch{i}{j}\right)}}2 \gue{C,\lambda\pch{i,j}{\frac{i+j}{2}-k,\frac{i+j}{2}-k}}.
\end{align*}
Now the result follows by applying $\Res^{W''}_{W'}$ on both sides. (Note that $\Res^{W(C_1)}_{\{*\}} \gue{C,(1,1)}=2$ and $\gue{C,\emptyset}=1$.)
\end{proof}

\begin{prop} \label{prop:indD}Assume $1\leq k \leq n-2$. Let $\sym_k \times W' \subset W$ be a maximal parabolic subgroup where $W'$ is of the same type as $W$. Then we have
$$\ague{\lambda\cup (k,k)} = \Ind_{\sym_k\times W' }^W 1 \times \ague{\lambda}.$$
Here $1=\ch \Id_{\sym_k}$. Furthermore, we have
$$\ague{ (n-1,n-1,1,1)} = \Ind_{\sym_{n-1}}^W 2, \quad \ague{(n,n)} = \Ind_{\sym_{n+}}^W 1+\Ind_{\sym_{n-}}^W 1.$$
\end{prop}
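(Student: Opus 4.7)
The plan is to deduce Proposition~\ref{prop:indD} from the type-$C$ analogue (Proposition~\ref{prop:indgue}) by restricting along the index-two inclusion $W=W(D_n)\hookrightarrow W(C_n)$ and applying Mackey's formula. The starting identity, which follows directly from the definition of $\ague{\lambda}$ together with the branching rule for irreducible characters from $W(C_n)$ to $W$, is
$$\ague{\lambda} = \Res^{W(C_n)}_{W}\gue{C,\lambda};$$
indeed, $\Res^{W(C_n)}_W\ch\chi^\mu_C = \ch\chi^\mu$ for $\mu$ not very even and $\Res^{W(C_n)}_W\ch\chi^\mu_C = \ch(\chi^{\mu+}\oplus\chi^{\mu-})$ for $\mu$ very even (in which case $\sg{\mu}=1$, so the sign prefactor in the definition of $\ague{\lambda}$ simplifies correctly).

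For the main claim with $1\leq k\leq n-2$, Proposition~\ref{prop:indgue} applied in type $C$ gives
$$\gue{C,\lambda\cup(k,k)} = \Ind_{\sym_k\times W(C_{n-k})}^{W(C_n)} 1\times\gue{C,\lambda}.$$
I then restrict to $W$ and apply Mackey. The key observation is that $\sym_k\times W(C_{n-k})\subset W(C_n)$ contains a single sign-flip from the $W(C_{n-k})$-factor, hence is \emph{not} contained in $W$, yet its product with $W$ exhausts $W(C_n)$. So $W\backslash W(C_n)/(\sym_k\times W(C_{n-k}))$ consists of a single double coset, whose intersection with $W$ is exactly $\sym_k\times W(D_{n-k})=\sym_k\times W'$; Mackey therefore delivers the single summand $\Ind_{\sym_k\times W'}^{W}1\times\ague{\lambda}$, which is the desired formula.

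For $\ague{(n-1,n-1,1,1)}$ I write this as $\ague{(1,1)\cup(n-1,n-1)}$ and apply the same proposition in type $C$ with $k=n-1$; again $(\sym_{n-1}\times W(C_1))\cdot W = W(C_n)$ and only one double coset appears, so Mackey produces $\Ind_{\sym_{n-1}}^W \Res^{W(C_1)}_{\{1\}}\gue{C,(1,1)}$. A direct computation from the definition shows that $\gue{C,(1,1)}$ is the regular representation of $W(C_1)$ (both irreducibles of $W(C_1)$ appear with the correct signs to combine to coefficient $+1$), so its restriction to the trivial subgroup is the scalar $2$, yielding $\Ind_{\sym_{n-1}}^W 2$. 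For $\ague{(n,n)}$ I apply the same proposition with $\lambda=\emptyset$ and $k=n$ to obtain $\gue{C,(n,n)} = \Ind_{\sym_n}^{W(C_n)}1$, where $\sym_n$ is the standard permutation subgroup of $W(C_n)$. This $\sym_n$ is already contained in $W$, so $W\backslash W(C_n)/\sym_n$ now has \emph{two} double cosets, represented by $1$ and by any sign-flip $t\in W(C_n)\setminus W$, and Mackey gives $\Ind_{\sym_n}^W 1 + \Ind_{t\sym_n t^{-1}}^W 1$.

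The final step, and the main conceptual obstacle, is to identify the pair $\{\sym_n,\,t\sym_n t^{-1}\}$ with the pair $\{\sym_{n+},\sym_{n-}\}$ in the labeling of Section~\ref{subsec:typeDparam}. This reduces to a direct check on generators: conjugation by $t$ fixes the simple reflections $s_1,\ldots,s_{n-2}$ shared between $\sym_n$ and $W(D_n)$, but sends the last simple reflection of $\sym_n$ to the other forked generator of the $D_n$ diagram, so the two parabolics $\sym_n$ and $t\sym_n t^{-1}$ differ exactly by swapping $s_+$ with $s_-$. Granting this identification, Mackey delivers $\Ind_{\sym_{n+}}^W 1+\Ind_{\sym_{n-}}^W 1$; everything else is bookkeeping.
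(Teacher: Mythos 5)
Your proof is correct and takes essentially the same route as the paper: restricting the type-$C$ identity $\gue{C,\lambda\cup(k,k)}=\Ind_{\sym_k\times W(C_{n-k})}^{W(C_n)}1\times\gue{C,\lambda}$ along $W(D_n)\hookrightarrow W(C_n)$ via Mackey, using $\ague{\lambda}=\Res^{W(C_n)}_W\gue{C,\lambda}$. The paper leaves the single- versus two-double-coset analysis and the identification $\{\sym_n,t\sym_n t^{-1}\}=\{\sym_{n+},\sym_{n-}\}$ implicit, whereas you spell these out, but the argument is the same.
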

\begin{proof} We define $W'' \subset W(C_n)$ as before. We know that $\gue{C,\lambda\cup (k,k)} = \Ind_{\sym_k\times W'' }^{W(C_n)} 1 \times \gue{C,\lambda} .$ If we apply $\Res^{W(C_n)}_W$ on both sides then we have
\begin{align*}
\ague{\lambda\cup (k,k)}=&\Res^{W(C_n)}_W\gue{C,\lambda\cup (k,k)}=\Res^{W(C_n)}_W\Ind_{\sym_k\times W'' }^{W(C_n)}1 \times \gue{C,\lambda}
\\=&\Ind^{W} _{\sym_k \times W'}\Res^{\sym_k\times W'' }_{\sym_k \times W'} 1 \times \gue{C,\lambda}=\Ind^{W} _{\sym_k \times W'} 1 \times \ague{\lambda}
\end{align*}
by Mackey's formula. Similarly, we have
\begin{align*}
\ague{(n-1,n-1,1,1)}=&\Res^{W(C_n)}_W\gue{C,(n-1,n-1,1,1)}=\Res^{W(C_n)}_W\Ind_{\sym_{n-1}\times W(C_1) }^{W(C_n)}1 \times \gue{C,(1,1)}
\\=&\Ind^{W} _{\sym_{n-1} }\Res^{\sym_{n-1}\times W(C_1) }_{\sym_{n-1}} 1 \times \gue{C,(1,1)}=\Ind^{W} _{\sym_{n-1}} 2 ,
\\\ague{(n,n)}=&\Res^{W(C_n)}_W\gue{C,(n,n)}=\Res^{W(C_n)}_W\Ind_{\sym_{n} }^{W(C_n)}1
= \Ind_{\sym_{n+}}^W 1+\Ind_{\sym_{n-}}^W 1.
\end{align*}
\end{proof}
Similarly to the previous section, by Proposition \ref{prop:indD}, in order to prove $\ch \atsp{\lambda} = \ague{\lambda}$ it suffices to consider the case when $\lambda$ corresponds to a distinguished nilpotent element. We fix such $\lambda$ from now on. Also by Proposition \ref{prop:resD}, we only need to show that $\ch\atsp{\lambda}(w)=\ague{\lambda}(w)$ for $w\in W$ parametrized by $(\emptyset, \rho)$ for some $\rho \vdash n$. To that end we follow the arguments in the previous section.
\begin{lem}
\begin{enumerate}
\item $\lambda_1 \leq \mu_1$ if the following holds: $\mu$ is not very even and $\br{\atsp{\lambda}, \chi^{\mu}} \neq 0$, or $\mu$ is very even and either $\br{\atsp{\lambda}, \chi^{\mu+}} \neq 0$ or $\br{\atsp{\lambda}, \chi^{\mu-}} \neq 0$.
\item $\lambda_1 \leq \mu_1$ if the following holds: $\mu$ is not very even and $\br{\ague{\lambda}, \ch\chi^{\mu}} \neq 0$, or $\mu$ is very even and either $\br{\ague{\lambda}, \ch\chi^{\mu+}} \neq 0$ or $\br{\ague{\lambda}, \ch\chi^{\mu-}} \neq 0$.
\end{enumerate}
\end{lem}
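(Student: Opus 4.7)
The plan is to mimic the proof of Lemma \ref{lem:triangular} for type $B$ and $C$, adapting for the fact that in type $D$ certain irreducible characters of $W(C_n)$ become identified upon restriction to $W=W(D_n)$ while certain Springer representations split. Both parts proceed in parallel: part (2) uses the identity $\ague{\lambda}=\Res^{W(C_n)}_W \gue{C,\lambda}$ together with Frobenius reciprocity, while part (1) uses the upper-triangularity of Springer representations together with a combinatorial estimate via Lusztig's symbols.

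For part (2), I would first observe that for $\mu\vdash |\lambda|$ not very even there is a unique partition $\nu\vdash |\lambda|$ with $\cq{0}{\nu}=\cq{1}{\mu}$ and $\cq{1}{\nu}=\cq{0}{\mu}$; then $\ch\chi^\mu=\ch\chi^\nu$ as characters of $W(D_n)$, and by Clifford theory one has $\Ind^{W(C_n)}_W \ch\chi^\mu=\ch\chi^\mu_C+\ch\chi^\nu_C$. Frobenius reciprocity together with the definition of $\gue{C,\lambda}$ then yields
$$\br{\ague{\lambda},\ch\chi^\mu}=\sg{\lambda}\sg{\mu}K_{\mu,\lambda}(-1)+\sg{\lambda}\sg{\nu}K_{\nu,\lambda}(-1).$$
Nonvanishing of the left side forces at least one of $K_{\mu,\lambda}(-1)$, $K_{\nu,\lambda}(-1)$ to be nonzero, and the triangularity $K_{\eta,\lambda}(t)\neq 0 \Rightarrow \lambda\leq\eta$ of Kostka--Foulkes polynomials gives $\max(\mu_1,\nu_1)\geq \lambda_1$. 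Since $\chi^\mu$ and $\chi^\nu$ coincide as $W(D_n)$-characters, one may relabel so that $\mu$ is the representative of $\{\mu,\nu\}$ with larger first part, concluding $\mu_1\geq \lambda_1$. For $\mu$ very even the induction identity becomes $\Ind^{W(C_n)}_W \ch\chi^{\mu\pm}=\ch\chi^\mu_C$, producing a single Kostka--Foulkes term, and the same argument applies.

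For part (1), invoke the upper-triangularity of Springer representations (see \cite{sho83}, \cite{lus81}): let $\tilde{\mu}$ denote the Jordan type of the nilpotent orbit assigned to $\chi^\mu$ (or $\chi^{\mu\pm}$) under the Springer correspondence. Nonvanishing of the relevant pairing with $\atsp{\lambda}$ forces $\lambda\leq \tilde{\mu}$ in dominance order, so $\lambda_1\leq \tilde{\mu}_1$. The residual inequality $\tilde{\mu}_1\leq \mu_1$ is a routine combinatorial check using the description of the Springer correspondence via Lusztig's symbols for type $D$ (cf. \cite[Chapter 13.3]{car93}), entirely parallel to the argument in the proof of Lemma \ref{lem:triangular}.

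The main subtlety lies in part (2): two Kostka--Foulkes terms of possibly opposing signs contribute to a single $W(D_n)$-inner product, so one must exploit the ambiguity in the labelling of the $W(D_n)$-character to extract the desired bound on the first part. The symbol-theoretic verification in part (1) is routine but requires care because of the additional constraints on type $D$ partitions and the separate treatment of the very-even case.
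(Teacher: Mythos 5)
Your computation of $\br{\ague{\lambda},\ch\chi^\mu}=\sg{\lambda}\sg{\mu}K_{\mu,\lambda}(-1)+\sg{\lambda}\sg{\nu}K_{\nu,\lambda}(-1)$ via Frobenius reciprocity through $\ague{\lambda}=\Res^{W(C_n)}_W\gue{C,\lambda}$ is correct, and it yields $\lambda_1\leq\max(\mu_1,\nu_1)$. However, the final step --- ``one may relabel so that $\mu$ is the representative of $\{\mu,\nu\}$ with larger first part'' --- does not prove the lemma as stated. The lemma requires $\lambda_1\leq\mu_1$ for \emph{every} partition $\mu$ labelling the character (equivalently $\lambda_1\leq\min(\mu_1,\nu_1)$), and this matters for the way the lemma is used: in the subsequent spanning argument one ranges over all $\mu$ with $\mu_1<\lambda_1$, including those whose partner $\nu$ may satisfy $\nu_1\geq\lambda_1$. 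Relabelling forfeits exactly the case you need.

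The missing ingredient is the auxiliary observation recorded in the paper just before this point: for a swapped pair $(\mu,\nu)$, the set $\{\mu_1,\nu_1\}$ is either $\{2r\}$ or $\{2r-1,2r\}$ for some $r$, so $\max\{\mu_1,\nu_1\}$ is always even and $\mu_1,\nu_1$ differ by at most $1$. Since in context $\lambda$ has been reduced to the Jordan type of a \emph{distinguished} nilpotent in $\mathfrak{so}_{2n}$ (strict, all parts odd), $\lambda_1$ is odd; hence $\lambda_1\leq\max\{\mu_1,\nu_1\}$ forces $\lambda_1<\max\{\mu_1,\nu_1\}$ and therefore $\lambda_1\leq\max\{\mu_1,\nu_1\}-1\leq\min\{\mu_1,\nu_1\}$, closing the gap. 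Your proposal never invokes the distinguished hypothesis on $\lambda$, which is essential here. The same parity consideration is also needed to finish the Springer-theoretic argument in part (1), where after obtaining $\lambda_1\leq\tilde\mu_1$ you still must control both labellings $\mu$ and $\nu$ of the character; your appeal to Lusztig's symbols is in the right spirit, but as with part (2) the distinguished hypothesis and the evenness of $\max\{\mu_1,\nu_1\}$ are what actually make the bound go through for both representatives.
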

\begin{proof} It can be proved similarly to Lemma \ref{lem:triangular}.
\end{proof}
\begin{rmk} One needs to be careful here since $\chi^{\mu}=\chi^\nu$ does not necessarily mean $\mu=\nu$. In fact, if $\mu, \nu \vdash 2n$ are partitions such that $\cq{c}{\mu}=\cq{c}{\nu}=\emptyset$, $\cq{0}{\mu}=\cq{1}{\nu}$, and $\cq{1}{\mu}=\cq{0}{\nu}$, then $\chi^\mu = \chi^\nu$ but $\mu\neq \nu$ unless $\mu$ is very even. However, the lemma above still remains valid; one may find the following fact useful.
\end{rmk}
\begin{lem} Suppose $\mu, \nu \vdash 2n$ are partitions such that $\cq{c}{\mu}=\cq{c}{\nu}=\emptyset$, $\cq{0}{\mu}=\cq{1}{\nu}$, and $\cq{1}{\mu}=\cq{0}{\nu}$. Then there exists $r \in \N$ such that $\{\mu_1, \nu_1\}$ equals either $\{2r\}$ or $\{2r-1, 2r\}$. In particular, $\max \{\mu_1, \nu_1\}$ is always even.
\end{lem}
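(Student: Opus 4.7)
The plan is to analyze $\mu$ and $\nu$ simultaneously via the 2-abacus of beta numbers and translate the hypothesis $\cq{0}{\mu}=\cq{1}{\nu}$, $\cq{1}{\mu}=\cq{0}{\nu}$ into a clean swap rule on bead positions.

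First I would fix an even integer $m$ with $m \geq \max\{l(\mu), l(\nu)\}$ and form the beta numbers $a_i^\mu = \mu_i + m - i$ and $a_i^\nu = \nu_i + m - i$. Split $\{a_i^\mu\}$ into its even part $E_\mu = \{2b_1^\mu > \cdots > 2b_s^\mu\}$ and its odd part $O_\mu = \{2c_1^\mu+1 > \cdots > 2c_t^\mu+1\}$, and likewise for $\nu$. Since both 2-cores are empty and $m$ is even, $s = t = m/2$ on both sides.

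Next I would unwind the definition of the 2-quotient from Section~\ref{sec:corequot}: the $b_i^\mu$ are exactly the beta numbers of $\cq{0}{\mu}$ (padded to $m/2$ positions), and the $c_j^\nu$ are the beta numbers of $\cq{1}{\nu}$. Hence $\cq{0}{\mu} = \cq{1}{\nu}$ becomes the identity $\{b_i^\mu\} = \{c_j^\nu\}$, equivalently $E_\mu = O_\nu - 1$; symmetrically $\cq{1}{\mu} = \cq{0}{\nu}$ becomes $O_\mu = E_\nu + 1$. This is the key \emph{bead-swap identity}: to pass from $\mu$ to $\nu$, shift all odd beads of $\mu$ down by one and shift all even beads of $\mu$ up by one.

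With this in hand I would compute $\mu_1 = \max(E_\mu \cup O_\mu) - (m-1)$ and $\nu_1 = \max(E_\nu \cup O_\nu) - (m-1)$ directly. Let $M_E = \max E_\mu$ (even) and $M_O = \max O_\mu$ (odd); then $\max E_\nu = M_O - 1$ and $\max O_\nu = M_E + 1$. Since $M_E$ and $M_O$ have opposite parities they differ, and a short case check distinguishes three regimes. If $M_E > M_O$, then automatically $M_E + 1 > M_O - 1$, so $\mu_1 = M_E - (m-1)$ is odd and $\nu_1 = \mu_1 + 1$ is even. If $M_O = M_E + 1$ (the only possibility with $M_O - M_E = 1$), then $\max(E_\nu \cup O_\nu) = M_E + 1 = M_O$, so $\mu_1 = \nu_1 = M_O - (m-1)$ is even. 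Finally, if $M_O \geq M_E + 3$, then $\max(E_\nu \cup O_\nu) = M_O - 1$, so $\mu_1 = M_O - (m-1)$ is even and $\nu_1 = \mu_1 - 1$ is odd. In every regime $\{\mu_1, \nu_1\}$ is either a singleton $\{2r\}$ or a consecutive pair $\{2r-1, 2r\}$, and the larger element is even.

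The main obstacle is making the bead-swap identity watertight from the definitions: one has to verify that $\cq{0}{\mu}$ and $\cq{0}{\nu}$ are really computed with the same even $m$, and correctly convert the equality of 2-quotients into the multiset identities $E_\mu = O_\nu - 1$ and $O_\mu = E_\nu + 1$ with the right $\pm 1$ shifts. Once that dictionary is in place, the remainder of the argument is a purely elementary comparison of the maxima of four explicit bead sets.
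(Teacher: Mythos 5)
Your proof is correct. The paper states this lemma without proof (it follows a remark calling it a "useful fact"), so there is no in-paper argument to compare against; your 2-abacus argument is a valid and natural way to establish it. Two small points worth making explicit when writing it up: first, the reduction from $\cq{0}{\mu}=\cq{1}{\nu}$ to $\{b_j^\mu\}=\{c_j^\nu\}$ relies on $s_\mu = t_\nu$, which holds because both 2-cores are empty and $m$ is even, giving $s_\mu = t_\mu = s_\nu = t_\nu = m/2$ — this should be stated rather than implicit. Second, your case analysis correctly uses that $M_E$ and $M_O$ have opposite parity so $M_E \neq M_O$, and that $M_O - M_E$ is odd so the three regimes $M_E > M_O$, $M_O = M_E+1$, $M_O \geq M_E+3$ are exhaustive; the parity computations ($M_E - (m-1)$ odd, $M_O - (m-1)$ even since $m$ is even) are right and give exactly the claimed $\{2r\}$ or $\{2r-1,2r\}$ dichotomy with the larger element even.
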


Recall that we defined $m_\mu \colonequals \max((\cq{0}{\mu})_1, (\cq{1}{\mu})_1).$ We set $M\in \N$ such that $(M-1)^2 < 2n \leq M^2$. We also let $M_C \in \N$ such that $(M_C-1)M_C<2n\leq M_C(M_C+1)$ as in the previous section for type $C$.
\begin{lem} \label{lem:bound1}Let $\lambda$ be a Jordan type of a distinguished nilpotent element in $\g$. Also suppose we are given a partition $\mu \vdash 2n$ such that $m_\mu <M$. Then we have $\mu_1 <\lambda_1$.
\end{lem}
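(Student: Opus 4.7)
\emph{Proof plan.} The strategy is to combine two estimates: an upper bound on $\mu_1$ coming from Lemma \ref{lem:estim}, and a lower bound on $\lambda_1$ coming from the shape of distinguished Jordan types in type $D_n$.

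First, by hypothesis $m_\mu \leq M-1$. Since $|\mu| = 2n$ is even, I would invoke Lemma \ref{lem:estim}(2): this gives $\mu_1 \leq 2m_\mu \leq 2M-2$ if $\mu_1$ is even, and $\mu_1 \leq 2m_\mu - 1 \leq 2M-3$ if $\mu_1$ is odd. In either case,
$$\mu_1 \leq 2M-2.$$

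Next, I would recall the classification of distinguished nilpotent orbits in $\g = \Lie SO_{2n}(\C)$: the Jordan type $\lambda$ is a strict partition of $2n$ with all parts odd. Consequently $\lambda_1 > \lambda_2 > \cdots > \lambda_{l(\lambda)} \geq 1$ are distinct positive odd integers, so
$$2n = \sum_i \lambda_i \leq 1 + 3 + \cdots + \lambda_1 = \left(\frac{\lambda_1+1}{2}\right)^2.$$
Combining this with the defining inequality $(M-1)^2 < 2n$ gives $M - 1 < (\lambda_1+1)/2$, i.e.\ $\lambda_1 \geq 2M-2$.

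The proof concludes by a parity comparison. Since $\lambda_1$ is odd while $2M-2$ is even, the bound $\lambda_1 \geq 2M-2$ actually upgrades to $\lambda_1 \geq 2M-1$. Therefore
$$\mu_1 \leq 2M-2 < 2M-1 \leq \lambda_1,$$
as required. The subtle point, if one may call it a main obstacle, is precisely this parity step: without the ``distinguished'' hypothesis forcing every part of $\lambda$ (in particular $\lambda_1$) to be odd, one would only obtain the non-strict bound $\mu_1 \leq \lambda_1$. This is the direct analogue of the case analysis in Lemma \ref{lem:parbound} for type $B$, where one treats the parities of $\mu_1$ separately; here it is slightly cleaner to collapse the two cases via the parity of $\lambda_1$ itself.
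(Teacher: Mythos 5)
Your proof is correct and essentially the same as the paper's: both invoke Lemma \ref{lem:estim}(2) to bound $\mu_1$ in terms of $m_\mu$, both use that a distinguished Jordan type in type $D_n$ is strict with odd parts to get $2n \leq (\lambda_1+1)^2/4$, and both exploit the oddness of $\lambda_1$ to sharpen $\lambda_1 \geq 2M-2$ to $\lambda_1 \geq 2M-1$. Your version collapses the two parity cases for $\mu_1$ into the single uniform bound $\mu_1 \leq 2M-2$, whereas the paper keeps the case split and, in the $\mu_1$-even case, closes with the observation that an even number cannot equal the odd $\lambda_1$; this is a cosmetic streamlining, not a different argument.
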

\begin{proof} Since $\lambda$ is strict and each part is odd, we have $(M-1)^2<2n = |\lambda| \leq \frac{(\lambda_1+1)^2}{4}$, thus $2M-2 <\lambda_1+1$. As $\lambda_1$ is odd, we have $2M\leq \lambda_1+1$. Now by Lemma \ref{lem:estim} we have
\begin{enumerate}
\item if $\mu_1$ is even, then $\mu_1 \leq 2m_{\mu}<2M\leq \lambda_1+1,$ and
\item if $\mu_1$ is odd, then $\mu_1 \leq 2m_{\mu}-1<2M-1\leq\lambda_1.$
\end{enumerate}
In the first case, as $\mu_1$ is even we also have $\mu_1 < \lambda_1$. Thus the result follows. 
\end{proof}
\begin{lem} \label{lem:bound2}For $n \geq 2$, $M^2\geq M_C^2>n$.
\end{lem}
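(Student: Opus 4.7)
The statement breaks into two independent inequalities: $M^2 \geq M_C^2$ (equivalently $M \geq M_C$, since both are positive integers) and $M_C^2 > n$. The plan is to handle each via a short contradiction/estimation argument using only the defining inequalities of $M$ and $M_C$.

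For the first inequality $M \geq M_C$, I would argue by contradiction. Suppose $M < M_C$, so $M \leq M_C - 1$. Then
\[
M^2 \leq (M_C-1)^2 \leq (M_C-1)M_C < 2n,
\]
where the last strict inequality comes directly from the definition of $M_C$. But this contradicts $2n \leq M^2$ from the definition of $M$. Hence $M \geq M_C$.

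For the second inequality $M_C^2 > n$, I would first observe that $n \geq 2$ forces $M_C \geq 2$: indeed $M_C = 1$ would give $M_C(M_C+1) = 2 < 2n$, violating the defining upper bound. With $M_C \geq 2$ in hand, we have $M_C(M_C-1) \geq 2$, i.e., $M_C^2 \geq M_C + 2$. Combining this with $2n \leq M_C(M_C+1) = M_C^2 + M_C$, we get
\[
M_C^2 - n \;\geq\; M_C^2 - \tfrac{M_C^2 + M_C}{2} \;=\; \tfrac{M_C(M_C - 1)}{2} \;\geq\; 1,
\]
so $M_C^2 \geq n + 1 > n$, as desired.

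No step here is a serious obstacle; the entire argument is a direct manipulation of the defining inequalities of $M$ and $M_C$ together with the hypothesis $n \geq 2$, so the proof should take only a few lines in final form.
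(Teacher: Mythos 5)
Your proof is correct and complete. The paper itself leaves this lemma with the one-word justification ``Easy,'' so there is no written argument to compare against; your two-part contradiction/estimation argument (first $M \geq M_C$ by comparing the defining chains $(M_C-1)M_C < 2n \leq M^2$, then $M_C^2 - n \geq \tfrac{1}{2}M_C(M_C-1) \geq 1$ using $M_C \geq 2$) is exactly the kind of direct manipulation the author presumably had in mind, and every step checks out.
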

\begin{proof} Easy.
\end{proof}
Now we prove $\ch\atsp{\lambda} = \ague{\lambda}$. First set $\mathscr{P} \subset W(C_n)$ to be the union of conjugacy classes in $W(C_n)$ parametrized by $(\emptyset, \rho)$ for some $\rho \vdash n$. Also we let $\mathscr{P}_e \colonequals \mathscr{P} \cap W$. Then $\mathscr{P}_e$ is the union of conjugacy classes in $W$ parametrized by $(\emptyset, \rho)$ for some $\rho \vdash n$ such that $l(\rho)$ is even. We denote by $\mathcal{R}(\mathscr{P}) \subset \mathcal{R}(W(C_n))$ (resp. $\mathcal{R}(\mathscr{P}_e)\subset \mathcal{R}(W)$) the set of class functions of $W(C_n)$ (resp. $W$) supported on $\mathscr{P}$ (resp. $\mathscr{P}_e$).

If we follow the argument in the previous section, we need to show that
$$\{\ch \chi^\mu|_{\mathscr{P}_e} \mid \mu \textup{ not very even}, \mu_1 < \lambda_1\} \cup \{\ch \chi^{\mu\pm}|_{\mathscr{P}_e} \mid \mu \textup{ very even}, \mu_1 < \lambda_1\}$$
spans $\mathcal{R}(\mathscr{P}_e)$. But we already know that $\{\ch \chi^\mu_C |_{\mathscr{P}} \mid m_\mu < M_C\}$ spans $\mathcal{R}(\mathscr{P})$ by Lemma \ref{lem:spanBC}. By taking the restriction to $\mathscr{P}_e$, we see that
$$\{\ch \chi^\mu |_{\mathscr{P}_e} \mid \mu \textup{ not very even}, m_\mu < M_C\}\cup \{\ch (\chi^{\mu+}\oplus\chi^{\mu-}) |_{\mathscr{P}_e} \mid \mu \textup{ very even}, m_\mu < M_C\}$$
spans $\mathcal{R}(\mathscr{P}_e)$. Now the result easily follows from Lemma \ref{lem:bound1} and \ref{lem:bound2}.

\subsection{Very even case} If $\lambda$ is not very even, then the identity $\ch\atsp{\lambda} = \ague{\lambda}$ directly implies Main Theorem \ref{mainthm1} as $\atsp{\lambda}= \tsp{\lambda}\oplus\tsp{\lambda}$. Even when $\lambda$ is very even, it proves Main Theorem \ref{mainthm1} modulo the multiplicities of $\chi^{\mu\pm}$ for $\mu$ very even. Here, we fill this gap and finish the proof.

Suppose $n$ is even and $\lambda,\mu \vdash n/2$ are given. We define 
$$\diff{\lambda} \colonequals \ch \tsp{(2\lambda\cup2\lambda)+}-\ch\tsp{(2\lambda\cup2\lambda)-}.$$
Then we claim the following. Note that it suffices to complete the proof of Main Theorem \ref{mainthm1}.
\begin{prop} $\br{\diff{\lambda}, \ch\chi^{(2\mu\cup2\mu)+}}=-\br{\diff{\lambda},\ch\chi^{(2\mu\cup2\mu)-}}=K_{\mu, \lambda}$, where $K_{\mu, \lambda}=K_{\mu, \lambda}(1)$ is the Kostka number.
\end{prop}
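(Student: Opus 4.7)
The plan is to compute $\br{\diff{\lambda}, \ch \chi^{(2\mu\cup 2\mu)+}}_W$ directly on the split conjugacy classes (where $\diff\lambda$ is concentrated) and identify the result with $K_{\mu,\lambda}$ using the Frobenius characteristic machinery of Section \ref{sec:symm}.

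First I would show that $\diff{\lambda}$ is supported only on the split classes $w_{(2\tilde\rho,\emptyset)\pm}$ ($\tilde\rho\vdash n/2$). On a non-split class $\tau$ acts trivially, and since $\tau\tsp{(2\lambda\cup 2\lambda)\pm} = \tsp{(2\lambda\cup 2\lambda)\mp}$, we get $\ch\tsp{(2\lambda\cup 2\lambda)+} = \ch\tsp{(2\lambda\cup 2\lambda)-}$ there, so $\diff\lambda$ vanishes. On a split class, Theorem \ref{thm:mainD}(4) gives $\diff\lambda(w_{(2\tilde\rho,\emptyset)\pm}) = \pm\gr^{2\lambda\cup 2\lambda}_{4\tilde\rho}(-1)$. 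To evaluate this Green polynomial in closed form, I would invoke the identities $Q'_{\alpha\cup\beta\cup\beta}(-1) = Q'_\alpha(-1)\prod_i Q'_{(i,i)}(-1)^{\beta_i}$ and $Q'_{(k,k)}(-1) = (-1)^k s_k[p_2]$ from \cite{llt94} (already cited in Section \ref{sec:mainBC}) to obtain $Q'_{2\lambda\cup 2\lambda}(-1) = h_{2\lambda}[p_2]$. Combining the Verschiebung adjunction $\br{f[p_2], g}_\Lambda = \br{f, \versch(g)}_\Lambda$, multiplicativity of $\versch$ on the power-sum basis, the identity $\versch(h_{2k}) = h_k$ (hence $\versch h_{2\lambda} = h_\lambda$), the parity $b(2\lambda\cup 2\lambda) \equiv 0 \pmod 2$, and $z_{4\tilde\rho}/z_{2\tilde\rho} = 2^{l(\tilde\rho)}$, I obtain
$$\gr^{2\lambda\cup 2\lambda}_{4\tilde\rho}(-1) = 2^{l(\tilde\rho)}\br{h_\lambda, p_{\tilde\rho}}_\Lambda.$$

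Second, I need the classical character difference formula for $W = W(D_n)$ on its split classes: for $\mu, \tilde\rho \vdash n/2$,
$$\ch\chi^{(2\mu\cup 2\mu)+}(w_{(2\tilde\rho, \emptyset)+}) - \ch\chi^{(2\mu\cup 2\mu)+}(w_{(2\tilde\rho, \emptyset)-}) = 2^{l(\tilde\rho)}\ch\chi^\mu_{\sym_{n/2}}(w_{\tilde\rho}).$$
This is a standard result about the character table of $W(D_n)$; see for instance \cite{mac95}. In outline: the left-hand side $\eta^\mu$ and the right-hand side $\xi^\mu$ are both $\tau$-anti-invariant class functions supported on split classes, and the $L^2$-computations $\br{\eta^\mu, \eta^\nu}_W = 2\delta_{\mu,\nu} = \br{\xi^\mu, \xi^\nu}_W$ show they are orthogonal bases of the same subspace; the bijection between them is pinned down by pairing with $\ch\Ind_{\sym_{n+}}^W \chi^\nu_{\sym_n} - \ch\Ind_{\sym_{n-}}^W\chi^\nu_{\sym_n}$ for $\nu$ ranging over partitions of $n$ with empty 2-core, using the identity $\versch(s_\nu) = \tsg{\nu}s_{\cq{0}\nu}s_{\cq{1}\nu}$ on one side and the defining property $\br{\Ind_{\sym_{n\pm}}^W\Id, \chi^{\{\mu,\mu\}\mp}} = 0$ from Section \ref{subsec:typeDparam} on the other.

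Combining, and using $|Z_W(w_{(2\tilde\rho,\emptyset)\pm})| = 4^{l(\tilde\rho)}z_{\tilde\rho}$, $\diff\lambda(w_{(2\tilde\rho,\emptyset)-}) = -\diff\lambda(w_{(2\tilde\rho,\emptyset)+})$, and $\ch\chi^\mu(w_{\tilde\rho}) = \br{s_\mu, p_{\tilde\rho}}_\Lambda$, the inner product unwinds to
\begin{align*}
\br{\diff{\lambda}, \ch\chi^{(2\mu\cup 2\mu)+}}_W &= \sum_{\tilde\rho \vdash n/2}\frac{\gr^{2\lambda\cup 2\lambda}_{4\tilde\rho}(-1)\cdot 2^{l(\tilde\rho)}\ch\chi^\mu(w_{\tilde\rho})}{4^{l(\tilde\rho)}z_{\tilde\rho}}\\
&= \sum_{\tilde\rho}\frac{\br{h_\lambda, p_{\tilde\rho}}_\Lambda\, \br{s_\mu, p_{\tilde\rho}}_\Lambda}{z_{\tilde\rho}} = \br{h_\lambda, s_\mu}_\Lambda = K_{\mu, \lambda}.
\end{align*}
The companion identity $\br{\diff\lambda, \ch\chi^{(2\mu\cup 2\mu)-}}_W = -K_{\mu,\lambda}$ then follows from $\tau\ch\chi^{(2\mu\cup 2\mu)+} = \ch\chi^{(2\mu\cup 2\mu)-}$ combined with $\tau\diff\lambda = -\diff\lambda$. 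The hardest step is the character difference formula in the second paragraph; while classical, its derivation in the paper's sign conventions demands careful tracking of the $\pm$ labeling from Section \ref{subsec:typeDparam}.
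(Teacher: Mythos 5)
Your approach is genuinely different from the paper's. You compute the inner product $\br{\diff\lambda, \ch\chi^{(2\mu\cup2\mu)+}}$ head-on by expanding over the split conjugacy classes of $W(D_n)$, which then forces you to invoke the classical ``difference formula'' for the degenerate characters $\chi^{\{\mu,\mu\}\pm}$ evaluated on the split classes. The paper instead sidesteps the character table of $W(D_n)$ entirely: it observes that $(m_{\mu,\lambda})$ and $(K_{\mu,\lambda})$ are both upper-unitriangular and that the proposition is \emph{equivalent}, by uniqueness of the (unitriangular) Cholesky factorization, to the single Gram-matrix identity $\br{\diff\lambda,\diff\mu}=2\br{h_\lambda,h_\mu}$, which it then verifies. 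Both routes use the same analytic core---the LLT factorization $Q'_{2\lambda\cup2\lambda}(-1)=h_{2\lambda}[p_2]$ and the Frobenius/Verschiebung adjunction $\br{p_\rho[p_2],h_{2\lambda}}=\br{p_\rho,h_\lambda}$---so the symmetric-function bookkeeping is essentially shared. What the paper's route buys is that it never needs to pin down a sign: the Gram matrix is insensitive to the $\pm$-labeling, and positivity plus triangularity determine $m_{\mu,\lambda}$ uniquely. What your route buys is a more transparent, term-by-term explanation of \emph{why} the Kostka number appears, at the cost of needing the sharp form of the $W(D_n)$ difference identity.

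Your argument is correct in principle, but there is one load-bearing step you only sketch, and it is exactly where a sign could go wrong: the identity
$$\ch\chi^{(2\mu\cup2\mu)+}(w_{(2\tilde\rho,\emptyset)+}) - \ch\chi^{(2\mu\cup2\mu)+}(w_{(2\tilde\rho,\emptyset)-}) = 2^{l(\tilde\rho)}\ch\chi^\mu(w_{\tilde\rho}),$$
with a $+$ on the right. An $L^2$ computation shows that $\eta^\mu:=\chi^{\{\mu,\mu\}+}-\chi^{\{\mu,\mu\}-}$ and the candidate $\xi^\mu$ are both orthogonal systems of the same norm in the space of $\tau$-anti-invariant class functions; but equal Gram matrices for two unitriangular-ordered bases only force equality if you also know the triangularity is compatible, and pinning down which $\eta^\mu$ matches which $\xi^\nu$ (and with which sign) requires the additional pairing argument you gesture at. The conventions here are self-consistent---a rank-$2$ check ($s_\pm$, $\chi^{\{(1),(1)\}\pm}$, the two split classes of a $2$-cycle) confirms the $+$ sign with the paper's labeling---but the step is not ``standard citation'' material; one genuinely has to chase the $\pm$ conventions of Section \ref{subsec:typeDparam} through. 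This is precisely the step the paper's unitriangularity trick is designed to avoid.

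Two minor points that are fine but worth making explicit if you write this up: (i) you need $\br{h_{2\lambda},p_{2\tilde\rho}}=\br{h_\lambda,p_{\tilde\rho}}$, which follows from $\versch$ being a ring homomorphism with $\versch(h_{2k})=h_k$; and (ii) the vanishing of $\diff\lambda$ off the split classes needs the observation that every non-split class of $W(D_n)$ is preserved by $\tau$, combined with $\tau\circ\tsp{(2\lambda\cup2\lambda)\pm}=\tsp{(2\lambda\cup2\lambda)\mp}$.
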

\begin{proof}
Set $m_{\mu,\lambda}\colonequals \br{\diff{\lambda}, \ch\chi^{(2\mu\cup2\mu)+}}=- \br{\diff{\lambda}, \ch\chi^{(2\mu\cup2\mu)-}}.$ Since $\br{\diff{\lambda}, \ch\chi^\alpha}=0$ if $\alpha$ is not very even, we have
$$\br{\diff{\lambda},\diff{\mu}} = 2\sum_{\nu\vdash n/2} m_{\nu,\lambda}m_{\nu,\mu}.$$
Assuming the proposition, it should be the same as $2\sum_{\nu\vdash n/2} K_{\nu,\lambda}K_{\nu,\mu}=2\br{h_\lambda,h_\mu}$. As the matrix $\{m_{\mu,\lambda}\}$ is upper-unitriangular (which comes from the upper-triangularity property of Springer representations),  the proposition is in fact equivalent to the identity $\br{\diff{\lambda},\diff{\mu}}=2\br{h_\lambda,h_\mu}$ for any $\lambda, \mu\vdash n/2$.

By Theorem \ref{thm:mainD}, we have
\begin{align*}
\br{\diff{\lambda},\diff{\mu}} &= 2\sum_{\rho\vdash n/2} \frac{1}{z_\rho 2^{2l(\rho)}} \gr^{2\lambda\cup2\lambda}_{4\rho}(-1)\gr^{2\mu\cup2\mu}_{4\rho}(-1)=2\sum_{\rho\vdash n/2} \frac{1}{z_\rho } \br{p_{2\rho}, h_{2\lambda}}\br{p_{2\rho}, h_{2\mu}}
\end{align*}
where the last equality is deduced from \cite[Theorem 3.4]{llt94}. By adjunction of the Frobenius and Verschiebung operators it is equal to
\begin{align*}
2\sum_{\rho\vdash n/2} \frac{1}{z_\rho } \br{p_{\rho}[p_2], h_{2\lambda}}\br{p_{\rho}[p_2], h_{2\mu}}&=2\sum_{\rho\vdash n/2} \frac{1}{z_\rho } \br{p_{\rho}, \versch(h_{2\lambda})}\br{p_{\rho}, \versch(h_{2\mu})}\\&=2\sum_{\rho\vdash n/2} \frac{1}{z_\rho } \br{p_{\rho}, h_{\lambda}}\br{p_{\rho}, h_{\mu}},
\end{align*}
which is clearly the same as $2\br{h_{\lambda}, h_{\mu}}$. Thus the result follows.
\end{proof}

\bibliographystyle{amsalphacopy}
\bibliography{kostka}

\end{document}